\tikzset{dynkdot/.style={circle,draw,scale=.38}}
\numberwithin{equation}{section}
\newcommand{\arxiv}[1]{\href{http://arxiv.org/abs/#1}{\texttt{arXiv:#1}}}
\newtheorem{theorem}{Theorem}[section]
\newtheorem{proposition}[theorem]{Proposition}
\newtheorem{corollary}[theorem]{Corollary}
\newtheorem{conjecture}[theorem]{Conjecture}
\newtheorem{lemma}[theorem]{Lemma}
\theoremstyle{definition}
\newtheorem{definition}[theorem]{Definition}
\newtheorem{example}[theorem]{Example}
\newtheorem{remark}[theorem]{Remark}
\newtheorem*{convention}{Convention}
\newcommand{\lf}{[\hspace{-0.3ex}[}
\newcommand{\rf}{]\hspace{-0.3ex}]}
\newcommand{\pprime}{\prime\prime}
\newcommand{\Z}{\mathbb{Z}}
\newcommand{\Q}{\mathbb{Q}}
\newcommand{\C}{\mathbb{C}}
\newcommand{\kk}{\mathbf{k}}
\newcommand{\de}{\operatorname{\mathfrak{d}}}
\newcommand{\Aut}{\mathop{\mathrm{Aut}}\nolimits}
\newcommand{\zero}{\mathop{\mathrm{zero}}\nolimits}
\newcommand{\g}{\mathfrak{g}}
\newcommand{\sfg}{\mathsf{g}}
\newcommand{\sfW}{\mathsf{W}}
\newcommand{\sfP}{\mathsf{P}}
\newcommand{\sfQ}{\mathsf{Q}}
\newcommand{\calQ}{\mathcal{Q}}
\newcommand{\hsfQ}{\widehat{\sfQ}}
\newcommand{\sfR}{\mathsf{R}}
\newcommand{\hsfR}{\widehat{\sfR}}
\newcommand{\al}{\alpha}
\newcommand{\be}{\beta}
\newcommand{\uxi}{\upxi}
\newcommand{\bfi}{\boldsymbol{i}}
\newcommand{\res}{\mathop{\mathrm{res}}\nolimits}
\newcommand{\hD}{\widehat{\Delta}}
\newcommand{\Rep}{\mathop{\mathrm{Rep}}}
\newcommand{\calD}{\mathcal{D}}
\newcommand{\Ind}{\mathop{\mathrm{Ind}}}
\newcommand{\im}{\imath}
\newcommand{\jm}{\jmath}
\newcommand{\id}{\mathrm{id}}
\newcommand{\wt}{\mathrm{wt}}
\newcommand{\wtc}{\wt^\circ}
\newcommand{\tvee}{\widetilde{\vee}}
\newcommand{\Ds}{\Delta^{\!\sigma}}
\newcommand{\hDs}{\widehat{\Delta}^{\sigma}}
\newcommand{\btau}{\breve{\tau}}
\newcommand{\hI}{\widehat{I}}
\newcommand{\bphi}{\bar{\phi}}
\newcommand{\bGamma}{\bar{\Gamma}}
\newcommand{\diag}{\mathop{\mathrm{diag}}\nolimits}
\newcommand{\tC}{\widetilde{C}}
\newcommand{\tc}{\widetilde{c}}
\newcommand{\lr}[1]{\langle #1 \rangle }
\newcommand{\prt}[2]{ \left( \begin{matrix} #1 \\ #2 \end{matrix}\right) }
\newcommand{\calY}{\mathcal{Y}}
\newcommand{\hg}{\widehat{\g}}
\newcommand{\scrN}{\mathscr{N}}
\newcommand{\af}{\mathrm{af}}
\newcommand{\qs}{q_{s}}
\newcommand{\scrC}{\mathscr{C}}
\newcommand{\scrD}{\mathscr{D}}
\newcommand{\fkD}{\mathfrak{D}}
\newcommand{\Irr}{\mathop{\mathrm{Irr}}}
\newcommand{\calM}{\mathcal{M}}
\newcommand{\calK}{\mathcal{K}}
\newcommand{\rd}{{}^*\mspace{-3mu}}
\newcommand{\evt}{\mathrm{ev}_{t=1}}
\newcommand{\univ}{\mathrm{univ}}
\newcommand{\norm}{\mathrm{norm}}
\newcommand{\seteq}{\mathbin{:=}}
\newcommand{\Deg}{\mathop{\mathrm{Deg}}\nolimits}
\newcommand{\bfD}{\mathbb{D}}
\newenvironment{rouge}{\color{black}}{}
\newcommand{\ber}{\begin{rouge}}
\newcommand{\er}{\end{rouge}}
\newenvironment{blue}{\relax\color{black}}{\hspace*{.5ex}\relax}
\newcommand{\beb}{\begin{blue}}
\newcommand{\eb}{\end{blue}}
\title[Q-data and untwisted quantum affine algebras] {Q-data and representation theory of untwisted
quantum affine algebras}
\author[R.~Fujita]{Ryo Fujita}
\address[R.~Fujita]{Institut de Math\'{e}matiques de Jussieu-Paris Rive Gauche, IMJ-PRG, Universit\'{e} de Paris, B\^{a}timent Sophie Germain, F-75013, Paris, France}
\email{ryo.fujita@imj-prg.fr}
\author[S.-j.~Oh]{Se-jin Oh}
\address[S.-j.~Oh]{Ewha Womans University Seoul, 52 Ewhayeodae-gil, Daehyeon-dong, Seodaemun-gu, Seoul, South Korea}
\email{sejin092@gmail.com}
\urladdr{https://sites.google.com/site/mathsejinoh/}
\date{\today}
\thanks{R.\ F.\ was supported in part by
Grant-in-Aid for JSPS Research Fellow JP18J10669, by JSPS Grant-in-Aid for Scientific Research (B) JP19H01782, (A) JP17H01086 and also by JSPS Overseas Research Fellowships (during the revision).}
\thanks{S.-j.\ Oh was supported by
the Ministry of Education of the Republic of Korea and the National Research Foundation of Korea (NRF-2019R1A2C4069647).}
\begin{document}

\begin{abstract}
For a complex finite-dimensional simple Lie algebra $\g$, we
introduce the notion of Q-datum, which generalizes the notion of
a Dynkin quiver with a height function from the viewpoint of Weyl group
combinatorics. Using this notion, we develop a unified  theory describing
the twisted Auslander-Reiten quivers and the twisted adapted classes introduced in~\cite{OS19b}
with an appropriate notion of the generalized Coxeter elements. 
As a consequence, we obtain a combinatorial formula
expressing the inverse
of the quantum Cartan matrix of $\g$,
which generalizes the result of \cite{HL15} in the simply-laced case.
We also find several applications of our combinatorial theory of Q-data to the finite-dimensional representation theory of 
the untwisted quantum affine algebra of $\g$. 
In particular, 
in terms of Q-data and the inverse of the quantum Cartan matrix,
(i) we give an alternative description of the block decomposition results due to \cite{CM05, KKOP20b},
(ii) we present a unified (partially conjectural) formula of the denominators of the normalized  R-matrices between all the Kirillov-Reshetikhin modules, 
and (iii) we compute the invariants $\Lambda(V,W)$ and $\Lambda^\infty(V, W)$
introduced in~\cite{KKOP20}
for each pair of simple modules $V$ and $W$.

\end{abstract}

\maketitle

\tableofcontents

\section*{Introduction}

\subsection{}
For a complex finite-dimensional simple Lie algebra $\g$, 
let $U'_q(\hg)$ denote its untwisted quantum affine algebra,
where $q$ is a generic quantization parameter.
Originally, it was introduced in the middle of the 80s in the context of 
solvable lattice models in order to give a systematic way to construct R-matrices, i.e., (trigonometric) solutions for the quantum Yang-Baxter equation.
In this paper, we are interested in the rigid monoidal abelian category $\scrC$ formed by finite-dimensional modules over $U'_q(\hg)$.
The structure of the category $\scrC$ is quite rich.
In particular, it is neither semisimple as an abelian category nor braided as an monoidal category. 
Due to its rich structure,
it have been intensively studied 
in fruitful connections with many research
areas of mathematics and physics including quantum groups, statistical mechanics, cluster algebras, dynamical systems, geometry of quiver varieties etc (see
for example \cite{AK97,CP,FR99,HL15,HL16,IKT12, Nakajima01q}).
Our main motivation is to obtain a better understanding of the structure of the category $\scrC$. 

Here we remark that the \emph{quantum Cartan matrix} $C(z)$ of $\g$ (see Subsection~\ref{ssec: Cartan} for its definition), or rather its inverse $\tC(z) := C(z)^{-1}$,
 has been playing an important role 
 as a combinatorial key ingredient in the study of the category $\scrC$. For instance, it appeared in:
 \begin{itemize}
 \item the factorization formula of the universal R-matrix due to~\cite{KT94, Damiani98},
 \item the study of the $q$-characters (natural quantum affine analog of the usual characters) of modules in $\scrC$ and 
 the deformed $\mathcal{W}$-algebras due to e.g.~\cite{FR99, FM01},
 \item the construction of the quantum Grothendieck ring $K_t(\scrC)$, which is a $t$-deformation of the usual Grothendieck ring $K(\scrC)$, due to 
 \cite{Nakajima04, VV03, Hernandez04}.
 \end{itemize}   
 In this paper, we particularly focus on this combinatorial ingredient. Roughly speaking, 
 we establish a unified formula computing the inverse $\tC(z)$ of the quantum Cartan matrix
 and apply it to the study of the category $\scrC$ especially for non-simply-laced cases. 
 
 \subsection{}  \label{intro: HL15}
 When $\g$ is of simply-laced type (i.e., of types $\mathsf{ADE}$), 
 the category $\scrC$ has an intimate connection with the representation theory of a Dynkin quiver $Q$ 
of the same type as $\g$.
 Such a remarkable connection was initially established in the seminal paper \cite{HL15} by D.~Hernandez and B.~Leclerc. 
 In that paper, it was proved that
 the quantum Grothendieck ring $K_t(\scrC^0)$
of a ``skeleton'' rigid monoidal subcategory $\scrC^0 \subset \scrC$ is isomorphic to
the derived Hall algebra $\mathrm{DH}(Q)$ of the category $\Rep(Q)$ of representations of a Dynkin quiver $Q$.   
Under this isomorphism, the cohomological degree shift $[1]$ in the derived category $D^b(\Rep(Q))$ 
corresponds to the right dual functor $\scrD$ on $\scrC^0$.    
In addition, restricting the isomorphism $K_t(\scrC^0) \simeq \mathrm{DH}(Q)$ to the subalgebras corresponding to the heart $\Rep(Q) \subset D^b(\Rep(Q))$
of the standard $t$-structure yields an isomorphism 
\begin{equation} \label{eq: HL}
K_t(\scrC_Q) \simeq U_{t}^{+}(\g),
\end{equation} 
where $\scrC_Q \subset \scrC^0$ is a certain monoidal subcategory defined by using the Auslander-Reiten (AR) quiver $\Gamma_Q$ of the category 
$\Rep(Q)$, and $U_{t}^{+}(\g)$ is the positive half of the quantized enveloping algebra $U_t(\g)$ of the finite-dimensional Lie algebra $\g$. 
(Strictly speaking, the definition of the subcategory $\scrC_Q$ and hence the isomorphism \eqref{eq: HL} actually depend not only on the quiver $Q$
but also on the choice of a height function $\xi$ on it.)   

To establish these isomorphisms, Hernandez-Leclerc~\cite{HL15} gave a homological interpretation of the inverse $\tC(z)$ of the quantum Cartan matrix 
in terms of the derived category $D^b(\Rep(Q))$. 
This homological interpretation implies that $\tC(z)$ can be computed from the AR quiver $\Gamma_Q$.
Moreover,  it also yields a combinatorial formula expressing $\tC(z)$ in terms of the action of the Coxeter element 
$\tau_Q$ adapted to $Q$ on the set of roots of $\g$.
From these results, we can deduce some useful properties of the coefficients of $\tC(z)$ such as periodicity and positivity.

\subsection{} \label{intro: Q-datum}
We aim to extend the above results by~\cite{HL15} to general $\g$ including non-simply-laced cases. 
Now let $\sfg$ be another simple Lie algebra of simply-laced type whose Dynkin diagram $\Delta$ is related to that of $\g$ via the folding 
with respect to a graph automorphism $\sigma$ on $\Delta$. When $\g$ is of simply-laced type, 
we understand that $\sigma$ is the identity and $\g = \sfg$.

Recall that the AR quiver $\Gamma_Q$ has another combinatorial meaning (due to~\cite{Bedard99})
as the Hasse quiver of the convex partial ordering among the positive roots
arising from the commutation class of reduced words for the longest element of the Weyl group adapted to $Q$. 
Generalizing this combinatorial aspect of the AR quivers, U.~R.~Suh and the second named author~\cite{OS19b}
defined \emph{the twisted AR quivers} to be 
the Hesse quivers of the convex ordering among positive roots of $\sfg$ (not $\g$)
arising from another kind of commutation classes  
called \emph{$\sigma$-adapted classes} (or \emph{twisted adapted classes}).   
With this notion, they introduced a nice subcategory of $\scrC^0$ for $\g$ attached to each $\sigma$-adapted class, 
which serves as a generalization of Hernandez-Leclerc's subcategory $\scrC_Q$ in the simply-laced case.   

Having this construction, it is natural to expect that 
we can compute the inverse $\tC(z)$ of the quantum Cartan matrix of general $\g$ from the twisted AR quivers.  
In the former half of this paper, we realize this expectation. 

In order to make our formulation unified, 
we introduce a new combinatorial notion of \emph{a Q-datum}.
By definition, a Q-datum for $\g$ is a triple $\calQ = (\Delta, \sigma, \uxi)$,
where $(\Delta, \sigma)$ is as above and $\uxi \colon \Delta_0 \to \Z$ is
a function on the vertex set $\Delta_0$, which we call \emph{a height function on $(\Delta, \sigma)$}, subject to certain axioms.    
When $\g$ is of simply-laced type ($\sigma = \id$), a height function $\uxi$ on $(\Delta, \id)$ is the same as a usual height function on a unique Dynkin quiver $Q$.   
In this sense, the notion of a Q-datum is a combinatorial generalization of the notion of a Dynkin quiver with  a height function. 

For a Q-datum $\calQ$ for $\g$, we can define the notion of \emph{$\calQ$-adaptedness} for reduced expressions of elements in the Weyl group $\sfW$ of $\sfg$
as an analog of the $Q$-adaptedness to a usual quiver $Q$.
Then we prove that the set $[\calQ]$ of reduced words of the longest element of $\sfW$ which are adapted to $\calQ$ 
forms a $\sigma$-adapted class and 
that the assignment $\calQ \mapsto [\calQ]$ gives a bijection between the set of Q-data for $\g$ (up to adding constants to height functions)
and the set of $\sigma$-adapted classes.
Moreover, we introduce \emph{the generalized Coxeter element} $\tau_\calQ$ as a specific element of $\sfW \rtimes \langle \sigma \rangle$
and construct the corresponding twisted AR quiver $\Gamma_\calQ$ by using $\tau_\calQ$ only. 
When $\sigma = \id$, they respectively coincide with the usual Coxeter element $\tau_Q$ and the AR quiver $\Gamma_{Q}$ in~\ref{intro: HL15}.  
Thus the notion of Q-datum 
yields a unified description of the $\sigma$-adapted classes and their twisted AR quivers. 

By means of these gadgets, we obtain the desired combinatorial formula of $\tC(z)$ for general $\g$
in terms of the action of the generalized Coxeter element 
$\tau_\calQ$ on the set $\sfR$ of roots of $\sfg$ (Theorem~\ref{theorem: combinatorial formula}),
which generalizes the above formula by \cite{HL15} in \ref{intro: HL15}.  
From this formula, we deduce several properties  of the coefficients of $\tC(z)$ for general $\g$ such as periodicity and positivity.
These results finally imply that $\tC(z)$ can be computed from the twisted AR quiver $\Gamma_{\calQ}$
associated with an arbitrary $\calQ$.
We will carry out such a computation for all non-simply-laced $\g$ with a specific choice of $\calQ$ (Subsection~\ref{subsection: computation}).

\subsection{}  
In the latter half of this paper,  we pursue several applications of the above combinatorial theory of Q-data
to the study of the category $\scrC$.  

For each Q-datum $\calQ$ for $\g$, we define a certain monoidal subcategory $\scrC_{\calQ}$
of the category $\scrC^0$ (Subsection \ref{Subsection: CQ}). 
From the construction, it coincides with the subcategory introduced in~\cite{OS19a} (mentioned above in \ref{intro: Q-datum})
attached to the $\sigma$-adapted class $[\calQ]$. 
Then we observe that the duals of simple objects in $\scrC_{\calQ}$, i.e., the set
$\{ \scrD^{k}(V) \mid V \in \Irr \scrC_{\calQ}, k \in \Z \}$ monoidally generates the whole category $\scrC^{0}$.
In this sense, our subcategory $\scrC_{\calQ}$ plays a role of a ``heart'' of the category $\scrC^{0}$, just as 
the subcategory $\scrC_{Q}$ does in the simple-laced case.    

In addition, we can assign to each simple module $V$ in $\scrC^0$
an element $\wt_{\calQ}(V)$ of the root lattice $\sfQ = \Z \sfR$ of 
the simply-laced Lie algebra $\sfg$, which
we call \emph{the $\calQ$-weight of $V$} (Definition~\ref{def: new-weights}).
It turns out that the assignment $V \mapsto \wt_{\calQ}(V)$ is comparable with the elliptic character considered in~\cite{CM05} and therefore 
we obtain a block decomposition of the category $\scrC^0$ labeled by $\sfQ$.  
The resulting block decomposition is in turn the same as the one recently obtained in~\cite{KKOP20b}.
Thus the notion of $\calQ$-weight connects these two known block decomposition results.

\subsection{}
We can also apply our combinatorial theory of Q-data and the inverse of the quantum Cartan matrix
to the study of R-matrices and related invariants. 

For any pair $(V, W)$ of simple modules in $\scrC$, we have
the normalized R-matrix $R^{\norm}_{V, W}(z)$, which is a unique non-trivial intertwining operator
between the tensor products $V \otimes W$ and $W \otimes V$ depending rationally on
the spectral parameter $z$,
whose singularity strongly reflects the structure of these tensor product modules.
For example, assuming one of the simple modules is real (i.e., its tensor square is again simple),
we have $V \otimes W \cong V \otimes W$ if and only if both $R^{\norm}_{V, W}(z)$ and $R^{\norm}_{W,V}(z)$ have no poles at $z=1$.
Thus to compute the denominator $d_{V, W}(z)$ of $R^{\norm}_{V, W}(z)$ is of fundamental importance
in the study of the monoidal category $\scrC$. 

When $\g$ is of simply-laced type,
the present authors recently discovered in~\cite{Oh15E, Fujita19} that
the denominators between the fundamental modules are closely related to the AR quiver $\Gamma_Q$
for a Dynkin quiver $Q$. 
In particular, we find that 
these denominators are expressed in a unified way in terms of the inverse  $\tC(z)$ of the Cartan matrix as the first named author proved in \cite{Fujita19} by 
using the geometry of quiver varieties. 
In this paper, 
motivated by this result, we present a conjectural unified denominator formula between
all the Kirillov-Reshetikhin (KR) modules (Conjecture~\ref{conjecture: unified}). 
Recall that the KR modules form a family of simple modules in $\scrC$ whose $q$-characters 
satisfy the remarkable functional relations called $T$-systems~\cite{Nakajima03II, Her06} and hence play an important role in the theory of monoidal categorifications of
cluster algebras~\cite{HL10, HL16, KKOP20-3}.
Our conjectural KR denominator formula is also expressed in terms of $\tC(z)$.
We check that it holds at least for types $\mathsf{ABCDG}$
by comparing the explicit case-by-case computations of the denominators obtained in \cite{OhS19s} with the explicit computations of $\tC(z)$ obtained in this paper.  

Finally, we compute the $\Z$-valued invariants $\Lambda(V, W)$ and $\Lambda^\infty(V, W)$ 
in terms of $\tC(z)$ and the $\calQ$-weights (Subsection~\ref{subsection: degree}).
These invariants are recently introduced by M.~Kashiwara, M.~Kim, E.~Park and the second named author~\cite{KKOP20}
to establish a criterion for a monoidal subcategory of $\scrC^0$ to give a monoidal categorification of a cluster algebra.
They are defined from
the denominator $d_{V,W}(z)$ and also \emph{the universal coefficient} $a_{V, W}(z)$, i.e., the ratio of the universal R-matrix $R^{\mathrm{univ}}_{V, W}(z)$
and the normalized R-matrix $R^{\norm}_{V, W}(z)$.
Based on a formula of $a_{V, W}(z)$ due to Frenkel-Reshetikhin~\cite{FR99}, we observe that
\begin{align*}
\Lambda(V, W) & = \scrN(V, W) + \deg_{z=1} d_{V, W}(z), \\
\Lambda^\infty(V, W) &= -(\wt_{\calQ}(V), \wt_{\calQ}(W))
\end{align*}
hold for any simple modules $V, W$ in $\scrC^0$.
Here  
$(\cdot, \cdot)$ is the standard symmetric bilinear form on the root lattice $\sfQ$, and
$\scrN(V, W)$ is a skew-symmetric $\Z$-valued form defined as a certain signed sum of coefficients of $\tC(z)$,
which was
introduced by Hernandez~\cite{Hernandez04} 
 in his algebraic construction of the quantum Grothendieck ring $K_t(\scrC^0)$.
Thus the invariants $\Lambda(V, W)$ and $\Lambda^\infty(V, W)$  are also closely related to Q-data and $\tC(z)$.

\subsection{} 
We expect that our results in this paper have some nice applications in the study of the quantum Grothendieck ring $K_t(\scrC)$.
In particular, we hope to generalize the results of \cite{HL15} (especially the isomorphism \eqref{eq: HL}) to the case of general $\g$ 
and investigate an expected quantum cluster algebra structure in the quantum Grothendieck ring
using our combinatorial theory of Q-data.  We will come back to these problems in forthcoming papers.

\subsection{Organization} This paper is organized as follows.
In Section~\ref{section: Dynkin}, we give a quick review of the classical notion
of the usual Dynkin quiver with height function, associated AR quivers and
adapted commutation classes in Weyl groups.
In Section~\ref{section: Q-data}, to generalize the story in Section~\ref{section: Dynkin},
we introduce the notion of Q-datum
and develop a unified theory for twisted AR quivers and generalized Coxeter elements.
In Section~\ref{section: Cartan}, we study the inverse $\tC(z)$ of the quantum Cartan matrix
in relation with Q-data.
Up to this part, our exposition is a pure combinatorics.
In Section~\ref{section: QAffine}, after reviewing some basic facts
on the finite-dimensional representation theory of the untwisted quantum affine algebras,
we revisit the block decomposition results of the category $\scrC^0$ with the notion of $\calQ$-weights.
In the final Section~\ref{section: denominator}, we present a conjectural unified KR denominator formula and
compute the invariants $\Lambda(V, W)$ and $\Lambda^\infty(V, W)$ 
in terms of $\tC(z)$ and the $\calQ$-weights.
Appendix~\ref{section: denominator formula} is a list of case-by-case formulae of denominators of the normalized R-matrices between
KR modules known at this moment.
\subsection*{Acknowledgments}
We are deeply grateful to David Hernandez for helpful discussions and for pointing out
that some of the results in the initial version of this paper were already known in the literature.  
We also wish to thank Masaki Kashiwara, Bernhard Keller, Myungho Kim, Bernard Leclerc and Euiyong Park for stimulating discussions and comments.
Finally, we would like to thank the anonymous referees for many helpful suggestions about the exposition and the appropriate references.

\begin{convention} Throughout this paper, we keep the following conventions.
\begin{enumerate}
\item For a statement $\mathtt{P}$, we set $\delta(\mathtt{P})$ to be $1$ or $0$
according that $\mathtt{P}$ is true or not.
In particular, we set $\delta_{i,j} := \delta(i=j)$ $($Kronecker's delta$)$.
\item \ber
For an abelian category $\scrC$, 
we denote by $\Irr \scrC$ the set of simple objects
(up to isomorphisms)
and by $K(\scrC)$ the Grothendieck group of $\scrC$.
The class of an object $X \in \scrC$ is denoted by $[X] \in K(\scrC)$. 
If moreover the category $\scrC$ is monoidal with a bi-exact tensor product $\otimes$, 
$K(\scrC)$ is endowed with a ring structure by
$[X] \cdot [Y] = [X \otimes Y]$, which we call the Grothendieck ring of $\scrC$. 
\er
\end{enumerate}
\end{convention}

\section{Dynkin quivers, adapted classes and associated Auslander-Reiten quivers}
\label{section: Dynkin}

In this section, we briefly recall the classical
notion of Dynkin quivers of type $\mathsf{ADE}$
and their height functions,
which give a useful description
of the associated Auslander-Reiten quiver
and the adapted commuting classes in the Weyl group.

\subsection{Basic notation}
\label{subsection: notation1}

Let $\sfg$ be a finite-dimensional complex simple Lie algebra of type $\mathsf{ADE}$ of rank $n$.
We denote by $\Delta$ the Dynkin diagram of $\sfg$ (simply-laced) and
by $\Delta_{0}$ the set of vertices of $\Delta$.
For $i, j \in \Delta_{0}$, we write $i \sim j$ if
$i$ is adjacent to $j$ in the Dynkin diagram $\Delta$.

Let $\sfP=\bigoplus_{i \in \Delta_{0}} \Z \varpi_{i}$ denote
the weight lattice of $\sfg$,
where $\varpi_{i}$ is the $i$-th fundamental weight.
Let $\alpha_{i} = 2 \varpi_{j} - \sum_{j \sim i} \varpi_{j}$
be the $i$-th simple root and
$\sfQ = \bigoplus_{i \in \Delta_{0}} \Z \alpha_{i} \subset \sfP$
be the root lattice.
We set
$\sfP^{+} := \sum_{i \in \Delta_{0}} \Z_{\ge 0} \varpi_{i}$ and
$\sfQ^{+} := \sum_{i \in \Delta_{0}} \Z_{\ge 0} \alpha_{i}$.
Let $( \ , \ ) \colon \sfP \times \sfP \to \Q$
denote the symmetric bilinear form determined by
$(\varpi_{i}, \alpha_{j}) = \delta_{i, j}$ for $i, j \in \Delta_{0}$.
The Weyl group $\sfW$ of $\sfg$ is defined as a subgroup of $\Aut(\sfP)$
generated by the simple reflections $\{ s_{i} \}_{i \in \Delta_{0}}$ defined by
$s_{i}(\lambda) = \lambda - (\lambda, \alpha_{i})\alpha_{i}$ for $\lambda \in \sfP$.
The set of roots is defined by $\sfR := \sfW\cdot\{\alpha_{i}\}_{i \in \Delta_{0}}$.
We have the decomposition $\sfR = \sfR^{+} \sqcup \sfR^{-}$, where
$\sfR^{+} := \sfR \cap \sfQ^{+}$ is the set of positive roots
and $\sfR^{-} := -\sfR^{+}$ is the set of negative roots.

For an element $w \in \sfW$,
a sequence $(i_{1}, \ldots, i_{l})$ of elements of $\Delta_{0}$
is called a reduced word for $w$ if
it satisfies $w=s_{i_{1}}\cdots s_{i_{l}}$ and
$l$ is the smallest among all sequences with this property.
The length of a reduced word of $w$ is called the length of $w$, denoted  by $\ell(w)$.
It is well-known that there exists a unique element $w_{0} \in \sfW$ with the largest length
$\ell(w_{0}) = N := |\sfR^{+}|$.
We define an involution $i \mapsto i^{*}$
on the set $\Delta_{0}$ by the relation $w_{0}(\alpha_{i}) = -\alpha_{i^{*}}$.

\subsection{Dynkin quivers and height functions}

By definition, a Dynkin quiver $Q$ of type $\sfg$ is a quiver
whose underlying graph is equal to the Dynkin diagram $\Delta$.
A \emph{height function} on $Q$ is a function
$\xi \colon \Delta_{0} \to \Z$
satisfying the condition that
\begin{align}
\label{eq: def classical height function 1}
\xi_{i} = \xi_{j} + 1   \quad \text{if $Q$ has an arrow $i \to j$},
\end{align}
where we set $\xi_{i} := \xi(i)$ for simplicity.
Note that a height function on $Q$ is determined uniquely up to constant functions.

\begin{example} \label{example: claissical Dynkin quiver}
Here are two examples of Dynkin quivers with height functions
in type $\mathsf{A}_{5}$ and type $\mathsf{D}_{4}$ respectively.
We put the values of height functions near the vertices.
$$
Q^{(1)}=
\left(
\begin{xy}
\def\objectstyle{\scriptstyle}
\ar@{<-} (-10,0) *+!D{1} *\cir<2pt>{};
(0,0) *+!D{2} *\cir<2pt>{}="A",
\ar@{<-} "A"; (10,0) *+!D{3} *\cir<2pt>{}="B",
\ar@{->} "B"; (20,0) *+!D{2} *\cir<2pt>{}="C",
\ar@{<-} "C"; (30,0) *+!D{3} *\cir<2pt>{}="D"
\end{xy}
\hspace{6pt}
\right),
\qquad
Q^{(2)}
=\left(
\begin{xy}
\def\objectstyle{\scriptstyle}
\ar@{<-} (0,0) *+!D{1} *\cir<2pt>{};
(10,0) *+!D{2} *\cir<2pt>{}="A",
\ar@{<-} "A"; (19,4) *+!L{3} *\cir<2pt>{}
\ar@{<-} "A"; (19,-4) *+!L{3} *\cir<2pt>{}
\end{xy}
\hspace{6pt}
\right).
$$
\end{example}

We fix a function $\epsilon \colon \Delta_{0} \to \{0,1\}$
such that $\epsilon_{i} \neq \epsilon_{j}$ when $i \sim j$.
We refer to such a function $\epsilon$ as a \emph{parity function} on $\Delta_{0}$.
Since the Dynkin diagram $\Delta$ is a connected tree,
there are only two choices of parity functions and their difference is not essential.
Adding a constant if necessary, we can make a given height function $\xi$
satisfy the condition
\begin{align}
\label{eq: def classical parity}
\xi_{i} - \epsilon_{i} \in 2\Z \quad \text{for each $i \in \Delta_{0}$}.
\end{align}
Thus, without loss of generality, we require that a height function $\xi$ always
satisfy the parity condition~(\ref{eq: def classical parity}) in the rest of this section.

We say that a vertex $i \in \Delta_{0}$ is a \emph{source} of $Q$
if every arrow in $Q$ incident to $i$ has $i$ as its tail.
In terms of the height function $\xi$,
a vertex $i$ is a source of $Q$ if and only if we have $\xi_{i} > \xi_{j}$
for any $j \sim i$.
Let $i \in \Delta_{0}$ be a source of $Q$.
We denote by $s_{i}Q$
the new Dynkin quiver of the same type $\sfg$
obtained from $Q$ by reversing all arrows incident to $i$.
Moreover, for a height function $\xi$ on $Q$, we can define
the height function $s_{i}\xi$ on $s_{i}Q$ by
\begin{align}
(s_{i}\xi)_{j} := \xi_{j} - 2\delta_{i,j} \quad \text{for each $j \in \Delta_{0}$}.
\end{align}
We say that a sequence $(i_{1}, \ldots, i_{l})$ of elements of $\Delta_{0}$ is \emph{adapted to $Q$} if
$$\text{$i_k$ is a source of $s_{i_{k-1}}s_{i_{k-2}}\cdots s_{i_{1}}Q$ for all $1 \le k \le l$.}$$

Recall that a Coxeter element $\tau$ of $\sfW$
is a product of the form $\tau=s_{i_1}\cdots s_{i_{n}}$ such that $\{ i_1,\ldots,i_{n}\} = \Delta_{0}$.
All the Coxeter elements are conjugate in $\sfW$ and the order of Coxeter elements
is called the \emph{$($dual$)$ Coxeter number} of $\sfg$ and denoted by $h^\vee$.
Note that we have the relation $nh^{\vee} = 2N$.

For each Dynkin quiver $Q$,
there exists a unique Coxeter element $\tau_{Q}$, all of whose reduced words are adapted to $Q$.
For example, a sequence $(i_{1}, \ldots, i_{n})$ satisfying $\ber \Delta_{0} \er = \{i_{1}, \ldots, i_{n} \}$ and
$\xi_{i_{1}} \ge \cdots \ge \xi_{i_{n}}$ for a height function $\xi$ on $Q$
gives a reduced word for $\tau_{Q}$.
Conversely, for each Coxeter element $\tau$,
there \ber exists \er a unique Dynkin quiver $Q$ such that all reduced words for $\tau$ are adapted to $Q$.

\subsection{Associated Auslander-Reiten quivers}
\label{subsection: AR quiver}

For a Dynkin quiver $Q$,
we denote by $\Rep(Q)$ the category of finite-dimensional representations of $Q$
over the field $\C$ of complex numbers.
In this subsection, we recall a combinatorial description of the Auslander-Reiten (AR) quiver
of the category $\Rep(Q)$ and that of the derived category
$\calD_{Q}:=D^{b}(\Rep(Q))$ by using a height function $\xi$ on $Q$.
By the definition, the vertex set of the AR quiver of $\Rep(Q)$ (resp.~$\calD_{Q}$)
is the set of isomorphism classes of indecomposable objects, denoted by $\Ind \Rep(Q)$ (resp.~$\Ind \calD_{Q}$).
By the fundamental result of Gabriel~\cite{Gabriel72}, we have the canonical bijection
$\sfR^{+} \cong \Ind \Rep(Q)$ which associates each positive root $\alpha \in \sfR^{+}$ with
the class of an indecomposable representation $M_{Q}(\alpha) \in \Rep(Q)$ whose dimension vector is $\alpha$.
Furthermore we have a canonical bijection $\hsfR^+ \seteq \sfR^+ \times \Z \cong \Ind \calD_{Q}$
which associates each element $(\alpha, k) \in \hsfR^+$ with
a stalk complex $M_{Q}(\alpha)[k] \in \calD_{Q}$. Here we naturally identify the abelian category $\Rep(Q)$
with the heart of the standard $t$-structure of $\calD_{Q}$ and denote by $[k]$ the cohomological degree shift by $k$.

Using the fixed parity function $\epsilon$ in~\eqref{eq: def classical parity},
we define the \emph{repetition quiver} associated with $\Delta$
to be the quiver $\hD$ whose vertex set $\hD_{0}$
and arrow set $\hD_{1}$ are given by
\begin{align*}
\hD_{0} &:= \{ (i,p) \in \Delta_{0} \times \Z \mid p - \epsilon_{i} \in 2\Z \}, \\
\hD_{1} &:= \{ (i,p) \to (j, p+1) \mid (i,p) \in \hD_{0}, \; j \sim i \}.
\end{align*}
\ber
\begin{example} Here are some examples of the repetition quiver $\hD$.
\begin{enumerate}
    \item When $\sfg$ is of type $\mathsf{A}_{5}$, 
the repetition quiver $\hD$ is depicted as:
$$
\raisebox{3mm}{
\scalebox{0.65}{\xymatrix@!C=0.5mm@R=2mm{
(\im\setminus p) & -8 & -7 & -6 &-5&-4 &-3& -2 &-1& 0 & 1& 2 & 3& 4&  5
& 6 & 7 & 8 & 9 & 10 & 11 & 12 & 13 & 14 & 15 & 16 & 17& 18 \\
1&\bullet \ar@{->}[dr]&& \bullet \ar@{->}[dr] &&\bullet\ar@{->}[dr]
&&\bullet \ar@{->}[dr] && \bullet \ar@{->}[dr] &&\bullet \ar@{->}[dr] &&  \bullet \ar@{->}[dr] 
&&\bullet \ar@{->}[dr] && \bullet \ar@{->}[dr] &&\bullet \ar@{->}[dr]  && \bullet\ar@{->}[dr] &&
\bullet\ar@{->}[dr] && \bullet\ar@{->}[dr]  && \bullet\\
2&&\bullet \ar@{->}[dr]\ar@{->}[ur]&& \bullet \ar@{->}[dr]\ar@{->}[ur] &&\bullet \ar@{->}[dr]\ar@{->}[ur]
&& \bullet \ar@{->}[dr]\ar@{->}[ur]&& \bullet\ar@{->}[dr] \ar@{->}[ur]&& \bullet \ar@{->}[dr]\ar@{->}[ur]&&\bullet \ar@{->}[dr]\ar@{->}[ur]&
&\bullet \ar@{->}[dr]\ar@{->}[ur]&&\bullet\ar@{->}[dr] \ar@{->}[ur]&& \bullet \ar@{->}[dr]\ar@{->}[ur]
&&\bullet \ar@{->}[dr]\ar@{->}[ur]&& \bullet \ar@{->}[dr] \ar@{->}[ur]&&\bullet \ar@{->}[dr]\ar@{->}[ur] & \\ 
3&\bullet \ar@{->}[dr] \ar@{->}[ur]&& \bullet \ar@{->}[dr] \ar@{->}[ur] &&\bullet\ar@{->}[dr] \ar@{->}[ur]
&&\bullet \ar@{->}[dr] \ar@{->}[ur] && \bullet \ar@{->}[dr]\ar@{->}[ur] &&\bullet \ar@{->}[dr] \ar@{->}[ur]&&  \bullet \ar@{->}[dr] \ar@{->}[ur]
&&\bullet \ar@{->}[dr] \ar@{->}[ur] && \bullet \ar@{->}[dr] \ar@{->}[ur]&&\bullet \ar@{->}[dr] \ar@{->}[ur] && \bullet\ar@{->}[dr] \ar@{->}[ur]&&
\bullet\ar@{->}[dr] \ar@{->}[ur] && \bullet\ar@{->}[dr] \ar@{->}[ur]  && \bullet\\
4&& \bullet \ar@{->}[ur]\ar@{->}[dr]&&\bullet \ar@{->}[ur]\ar@{->}[dr]&&\bullet \ar@{->}[ur]\ar@{->}[dr] &&\bullet \ar@{->}[ur]\ar@{->}[dr]&& \bullet \ar@{->}[ur]\ar@{->}[dr]
&&\bullet \ar@{->}[ur]\ar@{->}[dr]&& \bullet \ar@{->}[ur]\ar@{->}[dr] &&\bullet \ar@{->}[ur]\ar@{->}[dr]&&\bullet \ar@{->}[ur]\ar@{->}[dr]&&
\bullet \ar@{->}[ur]\ar@{->}[dr]&&\bullet \ar@{->}[ur]\ar@{->}[dr]&&\bullet \ar@{->}[ur]\ar@{->}[dr]&&\bullet\ar@{->}[ur]\ar@{->}[dr]\\
5&\bullet  \ar@{->}[ur]&& \bullet  \ar@{->}[ur] &&\bullet \ar@{->}[ur]
&&\bullet  \ar@{->}[ur] && \bullet \ar@{->}[ur] &&\bullet  \ar@{->}[ur]&&  \bullet  \ar@{->}[ur]
&&\bullet  \ar@{->}[ur] && \bullet  \ar@{->}[ur]&&\bullet  \ar@{->}[ur] && \bullet \ar@{->}[ur]&&
\bullet \ar@{->}[ur] && \bullet \ar@{->}[ur]  && \bullet }}}
$$
\item
When $\sfg$ is of type $\mathsf{D}_{4}$, 
the repetition quiver $\hD$ is depicted as:
$$
\raisebox{3mm}{
\scalebox{0.65}{\xymatrix@!C=0.5mm@R=2mm{
(\im\setminus p) & -8 & -7 & -6 &-5&-4 &-3& -2 &-1& 0 & 1& 2 & 3& 4&  5
& 6 & 7 & 8 & 9 & 10 & 11 & 12 & 13 & 14 & 15 & 16 & 17& 18 \\
1&&\bullet \ar@{->}[dr]&& \bullet \ar@{->}[dr]&&\bullet \ar@{->}[dr]
&& \bullet \ar@{->}[dr]&& \bullet\ar@{->}[dr] && \bullet \ar@{->}[dr]&&\bullet \ar@{->}[dr]&
&\bullet \ar@{->}[dr]&&\bullet\ar@{->}[dr]&& \bullet \ar@{->}[dr]
&&\bullet \ar@{->}[dr]&& \bullet \ar@{->}[dr]&&\bullet \ar@{->}[dr] & \\ 
2&\bullet \ar@{->}[dr] \ar@{->}[ur]&& \bullet \ar@{->}[dr] \ar@{->}[ur] &&\bullet\ar@{->}[dr] \ar@{->}[ur]
&&\bullet \ar@{->}[dr] \ar@{->}[ur] && \bullet \ar@{->}[dr]\ar@{->}[ur] &&\bullet \ar@{->}[dr] \ar@{->}[ur]&&  \bullet \ar@{->}[dr] \ar@{->}[ur]
&&\bullet \ar@{->}[dr] \ar@{->}[ur] && \bullet \ar@{->}[dr] \ar@{->}[ur]&&\bullet \ar@{->}[dr] \ar@{->}[ur] && \bullet\ar@{->}[dr] \ar@{->}[ur]&&
\bullet\ar@{->}[dr] \ar@{->}[ur] && \bullet\ar@{->}[dr] \ar@{->}[ur]  && \bullet\\
3&& \bullet \ar@{->}[ur]&&\bullet \ar@{->}[ur]&&\bullet \ar@{->}[ur] &&\bullet \ar@{->}[ur]&& \bullet \ar@{->}[ur]
&&\bullet \ar@{->}[ur]&& \bullet \ar@{->}[ur] &&\bullet \ar@{->}[ur]&&\bullet \ar@{->}[ur]&&
\bullet \ar@{->}[ur]&&\bullet \ar@{->}[ur]&&\bullet \ar@{->}[ur]&&\bullet\ar@{->}[ur]&\\
4&& \bullet \ar@{<-}[uul]\ar@{->}[uur]&&\bullet \ar@{<-}[uul]\ar@{->}[uur]&&\bullet \ar@{<-}[uul]\ar@{->}[uur] &&\bullet \ar@{<-}[uul]\ar@{->}[uur]&& \bullet \ar@{<-}[uul]\ar@{->}[uur]
&&\bullet \ar@{<-}[uul]\ar@{->}[uur]&& \bullet \ar@{<-}[uul]\ar@{->}[uur] &&\bullet \ar@{<-}[uul]\ar@{->}[uur]&&\bullet \ar@{<-}[uul]\ar@{->}[uur]&&
\bullet \ar@{<-}[uul]\ar@{->}[uur]&&\bullet \ar@{<-}[uul]\ar@{->}[uur]&&\bullet \ar@{<-}[uul]\ar@{->}[uur]&&\bullet\ar@{<-}[uul]\ar@{->}[uur]&
}}}
$$
\end{enumerate}
\end{example}
\er
It was shown by Happel~\cite{Happel87} that the AR quiver of $\calD_{Q}$ is isomorphic to the repetition quiver $\hD$.
An explicit underlying bijection $\phi_{Q} \colon \hD_{0} \to \Ind \calD_{Q}$ between the vertex sets is given by
$$
\phi_{Q}(i, p) := \uptau^{(\xi_{i}-p)/2}M_{Q}(\gamma^{Q}_{i}),
$$
where $\uptau$ denotes the AR translation of $\calD_{Q}$ and we set
$$
\gamma_{i}^{Q} := (1-\tau_{Q})\varpi_{i} 
$$
for each $i \in \Delta_{0}$.
Here $\tau_{Q} \in \sfW$ is the Coxeter element adapted to $Q$.
Note that the representation $M_{Q}(\gamma^{Q}_{i}) \in \Rep(Q)$
is an injective hull of the simple representation associated with $i \in \Delta_{0}$
\ber since $\gamma^{Q}_{i}$ is a sum of all the simple roots $\alpha_j$ such that there exists an oriented path from $j$ to $i$ in $Q$. \er

In what follows, we regard $\phi_{Q}$ as a bijection $\phi_{Q} \colon \hD_{0} \to \hsfR^{+}$
via the above canonical bijection $\Ind \calD_{Q} \cong \hsfR^+$.
Then it has the following recursive description (\cite[\S 2.1]{HL15}):
\begin{enumerate}
\item $\phi_Q(i,\xi_i)=(\gamma^{Q}_{i},0)$ for each $i \in \Delta_{0}$.
\item If $\phi_Q(i,p)=(\beta,k)$, we have
$$
\phi_Q(i,p\pm2) = \begin{cases}
(\tau_{Q}^{\mp1}(\beta),k) & \text{if $\tau_{Q}^{\mp1}(\beta) \in \sfR^+$}, \\
(-\tau_{Q}^{\mp1}(\beta), k\pm1) & \text{if $\tau_{Q}^{\mp1}(\beta) \in \sfR^-$}.
\end{cases}
$$
\end{enumerate}
In particular, we have $\tau_{Q}^{(\xi_{i}-p)/2}(\gamma^{Q}_{i}) = (-1)^{k}\beta$ if $\phi_{Q}(i,p) = (\beta, k)$.

The repetition quiver $\hD$ satisfies the \emph{additive property}:
For $i \in \Delta_{0}$ and $l \in \Z$, we have
\begin{align}
\label{eq: classical additive property}
\tau_{Q}^l(\gamma^{Q}_{i})+ \tau_{Q}^{l+1}(\gamma^{Q}_{i}) = \sum_{j \sim i} \tau_{Q}^{l+(\xi_j-\xi_i+1)/2}(\gamma^{Q}_{j}).
\end{align}
\ber This is because we have the following mesh in the AR quiver of $\calD_{Q}$:
$$
\xymatrix@!C=30mm@R=3mm{
& \uptau^{l+(\xi_j-\xi_i+1)/2}M_{Q}(\gamma^{Q}_{j}) 
\ar@{->}[dr]& \\
\uptau^{l+1} M_{Q}(\gamma^{Q}_{i}) 
\ar@{->}[ur] \ar@{->}[dr]
& \text{\raisebox{-1.5mm}{\rotatebox{90}{$\cdots$}}}& \uptau^{l}M_{Q}(\gamma^{Q}_{i}) \\
& \uptau^{l+(\xi_{j'}-\xi_i+1)/2}M_{Q}(\gamma^{Q}_{j'}) 
\ar@{->}[ur]& 
}
$$
where $\{j, \ldots, j'\} = \{j \in \Delta_0 \mid j \sim i \}$, which corresponds to an Auslander-Reiten triangle in $\calD_{Q}$. \er

Let $\Gamma_{Q}$ be the full subquiver of $\hD$ whose vertex set $(\Gamma_{Q})_{0}$ is given by
$\phi_{Q}^{-1}(\sfR^{+} \times \{ 0 \})$.
We define the bijection $\phi_{Q, 0} \colon (\Gamma_{Q})_{0} \to \sfR^{+}$
by $\phi_{Q}(i,p) = (\phi_{Q, 0}(i,p), 0)$ for $(i,p) \in (\Gamma_{Q})_{0}$.
For $\beta \in \sfR^{+}$, we call $(i,p) = \phi_{Q, 0}^{-1}(\beta)$ the \emph{coordinate of $\beta$ in $\Gamma_{Q}$}.

In what follows, we identify the vertex set $(\Gamma_{Q})_{0}$ with the set $\sfR^{+}$ of positive roots
via the bijection $\phi_{Q,0}$.
By construction, the quiver $\Gamma_{Q}$ is isomorphic to
the AR quiver of $\Rep(Q)$ under the canonical bijection $\sfR^{+} \cong \Ind \Rep(Q)$.
In addition, we have the following explicit characterization of $\Gamma_{Q}$ in $\hD$:
\begin{equation}
\label{eq: 2-segment}
\Gamma_{Q} =
\phi_{Q}^{-1}(\sfR^{+} \times \{ 0 \}) = \{ (i, \xi_{i}-2k) \in \hD_{0} \mid
0 \le 2k < h^{\vee} + \xi_{i}-\xi_{i^{*}} \}.
\end{equation}
See~\cite[\S 6.5]{Gabriel80}.
For $d \in \Z_{>0}$, we say that a subset $S \subset \Z$ is a \emph{$d$-segment}
if $S = \{ l+kd \mid k=0,\ldots,t\}$ for some $l \in \Z$ and $t \in \Z_{\ge 0}$.
The characterization~(\ref{eq: 2-segment}) shows the
\emph{$2$-segment property} of $\Gamma_Q$.

\begin{example} For the Dynkin quivers $Q^{(1)}$ and $Q^{(2)}$ in Example~\ref{example: claissical Dynkin quiver},
the corresponding AR quivers
$\Gamma_{Q^{(1)}}$ and $\Gamma_{Q^{(2)}}$
can be depicted as follows. Here
$[a,b] := \epsilon_{a} - \epsilon_{b+1}$ and $\lr{a,\pm b} := \epsilon_a \pm \epsilon_b$
denote the positive roots defined
as in Subsections~\ref{subsubsection: computation B} and~\ref{subsubsection: computation C} below.
$$
\Gamma_{Q^{(1)}}=
\raisebox{4em}{
\scalebox{0.7}{\xymatrix@!C=2ex@R=1ex{
(i\setminus p) & -3 & -2 & -1 & 0 & 1 & 2 & 3 \\
1&&& [4,5]\ar@{->}[dr] && [1,3]\ar@{->}[dr]  \\
2&&[4]\ar@{->}[dr]\ar@{->}[ur] && [1,5]\ar@{->}[dr]\ar@{->}[ur]  && [2,3] \ar@{->}[dr] \\
3&&& [1,4]\ar@{->}[dr]\ar@{->}[ur]  && [2,5]\ar@{->}[dr]\ar@{->}[ur] && [3]   \\
4&& [1,2] \ar@{->}[dr]\ar@{->}[ur] && [2,4]\ar@{->}[dr]\ar@{->}[ur] && [3,5] \ar@{->}[ur]\ar@{->}[dr] \\
5& [1] \ar@{->}[ur] && [2] \ar@{->}[ur]&& [3,4] \ar@{->}[ur]&& [5]}}},
\qquad
\Gamma_{Q^{(2)}}=
\raisebox{3.5em}{
\scalebox{0.7}{\xymatrix@!C=2ex@R=1.5ex{
(i\setminus p) & -3 & -2 & -1 & 0 & 1 & 2 & 3 \\
1& \lr{1,-2}\ar@{->}[dr] && \lr{2,-3}\ar@{->}[dr] && \lr{1,3} \ar@{->}[dr]\\
2& & \lr{1,-3}\ar@{->}[dr]\ar@{->}[ddr]\ar@{->}[ur] && \lr{1,2}\ar@{->}[dr]\ar@{->}[ddr]\ar@{->}[ur]  && \lr{2,3} \ar@{->}[ddr] \ar@{->}[dr] \\
3&&& \lr{1,-4} \ar@{->}[ur]  && \lr{2,4} \ar@{->}[ur] && \lr{3,-4}  \\
4&&&  \lr{1,4} \ar@{->}[uur] && \lr{2,-4}\ar@{->}[uur] && \lr{3,4} \\
}}}
$$
\end{example}

For a source $i \in \Delta_{0}$ of $Q$, the quiver $\Gamma_{s_iQ}$ can be obtained from $\Gamma_{Q}$ in the following way:
\begin{enumerate}
\item A positive root $\beta \in \sfR^{+} \setminus \{\alpha_{i}\}$ is located at the coordinate $(j,p)$ in $\Gamma_{s_iQ}$,
if $s_{i}\beta$ is located at the coordinate $(j,p)$ in $\Gamma_Q$,
\item The simple root $\alpha_i$ is located at the coordinate $(i^{*}, \xi_{i}-h^{\vee})$ in $\Gamma_{s_{i}Q}$,
while $\alpha_{i} = \gamma_{i}^{Q}$ was located at the coordinate $(i,\xi_{i})$ in $\Gamma_Q$.
\end{enumerate}

\subsection{Generalities on commutation classes}
\label{subsection: commutation classes}

Two sequences $\bfi$ and $\bfi^{\prime}$ of elements of $\Delta_{0}$
are said to be \emph{commutation equivalent}
if $\bfi^{\prime}$ is obtained from $\bfi$ by applying a sequence of operations
which transform some adjacent components $(i, j)$ such that $i \not \sim j$ into $(j,i)$.
This defines an equivalence relation.
We refer to the equivalent class containing $\bfi$ as the \emph{commutation class} of $\bfi$
and
denote it by $[\bfi]$.
Note that the set of all the reduced words of an element $w \in \sfW$ is divided into a disjoint union of
commutation classes.
In this paper, we mainly consider the commutation classes of
reduced words for the longest element $w_{0}$.

For a reduced word  $\bfi = (i_{1}, \ldots, i_{N})$ for the longest element $w_{0} \in \sfW$,
we have
$\sfR^{+}=\{  \beta^{\bfi}_{k} \mid 1 \le k \le N \}$, where
$\beta^{\bfi}_{k} := s_{i_{1}}\cdots s_{i_{k-1}}(\alpha_{i_{k}})$.
Thus the reduced word $\bfi$ defines a total order $<_{\bfi}$ on $\sfR^{+}$,
namely we write $\beta^{\bfi}_k <_{\bfi} \beta^{\bfi}_l$ if $k < l$.
Note that the order $<_{\bfi}$ is \emph{convex} in the sense that if
$\alpha,\beta,\alpha+\beta \in \sfR^+$,
we have either $\alpha <_{\bfi} \alpha+\beta <_{\bfi} \beta$
or $\beta <_{\bfi} \alpha+\beta <_{\bfi} \alpha$.
For a commutation class $[\bfi]$ of reduced words for $w_{0}$,
we define the convex partial order $\preceq_{[\bfi]}$ on $\sfR^{+}$ so that
$$\alpha \preceq_{[\bfi]} \beta \quad \text{ if and only if } \quad \alpha <_{\bfi^{\prime}} \beta \text{ for all } \ \bfi^{\prime} \in [\bfi].$$

For a commutation class $[\bfi]$ of reduced words of $w_{0}$ and a positive root $\alpha \in \sfR^{+}$,
we define the $[\bfi]$-\emph{residue} of $\alpha$, denoted by $\res^{[\bfi]}(\alpha)$,
to be $i_k \in \Delta_{0}$ if we have $\alpha = \beta^{\bfi}_{k}$ with
$\bfi = (i_{1}, \ldots, i_{N})$.
Note that this is well-defined, i.e.~if we have
$\alpha = \beta^{\bfi^{\prime}}_{l}$
for another $\bfi^{\prime} = (i_{1}^{\prime}, \ldots, i_{N}^{\prime}) \in [\bfi]$, then $i_{l}^{\prime} = i_{k}$.

In~\cite{OS19a}, U.~R.~Suh and the second named author
defined the \emph{combinatorial Auslander-Reiten quiver}
$\Upsilon_{\bfi}$ for each reduced word $\bfi = (i_{1}, \ldots, i_{N})$ for $w_{0}$.
By definition, it is a quiver whose vertex set is $\sfR^{+}$
and we have an arrow in $\Upsilon_{\bfi}$ from $\beta^{\bfi}_k$ to $\beta^{\bfi}_j$
if and only if $1\leq j < k \leq N$, $ i_{j} \sim i_{k}$
and there is no index $j<j'<k$ such that $i_{j'} \in \{  i_j, i_k\}$.
The quiver $\Upsilon_{\bfi}$ satisfies the following nice properties.
We say that a total ordering $\sfR^{+} = \{\beta_{1}, \beta_{2}, \ldots, \beta_{N} \}$
is a \emph{compatible reading of $\Upsilon_{\bfi}$} if we have $k<l$
whenever there is an arrow $\beta_{l} \to \beta_{k}$ in $\Upsilon_{\bfi}$.

\begin{theorem}[\cite{OS19a}]
\label{theorem: OS19a main}
For a commutation class $[\bfi]$ for $w_{0}$, we have the followings$\colon$
\begin{enumerate}
\item
If $\bfi^{\prime} \in [\bfi]$, then $\Upsilon_{\bfi} = \Upsilon_{\bfi^{\prime}}$. \ber Hence $\Upsilon_{[\bfi]}$ is well-defined. \er
\item For $\alpha, \beta \in \sfR^{+}$, we have
$\alpha \preceq_{[\bfi]} \beta$ if and only if there exists a path from $\beta$ to $\alpha$ in $\Upsilon_{[\bfi]}$.
In other words, the quiver $\Upsilon_{[\bfi]}$ is a Hasse quiver of the partial ordering $\preceq_{[\bfi]}$.
\item \label{OS19a pairing}
For $\alpha, \beta \in \sfR^{+}$, we have $(\alpha, \beta)=0$
if they are not comparable with respect to $\preceq_{[\bfi]}$\ber. \er
\item A sequence $\bfi^{\prime} = (i^{\prime}_{1}, \ldots, i_{N}^{\prime}) \in \Delta_{0}^{N}$
belongs to the commutation class $[\bfi]$ if and only if there is a compatible reading
$\sfR^{+} = \{\beta_{1}, \ldots, \beta_{N} \}$ of $\Upsilon_{[\bfi]}$
such that $i^{\prime}_{k} = \res^{[\bfi]}(\beta_{k})$ for all $1 \le k \le N$.
\end{enumerate}
\end{theorem}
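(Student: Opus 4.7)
The plan is to prove the four parts in the stated order, each reducing largely to the previous ones, with part (1) being the combinatorial crux.

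For part (1), I would reduce to the case of a single commutation move: suppose $\bfi' = (i_{1}, \ldots, i_{k-1}, j, i, i_{k+2}, \ldots, i_{N})$ is obtained from $\bfi = (i_{1}, \ldots, i_{k-1}, i, j, i_{k+2}, \ldots, i_{N})$ with $i \not\sim j$ and $i \neq j$. Since $s_{i} s_{j} = s_{j} s_{i}$ and $s_{i}(\alpha_{j}) = \alpha_{j}$, the sequences of positive roots satisfy $\beta^{\bfi'}_{k} = \beta^{\bfi}_{k+1}$, $\beta^{\bfi'}_{k+1} = \beta^{\bfi}_{k}$, and $\beta^{\bfi'}_{m} = \beta^{\bfi}_{m}$ for $m \notin \{k, k+1\}$; moreover the residues track this swap. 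What remains is to check that the arrow set of $\Upsilon$ is unchanged. Any arrow not having an endpoint at the swapped positions is manifestly preserved. For an arrow touching $\beta^{\bfi}_{k}$ or $\beta^{\bfi}_{k+1}$, the crucial input is that $i \not\sim j$: neither $i$ blocks a subsequent $j$-arrow nor vice versa, so the ``nearest preceding neighbor'' rule in the definition of $\Upsilon$ is unaffected by interchanging adjacent non-neighbors. This case-analysis is the main technical point, but it is short.

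For part (2), I would prove the two implications separately. If there is an arrow $\beta^{\bfi}_{l} \to \beta^{\bfi}_{k}$ (so $l > k$, $i_{l} \sim i_{k}$, with no intervening index in $\{i_{k}, i_{l}\}$), then in any reduced word in $[\bfi]$ the two residues $i_{k}$ and $i_{l}$ cannot be commuted past each other because they are adjacent in $\Delta$, so the root $\beta^{\bfi}_{k}$ precedes $\beta^{\bfi}_{l}$ in every $\bfi' \in [\bfi]$, giving $\beta^{\bfi}_{k} \preceq_{[\bfi]} \beta^{\bfi}_{l}$. Transitivity extends this to any directed path. Conversely, given $\alpha \preceq_{[\bfi]} \beta$, I would argue by induction on $\ell_{\bfi}(\beta) - \ell_{\bfi}(\alpha)$ (the difference of positions in a fixed $\bfi$), showing that a covering relation in $\preceq_{[\bfi]}$ must come from an arrow: if no such arrow existed, one could produce a reduced word in $[\bfi]$ reversing the order of $\alpha$ and $\beta$ via a sequence of admissible commutations, contradicting $\alpha \preceq_{[\bfi]} \beta$.

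For part (3), suppose $\alpha, \beta$ are incomparable under $\preceq_{[\bfi]}$. By part (2), there exist $\bfi', \bfi'' \in [\bfi]$ in which $\alpha <_{\bfi'} \beta$ and $\beta <_{\bfi''} \alpha$. If $(\alpha, \beta) \neq 0$, then either $\alpha + \beta$ or one of $s_{\alpha}\beta$, $s_{\beta}\alpha$ lies in $\sfR^{+}$; in any case the rank-two subsystem spanned by $\alpha, \beta$ produces a root $\gamma$ such that convexity of both $<_{\bfi'}$ and $<_{\bfi''}$ forces $\gamma$ to lie strictly between them in opposite orders, contradicting well-definedness. Hence $(\alpha, \beta) = 0$.

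For part (4), I would invoke the Matsumoto/Tits theorem in the form that says two reduced words for $w_{0}$ yielding the same total order on $\sfR^{+}$ must coincide, together with the standard fact that the reduced words in a commutation class $[\bfi]$ are in bijection with the linear extensions of the convex partial order $\preceq_{[\bfi]}$. Part (2) identifies $\preceq_{[\bfi]}$ with the reachability order of $\Upsilon_{[\bfi]}$, so its linear extensions are exactly the compatible readings; matching residues via $\res^{[\bfi]}$ then yields the stated bijection. The main obstacle throughout is the verification in part (1) and in the covering-relation direction of part (2), which both require a careful local analysis near a pair of positions where two residues can potentially be transposed.
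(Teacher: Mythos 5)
First, note that the paper itself does not prove this theorem: it is imported from \cite{OS19a}, so there is no in-paper argument to compare against and your proposal has to stand on its own. Parts (1), (2) and (4) of your sketch follow the standard route (the single-commutation-move analysis using $i\not\sim j$, the invariance of the relative order of two occurrences with adjacent residues, and the bubble-sort correspondence between compatible readings and words in the class --- the same mechanism as the paper's Lemma~\ref{lemma: repq}), and these go through, modulo the mild circularity that the ``standard fact'' you invoke in (4) (words in $[\bfi]$ correspond to linear extensions of $\preceq_{[\bfi]}$) is essentially part (4) itself and must be proved jointly with the converse direction of (2).

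The genuine gap is in part (3). You assume $\alpha,\beta$ are incomparable with $(\alpha,\beta)\neq 0$, produce a third root $\gamma$ in the rank-two subsystem (namely $\alpha+\beta$ or $\pm(\alpha-\beta)$), and claim that convexity of $<_{\bfi^{\prime}}$ and $<_{\bfi^{\prime\prime}}$ forces $\gamma$ ``between them in opposite orders, contradicting well-definedness.'' There is no contradiction there: $<_{\bfi^{\prime}}$ and $<_{\bfi^{\prime\prime}}$ are two different total orders, and nothing prevents $\gamma$ from lying between $\alpha$ and $\beta$ in both of them, in opposite directions; all you actually deduce is that $\gamma$ is in turn incomparable with each of $\alpha$ and $\beta$, which merely propagates the hypothesis. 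The step that closes the argument is different: if $\alpha$ and $\beta$ are incomparable, then along a chain of single commutation moves joining $\bfi^{\prime}$ to $\bfi^{\prime\prime}$ the relative order of the two occurrences carrying $\alpha$ and $\beta$ must flip at some step, and by your own analysis in parts (1)--(2) the only move that can flip it is a direct swap of those two occurrences when they sit in adjacent positions $k,k+1$ with $i_{k}\not\sim i_{k+1}$ and $i_k \neq i_{k+1}$. At that word one has $\alpha=w(\alpha_{i_{k}})$ and $\beta=w(\alpha_{i_{k+1}})$ with $w=s_{i_{1}}\cdots s_{i_{k-1}}$, whence $(\alpha,\beta)=(\alpha_{i_{k}},\alpha_{i_{k+1}})=0$ by $\sfW$-invariance of the bilinear form. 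Replacing your rank-two convexity argument by this observation completes part (3).
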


For a reduced word $\bfi = (i_{1}, i_{2}, \ldots, i_{N})$ of $w_{0}$,
the sequence $\bfi^{\prime} = (i_{2}, \ldots, i_{N}, i^{*}_{1})$ is also a reduced word of $w_0$ and
$[\bfi] \neq [\bfi^{\prime}]$.
This operation is referred to as a \emph{$($combinatorial$)$ reflection functor} and we write $\bfi^{\prime} = r_{i_{1}} \bfi$.
We have the induced operation on commutation classes (i.e.~$r_{i_1} [\bfi] := [r_{i_{1}} \bfi]$ is well-defined).
The relations $[\bfi] \overset{r}{\sim} [r_{i}\bfi]$ for 
$i \in \Delta_{0}$
generate an equivalence relation, called the \emph{reflection equivalent relation} $\overset{r}{\sim}$,
on the set of commutation classes of reduced words for $w_0$.
For a given reduced word $\bfi$ of $w_0$, the family of commutation classes
$\lf\bfi\rf := \{ [\bfi^{\prime}] \mid [\bfi^{\prime}] \overset{r}{\sim} [\bfi] \}$
is called an \emph{$r$-cluster point}.

\subsection{Adapted commutation classes}

Let $Q$ be a Dynkin quiver of type $\sfg$.
It is well-known that the set of all reduced words of $w_{0}$ adapted to $Q$ forms a
single commutation class $[Q]$ of $w_0$, and
$[Q] =[Q']$ if and only if $Q = Q'$ (see~\cite{Bedard99}).
Furthermore, if $i$ is a source of $Q$, we have $r_{i} [Q] = [s_{i}Q]$.
Since any Dynkin quiver $Q^{\prime}$ of type $\sfg$ can be obtained from a given $Q$
by a sequence of source reflections as $Q^{\prime} = s_{i_1} \cdots s_{i_m}Q$,
we have $[Q^{\prime}] \overset{r}{\sim} [Q]$.
Therefore  the set $\{ [Q] \mid \text{$Q$ is a Dynkin quiver of type $\sfg$}\}$
forms a single reflection equivalent class $\lf\Delta\rf$ called the \emph{adapted $r$-cluster point}.
We call a commutation class in $\lf \Delta \rf$ an \emph{adapted class}.
By the above discussion, we have a canonical bijection
\begin{equation}
\label{eq: bijection1}
\{ \text{Dynkin quivers of type $\sfg$} \} \quad \overset{1:1}{\longleftrightarrow} \quad \lf \Delta \rf
\end{equation}
which associates a Dynkin quiver $Q$ with the adapted class $[Q]$.


\begin{theorem}[\cite{Bedard99}]
Let $Q$ be a Dynkin quiver of type $\sfg$.
\begin{enumerate}
\item If $\beta \in \sfR^{+}$ is located at the coordinate $(i,p)$ in $\Gamma_{Q}$,
we have $\res^{[Q]}(\beta) = i$.
\item We have $\Upsilon_{[Q]} = \Gamma_{Q}$.
\end{enumerate}
\end{theorem}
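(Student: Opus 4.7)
For part (1), the plan is induction on the position $k$ of $\beta = \beta^{\bfi}_k$ in an adapted reduced word $\bfi = (i_1, \ldots, i_N) \in [Q]$. For the base case $k = 1$, the first letter $i_1$ must be a source of $Q$; writing $\tau_Q = s_{i_1} \tau'$ with $\tau'$ a product of simple reflections $s_j$, $j \ne i_1$, we get $\gamma_{i_1}^{Q} = (1 - \tau_Q)\varpi_{i_1} = \varpi_{i_1} - s_{i_1}(\varpi_{i_1}) = \alpha_{i_1}$, so $\phi_Q(i_1, \xi_{i_1}) = \alpha_{i_1}$ and the first coordinate of $\alpha_{i_1} = \beta^{\bfi}_1$ in $\Gamma_Q$ matches $\res^{[Q]}(\alpha_{i_1}) = i_1$. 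For $k > 1$ we have $\beta \ne \alpha_{i_1}$, and the very definition of the reflection functor gives $\res^{[Q]}(\beta) = \res^{[s_{i_1}Q]}(s_{i_1}\beta)$ with $s_{i_1}\beta$ sitting at position $k-1$ in $r_{i_1}\bfi \in [s_{i_1}Q]$. The transformation rule recalled in Subsection~\ref{subsection: AR quiver} places $s_{i_1}\beta$ in $\Gamma_{s_{i_1}Q}$ at the same coordinate as $\beta$ in $\Gamma_Q$, so the induction hypothesis closes the step.

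For part (2), part (1) identifies the vertex sets of $\Gamma_Q$ and $\Upsilon_{[Q]}$ via their common labeling by $\sfR^{+}$. By Theorem~\ref{theorem: OS19a main}(2), $\Upsilon_{[Q]}$ is the Hasse quiver of the convex order $\preceq_{[Q]}$, so it suffices to prove that $\Gamma_Q$ is too. I would run a parallel induction on $N = |\sfR^{+}|$ via source reflections. For a source $i$ of $Q$, the vertex $\alpha_i$ is the minimum of $\preceq_{[Q]}$, hence a sink of $\Upsilon_{[Q]}$; it is also a sink of $\Gamma_Q$, since $(j, \xi_i + 1) \notin \Gamma_Q$ for all $j \sim i$ by the $2$-segment property of columns. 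Deleting $\alpha_i$ from $\Gamma_Q$ and relabeling vertices by $\beta \mapsto s_i \beta$ yields $\Gamma_{s_i Q} \setminus \{\alpha_i\}$ by the transformation rule, while the same relabeling turns $\Upsilon_{[Q]} \setminus \{\alpha_i\}$ into $\Upsilon_{[s_i Q]} \setminus \{\alpha_i\}$ because $r_i [Q] = [s_i Q]$. The induction hypothesis then identifies the two remainders.

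The main remaining obstacle is matching the arrows incident to $\alpha_i$ itself: one must show that the incoming arrows of $\alpha_i$ in $\Gamma_Q$, one from each root $\beta_j$ sitting at $(j, \xi_i - 1)$ for $j \sim i$, are precisely the covering relations of $\alpha_i$ in $\preceq_{[Q]}$. Reading $\Gamma_Q$ in decreasing $p$-coordinate (with arbitrary tie-breaking among necessarily non-adjacent residues) produces an adapted word in which each $\beta_j$ can, after the available commutations, be placed directly before $\alpha_i$, so $\beta_j$ does cover $\alpha_i$. Conversely, any $\gamma$ covering $\alpha_i$ is comparable to it, so Theorem~\ref{theorem: OS19a main}(3) gives $(\gamma, \alpha_i) \ne 0$, forcing $\alpha_i$ into the support of $\gamma$; inspecting the minimum of $\preceq_{[Q]}$ via successive source reflections then identifies $\gamma$ with one of the $\beta_j$'s. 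This non-redundancy check is the main technical point, but it is self-contained and reduces to a direct root-theoretic verification.
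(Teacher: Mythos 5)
The paper does not actually prove this statement --- it is quoted from \cite{Bedard99} --- so there is no in-text argument to compare against; I can only assess your proposal on its own terms. Your part (1) is fine: the base case $\gamma^Q_{i_1}=\alpha_{i_1}$ and the reduction along source reflections, using the transformation rule for $\Gamma_{s_iQ}$ recalled at the end of Subsection~\ref{subsection: AR quiver} together with $\res^{[Q]}(\beta)=\res^{[s_{i_1}Q]}(s_{i_1}\beta)$, is a correct and complete induction on the position $k$.

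Part (2) has two genuine problems. First, the induction is circular as framed: passing from $Q$ to $s_iQ$ does not decrease $N=|\sfR^{+}|$, so ``the induction hypothesis identifies the two remainders'' invokes the theorem for a quiver of the same size. This is repairable --- run a downward induction on the number of peeled-off sinks, simultaneously over the whole reflection orbit of quivers, so that after $N$ peeling steps every arrow of $\Gamma_Q$ and of $\Upsilon_{[Q]}$ has been accounted for --- but that is not what you wrote. Second, and more seriously, your non-redundancy check misreads Theorem~\ref{theorem: OS19a main}(3): that statement says \emph{non-comparable} roots pair to zero, i.e.\ a nonzero pairing implies comparability; it does not say that comparable (let alone covering) roots have nonzero pairing, so ``$(\gamma,\alpha_i)\neq 0$'' does not follow. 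Even granting it, a nonzero pairing does not force $\alpha_i$ into the support of $\gamma$ (take $\gamma=\alpha_j$ with $j\sim i$), so the rest of that chain also fails. The step can be replaced by an elementary observation: for $\bfi=(i_1,\dots,i_N)\in[Q]$ with $i_1=i$, an arrow $\beta^{\bfi}_l\to\beta^{\bfi}_1=\alpha_i$ in $\Upsilon_{[\bfi]}$ forces $i_l\sim i$ and $l$ to be the \emph{first} occurrence of the letter $i_l$, so there is at most one incoming arrow per neighbour $j\sim i$; combined with your forward direction (which does produce an arrow $\gamma^{Q}_j\to\alpha_i$ for every $j\sim i$, with ``directly before'' corrected to ``directly after'') and with part (1), this identifies the incoming arrows as exactly $\{\gamma^{Q}_j\to\alpha_i \mid j\sim i\}$, matching $\Gamma_Q$.
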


\subsection{Remark}
\label{subsection: remark on height}

So far we have defined a height function $\xi$ on a given Dynkin quiver $Q$
to be a function $\xi \colon \Delta_{0} \to \Z$ \ber satisfying \er the condition~(\ref{eq: def classical height function 1}).
Conversely, suppose that a function $\xi \colon \Delta_{0} \to \Z$
satisfies
\begin{align}
\label{eq: def classical height function 2}
|\xi_i - \xi_{j}| = 1   \qquad \text{if $i \sim j$}.
\end{align}
Then it defines a Dynkin quiver $Q$ of type $\sfg$
such that
there is an arrow $i \to j$ in $Q$ if and only if $i \sim j$ and $\xi_{i} > \xi_{j}$.
The function $\xi$ gives a height function on this Dynkin quiver $Q$.

Since a height function $\xi$ is determined uniquely up to constant from the Dynkin quiver $Q$,
we can rewrite the canonical bijection~(\ref{eq: bijection1}) as:
\begin{align}
\label{eq: bijection2}
\{ \text{functions $\xi \colon \Delta_{0} \to \Z$ satisfying (\ref{eq: def classical parity}) and
(\ref{eq: def classical height function 2})}\}/2\Z
\quad \overset{1:1}{\longleftrightarrow} \quad
\lf \Delta \rf.
\end{align}
In the next section, we generalize this bijection for
the twisted adapted classes,
which are other special kinds of commutation classes
for $w_{0} \in \sfW$.

\section{Q-data, twisted adapted classes and twisted Auslander-Reiten quivers}
\label{section: Q-data}

In this section, we first recall the twisted adapted class associated with a pair $(\Delta, \sigma)$
of Dynkin diagram $\Delta$ of type $\mathsf{ADE}$
and a diagram automorphism $\sigma$ on it.
Then we introduce the notion of a Q-datum to develop a unified theory of twisted AR quivers
and generalized twisted Coxeter elements.

\subsection{Basic notation}
\label{subsection: notation2}

\begin{table}[h]
\centering
 { \arraycolsep=1.6pt\def\arraystretch{1.5}
\begin{tabular}{|c|c|c|c|c|c|}
\hline
$\sfg$ & $\sigma$ & $r$ & $\g$ & $h^{\vee}$ & $N$ \\
\hline
\hline
$\mathsf{A}_{n}$ & $\id$ & $1$ & $\mathsf{A}_{n}$ & $n+1$ & $n(n+1)/2$ \\
$\mathsf{D}_{n}$ & $\id$ & $1$ & $\mathsf{D}_{n}$ & $2n-2$ & $n(n-1)$ \\
$\mathsf{E}_{6,7,8}$ & $\id$ & $1$ & $\mathsf{E}_{6,7,8}$ & $12, 18, 30$& $36, 63, 120$ \\
\hline
$\mathsf{A}_{2n-1}$ & $\vee$ & $2$ & $\mathsf{B}_{n}$ & $2n-1$ & $n(2n-1)$\\
$\mathsf{D}_{n+1}$ & $\vee$ & $2$ & $\mathsf{C}_{n}$ & $n+1$ & $n(n+1)$ \\
$\mathsf{E}_{6}$ & $\vee$ & $2$ & $\mathsf{F}_{4}$ & $9$ & $36$ \\
\hline
$\mathsf{D}_{4}$ & $\widetilde{\vee}, \widetilde{\vee}^{2}$ & $3$ & $\mathsf{G}_{2}$ & $4$ & $12$ \\
\hline
\end{tabular}
  }\\[1.5ex]
    \caption{}
    \protect\label{table: classification}
\end{table}

Let $\sfg$ be a finite-dimensional complex simple Lie algebra of type $\mathsf{ADE}$
and $\Delta$ be its Dynkin diagram (simply-laced).
We keep the notation in Subsection~\ref{subsection: notation1} except for $n$ and $h^{\vee}$,
which will denote respectively the rank and the dual Coxeter number of another simple Lie algebra $\g$
defined below. Also we mainly use the symbols $\im, \jm, \ldots$ for denoting the vertices in $\Delta_{0}$
in order to save the symbols $i, j, \ldots$ for denoting the Dynkin indices of $\g$.

Let $\sigma$ be an automorphism of $\Delta$
satisfying the condition
\begin{equation} \label{eq:automorphism}
\text{ there is no index $\im \in \Delta_{0}$ such that $\im \sim \sigma(\im)$}.
\end{equation}
Such a pair $(\Delta, \sigma)$ is classified in Table~\ref{table: classification},
where the automorphisms $\vee$ and $\tvee$ are defined as follows:

\begin{subequations}
\label{eq: diagram foldings}
\begin{gather}
\label{eq: B_n}
\begin{tikzpicture}[xscale=1.75,yscale=.8,baseline=0]
\node (A2n1) at (0,1) {$\sfg:\mathsf{A}_{2n-1}$};
\node[dynkdot,label={above:$n+1$}] (A6) at (4,1.5) {};
\node[dynkdot,label={above:$n+2$}] (A7) at (3,1.5) {};
\node[dynkdot,label={above:$2n-2$}] (A8) at (2,1.5) {};
\node[dynkdot,label={above:$2n-1$}] (A9) at (1,1.5) {};
\node[dynkdot,label={above left:$n-1$}] (A4) at (4,0.5) {};
\node[dynkdot,label={above left:$n-2$}] (A3) at (3,0.5) {};
\node[dynkdot,label={above left:$2$}] (A2) at (2,0.5) {};
\node[dynkdot,label={above left:$1$}] (A1) at (1,0.5) {};
\node[dynkdot,label={right:$n$}] (A5) at (5,1) {};
\path[-]
 (A1) edge (A2)
 (A3) edge (A4)
 (A4) edge (A5)
 (A5) edge (A6)
 (A6) edge (A7)
 (A8) edge (A9);
\path[-,dotted] (A2) edge (A3) (A7) edge (A8);
\path[<->,thick,red] (A1) edge (A9) (A2) edge (A8) (A3) edge (A7) (A4) edge (A6);
\def\Foffset{-1}
\node (Bn) at (0,\Foffset) {$\g:\mathsf{B}_n$};
\foreach \x in {1,2}
{\node[dynkdot,label={below:$\x$}] (B\x) at (\x,\Foffset) {};}
\node[dynkdot,label={below:$n-2$}] (B3) at (3,\Foffset) {};
\node[dynkdot,label={below:$n-1$}] (B4) at (4,\Foffset) {};
\node[dynkdot,label={below:$n$}] (B5) at (5,\Foffset) {};
\path[-] (B1) edge (B2)  (B3) edge (B4);
\draw[-,dotted] (B2) -- (B3);
\draw[-] (B4.30) -- (B5.150);
\draw[-] (B4.330) -- (B5.210);
\draw[-] (4.55,\Foffset) -- (4.45,\Foffset+.2);
\draw[-] (4.55,\Foffset) -- (4.45,\Foffset-.2);
 \draw[-latex,dashed,color=blue,thick]
 (A1) .. controls (0.75,\Foffset+1) and (0.75,\Foffset+.5) .. (B1);
 \draw[-latex,dashed,color=blue,thick]
 (A9) .. controls (1.75,\Foffset+1) and (1.25,\Foffset+.5) .. (B1);
 \draw[-latex,dashed,color=blue,thick]
 (A2) .. controls (1.75,\Foffset+1) and (1.75,\Foffset+.5) .. (B2);
 \draw[-latex,dashed,color=blue,thick]
 (A8) .. controls (2.75,\Foffset+1) and (2.25,\Foffset+.5) .. (B2);
 \draw[-latex,dashed,color=blue,thick]
 (A3) .. controls (2.75,\Foffset+1) and (2.75,\Foffset+.5) .. (B3);
 \draw[-latex,dashed,color=blue,thick]
 (A7) .. controls (3.75,\Foffset+1) and (3.25,\Foffset+.5) .. (B3);
 \draw[-latex,dashed,color=blue,thick]
 (A4) .. controls (3.75,\Foffset+1) and (3.75,\Foffset+.5) .. (B4);
 \draw[-latex,dashed,color=blue,thick]
 (A6) .. controls (4.75,\Foffset+1) and (4.25,\Foffset+.5) .. (B4);
 \draw[-latex,dashed,color=blue,thick] (A5) -- (B5);
\draw[->] (A2n1) -- (Bn);
\end{tikzpicture}
\quad k^{\vee} = \begin{cases}
k, & \text{ if } k \le n, \\
2n - k, & \text{ if } k > n,
\end{cases}
\allowdisplaybreaks \\
\label{eq: C_n}
\begin{tikzpicture}[xscale=1.65,yscale=1.25,baseline=-25]
\node (Dn1) at (0,0) {$\sfg : \mathsf{D}_{n+1}$};
\node[dynkdot,label={above:$1$}] (D1) at (1,0){};
\node[dynkdot,label={above:$2$}] (D2) at (2,0) {};
\node[dynkdot,label={above:$n-2$}] (D3) at (3,0) {};
\node[dynkdot,label={above:$n-1$}] (D4) at (4,0) {};
\node[dynkdot,label={right:$n$}] (D6) at (5,.4) {};
\node[dynkdot,label={right:$n+1$}] (D5) at (5,-.4) {};
\path[-] (D1) edge (D2)
  (D3) edge (D4)
  (D4) edge (D5)
  (D4) edge (D6);
\draw[-,dotted] (D2) -- (D3);
\path[<->,thick,red] (D6) edge (D5);
\def\Coffset{-1.2}
\node (Cn) at (0,\Coffset) {$\g: \mathsf{C}_n$};
\foreach \x in {1,2}
{\node[dynkdot,label={below:$\x$}] (C\x) at (\x,\Coffset) {};}
\node[dynkdot,label={below:$n-2$}] (C3) at (3,\Coffset) {};
\node[dynkdot,label={below:$n-1$}] (C4) at (4,\Coffset) {};
\node[dynkdot,label={below:$n$}] (C5) at (5,\Coffset) {};
\draw[-] (C1) -- (C2);
\draw[-,dotted] (C2) -- (C3);
\draw[-] (C3) -- (C4);
\draw[-] (C4.30) -- (C5.150);
\draw[-] (C4.330) -- (C5.210);
\draw[-] (4.55,\Coffset+.1) -- (4.45,\Coffset) -- (4.55,\Coffset-.1);
\path[-latex,dashed,color=blue,thick]
 (D1) edge (C1)
 (D2) edge (C2)
 (D3) edge (C3)
 (D4) edge (C4);
\draw[-latex,dashed,color=blue,thick]
 (D6) .. controls (4.55,-.25) and (4.55,-0.8) .. (C5);
\draw[-latex,dashed,color=blue,thick]
 (D5) .. controls (5.25,\Coffset+.5) and (5.25,\Coffset+.3) .. (C5);
 \draw[->] (Dn1) -- (Cn);
\end{tikzpicture}
\quad k^{\vee} = \begin{cases} k & \text{ if } k \le n-1, \\ n+1 & \text{ if } k = n, \\ n & \text{ if } k = n+1, \end{cases}
\allowdisplaybreaks \\
\label{eq: F_4}
\begin{tikzpicture}[xscale=1.75,yscale=.8,baseline=0]
\node (E6desc) at (0,1) {$\sfg : \mathsf{E}_6$};
\node[dynkdot,label={above:$2$}] (E2) at (4,1) {};
\node[dynkdot,label={above:$4$}] (E4) at (3,1) {};
\node[dynkdot,label={above:$5$}] (E5) at (2,1.5) {};
\node[dynkdot,label={above:$6$}] (E6) at (1,1.5) {};
\node[dynkdot,label={above left:$3$}] (E3) at (2,0.5) {};
\node[dynkdot,label={above right:$1$}] (E1) at (1,0.5) {};
\path[-]
 (E2) edge (E4)
 (E4) edge (E5)
 (E4) edge (E3)
 (E5) edge (E6)
 (E3) edge (E1);
\path[<->,thick,red] (E3) edge (E5) (E1) edge (E6);
\def\Foffset{-1}
\node (F4desc) at (0,\Foffset) {$\g : \mathsf{F}_4$};
\foreach \x in {1,2,3,4}
{\node[dynkdot,label={below:$\x$}] (F\x) at (\x,\Foffset) {};}
\draw[-] (F1.east) -- (F2.west);
\draw[-] (F3) -- (F4);
\draw[-] (F2.30) -- (F3.150);
\draw[-] (F2.330) -- (F3.210);
\draw[-] (2.55,\Foffset) -- (2.45,\Foffset+.2);
\draw[-] (2.55,\Foffset) -- (2.45,\Foffset-.2);
\path[-latex,dashed,color=blue,thick]
 (E2) edge (F4)
 (E4) edge (F3);
\draw[-latex,dashed,color=blue,thick]
 (E1) .. controls (1.25,\Foffset+1) and (1.25,\Foffset+.5) .. (F1);
\draw[-latex,dashed,color=blue,thick]
 (E3) .. controls (1.75,\Foffset+1) and (1.75,\Foffset+.5) .. (F2);
\draw[-latex,dashed,color=blue,thick]
 (E5) .. controls (2.75,\Foffset+1) and (2.25,\Foffset+.5) .. (F2);
\draw[-latex,dashed,color=blue,thick]
 (E6) .. controls (0.25,\Foffset+1) and (0.75,\Foffset+.5) .. (F1);
\draw[->] (E6desc) -- (F4desc);
\end{tikzpicture}
\quad \begin{cases} 1^{\vee}=6, \ 6^{\vee}=1, \\ 3^{\vee}=5, \ 5^{\vee}=3, \\ 2^{\vee}=2, \ 4^{\vee}=4, \end{cases}
\allowdisplaybreaks \\
\label{eq: G_2}
\begin{tikzpicture}[xscale=1.9,yscale=1.5,baseline=-25]
\node (D4desc) at (0,0) {$\sfg : \mathsf{D}_{4}$};
\node[dynkdot,label={right:$1$}] (D1) at (1.75,.4){};
\node[dynkdot,label={above:$2$}] (D2) at (1,0) {};
\node[dynkdot,label={right:$3$}] (D3) at (2,0) {};
\node[dynkdot,label={right:$4$}] (D4) at (1.75,-.4) {};
\draw[-] (D1) -- (D2);
\draw[-] (D3) -- (D2);
\draw[-] (D4) -- (D2);
\path[->,red,thick]
(D1) edge[bend left=20] (D3)
(D3) edge[bend left=20] (D4)
(D4) edge[bend left=20] (D1);
\def\Coffset{-1.1}
\node (G2desc) at (0,\Coffset) {$\g : \mathsf{G}_2$};
\node[dynkdot,label={below:$2$}] (G1) at (1,\Coffset){};
\node[dynkdot,label={below:$1$}] (G2) at (2,\Coffset) {};
\draw[-] (G1) -- (G2);
\draw[-] (G1.40) -- (G2.140);
\draw[-] (G1.320) -- (G2.220);
\draw[-] (1.55,\Coffset+.1) -- (1.45,\Coffset) -- (1.55,\Coffset-.1);
\path[-latex,dashed,color=blue,thick]
 (D2) edge (G1);
\draw[-latex,dashed,color=blue,thick]
 (D1) .. controls (2.7,0.1) and (2.5,-0.8) .. (G2);
\draw[-latex,dashed,color=blue,thick]
 (D3) .. controls (2.1,-0.3) and (2.2,-0.7) .. (G2);
\draw[-latex,dashed,color=blue,thick] (D4) -- (G2);
\draw[->] (D4desc) -- (G2desc);
\end{tikzpicture}
\quad \begin{cases} 1^{\widetilde{\vee}}=3, \ 3^{\widetilde{\vee}}=4, \
4^{\widetilde{\vee}}=1, \\ 2^{\widetilde{\vee}}=2. \end{cases}
\end{gather}
\end{subequations}

Hereafter we set $r := |\sigma| \in \{1,2,3\}$.
Given such a pair $(\Delta, \sigma)$,
we denote by $I$ the set of $\sigma$-orbits of $\Delta_{0}$.
We write the natural quotient map
$\Delta_{0} \to I$
by $\im \mapsto \bar{\im}$.
We set $n := |I|$.
For each $i \in I$, we define
$$d_{i} := |i| \quad \in \{ 1, r\}.$$
Note that we have $d_{\bar{\im}} = 1$ if and only if $\im \in \Delta_{0}$ is fixed by $\sigma$.
Also we define
$$r_{i} := r/d_{i} \quad \in \{ 1, r\}.$$

For $i, j \in I$,
we write $i \sim j$
if there are $\im, \jm \in \Delta_{0}$ satisfying $\bar{\im}= i, \bar{\jm} = j$ and $\im \sim \jm$.
We attach an integer $c_{ij}$ to each pair $(i, j) \in I$ by
$$
c_{ij} := \begin{cases}
2 & \text{if $i = j$;} \\
-\lceil d_{j}/d_{i} \rceil & \text{if $i \sim j$;} \\
0 & \text{otherwise}.
\end{cases}
$$
The resulting matrix $C= (c_{ij})_{i, j \in I}$
gives the Cartan matrix of a complex simple Lie algebra $\g$.
Conversely, every complex simple Lie algebra $\g$ is obtained in this way
from the unique pair $(\Delta, \sigma)$ determined as in Table~\ref{table: classification}.
Note that $\g$ is not simply-laced if and only if $\sigma$ is non-trivial.
If we set $D := \mathop{\mathrm{diag}}(d_{i} \mid i \in I)$,
the product $DC$ becomes symmetric:
\begin{equation}
\label{eq: DC symmetric}
d_{i}c_{ij} = d_{j}c_{ji} \quad \text{for any $i,j \in I$}.
\end{equation}
Write $$C^{-1} = (\tc_{ij})_{i,j \in I} \in GL_{I}(\Q).$$
Since $D C^{-1} = D \cdot (DC)^{-1} \cdot D$ is symmetric,
we also have
\begin{equation}
\label{eq: DtC symmetric}
d_{i}\tc_{ij} = d_{j}\tc_{ji} \quad \text{for any $i,j \in I$}.
\end{equation}

Let $h^{\vee}$ denote the dual Coxeter number of the simple Lie algebra $\g$.
For each pair $(\Delta, \sigma)$, we have the equality
\begin{equation} \label{eq:dcn}
n r h^{\vee} = 2 N,
\end{equation}
which can be checked easily from Table~\ref{table: classification}.

\begin{remark}
\label{remark: h^vee parity}
Let $(\Delta, \sigma)$ be as above.
Recall the involution $\im \mapsto \im^{*}$ on the set $\Delta_{0}$
from Subsection~\ref{subsection: notation1}.
Under the assumption $\sigma \neq \id$,
we have $* = \id$ if $h^{\vee}$ is even, and $* = \sigma$ if $h^{\vee}$ is odd.
Note also that $r=2$ whenever $h^{\vee}$ is odd.
In particular, the involution $*$ on $\Delta_{0}$ induces an involution
on the set $I$, for which we use the same notation $i \mapsto i^{*}$.
The latter involution is trivial if $\g$ is not simply-laced.
\end{remark}
\subsection{Twisted adapted classes}

Note that the diagram automorphism $\sigma$
naturally gives an element of $\Aut(\sfP)$ which we denote by the same symbol $\sigma$.
Namely we define $\sigma (\varpi_{\im}) := \varpi_{\sigma(\im)}$ for each $\im \in \Delta_{0}$.
Then it is immediate that $\sigma(\alpha_{\im}) = \alpha_{\sigma(\im)}$ and
$\sigma s_{\im} \sigma^{-1} = s_{\sigma(\im)}$ for each $\im \in \Delta_{0}$.

For a sequence $(\im_{1}, \ldots, \im_{n}) \in (\Delta_{0})^{n}$ such that
$\{ \bar{\im}_{1}, \ldots, \bar{\im}_{n}\} = I$,
the element $\tau = s_{\im_{1}} \cdots s_{\im_{n}} \sigma \in \sfW \sigma$
is called a \emph{$\sigma$-Coxeter element} \beb (or a \emph{twisted Coxeter element})  \cite{Springer74, CP96}. \eb
Note that if $\sigma = \id$, a $\sigma$-Coxeter element is
the same as a usual Coxeter element.

\begin{theorem}[\cite{OS19b}] \label{theorem: OSmain}
Assume $\sigma \neq \id$.
Note that we have $rh^{\vee} \in 2\Z$ in this case.
\begin{enumerate}
\item
For a $\sigma$-Coxeter element
$\tau = s_{\im_{1}} \cdots s_{\im_{n}} \sigma \in \sfW \sigma$, we have
$$
\tau^{r h^{\vee}/2} = w_{0}\sigma^{rh^{\vee}/2} =-1.
$$
In particular, it defines a reduced word
$$
\bfi(\tau) :=
(\im_{1}, \ldots, \im_{n}, \sigma(\im_{1}), \ldots, \sigma(\im_{n}), \ldots \ldots,
\sigma^{rh^{\vee}/2-1}(\im_{1}), \ldots, \sigma^{rh^{\vee}/2-1}(\im_{n}) )
$$
for the longest element $w_{0}$.
\item
The commutation class $[\bfi(\tau)]$ depends only on the element $\tau$.
\item
For any two $\sigma$-Coxeter elements $\tau_{1}, \tau_{2} \in \sfW\sigma$,
we have $[\bfi(\tau_{1})] = [\bfi(\tau_{2})]$ if and only if $\tau_{1} = \tau_{2}$.
On the other hand, we have $[\bfi(\tau_{1})] \overset{r}{\sim} [\bfi(\tau_{2})]$
in any cases.\footnote{\ber This latter assertion
goes back to a classical result due to 
Springer~\cite[Lemma 7.5]{Springer74}. \er}
\end{enumerate}
\end{theorem}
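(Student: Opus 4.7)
The plan is as follows. Part (1) reduces to Springer's classical analysis of $\sigma$-Coxeter elements as regular elements of $\sfW \rtimes \langle \sigma \rangle$; parts (2) and (3) are then combinatorial consequences of (1) together with some bookkeeping.

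For (1), a $\sigma$-Coxeter element $\tau = s_{\im_1}\cdots s_{\im_n}\sigma$ is a regular element of the coset $\sfW \sigma$ acting on the complexified Cartan in Springer's sense: its regular eigenvector lies in the $\sigma$-fixed subspace, and its regular eigenvalue is a primitive $rh^\vee$-th root of unity. Consequently the order of $\tau$ inside $\sfW \rtimes \langle\sigma\rangle$ is $rh^\vee$ and $\tau^{rh^\vee/2}$ acts as $-\id$ on the Cartan. To identify this element with $w_0 \sigma^{rh^\vee/2}$, one uses that $w_0$ acts on simple roots by $\alpha_{\im} \mapsto -\alpha_{\im^*}$ together with the parity statement $\im^* = \sigma^{rh^\vee/2}(\im)$ recalled in Remark~\ref{remark: h^vee parity}. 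Once this is in hand, the concatenation $\bfi(\tau)$ has length $n \cdot rh^\vee/2 = N = \ell(w_0)$ thanks to the equality $nrh^\vee = 2N$, and its product in $\sfW$ equals $w_0$; reducedness then follows automatically from length considerations.

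For (2), suppose $(\im'_1, \ldots, \im'_n)$ is another ordering with $s_{\im'_1}\cdots s_{\im'_n}\sigma = \tau$. Then $s_{\im_1}\cdots s_{\im_n} = s_{\im'_1}\cdots s_{\im'_n}$ in $\sfW$, and because these reflections are pairwise distinct and commute precisely when their indices are not $\sim$-adjacent in $\Delta$, the two orderings differ by a finite sequence of commutations of non-adjacent letters. Performing the same swaps independently inside each of the $rh^\vee/2$ $\sigma$-shifted blocks of $\bfi(\tau)$ then yields the required commutation equivalence of the full words.

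For (3), injectivity is immediate: the first $n$ letters of any representative of $[\bfi(\tau)]$ determine, via part~(2), both the underlying multiset of reflections and the product $\tau$. For the reflection equivalence, the key observation is that the combinatorial reflection functor $r_{\im_1}$ maps $\bfi(\tau)$ to $\bfi(\tau')$, where $\tau' := s_{\im_2}\cdots s_{\im_n}s_{\sigma(\im_1)}\sigma$ is the cyclic rotation of $\tau$. Indeed $r_{\im_1}$ removes the leading $\im_1$ and appends $\im_1^* = \sigma^{rh^\vee/2}(\im_1)$ at the end, and the resulting sequence matches $\bfi(\tau')$ letter-by-letter. Since iterated rotations act transitively on the finite set of $\sigma$-Coxeter elements — a combinatorial fact analogous to the transitivity of source reflections on Dynkin quivers of a fixed Dynkin type — we conclude $[\bfi(\tau_1)] \overset{r}{\sim} [\bfi(\tau_2)]$ for any pair. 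The main obstacle will be to verify uniformly that the combinatorial reflection $r_{\im_1}$ matches the algebraic rotation $\tau \mapsto \tau'$, and to establish transitivity of rotations on $\sigma$-Coxeter elements without a case-by-case enumeration. The delicate point is that $\im_1$ is playing the role of a "twisted source", a notion which is cleanly formalized precisely by the Q-data introduced in the next section of this paper.
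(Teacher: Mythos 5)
Much of your outline is sound: the identification $w_{0}\sigma^{rh^{\vee}/2}=-1$ via Remark~\ref{remark: h^vee parity}, the length count $n\cdot rh^{\vee}/2=N$ forcing reducedness once the product is shown to equal $w_{0}$, the observation that any two orderings of a product of distinct simple reflections are commutation equivalent (so that (2) follows block by block, since $\sigma$ preserves adjacency), and the computation $r_{\im_{1}}\bfi(\tau)=\bfi(s_{\im_{1}}\tau s_{\im_{1}})$ are all correct; deferring the regularity input to Springer is also consistent with how the paper itself treats this theorem (it cites \cite{OS19b} and \cite{Springer74} rather than reproving). Two steps, however, do not work as written. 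First, in (1) the inference ``order $rh^{\vee}$, hence $\tau^{rh^{\vee}/2}=-\id$'' is a non sequitur: an element of even order $m$ need not have its $m/2$-th power equal to $-\id$. What you actually need from Springer is a regular eigenvector $v$ with eigenvalue a primitive $rh^{\vee}$-th root of unity $\zeta$; then $-\tau^{rh^{\vee}/2}$ lies in $\sfW$ (because $-1=w_{0}\sigma^{rh^{\vee}/2}$ by Remark~\ref{remark: h^vee parity}) and fixes the regular vector $v$ (since $\zeta^{rh^{\vee}/2}=-1$), hence equals the identity. (Also, that eigenvector does not lie in the $\sigma$-fixed subspace in general.)

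Second, the injectivity argument in (3) fails as stated: ``the first $n$ letters of any representative of $[\bfi(\tau)]$'' do not recover the first block. For example, in type $\mathsf{A}_{5}$ with $\sigma=\vee$ and $\tau=s_{1}s_{2}s_{3}\sigma$, the word $\bfi(\tau)=(1,2,3,5,4,3,\ldots)$ admits the commutation $3\leftrightarrow 5$ at positions $3,4$, producing a representative whose first three letters are $(1,2,5)$ --- these do not even cover all $\sigma$-orbits, let alone multiply to $s_{1}s_{2}s_{3}$. Recovering $\tau$ from the commutation class requires a genuine argument (e.g.\ via the convex order $\preceq_{[\bfi]}$ and the residue function, as in \cite{OS19b}). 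Finally, the transitivity of the rotation action on $\sigma$-Coxeter elements, which you yourself flag as ``the main obstacle,'' is precisely the content of Springer's Lemma~7.5 cited in the footnote; asserting it ``by analogy with source reflections on quivers'' leaves the second half of (3) unproved. So the proposal correctly reduces the theorem to these inputs but does not close them.
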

\begin{proof}
See~\cite[Section 3]{OS19b}.
Note that the fact $w_{0}\sigma^{rh^{\vee}/2} =-1$ follows from Remark~\ref{remark: h^vee parity} above.
\end{proof}

\begin{definition} \label{def: sigma cluster point}
Assume $\sigma \neq \id.$
Thanks to Theorem~\ref{theorem: OSmain},
there exists a unique $r$-cluster point containing
all the commutation classes $[\bfi(\tau)]$ arising from $\sigma$-Coxeter elements $\tau$.
We call it the \emph{$\sigma$-adapted $r$-cluster point}
(or the \emph{twisted adapted $r$-cluster point}) and denote it by $\lf \Delta^{\!\sigma} \rf$.
We refer to a commutation class belonging to $\lf \Delta^{\!\sigma} \rf$ as
a \emph{$\sigma$-adapted class} (or a \emph{twisted adapted class}).
\end{definition}

\begin{convention}
When $\sigma = \id$,
we understand $\lf \Ds \rf = \lf \Delta \rf$
so that an $\id$-adapted class is the same as a usual adapted class.
For a usual Coxeter element $\tau \in \sfW$, we define
$[\bfi(\tau)] := [Q] \in \lf \Delta \rf$, where $Q$ is the unique Dynkin quiver
to which $\tau$ is adapted, i.e.~$\tau_{Q}=\tau$.
\end{convention}

\begin{remark}
By definition, we have an inclusion
$\{ [\bfi(\tau)] \mid \text{$\tau$ is a $\sigma$-Coxeter element} \} \subset \lf \Delta^{\!\sigma} \rf$
for any $(\Delta, \sigma)$.
However the opposite inclusion is \emph{not} always true.
It fails precisely when $(\sfg, \sigma) = (\mathsf{A}_{2n-1}, \vee)$ with $n > 2$,
or $(\sfg, \sigma) = (\mathsf{E}_{6}, \vee)$.
See Corollary~\ref{corollary: twisted Coxeter height} below.
\end{remark}

\subsection{Q-datum}
\label{subsection: Q-datum}

In what follows,
we fix a pair $(\Delta, \sigma)$ of Dynkin diagram $\Delta$ of
a Lie algebra $\sfg$ of type $\mathsf{ADE}$
and an automorphism $\sigma$ on $\Delta$ satisfying the condition (\ref{eq:automorphism}),
or equivalently, a complex simple Lie algebra $\g$.
Recall that we set $I := \bar{\Delta}_{0}$.

\begin{definition}
\label{def: height}
A function $\uxi \colon \Delta_{0} \to \Z$
is called a \emph{height function on $(\Delta, \sigma)$} if
the following two conditions are satisfied.
\begin{itemize}
\item[($\mathtt{H1}$)] \label{def: height: cond1}
For any $\im, \jm \in \Delta_{0}$ such that $\im \sim \jm$ and $d_{\bar{\im}} = d_{\bar{\jm}}$,
we have $|\uxi_{\im} - \uxi_{\jm}| = d_{\bar{\im}} = d_{\bar{\jm}}$.
\item[($\mathtt{H2}$)] \label{def: height: cond2}
For any $i,j \in I$ with $i \sim j$ and $d_{i} < d_{j}$, there exists a unique element $j^{\circ} \in j$ such that
$|\uxi_{\im} - \uxi_{j^{\circ}}| = 1$ and $\uxi_{\sigma^{l}(j^{\circ})} = \uxi_{j^{\circ}} - 2l$
for any $0 \le l < r$, where $\im \in i$ is the unique element. (Note that we have
$d_{i} = 1$ and $d_{j} = r$ in this case.)
\end{itemize}
We refer to a triple $\calQ = (\Delta, \sigma, \uxi)$
as a \emph{${\rm Q}$-datum} for $\g$.
\end{definition}

The condition $(\mathtt{H2})$ can be seen as a local condition
at a ``branching point'' $i = \{ \im \}$,
while the condition $(\mathtt{H1})$ is a local condition at a ``non-branching point''.
\ber
\begin{example}
To illustrate the condition $(\mathtt{H2})$, 
we depict two examples of Q-data for $\g$ of type $\mathsf{G}_2$: 
\begin{center}
    \begin{tikzpicture}[xscale=1,yscale=1,baseline=-25]
\node[dynkdot,label={below:$\uxi_\im = 5$}] (D1) at (0,0){};
\node[dynkdot,label={left:$\uxi_{j^\circ}=4$}] (D2) at (-1,0) {};
\node[dynkdot,label={right:$\uxi_{\sigma^2(j^\circ)}=0$}] (D3) at (1,0) {};
\node[dynkdot,label={above:$\uxi_{\sigma(j^\circ)}=2$}] (D4) at (0,1) {};
\draw[-] (D1) -- (D2);
\draw[-] (D1) -- (D3);
\draw[-] (D1) -- (D4);
\path[->,red,thick]
(D2) edge[bend left] (D4)
(D4) edge[bend left] (D3);
\node[dynkdot,label={below:$\uxi_\im=3$}] (D1) at (7,0){};
\node[dynkdot,label={left:$\uxi_{j^\circ}=4$}] (D2) at (6,0) {};
\node[dynkdot,label={right:$\uxi_{\sigma^2(j^\circ)}=0$}] (D3) at (8,0) {};
\node[dynkdot,label={above:$\uxi_{\sigma(j^\circ)}=2$}] (D4) at (7,1) {};
\draw[-] (D1) -- (D2);
\draw[-] (D1) -- (D3);
\draw[-] (D1) -- (D4);
\path[->,red,thick]
(D2) edge[bend left] (D4)
(D4) edge[bend left] (D3);
\end{tikzpicture}
\end{center}
Note that an arbitrary Q-datum of type $\mathsf{G}_2$
is identical to one of these two cases up to adding a constant and 
permuting the vertices.
\end{example}
\er

\begin{example} \label{example: QB3}
Here are two examples of Q-data for $\g$ of type $\mathsf{B}_{3}$.
Note that the corresponding pair is $(\sfg, \sigma) = (\mathsf{A}_{5}, \vee)$.
We put the values of height functions above the vertices.
$$
\calQ^{(1)}=
\left(
\begin{xy}
\def\objectstyle{\scriptstyle}
\ar@{->} (-10,0) *+!D{6} *\cir<2pt>{};
(0,0) *+!D{4} *\cir<2pt>{}="A",
\ar@{<-} "A"; (10,0) *+!D{7} *\cir<2pt>{}="B",
\ar@{->} "B"; (20,0) *+!D{6} *\cir<2pt>{}="C",
\ar@{<-} "C"; (30,0) *+!D{8} *\cir<2pt>{}="D"
\end{xy}
\hspace{6pt}
\right),
\qquad
\calQ^{(2)}=
\left(
\begin{xy}
\def\objectstyle{\scriptstyle}
\ar@{<-} (-10,0) *+!D{2} *\cir<2pt>{};
(0,0) *+!D{4} *\cir<2pt>{}="A",
\ar@{<-} "A"; (10,0) *+!D{5} *\cir<2pt>{}="B",
\ar@{<-} "B"; (20,0) *+!D{6} *\cir<2pt>{}="C",
\ar@{<-} "C"; (30,0) *+!D{8} *\cir<2pt>{}="D"
\end{xy}
\hspace{6pt}
\right).
\qquad
$$
Here $i \to j$ implies $\uxi_i > \uxi_j$.
\end{example}

\begin{remark}
When $\sigma = \id$, the condition~($\mathtt{H2}$) becomes empty.
Hence a height function $\uxi$ on $(\Delta, \id)$ is nothing but a function $\xi$
satisfying~(\ref{eq: def classical height function 2}).
Thus, in view of Subsection~\ref{subsection: remark on height},
the notion of Q-datum for a simply-laced $\g$ is equivalent
to the notion of Dynkin quiver $Q$ with a height function.
\end{remark}

We can deduce the following properties of the height function $\uxi$
easily from its definition and the classification of $(\Delta, \sigma)$ in Table~\ref{table: classification}.

\begin{lemma}
\label{lemma: parity}
Let $\calQ=(\Delta, \sigma, \uxi)$ be a ${\rm Q}$-datum and $i \in I$ be an index.
\begin{enumerate}
\item \label{parity_constant}
For any $\im, \im^{\prime} \in i$, we have $\uxi_{\im} \equiv \uxi_{\im^{\prime}} \pmod{2}$.
\item \label{parity_injection}
For any $\im \in i$ and $l \in \Z$, we have $\uxi_{\sigma^{l}(\im)} \equiv \uxi_{\im} - 2l \pmod{2d_{i}}$.
\item \label{parity_adjacent}
\ber For any $j \in I$ and $\im \in i, \jm \in j$ with $\im \sim \jm$, \er
we have $\uxi_{\im} \equiv \uxi_{\jm} + \min(d_{i}, d_{j}) \pmod{2\min(d_{i}, d_{j})}$.
\end{enumerate}
\end{lemma}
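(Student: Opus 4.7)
The plan is as follows. Part~(3) can be read off directly from the axioms. If $d_i = d_j$, then ($\mathtt{H1}$) applied to the adjacent pair $\im \sim \jm$ gives $\uxi_\im - \uxi_\jm = \pm d_i$, and since $+d_i \equiv -d_i \equiv d_i \pmod{2d_i}$ this matches the claimed congruence with $\min(d_i, d_j) = d_i$. If instead $d_i < d_j$ (so $d_i = 1$, $d_j = r$), then $\min(d_i, d_j) = 1$ and ($\mathtt{H2}$) furnishes some $j^\circ \in j$ with $|\uxi_\im - \uxi_{j^\circ}| = 1$ and $\uxi_{\sigma^l(j^\circ)} = \uxi_{j^\circ} - 2l$; consequently every $\uxi_\jm$ with $\jm \in j$ has parity opposite to $\uxi_\im$, yielding $\uxi_\im \equiv \uxi_\jm + 1 \pmod 2$. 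The case $d_i > d_j$ is symmetric. Part~(1) is then a formal consequence of Part~(2): any $\im' \in i$ equals $\sigma^l(\im)$ for some $l \in \Z$, and the congruence $\uxi_{\sigma^l(\im)} - \uxi_\im \equiv -2l \pmod{2d_i}$ implies the weaker $\uxi_{\im'} \equiv \uxi_\im \pmod 2$.

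The substantive content is Part~(2). If $d_i = 1$, the orbit is a singleton and the statement is vacuous, so assume $d_i = r$. I would argue by induction along $\Delta$, using ($\mathtt{H2}$) for the base case and ($\mathtt{H1}$) together with $\sigma$-equivariance of adjacency for the inductive step. For the base case, suppose $i$ is adjacent to a singleton orbit $j = \{\jm\}$ (necessarily $d_j = 1$). Applying ($\mathtt{H2}$) with $j$ and $i$ playing the roles of ``$i$'' and ``$j$'' in the definition produces $i^\circ \in i$ with $\uxi_{\sigma^l(i^\circ)} = \uxi_{i^\circ} - 2l$ as an honest equality for $0 \le l < r$; translating along the $\sigma$-orbit then yields Part~(2) at every $\im \in i$. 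For the inductive step, suppose $j$ is another orbit adjacent to $i$ with $d_j = r$ for which Part~(2) has already been shown, and pick $\im \sim \jm$ with $\im \in i, \jm \in j$. Since $\sigma$ is a diagram automorphism, $\sigma^l(\im) \sim \sigma^l(\jm)$ for each $l \in \Z$, so ($\mathtt{H1}$) forces $\uxi_{\sigma^l(\im)} - \uxi_{\sigma^l(\jm)} \in \{+r, -r\}$. The discrepancy $(\uxi_{\sigma^l(\im)} - \uxi_\im) - (\uxi_{\sigma^l(\jm)} - \uxi_\jm)$ thus lies in $\{0, \pm 2r\}$ and vanishes modulo $2r$; the inductive hypothesis then transfers the required congruence from $j$ to $i$.

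What remains is only to check that this induction visits every orbit with $d_i = r$, and that is a direct inspection of Table~\ref{table: classification}. In $(\mathsf{D}_{n+1}, \vee)$ and $(\mathsf{D}_4, \tvee)$ there is a single orbit with $d > 1$ sitting next to the fixed orbit, so only the base case is needed. In $(\mathsf{E}_6, \vee)$, the two orbits $\{3, 5\}$ and $\{1, 6\}$ are adjacent and the former touches the fixed orbit $\{4\}$, so one inductive step suffices. In $(\mathsf{A}_{2n-1}, \vee)$, the $d = 2$ orbits $\{k, 2n-k\}$ for $1 \le k \le n-1$ form a path emanating from the branching orbit $\{n-1, n+1\}$. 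No deeper obstacle should arise: the content of the inductive step is simply that two ambiguous signs $\pm r$ automatically cancel modulo $2r$, so parity is transported across $\Delta$ without further input.
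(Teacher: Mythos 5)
Your proof is correct; the paper omits the argument entirely (stating only that the lemma follows ``easily'' from the definition and Table~\ref{table: classification}), and your direct verification --- ($\mathtt{H1}$)/($\mathtt{H2}$) for part~(3), the ($\mathtt{H2}$)-anchored induction along the chain of non-fixed orbits for part~(2) with the sign cancellation $(\pm r)-(\pm r)\equiv 0 \pmod{2r}$, and (2)~$\Rightarrow$~(1) --- is exactly the intended routine check. The only cosmetic quibble is that part~(2) for $d_i=1$ is not vacuous but trivially true, since $\sigma^{l}(\im)=\im$ and $2l\equiv 0\pmod{2d_i}$.
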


In what follows, we fix a $\Delta_{0}$-tuple $\epsilon = (\epsilon_{\im})_{\im \in \Delta_{0}}$
of integers with $0 \le \epsilon_{\im} < d_{\bar{\im}}$ for each $\im \in \Delta_{0}$ such that
the three conditions in Lemma~\ref{lemma: parity} are satisfied with $\uxi_{\im}$ therein replaced by $\epsilon_{\im}$.
We refer to such a tuple $\epsilon$ as a \emph{$\sigma$-parity function} on $\Delta$.
Note that there are only $2r$ choices of $\sigma$-parity functions
and their differences will not affect the results in this paper.
Adding a constant if necessary, we can make a given height function $\uxi$
satisfy the condition:
\begin{itemize}
\item[($\mathtt{H3}$)]
\label{eq: def parity}
We have $\uxi_{\im} - \epsilon_{\im} \in 2d_{\bar{\im}}\Z$ for each $\im \in \Delta_{0}$.
\end{itemize}
In what follows, we require a height function $\uxi \colon \Delta_{0} \to \Z$
always satisfies the condition~($\mathtt{H3}$) together with ($\mathtt{H1}$) and ($\mathtt{H2}$)
\ber without loss of generality. \er

\begin{definition}
Let $\calQ=(\Delta, \sigma, \uxi)$ be a Q-datum for $\g$.
A vertex $\im \in \Delta_{0}$ is called a \emph{source} of $\calQ$ if
we have $\uxi_{\im} > \uxi_{\jm}$ for any $\jm \in \Delta_{0}$ with $\im \sim \jm$.
\end{definition}

The following lemma is immediate from the definition.

\begin{lemma}
Let $\calQ = (\Delta, \sigma, \uxi)$ be a ${\rm Q}$-datum and
$\im \in \Delta_{0}$ be an index.
We define a function $s_{\im} \uxi \colon \Delta_{0} \to \Z$ by the rule
\begin{equation}
(s_{\im}\uxi)_{\jm} := \uxi_{\jm} - \delta_{\im, \jm} \times 2d_{\bar{\jm}}.
\end{equation}
Then $s_{\im} \uxi$ defines a height function on $(\Delta, \sigma)$
if and only if $\im$ is a source of $\calQ$.
\end{lemma}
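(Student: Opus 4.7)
The plan is to verify the three axioms $(\mathtt{H1})$, $(\mathtt{H2})$, $(\mathtt{H3})$ for $s_\im\uxi$ directly. The axiom $(\mathtt{H3})$ is automatically preserved since $s_\im$ merely shifts $\uxi_\im$ by a multiple of $2d_{\bar\im}$, and only the pairs involving $\im$ can affect $(\mathtt{H1})$ or $(\mathtt{H2})$. Thus both implications of the lemma reduce to analyzing the local conditions at $\im$.

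For the forward direction, assume $\im$ is a source, and examine each neighbor $\jm \sim \im$. If $d_{\bar\im} = d_{\bar\jm}$, then $(\mathtt{H1})$ together with the source property forces $\uxi_\im = \uxi_\jm + d_{\bar\im}$, so after the shift the gap becomes $-d_{\bar\im}$ and $(\mathtt{H1})$ persists. If the $d$-values differ, analyze $(\mathtt{H2})$ by splitting on whether $\bar\im$ is the small orbit $i$ (so $d_i = 1$, $\im$ is $\sigma$-fixed, and every element of the large orbit $j$ is adjacent to $\im$) or the large orbit $j$. In the first case the source condition combined with $|\uxi_\im - \uxi_{j^\circ}| = 1$ gives $\uxi_\im = \uxi_{j^\circ} + 1$, and the same $j^\circ$ witnesses $(\mathtt{H2})$ for $s_\im\uxi$ since the $j$-orbit values are untouched. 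In the second case, writing $\im = \sigma^{l_0}(j^\circ)$, one shows that being a source forces $l_0 = 0$ and $\uxi_{\im_0} = \uxi_{j^\circ} - 1$ (where $i = \{\im_0\}$); after the shift the new maximal element of the orbit becomes $\sigma(j^\circ)$, and direct computation confirms both the $\sigma$-equivariance and the distance condition.

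For the backward direction, argue contrapositively: if $\im$ is not a source, then some $\jm \sim \im$ has $\uxi_\im < \uxi_\jm$ (strict by Lemma~\ref{lemma: parity}), and a parallel case analysis shows that $s_\im\uxi$ must violate one of the axioms. The equal-$d$ case turns the gap $d_{\bar\im}$ into $3d_{\bar\im}$, violating $(\mathtt{H1})$; the small-orbit case turns the distance $1$ into $3$ in $(\mathtt{H2})$; and the remaining case ``$\im$ in the large orbit'' bifurcates by $l_0$: either the distance condition fails for the new maximal element, or shifting $\uxi_\im$ by $-2r$ destroys the required arithmetic progression on the $\sigma$-orbit, because the cyclic sequence of forward differences then cannot have $r-1$ consecutive entries equal to $-2$.

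The main obstacle is the bookkeeping in this last sub-case of $(\mathtt{H2})$ — tracking how the shift reshuffles the role of the canonical representative $j^\circ$ within its $\sigma$-orbit. Once one identifies $\sigma(j^\circ)$ as the only viable new candidate and observes that it is compatible with the source condition precisely when $l_0 = 0$ and $\uxi_{\im_0} = \uxi_{j^\circ} - 1$, each individual verification becomes a direct numerical check.
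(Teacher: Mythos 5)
Your verification is correct: the paper states this lemma with no proof at all (``immediate from the definition''), and your direct check of $(\mathtt{H1})$ and $(\mathtt{H2})$ at the neighbors of $\im$ --- with the key observations that a source lying in a large $\sigma$-orbit must be the orbit-maximum $j^{\circ}$, and that $\sigma(j^{\circ})$ becomes the new $(\mathtt{H2})$-witness after the shift by $-2r$ --- is exactly the intended argument spelled out. The only point you leave implicit is the uniqueness clause in $(\mathtt{H2})$, but this is automatic because the decreasing arithmetic-progression condition singles out a unique starting element of the $\sigma$-orbit, which is the same fact you already exploit in the backward direction when you note that the cyclic difference sequence cannot contain $r-1$ consecutive entries equal to $-2$.
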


Let $\calQ = (\Delta, \sigma, \uxi)$ be a Q-datum and
$\im \in \Delta_{0}$ be a source of $\calQ$.
Then we define a new Q-datum $s_{\im}\calQ$ to be the triple $(\Delta, \sigma, s_{\im}\uxi)$.

\begin{definition} \label{def: Qadaptedness}
Let $\calQ = (\Delta, \sigma, \uxi)$ be a Q-datum for $\g$.
We say that a sequence $(\im_{1}, \ldots, \im_{l})$ of elements of $\Delta_{0}$ is \emph{adapted to $\calQ$} if
$$\text{$\im_{k}$ is a source of $s_{\im_{k-1}} s_{\im_{k-2}} \cdots s_{\im_1}\calQ$ for all $1 \le k \le l$.}$$
\end{definition}

When $\sigma = \id$, this is equivalent to the usual notion of adaptedness for a Dynkin quiver.

\subsection{Repetition quiver and compatible readings}

\begin{definition} \label{def: r-quiver}
Recall that we have fixed a $\sigma$-parity function $\epsilon$ on $\Delta$.
The \emph{repetition quiver} associated with $(\Delta, \sigma)$ is the quiver $\hDs$ whose
vertex set $\hDs_{0}$ and arrow set $\hDs_{1}$ are given by
\begin{align*}
\hDs_{0} &:= \{ (\im, p) \in \Delta_{0} \times \Z \mid p-\epsilon_{\im} \in 2d_{\bar{\im}} \Z \}, \\
\hDs_{1} &:= \{ (\im,p) \to (\jm, s) \mid (\im, p), (\jm, s) \in \hDs_{0}, \jm \sim \im, s-p = \min(d_{\bar{\im}}, d_{\bar{\jm}}) \}.
\end{align*}
\end{definition}

\ber
\begin{example} Here are some examples of the repetition quiver.
\begin{enumerate}
    \item When $\g$ is of type $\mathsf{B}_{3}$, we choose the pair $(\mathsf{A}_{5}, \vee)$. The repetition quiver $\hDs$ is depicted as:
$$\raisebox{3.1em}{\scalebox{0.65}{\xymatrix@!C=0.1ex@R=0.5ex{
(i\setminus p) & -8 & -7 & -6 &-5&-4 &-3& -2 &-1& 0 & 1& 2 & 3& 4&  5
& 6 & 7 & 8 & 9 & 10 & 11 & 12 & 13 & 14 & 15 & 16 & 17& 18 \\
1&&&\bullet \ar@{->}[ddrr]&&&& \bullet \ar@{->}[ddrr]&&&&  \bullet \ar@{->}[ddrr] &&&& \bullet\ar@{->}[ddrr]
&&&& \bullet \ar@{->}[ddrr]&&&& \bullet \ar@{->}[ddrr] &&&& \bullet \\ \\
2&\bullet \ar@{->}[dr] \ar@{->}[uurr]&&&&\bullet\ar@{->}[dr] \ar@{->}[uurr]
&&&& \bullet \ar@{->}[dr]\ar@{->}[uurr] &&&&  \bullet \ar@{->}[dr] \ar@{->}[uurr]
&&&& \bullet \ar@{->}[dr]\ar@{->}[uurr]&&&& \bullet \ar@{->}[dr]\ar@{->}[uurr]&&&& \bullet \ar@{->}[dr]\ar@{->}[uurr] \\
3&& \bullet \ar@{->}[dr]&& \bullet \ar@{->}[ur] &&\bullet \ar@{->}[dr] && \bullet \ar@{->}[ur] && \bullet \ar@{->}[dr]
&& \bullet \ar@{->}[ur] && \bullet \ar@{->}[dr] &&
\bullet \ar@{->}[ur]&&\bullet \ar@{->}[dr] &&\bullet \ar@{->}[ur]&&\bullet \ar@{->}[dr] &&\bullet \ar@{->}[ur]
&&\bullet \ar@{->}[dr]\\
4&&&\bullet\ar@{->}[ddrr]\ar@{->}[ur]&&&&\bullet\ar@{->}[ddrr]\ar@{->}[ur] &&&&  \bullet \ar@{->}[ddrr]\ar@{->}[ur] &&&&
\bullet \ar@{->}[ddrr] \ar@{->}[ur]
&&&& \bullet \ar@{->}[ddrr]\ar@{->}[ur]&&&& \bullet\ar@{->}[ddrr]\ar@{->}[ur] &&&& \bullet \\ \\
5& \bullet  \ar@{->}[uurr]&&&&\bullet \ar@{->}[uurr] &&&& \bullet \ar@{->}[uurr]
&&&& \bullet   \ar@{->}[uurr] &&&& \bullet \ar@{->}[uurr]
&&&&\bullet \ar@{->}[uurr] &&&& \bullet \ar@{->}[uurr]}}}
$$
\item 
When $\g$ is of type $\mathsf{C}_{4}$, 
we choose the pair $(\mathsf{D}_5, \vee)$.
The repetition quiver $\hDs$ is depicted as:
$$
\raisebox{3mm}{
\scalebox{0.65}{\xymatrix@!C=0.1ex@R=0.5ex{
(\im\setminus p) & -8 & -7 & -6 &-5&-4 &-3& -2 &-1& 0 & 1& 2 & 3& 4&  5
& 6 & 7 & 8 & 9 & 10 & 11 & 12 & 13 & 14 & 15 & 16 & 17& 18 \\
1&\bullet \ar@{->}[dr]&& \bullet \ar@{->}[dr] &&\bullet\ar@{->}[dr]
&&\bullet \ar@{->}[dr] && \bullet \ar@{->}[dr] &&\bullet \ar@{->}[dr] &&  \bullet \ar@{->}[dr] 
&&\bullet \ar@{->}[dr] && \bullet \ar@{->}[dr] &&\bullet \ar@{->}[dr]  && \bullet\ar@{->}[dr] &&
\bullet\ar@{->}[dr] && \bullet\ar@{->}[dr]  && \bullet\\
2&&\bullet \ar@{->}[dr]\ar@{->}[ur]&& \bullet \ar@{->}[dr]\ar@{->}[ur] &&\bullet \ar@{->}[dr]\ar@{->}[ur]
&& \bullet \ar@{->}[dr]\ar@{->}[ur]&& \bullet\ar@{->}[dr] \ar@{->}[ur]&& \bullet \ar@{->}[dr]\ar@{->}[ur]&&\bullet \ar@{->}[dr]\ar@{->}[ur]&
&\bullet \ar@{->}[dr]\ar@{->}[ur]&&\bullet\ar@{->}[dr] \ar@{->}[ur]&& \bullet \ar@{->}[dr]\ar@{->}[ur]
&&\bullet \ar@{->}[dr]\ar@{->}[ur]&& \bullet \ar@{->}[dr] \ar@{->}[ur]&&\bullet \ar@{->}[dr]\ar@{->}[ur] & \\ 
3&\bullet \ar@{->}[dr] \ar@{->}[ur]&& \bullet \ar@{->}[ddr] \ar@{->}[ur] &&\bullet\ar@{->}[dr] \ar@{->}[ur]
&&\bullet \ar@{->}[ddr] \ar@{->}[ur] && \bullet \ar@{->}[dr]\ar@{->}[ur] &&\bullet \ar@{->}[ddr] \ar@{->}[ur]&&  \bullet \ar@{->}[dr] \ar@{->}[ur]
&&\bullet \ar@{->}[ddr] \ar@{->}[ur] && \bullet \ar@{->}[dr] \ar@{->}[ur]&&\bullet \ar@{->}[ddr] \ar@{->}[ur] && \bullet\ar@{->}[dr] \ar@{->}[ur]&&
\bullet\ar@{->}[ddr] \ar@{->}[ur] && \bullet\ar@{->}[dr] \ar@{->}[ur]  && \bullet\\
4&& \bullet \ar@{->}[ur]&&&&\bullet \ar@{->}[ur] &&&& \bullet \ar@{->}[ur]
&&&& \bullet \ar@{->}[ur] &&&&\bullet \ar@{->}[ur]&&&&\bullet \ar@{->}[ur]&&
&&\bullet \ar@{->}[ur]\\
5&&&&\bullet \ar@{->}[uur]&&&&\bullet \ar@{->}[uur]&&&&  \bullet \ar@{->}[uur]&&&&
\bullet \ar@{->}[uur]&&&& \bullet \ar@{->}[uur]&&&& \bullet \ar@{->}[uur]&& }}}
$$
\item
When $\g$ is of type $\mathsf{G}_{2}$, 
we choose the pair $(\mathsf{D}_4, \tvee)$. 
The repetition quiver $\hDs$ is depicted as:
$$
\raisebox{3mm}{
\scalebox{0.65}{\xymatrix@!C=0.1ex@R=0.5ex{
(\im\setminus p) & -8 & -7 & -6 &-5&-4 &-3& -2 &-1& 0 & 1& 2 & 3& 4&  5
& 6 & 7 & 8 & 9 & 10 & 11 & 12 & 13 & 14 & 15 & 16 & 17& 18 \\
1&&&&&&\bullet \ar@{->}[dr]&&&&&&\bullet \ar@{->}[dr]&&& 
&&&\bullet \ar@{->}[dr]&&&&&&\bullet \ar@{->}[dr]&&& \\
2&\bullet \ar@{->}[ddr]&&\bullet \ar@{->}[dr]&&\bullet\ar@{->}[ur] &&
\bullet \ar@{->}[ddr]&&\bullet \ar@{->}[dr]&&\bullet\ar@{->}[ur] &&
\bullet \ar@{->}[ddr]&&\bullet \ar@{->}[dr]&&\bullet\ar@{->}[ur] &&
\bullet \ar@{->}[ddr]&&\bullet \ar@{->}[dr]&&\bullet\ar@{->}[ur] &&
\bullet \ar@{->}[ddr]&&\bullet \\
3&&&&\bullet \ar@{->}[ur] &&
&&&&\bullet \ar@{->}[ur] &&
&&&&\bullet \ar@{->}[ur] &&
&&&&\bullet \ar@{->}[ur] &&
&&&\\
4&& \bullet \ar@{->}[uur]&&&&&& 
\bullet \ar@{->}[uur]&&&&&& 
\bullet \ar@{->}[uur]&&&&&& 
\bullet \ar@{->}[uur]&&&&&& 
\bullet \ar@{->}[uur]& \\
}}}
$$
\end{enumerate}
\end{example}
\er

When $\sigma = \id$, the repetition quiver $\hDs$ is the same as
the repetition quiver $\hD$ defined in Subsection~\ref{subsection: AR quiver}.
We denote by $\pi \colon \hDs_{0} \to \Delta_{0}$ the projection of the first components.

\beb
\begin{remark}
In \cite{HL16}, Hernandez-Leclerc introduced quivers $\Gamma$ and $G$ for untwisted affine types
whose set of vertices \emph{can be identified with} $\hDs_0$. 
We remark here that, for $\sigma=\id$, the quiver $\hD$ is isomorphic to the quivers $\Gamma$ and
$G$, when we remove the horizontal arrows $(\im,r) \to (\im,r+2)$ in $\Gamma$ and $G$ corresponding to the AR-translations. However, for $\sigma \neq \id$,
the quiver $\hDs$ is \emph{not} isomorphic to any of $\Gamma$ and $G$
even though we remove the horizontal arrows $(\im,r) \to (\im,r+2d_{\bar{\im}})$ in $\Gamma$ and $G$.
\end{remark}
\eb

\ber
\begin{definition}
Let $X \subset \hDs_{0}$ be a finite subset.
\begin{enumerate}
\item
We say that a total ordering $(x_{1}, x_{2}, \ldots, x_{l})$
of $X$
is a \emph{compatible reading of $X$} if 
we have $k<k'$
whenever there is an arrow $x_{k'} \to x_{k}$ in the quiver $\hDs$.
\item We define the element $w[X] \in \sfW$ by
\begin{equation*}
w[X] := s_{\pi(x_{1})} s_{\pi(x_{2})}\cdots s_{\pi(x_{l})},
\end{equation*}
where $(x_{1}, x_{2}, \ldots, x_{l} )$ is a compatible reading of $X$.
It does not depend on the choice of compatible reading of $X$
by Lemma~\ref{lemma: repq} below.
\end{enumerate}
\end{definition}

\begin{remark} \label{remark: repq}
Note that we have $\pi(x) \not \sim \pi(x')$ if there is no oriented path between $x$ and $x'$ in $\hDs$.
\end{remark}

\begin{lemma} \label{lemma: repq}
Let $(x_1, \ldots x_l)$ and $(x'_1, \ldots, x'_l)$ be two 
compatible readings of a finite subset $X \subset \hDs_0$.
Then the sequences $\bfi := (\pi(x_1), \ldots, \pi(x_l))$
and 
$\bfi^{\prime} := (\pi(x'_1), \ldots, \pi(x'_l))$
are commutation equivalent to each other.
\end{lemma}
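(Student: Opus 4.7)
The plan is to reduce to the classical fact that any two linear extensions of a finite poset are connected by a chain of adjacent transpositions, each swapping a pair of incomparable elements. To this end, I would first equip $X$ with a partial order $\preceq$ by declaring $x \preceq y$ iff there is an oriented path in $\hDs$ from $y$ to $x$. This is a genuine partial order because every arrow in $\hDs$ strictly increases the $p$-coordinate, so $\hDs$ is acyclic. A direct check then identifies the compatible readings of $X$ with the linear extensions of $(X, \preceq)$.

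Granted this identification, it suffices to treat the case where $(x'_j)$ is obtained from $(x_j)$ by a single adjacent transposition $x'_k = x_{k+1}$, $x'_{k+1} = x_k$ with $x_k$ and $x_{k+1}$ incomparable under $\preceq$. Incomparability precisely says that there is no oriented path in $\hDs$ between $x_k$ and $x_{k+1}$, so Remark~\ref{remark: repq} yields $\pi(x_k) \not\sim \pi(x_{k+1})$ in $\Delta$. Consequently $s_{\pi(x_k)}$ and $s_{\pi(x_{k+1})}$ commute in $\sfW$, and the swap of the $k$-th and $(k+1)$-st letters of $\bfi$ is exactly a commutation move producing $\bfi'$. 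Iterating along a chain of adjacent transpositions connecting any two given linear extensions yields the desired commutation equivalence of $\bfi$ and $\bfi'$ in general.

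The main subtlety I expect is justifying that the ``compatible reading'' condition, a priori formulated only in terms of direct arrows of $\hDs$, really coincides with being a linear extension of the path-order $\preceq$. The easy direction, that linear extensions are compatible readings, is immediate since direct arrows are paths of length one. The converse requires ruling out hidden incompatibilities arising from oriented paths in $\hDs$ that pass through vertices outside $X$; I would handle this by induction on the length of such a path, using the parity constraints on $p$-coordinates from Lemma~\ref{lemma: parity} together with Remark~\ref{remark: repq} to control the $\pi$-images of intermediate vertices. Once this technical identification is in place, the rest of the argument is the standard linear-extension manipulation outlined above.
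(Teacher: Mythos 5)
Your strategy --- order $X$ by oriented paths, connect any two linear extensions by adjacent transpositions of incomparable pairs, and invoke Remark~\ref{remark: repq} to turn each transposition into a commutation move --- runs parallel to the paper's own induction, which likewise reduces to moving one element past a block of others after asserting that no oriented path joins them. The transposition part of your argument is fine. The problem is the step you yourself flag as the ``main subtlety'': the identification of compatible readings with linear extensions of the path-order $\preceq$.

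That identification is false, and no induction on path length or parity bookkeeping can establish it, because the lemma as literally stated (for an \emph{arbitrary} finite $X \subset \hDs_0$) fails. Take $\sfg$ of type $\mathsf{A}_2$ with $\sigma=\id$, $\epsilon_1=0$, $\epsilon_2=1$, and $X=\{(1,0),(2,3)\}\subset\hD_0$. There is no arrow of $\hD$ between the two elements, so \emph{both} orderings of $X$ are compatible readings; but the oriented path $(1,0)\to(2,1)\to(1,2)\to(2,3)$, passing through vertices outside $X$, makes the two elements $\preceq$-comparable, so only one ordering is a linear extension. Worse, the two readings produce the words $(1,2)$ and $(2,1)$, which are not commutation equivalent since $1\sim 2$ --- so there is no proof to be found here. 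The statement is only true, and is only ever applied, for subsets $X$ of the special shape arising in Lemma~\ref{lemma: compatible reading} and Proposition~\ref{proposition: quasi Coxeter} (consecutive vertices in each column, bounded above and below by height functions); for such $X$ one can check that any two members with adjacent $\pi$-images are joined by a chain of arrows lying \emph{inside} $X$, so their relative order is forced in every compatible reading, after which either your transposition scheme or the paper's induction goes through. (The paper's own proof asserts ``there is no oriented path between $x'_{k'}$ and $x'_t$'' without justification and is vulnerable to the same example, so the convexity of $X$ is being used silently there as well.) To repair your write-up you must add and verify such a hypothesis on $X$; the parity constraints of Lemma~\ref{lemma: parity} alone will not do it.
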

\begin{proof}
Let $1 \le k \le l$ be the largest number such that
$x_{s} = x_{s}^{\prime}$ for $1 \le s \le k$.
We shall prove the assertion by downward induction on $k$. 
When $k=l$, we have $(x_1, \ldots x_l) = (x'_1, \ldots, x'_l)$
and nothing to prove.
Assume $k < l$.
Let $k + 1 < k^{\prime} \le l$ be the smallest number such that
$x_{k^{\prime}}^{\prime} = x_{k}$.
Then 
there is no oriented path between $x'_{k'}$ and $x'_t$ for all $k < t < k^{\prime}$
and hence $\pi(x_{k^{\prime}}^{\prime}) 
\not \sim \pi(x_{t}^{\prime})$ 
by Remark~\ref{remark: repq}.
Therefore the sequence 
$$
(x_{1}^{\pprime}, \ldots, x_{l}^{\pprime}) :=
(x_{1}^{\prime}, \ldots, x_{k}^{\prime}, x_{k^{\prime}}^{\prime}, x_{k+1}^{\prime}, x_{k+2}^{\prime}, \ldots,
x_{k^{\prime}-1}^{\prime}, x_{k^{\prime}+1}^{\prime}, \ldots, x_{l}^{\prime})
$$
gives a compatible reading of $X$ and the sequence
$\bfi'' := (\pi(x''_1), \ldots, \pi(x''_l) )$
is commutation equivalent to $\bfi'$.
Note that we have $x_s = x''_s$ for $1 \le s \le k+1$
and hence $\bfi^{\pprime}$ is commutation equivalent to $\bfi$
by the induction hypothesis.
Thus $\bfi^{\prime}$ is also commutation equivalent to $\bfi$.
\end{proof}
\er

\begin{lemma}
\label{lemma: compatible reading}
Let $(\im_{1}, \ldots, \im_{l})$ be a sequence of elements of $\Delta_{0}$.
We set $m_{\im} := |\{ k \mid \im_{k}=\im, 1 \le k \le l \}|$
for each $\im \in \Delta_{0}$.
Take a ${\rm Q}$-datum $\calQ=(\Delta, \sigma, \uxi)$ and assume that
\begin{equation}
\label{eq: assumption xi-m}
\text{the map
$\Delta_{0} \to \Z$ given by $\im \mapsto \uxi_{\im} -2m_{\im}d_{\bar{\im}}$
defines a height function on $(\Delta, \sigma)$.}
\end{equation}
Then the sequence $(\im_{1}, \ldots, \im_{l})$ is adapted to $\calQ$ if and only if
there exist a compatible reading $(x_{1}, \ldots, x_{l})$ of the subset
$$
X := \{ (\im, \uxi_{\im} - 2kd_{\bar{\im}} ) \in \hDs_{0} \mid 0 \le k < m_{\im}  \}
$$
of $\hDs_{0}$ such that we have $\im_{k} = \pi(x_{k})$ for all $1 \le k \le l$.
\end{lemma}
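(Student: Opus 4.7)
The plan is to proceed by induction on $l$, where the case $l=0$ is trivial (the empty sequence is adapted and the empty compatible reading exists). For the inductive step, the key is to establish the following equivalence at the first position: assuming $(\im_1, \uxi_{\im_1}) \in X$ (automatic from $m_{\im_1} \geq 1$), the vertex $x_1 := (\im_1, \uxi_{\im_1})$ can serve as the first element of a compatible reading of $X$ if and only if $\im_1$ is a source of $\calQ$. Granted this, one strips off $x_1$ and invokes the induction hypothesis with $(s_{\im_1}\calQ, (\im_2,\ldots,\im_l), X \setminus \{x_1\})$; the assumption (\ref{eq: assumption xi-m}) transfers because $(s_{\im_1}\uxi)_{\im} - 2m'_{\im} d_{\bar{\im}}$ coincides with $\uxi_{\im} - 2m_{\im}d_{\bar{\im}}$, where $m'_{\im} = m_{\im} - \delta_{\im, \im_1}$, and the set $X \setminus \{x_1\}$ is exactly the set defined from $s_{\im_1}\uxi$ and $m'$.

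To analyze the key equivalence I would observe that $x_1$ can be first if and only if it has no outgoing arrow in $\hDs$ into another element of $X$. By the definition of $\hDs_1$, every outgoing arrow from $x_1$ has the form $x_1 \to (\jm, \uxi_{\im_1} + \min(d_{\bar{\im_1}}, d_{\bar\jm}))$ for some $\jm \sim \im_1$, and such a target lies in $X$ exactly when $\uxi_\jm \geq \uxi_{\im_1} + \min(d_{\bar{\im_1}}, d_{\bar\jm})$, the difference is an integer multiple of $2d_{\bar\jm}$, and the corresponding $k = (\uxi_\jm - \uxi_{\im_1} - \min)/(2d_{\bar\jm})$ is strictly less than $m_\jm$. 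The forward direction is then immediate: if $\im_1$ is a source, then $\uxi_{\im_1} > \uxi_\jm$ for every $\jm \sim \im_1$, so the putative target $(\jm, \uxi_{\im_1} + \min)$ has second coordinate $> \uxi_\jm$ and cannot lie in $X$.

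The backward direction is where the real work sits. Starting from a neighbour $\jm \sim \im_1$ with $\uxi_\jm > \uxi_{\im_1}$, a case analysis using (H1) when $d_{\bar{\im_1}} = d_{\bar\jm}$ and (H2) when the degrees differ shows that in fact $\uxi_\jm = \uxi_{\im_1} + \min(d_{\bar{\im_1}}, d_{\bar\jm})$ precisely, so that $k = 0$ and the target of the outgoing arrow equals $(\jm, \uxi_\jm)$, which lies in $X$ provided $m_\jm \geq 1$. To verify the latter, I would apply the same axioms to the auxiliary height function $\uxi''_\im := \uxi_\im - 2m_\im d_{\bar{\im}}$ supplied by assumption (\ref{eq: assumption xi-m}): comparing the relations satisfied by $\uxi$ and $\uxi''$ on the pair $(\im_1,\jm)$ forces $m_\jm \geq m_{\im_1}$, and $m_{\im_1} \geq 1$ follows from $(\im_1,\uxi_{\im_1}) \in X$.

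The main obstacle is precisely this backward direction: verifying that when $\im_1$ is not a source of $\calQ$ the outgoing arrow from $x_1$ indeed lands inside $X$. This step is the only place where assumption (\ref{eq: assumption xi-m}) plays an active role, and the argument requires running the axioms (H1)/(H2) in parallel for both $\uxi$ and $\uxi''$ and carefully handling the branching-point case where $d_{\bar{\im_1}} \neq d_{\bar\jm}$, with the help of Lemma~\ref{lemma: parity} to control congruences modulo $2d_{\bar\jm}$.
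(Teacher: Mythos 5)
Your overall architecture --- induction on $l$, reduction to the claim that $x_{1}=(\im_{1},\uxi_{\im_{1}})$ can be placed first in a compatible reading of $X$ if and only if $\im_{1}$ is a source of $\calQ$, and the bookkeeping showing that assumption \eqref{eq: assumption xi-m} and the set $X\setminus\{x_{1}\}$ transfer correctly to $(s_{\im_{1}}\calQ, m')$ --- is essentially the paper's, and your forward direction (source $\Rightarrow$ no outgoing arrow from $x_{1}$ lands in $X$) is correct.

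The gap is in the backward direction. Your pivotal claim that $\jm\sim\im_{1}$ and $\uxi_{\jm}>\uxi_{\im_{1}}$ force $\uxi_{\jm}=\uxi_{\im_{1}}+\min(d_{\bar{\im}_{1}},d_{\bar{\jm}})$ is false precisely in the branching case $d_{\bar{\im}_{1}}=r>1=d_{\bar{\jm}}$ with $\im_{1}=\sigma^{l}(j^{\circ})$, $l\ge 1$: there $(\mathtt{H2})$ gives $\uxi_{\jm}-\uxi_{\im_{1}}=2l\pm 1$, which can be $3,5,\dots$. Concretely, for $\calQ^{(1)}$ of Example~\ref{example: QB3} (type $\mathsf{B}_{3}$) take $\im_{1}=2$, $\jm=3$: then $\uxi_{3}-\uxi_{2}=7-4=3\neq 1$. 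In this situation the outgoing arrow from $x_{1}$ towards the $\jm$-column targets $(\jm,\uxi_{\im_{1}}+1)=(\jm,\uxi_{\jm}-2k)$ with $k=(\uxi_{\jm}-\uxi_{\im_{1}}-1)/2\ge 1$, so membership in $X$ requires $m_{\jm}>k$, a bound growing with the height gap. Your fallback estimate $m_{\jm}\ge m_{\im_{1}}$ only yields $m_{\jm}\ge 1$, i.e.\ $(\jm,\uxi_{\jm})\in X$, which does not help because there is no arrow from $x_{1}$ to $(\jm,\uxi_{\jm})$ in this case. The needed inequality does follow from \eqref{eq: assumption xi-m}, but by a different computation: the $(\mathtt{H2})$-constraint for $\uxi''_{\im}:=\uxi_{\im}-2m_{\im}d_{\bar{\im}}$ gives $\uxi''_{\jm}-\uxi''_{\im_{1}}\le 2r-1$, whence $2m_{\jm}\ge(\uxi_{\jm}-\uxi_{\im_{1}})+2r(m_{\im_{1}}-1)+1\ge 2k+2$. (The paper runs the same idea as a contradiction: if the relevant vertices of the $\jm$-column are absent from $X$, then $\uxi''_{\jm}-\uxi''_{\im_{1}}$ is too large for $\uxi''$ to be a height function.) A smaller omission: you should also justify why $x_{1}$ must be the \emph{top} vertex $(\im_{1},\uxi_{\im_{1}})$ of its column rather than some $(\im_{1},\uxi_{\im_{1}}-2kd_{\bar{\im}_{1}})$ with $k\ge 1$; this again relies on \eqref{eq: assumption xi-m} and is not addressed in your sketch.
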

\begin{proof}
We prove the assertion by induction on $l$.
First let us assume that a given sequence $(\im_{1}, \ldots, \im_{l})$ is adapted to $\calQ$.
Then $\im_1$ is a source of $\calQ$ and the sequence $(\im_{2}, \ldots, \im_{l})$
is adapted to $s_{\im_{1}}\calQ$.
Defining
$$X^{\prime} := \{ (\im, (s_{\im_{1}}\uxi)_{\im} - 2kd_{\bar{\im}} ) \in \hDs_{0} \mid 0 \le k < m_{\im} -\delta_{\im, \im_{1}} \},$$
the induction hypothesis implies that
there exists a compatible reading $(x_{2}, \ldots, x_{l})$ of $X^{\prime}$ such that $\im_{k} = \pi(x_{k})$ for all $2 \le k \le l$.
Since $X=X^{\prime} \cup \{ (\im_{1}, \uxi_{\im_{1}})\}$, we obtain the desired compatible reading
$(x_{1}, x_{2}, \ldots, x_{l})$ by setting $x_{1} := (\im_{1}, \uxi_{\im_{1}})$.

Conversely assume that there is a compatible reading $(x_{1}, \ldots, x_{l})$ of $X$
satisfying $\im_{k} = \pi(x_{k})$ for all $1 \le k \le l$.
By the assumption~(\ref{eq: assumption xi-m}),
we see that $x_{1} = (\im_{1}, \uxi_{\im_{1}})$.
Let us prove that $\im := \im_{1}$ is a source of $\calQ$.
To deduce a contradiction, we assume the contrary that
there is a vertex $\jm \in \Delta_{0}$ such that $\im \sim \jm$ and $\uxi_{\im} < \uxi_{\jm}$.
Then we have $(\jm, \uxi_{\jm}) \not \in X$ and hence $m_{\jm} = 0$.
Therefore we have
$$(\uxi_{\jm} - 2m_{\jm}d_{\bar{\jm}}) - (\uxi_{\im} - 2m_{\im}d_{\bar{\im}}) = (\uxi_{\jm} - \uxi_{\im}) + 2m_{\im}d_{\bar{\im}} \ge
\min(d_{\bar{\im}}, d_{\bar{\jm}})+2d_{\bar{\im}},$$
which \ber contradicts \er the assumption~(\ref{eq: assumption xi-m}).
Thus $\im = \im_{1}$ is a source of $\calQ$ and hence $s_{\im_{1}}\calQ$ is well-defined.
Since $(x_{2}, \ldots, x_{l})$ is a compatible reading of the set $X \setminus \{ x_{1} \} = X^{\prime}$,
the sequence $(\im_{2}, \ldots, \im_{l})$ is adapted to $s_{\im_{1}}\calQ$ by induction hypothesis.
This completes the proof.
\end{proof}
\ber
\begin{corollary} \label{corollary: adapted seq}
Let $\calQ = (\Delta, \sigma, \uxi)$ be a {\rm Q}-datum.
Assume that two reduced words $\bfi = (\im_{1}, \ldots, \im_{l})$
and $\bfi^{\prime} = (\im_{1}^{\prime}, \ldots, \im_{l}^{\prime})$
of an element $w \in \sfW$ are both adapted to $\calQ$.
Then $\bfi^{\prime}$ is commutation equivalent to $\bfi$.
\end{corollary}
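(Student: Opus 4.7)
The plan is to apply Lemma~\ref{lemma: compatible reading} to both $\bfi$ and $\bfi'$, identify the resulting subsets of $\hDs_{0}$, and conclude via Lemma~\ref{lemma: repq}.

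First, I would verify that the hypothesis \eqref{eq: assumption xi-m} of Lemma~\ref{lemma: compatible reading} is automatically satisfied for any sequence adapted to $\calQ$. Indeed, if $(\im_{1}, \ldots, \im_{l})$ is adapted to $\calQ$, then by definition we may iterate source reflections to obtain the {\rm Q}-datum $s_{\im_{l}} \cdots s_{\im_{1}} \calQ$, whose height function sends $\im$ to $\uxi_{\im} - 2 m_{\im} d_{\bar{\im}}$. This is exactly the condition \eqref{eq: assumption xi-m}.

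Next, for $\bfi$ define $m_{\im} := |\{k \mid \im_{k}=\im\}|$, and similarly $m_{\im}'$ for $\bfi'$. The key observation is that $m_{\im} = m_{\im}'$ for all $\im \in \Delta_{0}$. This holds because $\bfi$ and $\bfi'$ are reduced words for the same element $w \in \sfW$: by Matsumoto's theorem any two reduced words are connected by braid moves, and both commutation relations and the braid relations $s_{\im}s_{\jm}s_{\im} = s_{\jm}s_{\im}s_{\jm}$ (the only relations allowed since $(\Delta, \sigma)$ is simply-laced on $\sfW$) preserve the multiset of letters. Hence the subsets
\begin{equation*}
X := \{ (\im, \uxi_{\im} - 2k d_{\bar{\im}}) \in \hDs_{0} \mid 0 \le k < m_{\im}\}, \qquad X' := \{ (\im, \uxi_{\im} - 2k d_{\bar{\im}}) \in \hDs_{0} \mid 0 \le k < m_{\im}'\}
\end{equation*}
coincide: $X = X'$.

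Finally, by Lemma~\ref{lemma: compatible reading} applied to $\bfi$ and $\bfi'$ respectively, there exist compatible readings $(x_{1}, \ldots, x_{l})$ and $(x_{1}', \ldots, x_{l}')$ of the common set $X = X'$ such that $\im_{k} = \pi(x_{k})$ and $\im_{k}' = \pi(x_{k}')$ for all $1 \le k \le l$. By Lemma~\ref{lemma: repq}, the two projected sequences $\bfi$ and $\bfi'$ are commutation equivalent, completing the proof. The only mildly subtle point in this argument is the invariance of the letter-multiplicities $m_{\im}$ under passage to another reduced word; everything else is a direct application of the lemmas just proved.
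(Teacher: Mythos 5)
Your route is the one the paper itself takes (its proof is literally the citation of Lemma~\ref{lemma: repq} and Lemma~\ref{lemma: compatible reading}), and your verification that an adapted sequence automatically satisfies hypothesis~\eqref{eq: assumption xi-m} is correct. The gap is in your justification of the key claim $m_{\im}=m_{\im}'$. The length-three braid relation $s_{\im}s_{\jm}s_{\im}=s_{\jm}s_{\im}s_{\jm}$ does \emph{not} preserve the multiset of letters: it replaces $(\im,\jm,\im)$ by $(\jm,\im,\jm)$, changing the multiplicity of $\im$ from $2$ to $1$ and that of $\jm$ from $1$ to $2$. Already in type $\mathsf{A}_{2}$ the reduced words $(1,2,1)$ and $(2,1,2)$ of $w_{0}$ have different multiplicity vectors, so the statement ``any two reduced words of the same element have the same letter multiplicities'' is false, and $X=X'$ cannot be deduced from Matsumoto's theorem alone.

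The equality $m_{\im}=m_{\im}'$ is true here, but its proof must use that \emph{both} words are adapted to the same Q-datum $\calQ$, not merely that they represent the same $w$. One way to repair the step is an induction on $\ell(w)$: if $\im_{1}=\im_{1}'$, strip the common first letter and pass to $s_{\im_{1}}\calQ$; if $\im_{1}\neq\im_{1}'$, then both are sources of $\calQ$, hence non-adjacent (adjacent vertices cannot both be sources since a source has strictly larger height than all its neighbours), so $s_{\im_{1}}$ and $s_{\im_{1}'}$ commute and $\ell(s_{\im_{1}'}s_{\im_{1}}w)=\ell(w)-2$; a further induction then shows that $w$ admits a reduced word adapted to $\calQ$ beginning with $(\im_{1},\im_{1}')$, which after one commutation begins with $\im_{1}'$, reducing to the first case. (In effect this inductive argument already proves the corollary directly.) Once $m_{\im}=m_{\im}'$, hence $X=X'$, is established, your application of Lemma~\ref{lemma: repq} completes the proof as you describe.
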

\begin{proof}
It follows from Lemma~\ref{lemma: repq} and Lemma~\ref{lemma: compatible reading}.
\end{proof}
\er
\begin{corollary} \label{cor: adpted Cartan}
Let $(\im_1,\cdots, \im_N)$ be a reduced word of $w_0$.
Then the following statements are equivalent$\colon$
\begin{enumerate}[{\rm (a)}]
\item For any $\im, \jm \in \Delta_0$ with $\im \sim \jm$
     and $k$ such that $1\le k \le k^+\le N$ and $\im=\im_k$,
     we have
$$
-c_{\bar{\jm}\bar{\im}} = \begin{cases}
|\{s\mid k<s<k^+,   \bar{\jm}=\bar{\im}_s \}| & \text{if $d_{\bar{\im}} < d_{\bar{\jm}}$}, \\[1ex]
|\{s\mid k<s<k^+,   \jm=\im_s \}| & \text{if $d_{\bar{\im}} \ge d_{\bar{\jm}}$}.
\end{cases}
$$
Here $k^+ \seteq \min\{p \mid k<p ,\; \im_k=\im_p \}$.
\item The reduced word $(\im_1,\cdots, \im_N)$ is adapted to some ${\rm Q}$-datum $\calQ$ for $\g$.
\end{enumerate}
\end{corollary}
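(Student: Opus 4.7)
The plan is to prove both implications by reducing to the combinatorics of compatible readings via Lemma~\ref{lemma: compatible reading}.

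For (b) $\Rightarrow$ (a), I would assume $\bfi = (\im_1, \ldots, \im_N)$ is adapted to some Q-datum $\calQ = (\Delta, \sigma, \uxi)$ and apply Lemma~\ref{lemma: compatible reading} to obtain a compatible reading $(x_1, \ldots, x_N)$ of a specific finite subset $X \subset \hDs_0$ with $\im_k = \pi(x_k)$. A preliminary observation is that the $\im$-vertices of $X$ appear in the reading in strictly decreasing height order, since any pair $(\im, p_1), (\im, p_2) \in X$ with $p_1 > p_2$ is connected by a directed chain of arrows in $\hDs$ from $(\im, p_2)$ up to $(\im, p_1)$ passing through vertices already in $X$. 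Hence for consecutive occurrences $\im_k = \im_{k^+} = \im$ we obtain $x_k = (\im, p)$ and $x_{k^+} = (\im, p - 2d_{\bar{\im}})$. The count appearing in (a) is then the number of vertices of $X$ at positions $k+1, \ldots, k^+ - 1$ whose $\pi$-image lies in the prescribed set; using the arrow structure of $\hDs$ together with the parity function $\epsilon$, I would show that these are exactly the $(\jm', s)$ with $\jm' \sim \im$, $\bar{\jm}' = \bar{\jm}$ (resp.~$\jm' = \jm$), and $p - 2d_{\bar{\im}} < s < p$. A direct case analysis in the three scenarios $d_{\bar{\im}} = d_{\bar{\jm}}$, $d_{\bar{\im}} < d_{\bar{\jm}}$, and $d_{\bar{\im}} > d_{\bar{\jm}}$, using condition $(\mathtt{H2})$, then produces the count $-c_{\bar{\jm}\bar{\im}}$ in each case.

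For (a) $\Rightarrow$ (b), I would proceed by induction on $N$, noting first that condition (a) is preserved when passing from $\bfi$ to its suffix $\bfi^{\prime} := (\im_2, \ldots, \im_N)$, because the count of letters strictly between any two consecutive occurrences is unaffected by removing the leading letter. By induction, $\bfi^{\prime}$ is adapted to some Q-datum $\calQ^{\prime} = (\Delta, \sigma, \uxi^{\prime})$. I would then define $\calQ := (\Delta, \sigma, \uxi)$ by $\uxi_{\im_1} := \uxi^{\prime}_{\im_1} + 2d_{\bar{\im}_1}$ and $\uxi_{\jm} := \uxi^{\prime}_{\jm}$ otherwise, so that $s_{\im_1}\calQ = \calQ^{\prime}$ formally. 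Verifying that $\calQ$ is a Q-datum with $\im_1$ a source reduces to checking $(\mathtt{H1})$ and $(\mathtt{H2})$ at $\im_1$ along with the source inequality; these follow by applying (a) to the first occurrence of $\im_1$: when $\im_1$ reappears at some $k^+$, the forced count pins down the height of each neighbor $\jm$ in $\calQ^{\prime}$ to the value required by $(\mathtt{H1})$ or $(\mathtt{H2})$ after the shift, and the source inequality is a direct consequence. If $\im_1$ has no subsequent occurrence, one may freely adjust $\uxi^{\prime}$ within its equivalence class of Q-data to attain the source condition at $\im_1$.

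The hard part will be the case $d_{\bar{\im}} > d_{\bar{\jm}}$ in (b) $\Rightarrow$ (a), namely verifying that all $r$ expected vertices $(\jm, p-1), (\jm, p-3), \ldots, (\jm, p - 2r+1)$ lie strictly between $x_k$ and $x_{k^+}$ in the compatible reading. Only the two outer ones are directly forced by arrows to $(\im, p)$ and from $(\im, p - 2r)$; the intermediate ones must be chained through auxiliary vertices lying outside the $\im$-$\jm$ subquiver. Establishing these chains within the finite subset $X$ provided by Lemma~\ref{lemma: compatible reading} will combine the periodic structure of $X$ with a Dynkin-diagram-connectivity argument to find a suitable adjacent $\jm^{\prime}$ of $\jm$ producing the required chain.
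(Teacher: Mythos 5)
Your direction (b) $\Rightarrow$ (a) follows the same route as the paper (reduction to compatible readings via Lemma~\ref{lemma: compatible reading}), and you correctly identify the delicate point: when $d_{\bar{\im}} > d_{\bar{\jm}}$ the intermediate vertices $(\jm, p-2t-1)$ are not linked to $(\im,p)$ or $(\im,p-2d_{\bar{\im}})$ by single arrows, so their position in the reading must be forced by directed chains through auxiliary vertices of $X$. This is left as a plan rather than a proof; note also that you must additionally rule out \emph{extra} occurrences of $\jm$ between positions $k$ and $k^{+}$ coming from $\jm$-vertices outside the window $(p-2d_{\bar{\im}},p)$, which needs the same kind of chain argument, and that your ``preliminary observation'' about $\im$-vertices appearing in decreasing height order already relies on it. A cleaner route avoids compatible readings entirely: track the height function along the word. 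The source condition at step $k$ together with ($\mathtt{H1}$)/($\mathtt{H2}$) pins down $\uxi_{\jm}$ relative to $\uxi_{\im}$ exactly, and the source condition at step $k^{+}$ (where $\uxi_{\im}$ has dropped by $2d_{\bar{\im}}$ and $\uxi_{\jm}$ by $2d_{\bar{\jm}}$ times the count in question) then forces that count to equal $-c_{\bar{\jm}\bar{\im}}$.

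The direction (a) $\Rightarrow$ (b) is where you genuinely diverge from the paper, and where there is a gap. The paper builds $\uxi$ in one shot from the first-occurrence indices $k(\im) := \min\{k \mid \im_k = \im\}$, orienting $\uxi_{\im} > \uxi_{\jm}$ exactly when $k(\im) < k(\jm)$ and using (a) to check compatibility with ($\mathtt{H1}$)--($\mathtt{H2}$). Your induction on $N$ passes to the suffix $(\im_2,\ldots,\im_N)$, which is a reduced word of $s_{\im_1}w_0$, not of $w_0$; so you are implicitly proving a statement about arbitrary reduced words, and there the case where $\im_1$ never reappears is unavoidable (it already occurs two steps into the induction for $(1,2,1)$ in type $\mathsf{A}_2$). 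In that case condition (a) imposes no constraint involving $\im_1$, and your fix --- ``freely adjust $\uxi'$ within its equivalence class'' --- does not work: the equivalence class of Q-data consists of constant shifts only, which cannot change whether a neighbour $\jm$ sits above or below $\im_1$; and if the Q-datum $\calQ'$ supplied by the inductive hypothesis happens to have $\uxi'_{\jm} = \uxi'_{\im_1} - d$ for some neighbour $\jm$ not occurring in the suffix, then setting $\uxi_{\im_1} = \uxi'_{\im_1} + 2d_{\bar{\im}_1}$ violates ($\mathtt{H1}$). What you actually need is that the suffix is adapted to some Q-datum in which $\im_1$ is a sink; establishing this (or strengthening the inductive hypothesis to carry such a Q-datum along) is a genuine piece of work that the proposal omits, and it is precisely what the paper's global first-occurrence construction is designed to sidestep.
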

\ber
\begin{proof}
Assume (a). For each $\im \in \Delta_0$, we set 
$k(\im) := \min\{ 1 \le k \le N \mid \im_k = \im \}$.
Since $(\im_1,\cdots, \im_N)$ is a reduced word of the longest element $w_0$,
$\{ k(\im) \mid \im \in \Delta_0\}$
is well-defined as a subset of $\{1, \ldots, N \}$ 
of cardinality $|\Delta_0|$.
Thanks to our assumption, there exists a height function 
$\uxi$ on $(\Delta, \sigma)$ uniquely up to adding an integer 
so that we have 
(i) $\uxi_\im > \uxi_\jm$ if $\im \sim \jm$ and $k(\im) < k(\jm)$,
and (ii) $k(j^\circ) < k(\sigma (j^\circ)) < \cdots < k(\sigma^{r-1} (j^\circ))$
if $d_i < d_j$ and $i \sim j$
(with the notation in Definition~\ref{def: height}).
The same assumption also enables us to prove that the sequence
$(\im_1,\cdots, \im_N)$ is adapted to the resulting Q-datum
$\calQ = (\Delta, \sigma, \uxi)$. 
This proves the implication (a) $\Rightarrow$ (b).

The other implication (b) $\Rightarrow$ (a) 
follows from Lemma~\ref{lemma: compatible reading} immediately.  
\end{proof}
\er

\subsection{Twisted Auslander-Reiten quivers}
\label{subsection: twisted AR quiver}
Mimicking the characterization~(\ref{eq: 2-segment}) of the usual AR quiver $\Gamma_{Q}$ for a Dynkin quiver $Q$,
we give the following definition.
\begin{definition}
Let $\calQ = (\Delta, \sigma, \uxi)$ be a Q-datum for $\g$. We define
the \emph{twisted Auslander-Reiten quiver $\Gamma_{\calQ}$} of
$\calQ$ as the full subquiver of $\hDs$ whose vertex set
$(\Gamma_{\calQ})_{0}$ is given by
\begin{equation}
(\Gamma_{\calQ})_{0} := \{(\im, \uxi_{\im}-2d_{\bar{\im}}k) \in \hDs_{0} \mid
0 \le 2d_{\bar{\im}}k < rh^{\vee} + \uxi_{\im}-\uxi_{\im^{*}}\}.
\end{equation}
\end{definition}

\begin{example}
\label{example: GammaB3}
The twisted AR quivers associated with the Q-data $\calQ^{(1)}$ and $\calQ^{(2)}$ for $\g$ of type $\mathsf{B}_{3}$
given in Example~\ref{example: QB3} above are depicted as follows:
$$ \ber \Gamma_{\calQ^{(1)}} =
\raisebox{3.3em}{\scalebox{0.63}{\xymatrix @!C=0.1ex@R=1ex{
(i\setminus p) & -2 &-1&   0 &  1&   2 &  3&   4& 5&   6 &  7 & 8\\
1&&&&&  \bullet \ar@{->}[ddrr] &&&& \bullet \\ \\
2&&& \bullet \ar@{->}[dr]\ar@{->}[uurr] &&&&  \bullet \ar@{->}[dr] \ar@{->}[uurr] \\
3&& \bullet \ar@{->}[ur] && \bullet \ar@{->}[dr] && \bullet\ar@{->}[ur] && \bullet \ar@{->}[dr] && \bullet
\\
4&\bullet\ar@{->}[ddrr]\ar@{->}[ur] &&&&  \bullet \ar@{->}[ddrr]\ar@{->}[ur] &&&&  \bullet \ar@{->}[ur]   \ar@{->}[ddrr] \\ \\
5&&&  \bullet \ar@{->}[uurr] &&&& \bullet   \ar@{->}[uurr]   &&&& \bullet }}}, \er
 \ \
\Gamma_{\calQ^{(2)}} =
\raisebox{3.3em}{\scalebox{0.63}{\xymatrix @!C=0.1ex@R=1ex{
(i\setminus p) & -4 &-3& -2 &-1& 0 &1&2 & 3& 4& 5&  6 & 7 & 8 \\
1&&& &&&&  \bullet \ar@{->}[ddrr]  \\ \\
2&&&&& \bullet \ar@{->}[dr]\ar@{->}[uurr] &&&&  \bullet \ar@{->}[dr] \\
3&&\bullet \ar@{->}[dr] && \bullet \ar@{->}[ur] && \bullet \ar@{->}[dr] && \bullet \ar@{->}[ur] && \bullet \ar@{->}[dr] &&
\\
4&&&\bullet\ar@{->}[ddrr]\ar@{->}[ur] &&&&  \bullet \ar@{->}[ddrr]\ar@{->}[ur] &&&&  \bullet \ar@{->}[ddrr] \\ \\
5&\bullet \ar@{->}[uurr] &&&&  \bullet \ar@{->}[uurr] &&&& \bullet   \ar@{->}[uurr] &&&& \bullet }}}
$$
\end{example}

The following theorem gives a twisted analog of the canonical bijection (\ref{eq: bijection2}).

\begin{theorem}[cf.~\cite{OS19b}]
\label{theorem: OS19b main}
For each ${\rm Q}$-datum $\calQ=(\Delta, \sigma, \uxi)$ for $\g$,
we denote by $[\calQ]$ the set of
all the reduced words for the longest element $w_{0} \in \sfW$ adapted to $\calQ$.
Then $[\calQ]$ forms a single commutation class and there
is a unique isomorphism $\Gamma_{\calQ} \cong \Upsilon_{[\calQ]}$ of quivers
which intertwines $\pi$ and $\res^{[\calQ]}$.
Moreover, the assignment $\calQ \mapsto [\calQ]$ gives a bijection
$$
\{ \text{Q-datum for $\g$}\}/2r\Z \quad \overset{1:1}{\longleftrightarrow} \quad \lf \Delta^{\!\sigma} \rf,
$$
where $/2r\Z$ means that we ignore constant differences between height functions.
\end{theorem}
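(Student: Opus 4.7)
My plan is to leverage the compatible reading formalism from Lemma~\ref{lemma: compatible reading}, treating the vertex set $(\Gamma_\calQ)_0 \subset \hDs_0$ as a ``fundamental domain'' from which every reduced word of $w_0$ adapted to $\calQ$ can be extracted. First, I would verify that $|(\Gamma_\calQ)_0| = N = \ell(w_0)$ by summing the local counts $\#\{k \ge 0 \mid 2 d_{\bar\im} k < r h^\vee + \uxi_\im - \uxi_{\im^*}\}$ over $\im \in \Delta_0$; here Lemma~\ref{lemma: parity} together with Remark~\ref{remark: h^vee parity} ensures that these are positive integers, and the identity $n r h^\vee = 2N$ delivers the correct total. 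Next, I would check that $X = (\Gamma_\calQ)_0$ satisfies hypothesis (\ref{eq: assumption xi-m}) of Lemma~\ref{lemma: compatible reading}: the shifted function $\im \mapsto \uxi_\im - 2 m_\im d_{\bar\im}$ still satisfies ($\mathtt{H1}$)--($\mathtt{H3}$). Geometrically, this amounts to the period $r h^\vee$ being compatible with both $*$ and $\sigma$, which is transparent from the shape of $(\Gamma_\calQ)_0$ together with Remark~\ref{remark: h^vee parity}.

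With these preliminaries, Lemma~\ref{lemma: compatible reading} implies that any compatible reading $(x_1,\dots,x_N)$ of $(\Gamma_\calQ)_0$ produces a sequence $\bfi = (\pi(x_1),\dots,\pi(x_N))$ adapted to $\calQ$. The main obstacle is then to show that this is actually a reduced word for $w_0$ rather than merely an adapted sequence; equivalently, that $w[(\Gamma_\calQ)_0] = w_0$. I would prove this by induction via source reflections: if $\im$ is a source of $\calQ$, then $(\im, \uxi_\im)$ is a sink of $\Gamma_\calQ$, and a direct identification of the remaining vertex set with $(\Gamma_{s_\im\calQ})_0$ yields $w[(\Gamma_\calQ)_0] = s_\im \cdot w[(\Gamma_{s_\im\calQ})_0]$. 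A convenient base case is provided by Q-data $\calQ$ arising from $\sigma$-Coxeter elements $\tau$ (where $\uxi$ is essentially descending along a $\sigma$-adapted enumeration of $I$): here a natural compatible reading produces precisely $\bfi(\tau)$ from Theorem~\ref{theorem: OSmain}, so that $w[(\Gamma_\calQ)_0] = w_0$ on the nose. Combining this with Corollary~\ref{corollary: adapted seq}, which guarantees that any two reduced words of $w_0$ adapted to $\calQ$ are commutation equivalent, we conclude that $[\calQ]$ is a single non-empty commutation class.

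For the quiver isomorphism $\Gamma_\calQ \cong \Upsilon_{[\calQ]}$, I would map $x_k \mapsto \beta^\bfi_k$ for any reduced word $\bfi$ obtained from a compatible reading; independence of the choice of reading and the residue-intertwining identity $\res^{[\calQ]}(\beta^\bfi_k) = \pi(x_k)$ are both immediate from Theorem~\ref{theorem: OS19a main}. To see that arrows match, I would compare the characterization of arrows in $\Upsilon_{[\calQ]}$ (immediate-neighbor structure in any compatible reading) with the arrow convention in $\hDs$ (height jump equal to $\min(d_{\bar\im}, d_{\bar\jm})$), which correspond once the bijection above is in place. Finally, for the bijection $\calQ \leftrightarrow [\calQ]$: injectivity modulo $2r\Z$ follows because $\uxi_\im$ can be recovered (modulo $2r\Z$) as the maximal second coordinate of the vertices of $\Gamma_\calQ$ lying over $\im$; and surjectivity follows from the compatibility $[s_\im \calQ] = r_{\im}[\calQ]$ under source reflection, which allows any class in $\lf\Ds\rf$ to be traced back to a Q-datum associated to a $\sigma$-Coxeter element, using Theorem~\ref{theorem: OSmain} for the base case.
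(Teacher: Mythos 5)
Your route is genuinely different from the paper's. The paper's proof is essentially a citation: it picks a compatible reading of $(\Gamma_{\calQ})_{0}$, quotes \cite[Section 4]{OS19b} for the three substantive facts (the reading is a reduced word for $w_{0}$, the bijection with $\lf\Ds\rf$, and the isomorphism $\Gamma_{\calQ}\cong\Upsilon_{[\calQ]^{\prime}}$), and then only proves the new identification $[\calQ]=[\calQ]^{\prime}$ via Lemma~\ref{lemma: compatible reading} and Corollary~\ref{corollary: adapted seq}. You instead attempt a self-contained proof by source-reflection induction from the $\sigma$-Coxeter base case $\calQ^{\tau}$, where $\bfi(\tau)$ realizes a compatible reading of $(\Gamma_{\calQ^{\tau}})_{0}$ and Theorem~\ref{theorem: OSmain} gives $w[(\Gamma_{\calQ^{\tau}})_{0}]=w_{0}$. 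This is a legitimate and more informative strategy (it is close in spirit to how \cite{OS19b} itself argues), but it commits you to verifying details the paper outsources, and two of them are not in order as written.

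First, the identification of $(\Gamma_{\calQ})_{0}\setminus\{(\im,\uxi_{\im})\}$ with $(\Gamma_{s_{\im}\calQ})_{0}$ is false: comparing the defining inequalities, one finds
$(\Gamma_{s_{\im}\calQ})_{0}=\bigl((\Gamma_{\calQ})_{0}\setminus\{(\im,\uxi_{\im})\}\bigr)\cup\{(\im^{*},\uxi_{\im}-rh^{\vee})\}$,
i.e.\ removing the top $\im$-vertex is compensated by a new bottom $\im^{*}$-vertex (this is exactly Proposition~\ref{proposition: reflection algorithm}). The correct recursion is therefore $w[(\Gamma_{\calQ})_{0}]=s_{\im}\cdot w[(\Gamma_{s_{\im}\calQ})_{0}]\cdot s_{\im^{*}}$, the combinatorial reflection functor $r_{\im}$, not $s_{\im}\cdot w[(\Gamma_{s_{\im}\calQ})_{0}]$. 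Your induction still closes because $s_{\im}w_{0}s_{\im^{*}}=w_{0}$, but the step must be restated. (Relatedly, your cardinality count needs the divisibility $2d_{\bar{\im}}\mid rh^{\vee}+\uxi_{\im}-\uxi_{\im^{*}}$, which follows from $\im^{*}=\sigma^{-rh^{\vee}/2}(\im)$ and Lemma~\ref{lemma: parity}~(\ref{parity_injection}) but is not noted.) Second, the matching of arrows in $\Gamma_{\calQ}\cong\Upsilon_{[\calQ]}$ is the real content of the quoted result and cannot be dismissed as ``immediate once the bijection is in place'': one must check both that every $\hDs$-arrow between vertices of $(\Gamma_{\calQ})_{0}$ satisfies B\'edard's no-intermediate-residue condition, and conversely that no $\Upsilon_{[\calQ]}$-arrow joins vertices at height difference other than $\min(d_{\bar{\im}},d_{\bar{\jm}})$; this uses the $2d_{\bar{\im}}$-segment structure of the fibers of $\pi$ and the parity constraints ($\mathtt{H1}$), ($\mathtt{H2}$) in an essential way. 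As it stands this step is asserted rather than proved, and it is precisely the part for which the paper defers to \cite{OS19b}.
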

\begin{proof}
In the case $\sigma = \id$, the assertion goes back to the previous section.
Let us focus on the case $\sigma \neq \id$.
We pick a compatible reading $(x_{1}, \ldots, x_{N})$ of $(\Gamma_{\calQ})_{0}$
and set $\bfi := (\pi(x_{1}), \ldots, \pi(x_{N})) \in \Delta_{0}^{N}$.
Here we used the fact \ber$|(\Gamma_{\calQ})_{0}|=N=\ell(w_0)$\er.
By construction, the commutation class $[\bfi]$ does not depend on the choice of the compatible reading
and hence we denote it by $[\calQ]^{\prime}$.
Then the following results have been obtained in \cite[Section 4]{OS19b}:
\begin{itemize}
\item The sequence $\bfi$ obtained as above gives a reduced word for $w_{0}$,
\item The assignment $\calQ \mapsto [\calQ]^{\prime}$ yields a bijection
$
\{ \text{Q-datum for $\g$}\}/2r\Z \overset{1:1}{\longrightarrow} \lf \Delta^{\!\sigma} \rf,
$
\item We have a unique isomorphism $\Gamma_{\calQ} \cong \Upsilon_{[\calQ]^{\prime}}$ of quivers
which intertwines $\pi$ and $\res^{[\calQ]^{\prime}}$.
\end{itemize}
Thanks to Lemma~\ref{lemma: compatible reading} and \ber Corollary~\ref{corollary: adapted seq}, \er
we have $[\calQ] = [\calQ]^{\prime}$, which completes the proof.
\end{proof}

Let $\phi_{\calQ, 0} \colon (\Gamma_{\calQ})_{0} \to \sfR^{+}$ denote the underlying bijection
of the isomorphism $\Gamma_{\calQ} \cong \Upsilon_{[\calQ]}$ in Theorem~\ref{theorem: OS19b main}.
For $\beta \in \sfR^{+}$, we call $(\im,p) = \phi_{\calQ, 0}^{-1}(\beta)$
the \emph{coordinate of $\beta$ in $\Gamma_{\calQ}$}.
Theorem~\ref{theorem: OS19b main} implies that if $\beta \in \sfR^{+}$ is located at the coordinate $(\im, p)$ in $\Gamma_{\calQ}$,
we have $\res^{[\calQ]}(\beta) = \im$.

The following is a twisted analogue of the algorithm in the last paragraph of Subsection~\ref{subsection: AR quiver}.

\begin{proposition}[\cite{OS19b}] \label{proposition: reflection algorithm}
Let $\calQ = (\Delta, \sigma, \uxi)$ be a Q-datum.
For a source $\im \in \Delta_{0}$ of $\calQ$,
we have $r_{\im}[\calQ] = [s_{\im}\calQ]$.
The bijection $\phi_{s_{\im}\calQ, 0} \colon (\Gamma_{s_{\im}\calQ})_{0} \to \sfR^{+}$
can be obtained from the bijection $\phi_{\calQ, 0} \colon (\Gamma_{\calQ})_{0} \to \sfR^{+}$ in the following way:
\begin{enumerate}
\item A positive root $\beta \in \sfR^{+} \setminus \{\alpha_{\im}\}$ is located at the coordinate $(\jm, p)$ in $\Gamma_{s_{i}\calQ}$
if $s_{\im}\beta$ is located at the coordinate $(\jm,p)$ in $\Gamma_{\calQ}$,
\item The simple root $\alpha_{\im}$ is located at the coordinate $(\im^{*}, \uxi_{\im}-rh^{\vee})$ in $\Gamma_{s_{\im}\calQ}$
while $\alpha_{\im}$ was located at the coordinate $(\im,\uxi_{\im})$ in $\Gamma_\calQ$.
\end{enumerate}
\end{proposition}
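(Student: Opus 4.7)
My plan is to work with a reduced word for $w_0$ starting with $\im$ and transfer the reflection at the Weyl group level to the Q-datum level. Since $\im$ is a source of $\calQ$, the vertex $(\im, \uxi_\im)$ is maximal in $(\Gamma_\calQ)_0$ and therefore may be chosen as the first entry of a compatible reading $(x_1, \dots, x_N)$ of $\Gamma_\calQ$; by Theorem~\ref{theorem: OS19b main} this yields a reduced word $\bfi = (\im, \im_2, \dots, \im_N) \in [\calQ]$. Its reflection $r_\im \bfi = (\im_2, \dots, \im_N, \im^*)$ lives in $r_\im[\calQ]$ by definition.

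Next I will show that $r_\im \bfi$ is adapted to $s_\im\calQ$. The initial segment $(\im_2, \dots, \im_N)$ is adapted to $s_\im\calQ$ directly from the definition of adaptedness. For the last letter, it suffices to verify that $\im^*$ is a source of the intermediate Q-datum $\calQ' := s_{\im_N} s_{\im_{N-1}} \cdots s_{\im_2} s_\im \calQ$ obtained after applying all of $\bfi$. The crux is to compute the height function of $\calQ'$: writing $m_\jm$ for the multiplicity of $\jm$ in $\bfi$, it equals $\uxi_\jm - 2 d_{\bar\jm} m_\jm$. By Theorem~\ref{theorem: OS19b main}, $m_\jm$ coincides with the number of vertices of $\Gamma_\calQ$ in the column $\{\jm\} \times \Z$, which by the defining inequality of $(\Gamma_\calQ)_0$ equals
\[
m_\jm = \frac{rh^\vee + \uxi_\jm - \uxi_{\jm^*}}{2 d_{\bar\jm}},
\]
the divisibility following from Lemma~\ref{lemma: parity} together with Remark~\ref{remark: h^vee parity} (which pins down $*$ in the non-simply-laced case). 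Consequently, the height function of $\calQ'$ is $\jm \mapsto \uxi_{\jm^*} - rh^\vee$, and the condition ``$\im^*$ is a source of $\calQ'$'' becomes, after substitution and the fact that $*$ preserves adjacency, exactly ``$\im$ is a source of $\calQ$'', which holds by hypothesis. Hence $r_\im \bfi \in [s_\im\calQ]$, proving $r_\im [\calQ] = [s_\im\calQ]$.

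For the coordinate description, I will use the standard identities $\beta^{r_\im \bfi}_{k-1} = s_\im(\beta^\bfi_k)$ for $2 \le k \le N$ and $\beta^{r_\im \bfi}_N = \alpha_\im$ (the latter following from $w_0(\alpha_{\im^*}) = -\alpha_\im$). Combined with Theorem~\ref{theorem: OS19b main} applied to both $\calQ$ and $s_\im\calQ$, the first identity yields assertion (1) once one knows the compatible reading of $\Gamma_{s_\im\calQ}$ is $(x_2, \dots, x_N, x_{\mathrm{new}})$ for an appropriate new vertex $x_{\mathrm{new}}$. To identify $x_{\mathrm{new}}$, I will verify directly from Definition~\ref{def: r-quiver} that
\[
(\Gamma_{s_\im\calQ})_0 = \bigl((\Gamma_\calQ)_0 \setminus \{(\im, \uxi_\im)\}\bigr) \cup \{(\im^*, \uxi_\im - rh^\vee)\},
\]
so that $x_{\mathrm{new}} = (\im^*, \uxi_\im - rh^\vee)$; this reduces to a short case analysis according to whether $\jm \in \{\im, \im^*\}$, using $d_{\bar{\im}^*} = d_{\bar\im}$ to match the column lengths. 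Together with $\beta^{r_\im\bfi}_N = \alpha_\im$, this gives assertion (2).

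The main obstacle is the multiplicity computation in the second paragraph: the identity $m_\jm = (rh^\vee + \uxi_\jm - \uxi_{\jm^*})/(2 d_{\bar\jm})$ requires tracking the interaction between the parity conditions of Lemma~\ref{lemma: parity} and the involution $*$, which behaves differently according to the three cases in Table~\ref{table: classification}; in particular, the subtle case where $* = \sigma$ with $h^\vee$ odd and $r = 2$ has to be handled by hand to confirm the divisibility of $rh^\vee + \uxi_\jm - \uxi_{\jm^*}$ by $2 d_{\bar\jm}$.
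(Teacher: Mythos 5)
The paper itself offers no argument for this proposition --- it is imported wholesale from \cite{OS19b} --- so your proof cannot be compared with an in-paper one; judged on its own, it is a correct and essentially complete reconstruction. The strategy (start a compatible reading of $(\Gamma_{\calQ})_0$ at $(\im,\uxi_\im)$, which is legitimate because a source of $\calQ$ has no outgoing arrows inside $\Gamma_{\calQ}$; rotate the resulting reduced word; verify adaptedness of the rotated word to $s_\im\calQ$ via the column-count computation; then transport the identities $\beta^{r_\im\bfi}_{k-1}=s_\im\beta^{\bfi}_k$ and $\beta^{r_\im\bfi}_N=\alpha_\im$ through Theorem~\ref{theorem: OS19b main}) is sound, and the two obstacles you flag do go through. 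For the divisibility of $rh^\vee+\uxi_\jm-\uxi_{\jm^*}$ by $2d_{\bar\jm}$ there is a shortcut that avoids the case analysis over Table~\ref{table: classification}: writing $Q_\jm=(rh^{\vee}+\uxi_{\jm}-\uxi_{\jm^{*}})/(2d_{\bar{\jm}})$, one has $m_\jm=\lceil Q_\jm\rceil$ while $\sum_{\jm}Q_{\jm}=nrh^{\vee}/2=N=\sum_{\jm}m_{\jm}$ by \eqref{eq:dcn} and the telescoping of the $\uxi_{\jm}-\uxi_{\jm^{*}}$ terms, which forces $\lceil Q_\jm\rceil=Q_\jm$ for every $\jm$. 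Two small points worth making explicit in a write-up: (i) the identification $\phi_{\calQ,0}(x_k)=\beta^{\bfi}_k$ rests on the standard fact that a compatible reading of $\Upsilon_{[\bfi]}$ with residue sequence $\bfi'$ realizes exactly the roots $\beta^{\bfi'}_k$ in order (roots of a fixed residue are totally ordered by $\preceq_{[\bfi]}$, so the residue sequence determines the reading); and (ii) your direct verification of $(\Gamma_{s_\im\calQ})_0=\bigl((\Gamma_\calQ)_0\setminus\{(\im,\uxi_\im)\}\bigr)\cup\{(\im^*,\uxi_\im-rh^\vee)\}$ from Definition~\ref{def: r-quiver} is genuinely needed and non-circular, since the paper only records that identity later (in the proof of Theorem~\ref{thm: phi_Q}) as a consequence of the very proposition you are proving.
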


\subsection{Generalizations of Coxeter element}
First we introduce a generalization of the $r$-th power $\tau^{r}$
of a $\sigma$-twisted Coxeter element $\tau$ for any Q-datum.
Note that we have $\sum_{\im \in \Delta_{0}} r_{\bar{\im}} = nr$.

\begin{proposition}
\label{proposition: quasi Coxeter}
For each ${\rm Q}$-datum $\calQ=(\Delta, \sigma, \uxi)$,
there exists a unique element $\btau_{\calQ} \in \sfW$
with $\ell(\btau_{\calQ}) = nr$
which has a reduced word $(\im_{1}, \ldots, \im_{nr})$
adapted to $\calQ$
such that
$$|\{ k \mid 1 \le k \le nr, \im_{k}=\im \}|=r_{\bar{\im}} \quad \text{ for each $\im \in \Delta_{0}$.}$$
\end{proposition}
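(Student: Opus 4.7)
The plan is to construct $\btau_\calQ$ by reading off a ``top slice'' of the repetition quiver $\hDs$ and then to deduce the required properties from Lemmas~\ref{lemma: compatible reading} and~\ref{lemma: repq}, together with Theorem~\ref{theorem: OS19b main}. Concretely, I would introduce the finite subset
\[
X := \{(\im,\, \uxi_\im - 2d_{\bar{\im}} k) \in \hDs_0 \mid \im \in \Delta_0,\ 0 \le k < r_{\bar{\im}}\},
\]
whose cardinality is $\sum_{\im \in \Delta_0} r_{\bar{\im}} = nr$. Since $2r \in 2d_{\bar{\im}}\Z$ for every $\im$, the shifted function $\uxi - 2r$ still satisfies $(\mathtt{H1})$--$(\mathtt{H3})$, so the multiplicity function $m_\im = r_{\bar{\im}}$ satisfies the hypothesis of Lemma~\ref{lemma: compatible reading}. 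Picking a compatible reading $(x_1, \ldots, x_{nr})$ of $X$ in $\hDs$ therefore produces a sequence $(\im_1, \ldots, \im_{nr}) := (\pi(x_1), \ldots, \pi(x_{nr}))$ adapted to $\calQ$ in which $\im$ occurs exactly $r_{\bar{\im}}$ times; setting $\btau_\calQ := s_{\im_1} \cdots s_{\im_{nr}}$, Lemma~\ref{lemma: repq} makes this element independent of the compatible reading chosen.

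To prove $\ell(\btau_\calQ) = nr$, I would verify two statements: (a) $X \subset (\Gamma_\calQ)_0$, and (b) $X$ is \emph{forward-closed} in $\Gamma_\calQ$, meaning that every arrow of $\Gamma_\calQ$ whose source lies in $X$ also has its target in $X$. Granting (a) and (b), any compatible reading of $X$ extends to a compatible reading of $(\Gamma_\calQ)_0$ by appending a compatible reading of the complement, and Theorem~\ref{theorem: OS19b main} identifies the resulting reading with a reduced word for $w_0 \in \sfW$; its initial segment of length $nr$ is consequently reduced, giving $\ell(\btau_\calQ) = nr$. Statement (a) is a routine check after rewriting the defining inequality of $(\Gamma_\calQ)_0$ as $(rh^\vee + \uxi_\im - \uxi_{\im^*})/(2d_{\bar{\im}}) \ge r_{\bar{\im}}$: when $\im = \im^*$ this reduces to $h^\vee \ge 2$, and when $\im \ne \im^*$ Remark~\ref{remark: h^vee parity} forces $d_{\bar{\im}} = r$ and hence $r_{\bar{\im}} = 1$, so the bound follows from the fact that $\im$ must occur in some reduced word for $w_0$.

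The main obstacle is (b), which I plan to handle by a case analysis on an arrow $(\im, p) \to (\jm, p + \min(d_{\bar{\im}}, d_{\bar{\jm}}))$ of $\Gamma_\calQ$ with source in $X$. The case $d_{\bar{\im}} = d_{\bar{\jm}}$ is handled directly by $(\mathtt{H1})$: either the target is the top vertex of the $\jm$-column (when $\uxi_\jm = \uxi_\im + d_{\bar{\im}}$), or it corresponds to shifting the $k$-index by $-1$ within $X$. The cases $d_{\bar{\im}} \ne d_{\bar{\jm}}$ are more delicate and combine $(\mathtt{H2})$ with the parity constraint defining $\hDs_0$: the relation $\uxi_{\sigma^l(j^\circ)} = \uxi_{j^\circ} - 2l$ forces the unique parity-admissible target to coincide with a top vertex $(\sigma^l(j^\circ), \uxi_{\sigma^l(j^\circ)}) \in X$ for the appropriate $l$, while the remaining would-be targets overshoot the $\jm$-column of $(\Gamma_\calQ)_0$ and so the arrows in question simply do not exist in $\Gamma_\calQ$. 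Finally, uniqueness follows from the converse direction of the same argument: any reduced word adapted to $\calQ$ with the prescribed multiplicities $r_{\bar{\im}}$ is itself a compatible reading of the same set $X$ by Lemma~\ref{lemma: compatible reading}, hence commutation-equivalent to our defining word by Lemma~\ref{lemma: repq}, and therefore gives the same element $\btau_\calQ \in \sfW$.
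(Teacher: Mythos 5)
Your proposal is correct and follows essentially the same route as the paper: define the set $X_{\calQ}$ of the top $r_{\bar{\im}}$ vertices of each column, take a compatible reading, extend it to a compatible reading of $(\Gamma_{\calQ})_{0}$, and invoke Theorem~\ref{theorem: OS19b main} together with Lemmas~\ref{lemma: repq} and~\ref{lemma: compatible reading} for reducedness, adaptedness and uniqueness. The only difference is that you explicitly verify the inclusion $X \subset (\Gamma_{\calQ})_{0}$ and the forward-closedness needed for the extension step, which the paper dismisses with ``by construction''; your case analysis there is sound.
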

\begin{proof}
Define the subset $X_{\calQ} \subset \hDs_{0}$ by
$$
X_{\calQ} := \{ (\im, \uxi_{\im} - 2kd_{\bar{\im}}) \in \hDs_{0} \mid 0 \le k < r_{\bar{\im}} \}.
$$
Note that the condition~(\ref{eq: assumption xi-m}) is satisfied when $m_{\im} = r_{\bar{\im}}$
because we have $r_{\bar{\im}}d_{\bar{\im}} = r$ for any $\im \in \Delta_{0}$.
Let $(x_{1}, \ldots, x_{nr})$ be a compatible reading of $X_{\calQ}$.
By construction, it can be extended to a compatible reading $(x_{1}, \ldots, x_{N})$ of $(\Gamma_{\calQ})_{0}$.
By Theorem~\ref{theorem: OS19b main}, the sequence
$(\im_{1}, \ldots, \im_{N}) := (\pi(x_{1}), \ldots, \pi(x_{N}))$
gives a reduced word for $w_{0}$.
In particular, the subsequence $(\im_{1}, \ldots, \im_{nr}) = (\pi(x_{1}), \ldots, \pi(x_{nr}))$ gives
a reduced word for the element $\btau_{\calQ} := w[X_{\calQ}] = s_{\im_{1}} \cdots s_{\im_{nr}}$,
which is adapted to $\calQ$ thanks to Lemma~\ref{lemma: compatible reading}.
The uniqueness also follows from Lemma~\ref{lemma: compatible reading}.
\end{proof}

\begin{definition} \label{def: quasi Coxeter element}
We refer to the unique element $\btau_{\calQ} \in \sfW$ in Proposition~\ref{proposition: quasi Coxeter}
as the \emph{quasi Coxeter element associated with $\calQ$}.
\end{definition}

\begin{corollary}
\label{corollary: quasi Coxeter}
Let $\calQ=(\Delta, \sigma, \uxi)$ be a ${\rm Q}$-datum and $\im \in \Delta_{0}$ be a source of $\calQ$.
Then we have $$s_{\im}\btau_{\calQ}s_{\im}=\btau_{s_{\im}\calQ}.$$
\end{corollary}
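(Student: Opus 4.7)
The plan is to use the compatible-reading characterization of $\btau_\calQ$ given by Lemma~\ref{lemma: compatible reading} together with the uniqueness in Proposition~\ref{proposition: quasi Coxeter}. First, since $\im$ is a source of $\calQ$, the vertex $(\im, \uxi_\im)$ has no outgoing arrow in $\hDs$ landing in $X_\calQ = \{(\jm, \uxi_\jm - 2k d_{\bar{\jm}}) \mid \jm \in \Delta_0, \; 0 \le k < r_{\bar{\jm}}\}$, so there exists a compatible reading $(x_1, \ldots, x_{nr})$ of $X_\calQ$ with $x_1 = (\im, \uxi_\im)$. Writing $\im_k := \pi(x_k)$, I obtain $\btau_\calQ = s_\im s_{\im_2} \cdots s_{\im_{nr}}$, hence $s_\im \btau_\calQ s_\im = s_{\im_2} \cdots s_{\im_{nr}} s_\im$.

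Set $\calQ' := s_\im \calQ$. Using $(s_\im \uxi)_\jm = \uxi_\jm - 2 d_{\bar{\im}} \delta_{\im,\jm}$ and $r_{\bar{\im}} d_{\bar{\im}} = r$, one checks directly that
\[
X_{\calQ'} = \bigl(X_\calQ \setminus \{(\im,\uxi_\im)\}\bigr) \cup \{(\im, \uxi_\im - 2r)\}.
\]
I would then show that the shifted sequence $(y_1, \ldots, y_{nr}) := (x_2, \ldots, x_{nr}, (\im, \uxi_\im - 2r))$ is a compatible reading of $X_{\calQ'}$. The compatibility among $(x_2, \ldots, x_{nr})$ is inherited from $(x_1, \ldots, x_{nr})$, so the only new condition to verify is that the final element $y_{nr} = (\im, \uxi_\im - 2r)$ has no incoming arrow in $\hDs$ from any other element of $X_{\calQ'}$.

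This verification is the main obstacle. An incoming arrow must originate from some $(\jm, \uxi_\im - 2r - \min(d_{\bar{\im}}, d_{\bar{\jm}}))$ with $\jm \sim \im$ in $\Delta$. I would rule this out by a short case analysis on $(d_{\bar{\im}}, d_{\bar{\jm}}) \in \{1,r\}^2$, using Lemma~\ref{lemma: parity} together with the axioms $(\mathtt{H1})$ and $(\mathtt{H2})$: the source hypothesis on $\im$ forces $\uxi_\im - \uxi_\jm = \min(d_{\bar{\im}}, d_{\bar{\jm}})$ in the cases covered by $(\mathtt{H1})$, and in the branching case $(d_{\bar{\im}}, d_{\bar{\jm}}) = (1,r)$ covered by $(\mathtt{H2})$ it forces the heights $\{\uxi_{\jm'} \mid \jm' \in \bar{\jm}\}$ to equal $\{\uxi_\im - 1, \uxi_\im - 3, \ldots, \uxi_\im - (2r-1)\}$; in each case the required target height $\uxi_\im - 2r - \min(d_{\bar{\im}}, d_{\bar{\jm}})$ falls strictly outside the set of heights appearing at position $\jm$ in $X_{\calQ'}$.

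Granting this, Lemma~\ref{lemma: compatible reading} applied to $\calQ'$ (whose applicability follows because the shifted map $\uxi' - 2 m_{\bullet} d_{\bar{\bullet}}$ is again a valid height function, since shifting by $2r$ respects ($\mathtt{H3}$)) shows that $(\im_2, \ldots, \im_{nr}, \im)$ is adapted to $\calQ'$. The occurrence counts $|\{k \mid \pi(y_k) = \jm'\}| = r_{\bar{\jm'}}$ are preserved from $X_\calQ$ to $X_{\calQ'}$, and the resulting expression $s_{\im_2}\cdots s_{\im_{nr}} s_\im$ is a reduced word of length $nr$ (since any adapted sequence yields a reduced word, as in the proof of Proposition~\ref{proposition: quasi Coxeter}). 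The uniqueness part of Proposition~\ref{proposition: quasi Coxeter} then gives $s_\im \btau_\calQ s_\im = s_{\im_2}\cdots s_{\im_{nr}} s_\im = \btau_{\calQ'} = \btau_{s_\im \calQ}$, as desired.
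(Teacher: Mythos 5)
Your proof is correct and follows essentially the same route as the paper's: both start from a compatible reading of $X_{\calQ}$ beginning at $(\im,\uxi_{\im})$, cyclically rotate it, and use the identification $X_{s_{\im}\calQ}=(X_{\calQ}\setminus\{(\im,\uxi_{\im})\})\cup\{(\im,\uxi_{\im}-2r)\}$. The only difference is that you verify compatibility of the shifted reading by a direct height/arrow check at the new last vertex, whereas the paper deduces it from adaptedness of the rotated sequence via Lemma~\ref{lemma: compatible reading}; both verifications go through.
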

\begin{proof}
Let $(x_{1}, \ldots, x_{nr})$ be a compatible reading of $X_{\calQ}$ such that $x_{1} = (\im, \uxi_{\im})$.
By Proposition~\ref{proposition: quasi Coxeter}, we get a reduced word
$(\im_{2}, \ldots, \im_{nr}) := (\pi(x_{2}), \ldots, \pi(x_{nr}))$ for the element $s_{\im}\btau_{\calQ}$,
and hence $s_{\im}\btau_{\calQ} = w[X^{\prime}]$ with $X^{\prime} := X_{\calQ} \setminus \{(\im, \uxi_{\im})\}$.
On the other hand, the sequence $(\im_{2}, \ldots, \im_{nr}, \im)$
is adapted to $s_{\im}\calQ$ and hence there is
a compatible reading $(x^{\prime}_{1}, \ldots, x^{\prime}_{nr})$ of $X_{s_{\im}\calQ}$
such that $x^{\prime}_{nr} = (\im, \uxi_{\im} - 2r)$ by Lemma~\ref{lemma: compatible reading}.
Thus we have $\btau_{s_{\im}\calQ} s_{\im} = w[X^{\prime\prime}]$ where
$X^{\prime \prime} := X_{s_{\im}\calQ} \setminus \{ (\im, \uxi_{\im} - 2r)\}$.
Since $X^{\prime} = X^{\prime\prime}$, we obtain the conclusion.
\end{proof}

Next we consider a generalization of twisted Coxeter element for any Q-datum $\calQ$.
\begin{definition}
Let $\calQ=(\Delta, \sigma, \uxi)$ be a Q-datum for $\g$.
For each $i \in I$, we denote by $i^{\circ}$ an element in the $\sigma$-orbit $i$ satisfying the condition
$$
\uxi_{i^{\circ}} = \max\{\uxi_{\im} \mid \im \in i\}.
$$
By Lemma~\ref{lemma: parity}~(\ref{parity_injection}), $i^{\circ} \in i$ is uniquely determined.
The subset $I_{\calQ}^{\circ} := \{i^{\circ} \mid i \in I\} \subset \Delta_{0}$
gives a section of the natural quotient map $\bar{ \ } \colon \Delta_{0} \to I$.
We denote the corresponding $\sigma$-Coxeter element by
$\tau_{\calQ}^{\circ} := w[X_{\calQ}^{\circ}] \sigma$,
where $X_{\calQ}^{\circ} := \{ (\im, \uxi_{\im}) \mid \im \in I_{\calQ}^{\circ} \} \subset \hDs_{0}$.
\end{definition}

\begin{lemma}
\label{lemma: twisted Coxeter height}
Let $i, j \in I$ with $i \sim j$.
\ber With \er the above notation, we have:
\begin{enumerate}
\item \label{circ_adj} $|\uxi_{i^{\circ}} -\uxi_{j^{\circ}}| = \min(d_{i}, d_{j})$ if $i^{\circ} \sim j^{\circ}$,
\item \label{circ_nonadj} $\uxi_{i^{\circ}}=\uxi_{j^{\circ}}$ if $i^{\circ} \not \sim j^{\circ}$.
\end{enumerate}
\end{lemma}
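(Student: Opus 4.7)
The plan is to handle three cases according to the values of $d_i$ and $d_j$, which by Table~\ref{table: classification} always lie in $\{1, r\}$.

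Suppose first that $d_i \neq d_j$; by symmetry I may take $d_i < d_j$, so $d_i = 1$ and $d_j = r$. Then $i = \{i^\circ\}$, and one reads off from Table~\ref{table: classification} that the fixed vertex $i^\circ$ is adjacent in $\Delta$ to every element of $j$. Applying $(\mathtt{H2})$ with $\im = i^\circ$ produces a distinguished element $\tilde{\jm} \in j$ satisfying $|\uxi_{i^\circ} - \uxi_{\tilde{\jm}}| = 1$ and $\uxi_{\sigma^l(\tilde{\jm})} = \uxi_{\tilde{\jm}} - 2l$ for $0 \le l < r$. The second identity forces $\tilde{\jm}$ to be the maximum-height representative of $j$, so $\tilde{\jm} = j^\circ$; hence $i^\circ \sim j^\circ$ and $|\uxi_{i^\circ} - \uxi_{j^\circ}| = 1 = \min(d_i, d_j)$, which gives~(1) and shows that~(2) cannot occur here. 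The subcase $d_i = d_j = 1$ is immediate: both orbits are singletons, $i^\circ \sim j^\circ$ directly, and $(\mathtt{H1})$ gives $|\uxi_{i^\circ} - \uxi_{j^\circ}| = 1$.

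The essential case is $d_i = d_j = r > 1$. Inspection of Table~\ref{table: classification} shows this can only happen when $r = 2$ and $(\Delta, \sigma) \in \{(\mathsf{A}_{2n-1}, \vee), (\mathsf{E}_6, \vee)\}$; in those types both orbits $i$ and $j$ have exactly two elements, and the edges of $\Delta$ between them form exactly two $\sigma$-conjugate pairs. Writing $i = \{\im_0, \sigma(\im_0)\}$ and $j = \{\jm_0, \sigma(\jm_0)\}$, I may relabel so that $\im_0 \sim \jm_0$ (whence $\sigma(\im_0) \sim \sigma(\jm_0)$ by $\sigma$-equivariance) and so that $i^\circ = \im_0$. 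If also $j^\circ = \jm_0$, then $i^\circ \sim j^\circ$ and $(\mathtt{H1})$ gives $|\uxi_{i^\circ} - \uxi_{j^\circ}| = 2 = \min(d_i,d_j)$, yielding~(1). The only remaining possibility is $j^\circ = \sigma(\jm_0)$, in which case $i^\circ \not\sim j^\circ$, and I must prove $\uxi_{i^\circ} = \uxi_{j^\circ}$.

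For that last configuration I combine $(\mathtt{H1})$ applied to both adjacent pairs $(\im_0,\jm_0)$ and $(\sigma(\im_0), \sigma(\jm_0))$ with Lemma~\ref{lemma: parity}~(\ref{parity_injection}) for $r = 2$. The lemma implies that both $\uxi_{\im_0} - \uxi_{\sigma(\im_0)}$ and $\uxi_{\sigma(\jm_0)} - \uxi_{\jm_0}$ are positive integers congruent to $2$ modulo $4$, hence at least $2$; $(\mathtt{H1})$ says that each of $\uxi_{\im_0} - \uxi_{\jm_0}$ and $\uxi_{\sigma(\im_0)} - \uxi_{\sigma(\jm_0)}$ lies in $\{\pm 2\}$. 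The identity
\[
(\uxi_{\im_0} - \uxi_{\jm_0}) - (\uxi_{\sigma(\im_0)} - \uxi_{\sigma(\jm_0)}) = (\uxi_{\im_0} - \uxi_{\sigma(\im_0)}) + (\uxi_{\sigma(\jm_0)} - \uxi_{\jm_0})
\]
then has left-hand side in $\{-4, 0, 4\}$ and right-hand side at least $4$, pinning both sides to $4$, forcing each intra-orbit difference on the right to equal exactly $2$, and forcing $\uxi_{\im_0} - \uxi_{\jm_0} = +2$ with $\uxi_{\sigma(\im_0)} - \uxi_{\sigma(\jm_0)} = -2$. A one-line computation then yields $\uxi_{i^\circ} = \uxi_{\im_0} = \uxi_{\jm_0} + 2 = \uxi_{\sigma(\jm_0)} = \uxi_{j^\circ}$, proving~(2). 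The main obstacle is simply recognizing that the classification narrows the analysis to this clean two-pair configuration; once that is done the numerical pinning above is essentially forced.
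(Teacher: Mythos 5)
Your proof is correct and follows essentially the same route as the paper's: the crux is part (2), where both arguments reduce to the case $d_i = d_j = r = 2$, exploit the cross-adjacency $i^{\circ} \sim \sigma(j^{\circ})$ together with $(\mathtt{H1})$ and the intra-orbit gaps $\uxi_{i^{\circ}} - \uxi_{\sigma(i^{\circ})} \ge 2$ --- the paper via a direct contradiction, you via an equivalent telescoping identity. Your treatment of part (1), including the identification of the $(\mathtt{H2})$-distinguished vertex with the maximal-height representative $j^{\circ}$, simply spells out what the paper dismisses as immediate from the definition.
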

\begin{proof}
(\ref{circ_adj}) is immediate from Definition~\ref{def: height}.
For (\ref{circ_nonadj}), we note that the conditions $i \sim j$ and $i^{\circ} \not \sim j^{\circ}$
are satisfied only if $d_{i}=d_{j}=r=2$.
In this case, we have $\uxi_{i^{\circ}} \ge \uxi_{\sigma(i^{\circ})} +2$
and $\uxi_{j^{\circ}} \ge \uxi_{\sigma(j^{\circ})} +2$.
Moreover we have $i^{\circ}\sim \sigma(j^{\circ})$ and $j^{\circ} \sim \sigma(i^{\circ})$.
If $\uxi_{i^{\circ}} > \uxi_{j^{\circ}}$, we have
$\uxi_{i^{\circ}} > \uxi_{\sigma(j^{\circ})} +2$, which \ber contradicts the condition ($\mathtt{H1}$): \er
$|\uxi_{i^{\circ}} - \uxi_{\sigma(j^{\circ})}|=2$.
Similarly we \ber cannot \er have $\uxi_{i^{\circ}} < \uxi_{j^{\circ}}$.
Thus we obtain $\uxi_{i^{\circ}} = \uxi_{j^{\circ}}$.
\end{proof}

\begin{proposition}
\label{proposition: twisted Coxeter height}
A twisted adapted class $[\calQ] \in \lf \Delta^{\!\sigma}\rf$
contains a reduced word arising from a $\sigma$-Coxeter element,
i.e.~$[\calQ] = [\bfi(\tau)]$ for some $\tau$
if and only if the corresponding {\rm Q}-datum $\calQ=(\Delta, \sigma, \uxi)$ satisfies the following condition:
\begin{equation}
\label{eq: condition twisted Coxeter height}
\text{For each $i \in I$ and $0 \le k< d_{i}$, we have
$\uxi_{\sigma^{k}(i^{\circ})} = \uxi_{i^{\circ}} - 2k$.}
\end{equation}
Moreover, if this condition~(\ref{eq: condition twisted Coxeter height}) is satisfied,
we have $[\calQ] = [\bfi(\tau_{\calQ}^{\circ})]$ and $(\tau_{\calQ}^{\circ})^{r} = \btau_{\calQ}$.
\end{proposition}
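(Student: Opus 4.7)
Plan. The proposition bundles three assertions: (i) condition~(\ref{eq: condition twisted Coxeter height}) $\Rightarrow [\calQ]=[\bfi(\tau_{\calQ}^{\circ})]$, (ii) the converse $[\calQ]=[\bfi(\tau)]$ for some $\sigma$-Coxeter $\tau$ $\Rightarrow$ condition~(\ref{eq: condition twisted Coxeter height}), and (iii) the identity $(\tau_{\calQ}^{\circ})^{r}=\btau_{\calQ}$ under the condition. All three will be handled by constructing appropriate compatible readings of subsets of $\hDs_{0}$ and invoking Lemma~\ref{lemma: compatible reading}. The central auxiliary object is the height-shift map $\widetilde{\sigma}\colon\hDs_{0}\to\hDs_{0}$, $\widetilde{\sigma}(\im,p):=(\sigma(\im),p-2)$; it is a quiver automorphism of $\hDs$ because $\sigma$ preserves adjacency in $\Delta$ and, together with Lemma~\ref{lemma: parity} applied to $\epsilon$, preserves the parity condition cutting out $\hDs_{0}$.

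For (i), fix a compatible reading $(x_{1},\dots,x_{n})$ of $X_{\calQ}^{\circ}$ so that $\tau_{\calQ}^{\circ}=s_{\pi(x_{1})}\cdots s_{\pi(x_{n})}\sigma$ and $\bfi(\tau_{\calQ}^{\circ})$ is the $\pi$-image of the lexicographically ordered family $(\widetilde{\sigma}^{k}(x_{j}))_{0\le k<rh^{\vee}/2,\,1\le j\le n}$. By Lemma~\ref{lemma: compatible reading}, it suffices to check that this family is a compatible reading of $(\Gamma_{\calQ})_{0}$. \emph{Exhaustion:} condition~(\ref{eq: condition twisted Coxeter height}) combined with Remark~\ref{remark: h^vee parity} gives that the multiplicity of each $\im\in\Delta_{0}$ in the family equals $|\pi^{-1}(\im)\cap(\Gamma_{\calQ})_{0}|=(rh^{\vee}+\uxi_{\im}-\uxi_{\im^{*}})/(2d_{\bar{\im}})$. \emph{Compatibility:} arrows in $\hDs$ strictly increase height whereas $\widetilde{\sigma}$ decreases height by $2$, so no arrow bridges distinct $\widetilde{\sigma}$-slices; within a single slice, the automorphism $\widetilde{\sigma}^{k}$ transports the compatibility of $(x_{1},\dots,x_{n})$ to that of $(\widetilde{\sigma}^{k}(x_{1}),\dots,\widetilde{\sigma}^{k}(x_{n}))$.

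For (ii), Lemma~\ref{lemma: compatible reading} furnishes a compatible reading $(y_{\ell})_{\ell=1}^{N}$ of $(\Gamma_{\calQ})_{0}$ with $\pi(y_{\ell})$ matching the $\ell$-th letter of $\bfi(\tau)=(\im_{1},\dots,\im_{n},\sigma(\im_{1}),\dots)$. Since $\{\bar{\im}_{1},\dots,\bar{\im}_{n}\}=I$, the first $n$ positions hit pairwise distinct vertices of $\Delta_{0}$; a first occurrence in a compatible reading must sit at the top of its $\pi$-fibre, so $y_{\ell}=(\im_{\ell},\uxi_{\im_{\ell}})$ with $\im_{\ell}=(\bar{\im}_{\ell})^{\circ}$, forcing $\tau=\tau_{\calQ}^{\circ}$. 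For $0<k<d_{\bar{\im}_{j}}$, the letter $\sigma^{k}(\im_{j})$ at position $kn+j$ is its first occurrence in $\bfi(\tau)$, so $y_{kn+j}=(\sigma^{k}(\im_{j}),\uxi_{\sigma^{k}(\im_{j})})$; comparing with the multiplicity count of (i) run in reverse, together with the congruence $\uxi_{\sigma^{k}(\im_{j})}\equiv\uxi_{\im_{j}}-2k\pmod{2d_{\bar{\im}_{j}}}$ from Lemma~\ref{lemma: parity}, pins $\uxi_{\sigma^{k}(\im_{j})}$ down to exactly $\uxi_{\im_{j}}-2k$, yielding condition~(\ref{eq: condition twisted Coxeter height}).

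Finally, for (iii), the identities $\sigma w\sigma^{-1}=\sigma(w)$ and $\sigma^{r}=1$ yield $(\tau_{\calQ}^{\circ})^{r}=\prod_{m=0}^{r-1}\sigma^{m}(w[X_{\calQ}^{\circ}])$. Condition~(\ref{eq: condition twisted Coxeter height}) produces the disjoint decomposition $X_{\calQ}=\bigsqcup_{m=0}^{r-1}\widetilde{\sigma}^{m}(X_{\calQ}^{\circ})$ directly from the definitions, and the slice-by-slice compatibility argument of (i), now run only for $0\le m<r$, shows that concatenating compatible readings of the slices is a compatible reading of $X_{\calQ}$; applying $\pi$ and Proposition~\ref{proposition: quasi Coxeter} gives $\btau_{\calQ}=w[X_{\calQ}]=\prod_{m=0}^{r-1}\sigma^{m}(w[X_{\calQ}^{\circ}])=(\tau_{\calQ}^{\circ})^{r}$. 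The main obstacle throughout is the verification that the $\widetilde{\sigma}$-shifted concatenations are genuine compatible readings; this is precisely where condition~(\ref{eq: condition twisted Coxeter height}), demanding the exact arithmetic progression of heights along a $\sigma$-orbit rather than the mere congruence supplied by Lemma~\ref{lemma: parity}, becomes indispensable.
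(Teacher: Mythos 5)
Your treatment of the forward implication and of the identity $(\tau_{\calQ}^{\circ})^{r}=\btau_{\calQ}$ is essentially sound and runs parallel to the paper's proof: where you slice $(\Gamma_{\calQ})_{0}$ into $\widetilde{\sigma}$-translates of $X_{\calQ}^{\circ}$ and concatenate compatible readings, the paper peels off one block at a time via the height-function update $(s_{\im_{n}}\cdots s_{\im_{1}}\uxi)_{\im}=\uxi_{\sigma^{-1}(\im)}-2$; these are the same computation in two coordinate systems. One step of yours is under-justified: the slices $\widetilde{\sigma}^{k}(X_{\calQ}^{\circ})$ are not level sets of the height, so ``arrows increase height while $\widetilde{\sigma}$ lowers it by $2$'' does not by itself exclude an arrow from slice $k_{1}$ to slice $k_{2}>k_{1}$. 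You need Lemma~\ref{lemma: twisted Coxeter height}, i.e.\ $|\uxi_{i^{\circ}}-\uxi_{j^{\circ}}|\le\min(d_{i},d_{j})$ for $i\sim j$, to conclude that the target of any arrow lies in the same or an earlier slice. This is fillable.

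The converse is where your argument has genuine gaps. First, from ``a first occurrence sits at the top of its $\pi$-fibre'' you only get $y_{j}=(\im_{j},\uxi_{\im_{j}})$; you do \emph{not} get $\im_{j}=(\bar{\im}_{j})^{\circ}$, which is what ``forcing $\tau=\tau_{\calQ}^{\circ}$'' requires. To rule out $\uxi_{\im_{j}}<\uxi_{(\bar{\im}_{j})^{\circ}}$ you would have to produce an oriented path \emph{inside} $(\Gamma_{\calQ})_{0}$ from $(\im_{j},\uxi_{\im_{j}})$ to $((\bar{\im}_{j})^{\circ},\uxi_{(\bar{\im}_{j})^{\circ}})$ (compatibility only constrains arrows between elements of the chosen subset), and you have not done this. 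Second, the mechanism you invoke to upgrade the congruence $\uxi_{\sigma^{k}(\im_{j})}\equiv\uxi_{\im_{j}}-2k\pmod{2d_{\bar{\im}_{j}}}$ to an equality --- matching fibre cardinalities of $(\Gamma_{\calQ})_{0}$ against letter multiplicities in $\bfi(\tau)$ --- gives no information when $h^{\vee}$ is even: by Remark~\ref{remark: h^vee parity} one then has $\im^{*}=\im$, so every fibre over the orbit $i$ has cardinality $rh^{\vee}/(2d_{i})$ regardless of $\uxi$. (The proposition is still true there because axiom $(\mathtt{H2})$ forces the condition outright in those types, but your proof neither observes nor uses this, so as written the converse is not established.) The paper sidesteps all of this: it constructs, for each $\sigma$-Coxeter element $\tau$, an explicit height function $\uxi^{\tau}$ and Q-datum $\calQ^{\tau}$ satisfying~(\ref{eq: condition twisted Coxeter height}) with $[\bfi(\tau)]=[\calQ^{\tau}]$, and then the converse is immediate from the injectivity of $\calQ\mapsto[\calQ]$ in Theorem~\ref{theorem: OS19b main}: $[\calQ]=[\bfi(\tau)]=[\calQ^{\tau}]$ forces $\calQ=\calQ^{\tau}$ up to a constant. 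I would recommend replacing your direct read-off of $\uxi$ by this bijectivity argument.
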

\begin{proof}
Let $\tau = s_{\im_{1}} \cdots s_{\im_{n}} \sigma \in \sfW \sigma$ be a $\sigma$-Coxeter element.
Let us choose a function $\uxi^{\tau} \colon \Delta_{0} \to \Z$ satisfying the following three conditions,
which is unique up to constant:
\begin{enumerate}
\item \label{xitau_adj}
$\uxi^{\tau}_{\im_{k}} = \uxi^{\tau}_{\im_{l}} + \min(d_{\bar{\im}_{k}}, d_{\bar{\im}_{l}})$
if $1 \le k < l \le n$ and $\im_{k} \sim \im_{l}$,
\item \label{xitau_nonadj}
$\uxi^{\tau}_{\im_{k}} = \uxi^{\tau}_{\im_{l}}$ if $\bar{\im}_{k} \sim \bar{\im}_{l}$ and $\im_{k} \not\sim \im_{l}$,
\item \label{xitau_sigma}
$\uxi^{\tau}_{\sigma^{l}(\im_{k})} = \uxi^{\tau}_{\im_{k}} - 2l$ for each $1 \le k \le n$ and $0 \le l <d_{\bar{\im}_{k}}$.
\end{enumerate}
Note that (\ref{xitau_nonadj}) occurs only when $d_{\bar{\im}_{k}} = d_{\bar{\im}_{l}} = r = 2$.
It is easy to see that $\uxi^{\tau}$ defines a height function on $(\Delta, \sigma)$.
Hence we obtain a Q-datum
$\calQ^{\tau} := (\Delta, \sigma, \uxi^{\tau})$ satisfying the condition~(\ref{eq: condition twisted Coxeter height}).
By construction, we have $I_{\calQ^{\tau}}^{\circ} = \{ \im_{k} \mid 1 \le k \le n\}$
and $\tau_{\calQ^{\tau}}^{\circ} = \tau$.
Conversely, every Q-datum $\calQ$ satisfying the condition~(\ref{eq: condition twisted Coxeter height})
is obtained in this way, i.e.~we can realize $\calQ=\calQ^{\tau}$
for a unique $\sigma$-Coxeter element $\tau$
by Lemma~\ref{lemma: twisted Coxeter height}.

It remains to show that $\tau^{r} = \btau_{\calQ^{\tau}}$ and $[\bfi(\tau)] = [\calQ^{\tau}]$.
In the case $\sigma = \id$, we have nothing to prove.
Assume that $\sigma \neq \id$.
We shall apply Lemma~\ref{lemma: compatible reading} to the set
$X_{\calQ^{\tau}}^{\circ} = \{(\im_{k}, \uxi^{\tau}_{\im_{k}}) \mid 1 \le k \le n \}$.
Note that the condition~(\ref{eq: assumption xi-m})
is satisfied for $m_{\im} = \delta(\im \in I_{\calQ^{\tau}}^{\circ})$
since we have
\begin{equation}
\label{eq: xi^prime}
\uxi_{\im}^{\prime} := \uxi^{\tau}_{\im} - 2d_{\bar{\im}} \times \delta(\im \in I_{\calQ^{\tau}}^{\circ})
= \uxi^{\tau}_{\sigma^{-1}(\im)} -2
\end{equation}
by (\ref{xitau_sigma}).
As a result, the sequence $(\im_{1}, \ldots, \im_{n})$ is adapted to $\calQ^{\tau}$
and $s_{\im_{n}} \cdots s_{\im_{1}}\uxi^{\tau} = \uxi^{\prime}$.
Then the equation~(\ref{eq: xi^prime})
also implies that the sequence
$(\sigma(\im_{1}), \ldots, \sigma(\im_{n}))$ is adapted to $(\Delta, \sigma, \uxi^{\prime})$.
Repeating this argument, we see that the sequence $\bfi(\tau)$ defined in Theorem~\ref{theorem: OSmain}
is adapted to $\calQ^{\tau}$.
Therefore, we obtain $\tau^{r} = \btau_{\calQ^{\tau}}$,
and also
$[\bfi(\tau)] = [\calQ^{\tau}]$ by Theorem~\ref{theorem: OS19b main}.
\end{proof}

\begin{corollary}
\label{corollary: twisted Coxeter height}
Except for $\g=\mathsf{B}_{n}$ or $\mathsf{F}_{4}$,
every twisted adapted class contains a reduced word arising from a $\sigma$-Coxeter element,
i.e.~$\lf \Delta^{\! \sigma} \rf = \{ [\bfi(\tau)] \mid \text{$\tau$ is a $\sigma$-Coxeter element.}\}$.
\end{corollary}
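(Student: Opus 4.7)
The plan is to apply Proposition~\ref{proposition: twisted Coxeter height} together with Theorem~\ref{theorem: OS19b main}, which together reduce the claim to the following purely combinatorial statement: \emph{for every $\g$ not of type $\mathsf{B}_n$ or $\mathsf{F}_4$, every {\rm Q}-datum $\calQ = (\Delta, \sigma, \uxi)$ for $\g$ satisfies the condition~(\ref{eq: condition twisted Coxeter height})}. Indeed, Theorem~\ref{theorem: OS19b main} identifies $\lf\Ds\rf$ with the set of {\rm Q}-data modulo $2r\Z$, and Proposition~\ref{proposition: twisted Coxeter height} characterizes those classes arising from a $\sigma$-Coxeter element as the ones whose {\rm Q}-datum satisfies~(\ref{eq: condition twisted Coxeter height}).

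First I would observe that the condition~(\ref{eq: condition twisted Coxeter height}) is vacuous at any $i \in I$ with $d_i = 1$, since then only $k = 0$ is admissible. In particular, in the simply-laced cases $\g \in \{\mathsf{A}_n, \mathsf{D}_n, \mathsf{E}_{6,7,8}\}$ (where $\sigma = \id$ and every $d_i = 1$) there is nothing to verify.

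For the remaining cases $\g \in \{\mathsf{C}_n, \mathsf{G}_2\}$, I would verify the condition at each $i \in I$ with $d_i > 1$ by reading off the folding diagrams~(\ref{eq: diagram foldings}): such an index $i$ is unique and always adjacent to an index of smaller orbit size, namely
\begin{itemize}
\item if $\g = \mathsf{C}_n$, then $i = n$ is the unique such vertex and is adjacent to $n-1$ with $d_{n-1} = 1 < 2 = d_n$;
\item if $\g = \mathsf{G}_2$, then $i = 1$ is the unique such vertex and is adjacent to $2$ with $d_2 = 1 < 3 = d_1$.
\end{itemize}
Denoting this smaller neighbor by $j \in I$, the axiom $(\mathtt{H2})$ applied to the pair $(j, i)$ (playing the roles of $(i,j)$ in Definition~\ref{def: height}) produces a unique $\im \in i$ with $|\uxi_{\jm} - \uxi_{\im}| = 1$ (where $\jm \in j$ is the unique element) and $\uxi_{\sigma^l(\im)} = \uxi_{\im} - 2l$ for all $0 \le l < r = d_i$. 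The latter relations force $\uxi$ to attain its maximum on the orbit $i$ precisely at $\im$, so $\im = i^\circ$ and condition~(\ref{eq: condition twisted Coxeter height}) holds at $i$. The second assertion of Proposition~\ref{proposition: twisted Coxeter height} then yields $[\calQ] = [\bfi(\tau_\calQ^\circ)]$, finishing the proof.

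The main (and rather minor) obstacle is merely the bookkeeping of orbit sizes under the foldings, which is routine via Table~\ref{table: classification}. The exclusion of $\mathsf{B}_n$ and $\mathsf{F}_4$ is genuine: in these types there exist orbits $i \in I$ with $d_i > 1$ all of whose neighbors $j$ satisfy $d_j \ge d_i$, so $(\mathtt{H2})$ imposes no constraint on $\uxi$ along the orbit $i$, and~(\ref{eq: condition twisted Coxeter height}) can genuinely fail---as already witnessed, for instance, by $\calQ^{(2)}$ of Example~\ref{example: QB3} at the orbit corresponding to $\{1,5\} \subset (\mathsf{A}_5)_0$.
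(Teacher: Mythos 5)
Your proposal is correct and follows essentially the same route as the paper: the paper's proof simply asserts that, outside types $\mathsf{B}_n$ and $\mathsf{F}_4$, every Q-datum satisfies condition~(\ref{eq: condition twisted Coxeter height}) and then invokes Proposition~\ref{proposition: twisted Coxeter height}. You additionally supply the verification of that assertion (via axiom $(\mathtt{H2})$ at the unique orbit of size $r$ in types $\mathsf{C}_n$ and $\mathsf{G}_2$, and the vacuity of the condition when $d_i=1$), which the paper leaves implicit; this check is accurate.
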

\begin{proof}
Unless $\g$ is of type $\mathsf{B}_{n}$ nor of type $\mathsf{F}_{4}$, every Q-datum for $\g$
satisfies the condition~(\ref{eq: condition twisted Coxeter height}).
Therefore Proposition~\ref{proposition: twisted Coxeter height} proves the assertion.
\end{proof}

Let $\calQ=(\Delta, \sigma, \uxi)$ be a Q-datum.
Associated with $\uxi$, we define another height function $\uxi^{\circ} \colon \Delta_{0} \to \Z$
by
$\uxi^{\circ}_{\sigma^{k}(\im)} =
\uxi_{\im} -2k$ for $\im \in I_{\calQ}^{\circ}$ and $0 \le k < d_{\bar{\im}}$.
Let $\calQ^{\circ} := (\Delta, \sigma, \uxi^{\circ})$ be the corresponding Q-datum.
Proposition~\ref{proposition: twisted Coxeter height} shows that we have $[\bfi(\tau_{\calQ}^{\circ})] = [\calQ^{\circ}]$.
Let
$$
X_{\calQ}^{\prime} := \{ (\sigma(i^{\circ}), p) \in \hDs_{0} \mid i \in I, \uxi_{\sigma(i^{\circ})} < p \le \uxi_{i^{\circ}} -2 \}.
$$
By definition, $X_{\calQ}^{\prime} = \varnothing$ if and only if
the condition~(\ref{eq: condition twisted Coxeter height}) is satisfied.
Observe that we have $\im \not \sim \jm$
if $\im \in I_{\calQ}^{\circ}$ and $\jm \in \pi(X_{\calQ}^{\prime})$.

Pick a compatible reading $(y_{1}, \ldots, y_{m})$ of $X_{\calQ}^{\prime}$,
where $m=|X_{\calQ}^{\prime}|$.
By Lemma~\ref{lemma: compatible reading}, the sequence $(\jm_{1}, \ldots, \jm_{m}) := (\pi(y_{1}), \ldots, \pi(y_{m}))$
is adapted to $\calQ^{\circ}$ and we have
\begin{equation}
\label{eq: translation}
\calQ = s_{\jm_{m}} \cdots s_{\jm_{1}}\calQ^{\circ}.
\end{equation}
Recall that we have defined $w[X_{\calQ}^{\prime}] := s_{\jm_{1}} \cdots s_{\jm_{m}}$.

\begin{definition} \label{def: generalized C-element}
Under the above notation, we define the \emph{generalized $\sigma$-Coxeter element}
(or the \emph{generalized twisted Coxeter element}) $\tau_{\calQ} \in \sfW \sigma$ associated with $\calQ$ by
$$
\tau_{\calQ} := w[X_{\calQ}^{\prime}]^{-1} \cdot \tau_{\calQ}^{\circ} \cdot w[X_{\calQ}^{\prime}].
$$
\end{definition}

Note that
the generalized twisted Coxeter element $\tau_{\calQ}$ is equal to the twisted Coxeter element $\tau_{\calQ}^{\circ}$
if and only if the condition~(\ref{eq: condition twisted Coxeter height}) is satisfied.
In particular, we have $\tau_{\calQ^{\circ}} = \tau^{\circ}_{\calQ}$.

\begin{proposition}
\label{proposition: conjugation}
Let $\calQ = (\Delta, \sigma, \uxi)$ be a {\rm Q}-datum.
\begin{enumerate}
\item \label{tauconj_conj} If $\im \in \Delta_{0}$ is a source of $\calQ$,
we have $s_{\im} \tau_{\calQ} s_{\im} = \tau_{s_{\im}\calQ}$.
\item \label{tauconj_ord} The order of $\tau_{\calQ}$ is $rh^{\vee}$.
\item \label{tauconj_qtau} We have $\tau_{\calQ}^{r} = \btau_{\calQ}$. Hence the order of $\btau_{\calQ}$ is $h^{\vee}$.
\item \label{tauconj_rh} If $\sigma \neq \id$, we have $\tau_{\calQ}^{rh^{\vee}/2} = -1$.
\end{enumerate}
\end{proposition}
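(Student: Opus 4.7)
The plan is to deduce all four statements from the defining formula $\tau_{\calQ} = w[X_{\calQ}^{\prime}]^{-1}\, \tau_{\calQ^{\circ}}^{\circ}\, w[X_{\calQ}^{\prime}]$, which expresses $\tau_{\calQ}$ as a conjugate of the genuine $\sigma$-Coxeter element $\tau_{\calQ^{\circ}}^{\circ}$ attached to the modified Q-datum $\calQ^{\circ}$ that satisfies condition~(\ref{eq: condition twisted Coxeter height}). This reduction lets us push each claim back to a corresponding property of a genuine (possibly twisted) Coxeter element, controlled by the auxiliary conjugation coming from the translation relation (\ref{eq: translation}).

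For (\ref{tauconj_qtau}), I would raise the defining formula to the $r$-th power to obtain $\tau_{\calQ}^{r} = w[X_{\calQ}^{\prime}]^{-1}(\tau_{\calQ^{\circ}}^{\circ})^{r}\, w[X_{\calQ}^{\prime}]$. Proposition~\ref{proposition: twisted Coxeter height} applied to $\calQ^{\circ}$ gives $(\tau_{\calQ^{\circ}}^{\circ})^{r} = \btau_{\calQ^{\circ}}$. Then, since (\ref{eq: translation}) expresses $\calQ = s_{\jm_{m}}\cdots s_{\jm_{1}}\calQ^{\circ}$ with each $\jm_{k}$ a source of $s_{\jm_{k-1}}\cdots s_{\jm_{1}}\calQ^{\circ}$, iterating Corollary~\ref{corollary: quasi Coxeter} yields $w[X_{\calQ}^{\prime}]^{-1}\,\btau_{\calQ^{\circ}}\,w[X_{\calQ}^{\prime}] = \btau_{\calQ}$, proving (\ref{tauconj_qtau}). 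The second clause about the order of $\btau_{\calQ}$ will then follow once (\ref{tauconj_ord}) is in hand.

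For (\ref{tauconj_ord}) and (\ref{tauconj_rh}), I would use that conjugation in $\sfW \rtimes \langle \sigma \rangle$ preserves the order of an element. For $\sigma \neq \id$, Theorem~\ref{theorem: OSmain}(1) gives $(\tau_{\calQ^{\circ}}^{\circ})^{rh^{\vee}/2} = -1$, which is central in $\Aut(\sfP)$; conjugating by $w[X_{\calQ}^{\prime}]$ therefore leaves it invariant, yielding (\ref{tauconj_rh}). Since $-1$ has order $2$, this forces the order of $\tau_{\calQ}$ to be exactly $rh^{\vee}$, giving (\ref{tauconj_ord}); for $\sigma = \id$ we simply invoke the classical fact that every Coxeter element has order $h^{\vee}$.

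The main obstacle is (\ref{tauconj_conj}). Here I plan a case analysis according to whether the source $\im$ of $\calQ$ lies in $I_{\calQ}^{\circ}$ (i.e., $\im = i^{\circ}$ for $i = \bar{\im}$) or in $\pi(X_{\calQ}^{\prime})$ (i.e., $\im = \sigma^{k}(i^{\circ})$ for some $k \ge 1$). In each case the goal is to track how the triple $(I_{\calQ}^{\circ}, X_{\calQ}^{\circ}, X_{\calQ}^{\prime})$ transforms under $\calQ \mapsto s_{\im}\calQ$, and then use Lemma~\ref{lemma: twisted Coxeter height} and Lemma~\ref{lemma: compatible reading} to realize the conjugate $s_{\im}\tau_{\calQ}s_{\im}$ as the canonical product $w[X_{s_{\im}\calQ}^{\prime}]^{-1}\tau_{(s_{\im}\calQ)^{\circ}}^{\circ}\,w[X_{s_{\im}\calQ}^{\prime}]$. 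The subtle case is when $\im = i^{\circ}$ with $d_{i} > 1$: the reflection sends $\uxi_{\im}$ to $\uxi_{\im} - 2d_{i}$, so after reflection the maximum on the $\sigma$-orbit jumps from $\im$ to $\sigma(\im)$, causing the distinguished section $I_{\calQ}^{\circ}$ to shift; verifying that the intrinsic element $\tau_{\calQ}$ nevertheless transforms by simple conjugation by $s_{\im}$ requires an explicit braid-type manipulation involving the entries of a reduced word for $w[X_{\calQ}^{\prime}]$, using that $\im$ being a source of $\calQ$ constrains which neighbors of $\im$ can occur. Once this compatibility of compatible readings is verified, (\ref{tauconj_conj}) follows and the proposition is complete.
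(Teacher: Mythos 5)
Your derivation of parts (\ref{tauconj_qtau}), (\ref{tauconj_rh}) and most of (\ref{tauconj_ord}) is correct, and its dependency structure is slightly cleaner than the paper's: instead of routing these through (\ref{tauconj_conj}), you conjugate the defining formula $\tau_{\calQ}=w[X_{\calQ}^{\prime}]^{-1}\tau_{\calQ}^{\circ}w[X_{\calQ}^{\prime}]$ directly, using Proposition~\ref{proposition: twisted Coxeter height} for $\calQ^{\circ}$ together with Corollary~\ref{corollary: quasi Coxeter} iterated along the translation~(\ref{eq: translation}) to get $\tau_{\calQ}^{r}=w[X_{\calQ}^{\prime}]^{-1}\btau_{\calQ^{\circ}}w[X_{\calQ}^{\prime}]=\btau_{\calQ}$, and the centrality of $-1$ for (\ref{tauconj_rh}). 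One caveat (shared with the paper's one-line treatment): the identity $\tau_{\calQ}^{rh^{\vee}/2}=-1$ only shows that the order divides $rh^{\vee}$ and does not divide $rh^{\vee}/2$, which does not by itself force the order to equal $rh^{\vee}$; pinning it down still requires the known order of a ($\sigma$-)Coxeter element, preserved under conjugation.

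The genuine gap is part (\ref{tauconj_conj}), which you rightly single out as the main obstacle but do not actually prove: the decisive computation is deferred ("once this compatibility \dots is verified"), and the outline contains two errors that would derail it. First, the dichotomy ``$\im\in I_{\calQ}^{\circ}$ or $\im\in\pi(X_{\calQ}^{\prime})$'' is not exhaustive: $\pi(X_{\calQ}^{\prime})$ contains $\sigma(i^{\circ})$ only when $\uxi_{\sigma(i^{\circ})}<\uxi_{i^{\circ}}-2$, so a source $\im=\sigma(i^{\circ})$ with $\uxi_{\im}=\uxi_{i^{\circ}}-2$ (e.g.\ the vertex $1$ of $\calQ^{(1)}$ in Example~\ref{example: QB3}) lies in neither class. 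The correct split is simply $\im\in I_{\calQ}^{\circ}$ or not; in the latter case $X_{\calQ}^{\circ}$ is unchanged and $X_{\calQ}^{\prime}$ gains the single point $(\im,\uxi_{\im}-2d_{\bar{\im}})$, so $w[X^{\prime}_{s_{\im}\calQ}]=w[X^{\prime}_{\calQ}]s_{\im}$ and the claim is immediate. Second, in the hard case $\im\in I_{\calQ}^{\circ}$ the key input is not a braid-type relation nor a constraint on the neighbours of $\im$ coming from sourcehood, but the structural fact that no vertex of $I_{\calQ}^{\circ}$ is adjacent to any vertex of $\pi(X_{\calQ}^{\prime})$; this is what lets $s_{\im}$ on the left, and $s_{\sigma(\im)}$ after passing through $\sigma$ on the right, commute past the entire word $w[X_{\calQ}^{\prime}]$ and its $\sigma$-translate. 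One must then split further according to whether $\uxi_{\sigma(\im)}=\uxi_{\im}-2$ (where the orbit maximum moves from $\im$ to $\sigma(\im)$ and $X^{\prime}$ is unchanged) or $\uxi_{\sigma(\im)}<\uxi_{\im}-2$ (which forces $r=d_{\bar{\im}}=2$, where $\im$ remains the maximum of its orbit and $X^{\prime}$ loses the point $(\sigma(\im),\uxi_{\im}-2)$). Your assertion that ``the maximum jumps from $\im$ to $\sigma(\im)$'' holds only in the first subcase, so the bookkeeping you plan to carry out would go wrong in the second.
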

\begin{proof}
The assertions (\ref{tauconj_ord}), (\ref{tauconj_qtau}) and (\ref{tauconj_rh})
follow easily from the assertion (\ref{tauconj_conj}), Theorem~\ref{theorem: OSmain},
Corollary~\ref{corollary: quasi Coxeter} and Proposition~\ref{proposition: twisted Coxeter height}.
Let us prove the remaining assertion (\ref{tauconj_conj}).
First we assume $\im \not\in I_{\calQ}^{\circ}$.
Then we have $X^{\circ}_{s_{\im}\calQ} = X^{\circ}_{\calQ}$ and
$X^{\prime}_{s_{\im}\calQ} = X^{\prime}_{\calQ} \cup \{ (\im, \uxi_{\im} - 2d_{\bar{\im}}) \}$,
which imply that $\tau^{\circ}_{s_{\im}\calQ} = \tau^{\circ}_{\calQ}$ and
$w[X^{\prime}_{s_{\im}\calQ}] = w[X^{\prime}_{\calQ}] s_{\im}$.
Thus we obtain $\tau_{s_{\im}\calQ} = s_{\im} \tau_{\calQ} s_{\im}$ as desired.
Now we assume $\im \in I_{\calQ}^{\circ}$.
Take a compatible reading $(x_{1}, \ldots, x_{n})$ (resp.~$(y_{1}, \ldots, y_{m})$) of $X_{\calQ}^{\circ}$ (resp.~$X_{\calQ}^{\prime}$)
and set $\im_{k} := \pi(x_{k})$, $\jm_{l} := \pi(y_{l})$.
Since $\im$ is a source of $\calQ$, we may assume that $\im_{1}=\im$.
By definition, we have
$$
\tau_{\calQ} = s_{\jm_{m}} \cdots s_{\jm_{1}} s_{\im} s_{\im_{2}} \cdots s_{\im_{n}} s_{\sigma(\jm_{1})} \cdots s_{\sigma(\jm_{m})} \sigma.
$$
Since $\im \not\sim \jm_{l}$ and
$\im_{k} \not \sim \jm_{l}$ for any $k, l$, we have
\begin{equation}
\label{eq: sts=t}
s_{\im}\tau_{\calQ}s_{\im} =
s_{\jm_{m}} \cdots s_{\jm_{1}} s_{\im_{2}} \cdots s_{\im_{n}} s_{\sigma(\im)} s_{\sigma(\jm_{1})} \cdots s_{\sigma(\jm_{m})} \sigma.
\end{equation}
Assume $\uxi_{\sigma(\im)} = \uxi_{\im} -2$.
Then we have
$X^{\circ}_{s_{\im}\calQ}= (X^{\circ}_{\calQ} \setminus \{(\im, \uxi_{\im})\})\cup \{(\sigma(\im), \uxi_{\sigma(\im)})\}$
and $X^{\prime}_{s_{\im}\calQ} = X^{\prime}_{\calQ}$,
which imply
$s_{\im_{2}} \cdots s_{\im_{n}} s_{\sigma(\im)} = \tau^{\circ}_{s_{\im} \calQ}$ and
$s_{\jm_{1}} \cdots s_{\jm_{m}} = w[X^{\prime}_{s_{\im} \calQ}]$ respectively.
Hence the RHS of (\ref{eq: sts=t}) is equal to $\tau_{s_{\im}\calQ}$ as desired.
Finally we assume $\uxi_{\sigma(\im)} < \uxi_{\im} -2$.
Note that it forces $r=d_{\bar{\im}} = 2$ in this case.
Since $\im$ is a source of $\calQ$,
we may assume that $y_{1} = (\sigma(\im), \uxi_{\im} -2)$ and hence $\jm_{1}=\sigma(\im)$.
Since $\sigma(\im) \not \sim \im_{k}$ for all $2 \le k \le n$, the equation~(\ref{eq: sts=t}) is rewritten as
\begin{align}
\label{eq: sts=t2}
s_{\im}\tau_{\calQ}s_{\im} &=
s_{\jm_{m}} \cdots s_{\jm_{2}} s_{\sigma(\im)}
s_{\im_{2}} \cdots s_{\im_{n}} s_{\sigma(\im)} s_{\im} s_{\sigma(\jm_{2})} \cdots s_{\sigma(\jm_{m})} \sigma  \nonumber \\
&= s_{\jm_{m}} \cdots s_{\jm_{2}}
s_{\im_{2}} \cdots s_{\im_{n}} s_{\im} s_{\sigma(\jm_{2})} \cdots s_{\sigma(\jm_{m})} \sigma.
\end{align}
On the other hand, we have
$X^{\circ}_{s_{\im}\calQ} = (X^{\circ}_{\calQ} \setminus \{(\im, \uxi_{\im})\})\cup \{(\im, \uxi_{\im} -4)\}$
and $X^{\prime}_{s_{\im}\calQ} = X^{\prime}_{\calQ} \setminus \{ (\sigma(\im), \uxi_{\im} -2)\}$,
which imply
$s_{\im_{2}} \cdots s_{\im_{n}} s_{\im} = \tau^{\circ}_{s_{\im} \calQ}$ and
$s_{\jm_{2}} \cdots s_{\jm_{m}} = w[X^{\prime}_{s_{\im} \calQ}]$ respectively.
Therefore the RHS of the equation~(\ref{eq: sts=t2}) is equal to $\tau_{s_{\im}\calQ}$ as desired.
\end{proof}

\subsection{The bijection $\phi_{\calQ}$ and $\g$-additive property} \label{subsec: bijection}
\beb In this subsection, we extend the bijection  
$\phi_Q: \hD_{0} \to \hsfR^+$ (see \cite[Section 2.2]{HL15}), induced from a Dynkin quiver $Q$ and its Coxeter element $\tau_Q$, to the bijection  $\phi_{\calQ} \colon \hDs_{0} \to \hsfR^+$ by using a Q-datum $\calQ= (\Delta, \sigma, \uxi)$ and its generalized twisted Coxeter element $\tau_\calQ$. Then we will prove $\g$-additive property, which is also well-known for $\calQ$
 being a Dynkin quiver $Q$.  
\eb

\smallskip

For each $\im \in \Delta_{0}$, we assign a positive root by
$$
\gamma_{\im}^{\calQ} := (1-\tau_{\calQ}^{d_{\bar{\im}}})\varpi_{\im}.
$$
Then we define the map $\phi_{\calQ} \colon \hDs_{0} \to \hsfR^+$ recursively by
the following rules:
\begin{enumerate}
\item $\phi_{\calQ}(\im, \uxi_{\im})=(\gamma^{\calQ}_{\im},0)$ for each $\im \in \Delta_{0}$.
\item If $\phi_\calQ(\im,p)=(\beta,k)$, we have
$$
\phi_{\calQ}(\im, p\pm2d_{\bar{\im}}) =
\begin{cases}
(\tau_{\calQ}^{\mp d_{\bar{\im}}}(\beta),k) & \text{if $\tau_{\calQ}^{\mp d_{\bar{\im}}}(\beta) \in \sfR^+$}, \\
(-\tau_{\calQ}^{\mp d_{\bar{\im}}}(\beta), k\pm1) & \text{if $\tau_{\calQ}^{\mp d_{\bar{\im}}}(\beta) \in \sfR^-$}.
\end{cases}
$$
\end{enumerate}
By definition, we have $\tau_{\calQ}^{(\uxi_{\im} -p)/2}(\gamma^{\calQ}_{\im}) = (-1)^{k}\beta$
if $\phi_{\calQ}(\im, p) = (\beta, k)$.

\begin{theorem}
\label{thm: phi_Q}
Let $\calQ= (\Delta, \sigma, \uxi)$ be a ${\rm Q}$-datum.
\begin{enumerate}
\item The map $\phi_{\calQ} \colon \hDs_{0} \to \hsfR^+$ is a bijection.
\item We have $\phi_{\calQ}^{-1}(\sfR^{+} \times \{ 0 \}) = (\Gamma_{\calQ})_{0}$
and $\phi_{\calQ}(\im, p) = (\phi_{\calQ, 0}(\im, p), 0)$ for each $(\im, p) \in (\Gamma_{\calQ})_{0}$.
Here $\phi_{\calQ, 0} \colon (\Gamma_{\calQ})_{0} \to \sfR^{+}$ is the underlying bijection of
the isomorphism $\Gamma_{\calQ} \cong \Upsilon_{[\calQ]}$ in Theorem~\ref{theorem: OS19b main}.
In particular, we have
$$
\phi_{\calQ,0}(\im, p) = \tau_{\calQ}^{(\uxi_{\im}-p)/2}(\gamma^{\calQ}_{\im})
$$
for each $(\im, p) \in (\Gamma_{\calQ})_{0}$.
\end{enumerate}
\end{theorem}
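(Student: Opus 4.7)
The plan is to prove both assertions together by induction on the distance from a base Q-datum via the source reflection operation. The inductive step rests on the conjugation formula $\tau_{s_{\im_0}\calQ} = s_{\im_0} \tau_\calQ s_{\im_0}$ from Proposition~\ref{proposition: conjugation}(\ref{tauconj_conj}): at a source $\im_0$ of $\calQ$, a direct computation gives $\gamma_\jm^{s_{\im_0}\calQ} = s_{\im_0}(\gamma_\jm^\calQ)$ for $\jm \neq \im_0$ (using $s_{\im_0}\varpi_\jm = \varpi_\jm$), so the entire column of $\jm$ in $\phi_{s_{\im_0}\calQ}$ is obtained from that of $\phi_\calQ$ by applying $s_{\im_0}$ to roots. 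Meanwhile, the top vertex $(\im_0, \uxi_{\im_0})$, which carries the value $(\gamma_{\im_0}^\calQ, 0) = (\alpha_{\im_0}, 0)$ (since $\im_0$ is a source), is relocated to $(\im_0^*, \uxi_{\im_0} - rh^\vee)$ in $\Gamma_{s_{\im_0}\calQ}$. These transformations match exactly the combinatorial rules of Proposition~\ref{proposition: reflection algorithm} for $\phi_{\calQ,0}$. Since any two Q-data on $(\Delta,\sigma)$ are connected by a sequence of source reflections up to an overall shift of the height function, this reduces both assertions to a single convenient base Q-datum.

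For the base case I would choose a Q-datum $\calQ^\circ$ satisfying condition~\eqref{eq: condition twisted Coxeter height}, so that $\tau_{\calQ^\circ}$ is a genuine $\sigma$-Coxeter element. Here one can argue directly: the identities $\tau_{\calQ^\circ}^{rh^\vee} = 1$ and, when $\sigma \neq \id$, $\tau_{\calQ^\circ}^{rh^\vee/2} = -1$ from Proposition~\ref{proposition: conjugation} force each column of $\hDs_0$ to be periodic in $p$ under $\phi_{\calQ^\circ}$, and a count based on the relation $nrh^\vee = 2N$ in \eqref{eq:dcn} shows that the map is a bijection onto $\hsfR^+$. The positivity $\gamma_\im^{\calQ^\circ} \in \sfR^+$ follows by expanding $(1 - \tau_{\calQ^\circ}^{d_{\bar\im}})\varpi_\im$ as an explicit sum of simple roots using the reduced expression $\bfi(\tau_{\calQ^\circ})$ from Theorem~\ref{theorem: OSmain}. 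For part~(2), the cutoff $0 \le 2d_{\bar\im}k < rh^\vee + \uxi_\im - \uxi_{\im^*}$ in the definition of $(\Gamma_{\calQ^\circ})_0$ corresponds to the smallest $k$ at which $\tau_{\calQ^\circ}^{k d_{\bar\im}}(\gamma_\im^{\calQ^\circ})$ first enters $\sfR^-$; the offset $\uxi_\im - \uxi_{\im^*}$ is predicted by the identity $\tau_{\calQ^\circ}^{rh^\vee/2}\varpi_\im = -\varpi_{\im^*}$, which is in turn read off from the equality $w_0 \sigma^{rh^\vee/2} = -1$ in Theorem~\ref{theorem: OSmain}.

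The hard part will be this base-case analysis. One must track how the different columns of $\hDs_0$, with their effective periods $rh^\vee/d_{\bar\im}$ depending on orbit sizes, fit together to cover $\hsfR^+$ exactly once, and how sign-crossings in each column align with the prescribed bound. The interplay between the diagram automorphism $\sigma$ and the Dynkin involution $*$ from Remark~\ref{remark: h^vee parity} will be delicate, and a uniform proof may only become clean after some case analysis of $(\Delta, \sigma)$ informed by Table~\ref{table: classification}; the asymmetric cases $(\mathsf{A}_{2n-1}, \vee)$ and $(\mathsf{E}_6, \vee)$, where $r = 2$ but $*$ is nontrivial on $I$, should require the most care.
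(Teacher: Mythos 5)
Your overall strategy coincides with the paper's: both proofs induct on source reflections, using the conjugation formula (via Lemma~\ref{lemma: gamma reflection}) to get $\phi_{s_{\im}\calQ} = \widehat{s}_{\im}\circ\phi_{\calQ}$, and Corollary~\ref{corollary: opposite end} together with Proposition~\ref{proposition: reflection algorithm} to propagate assertion (2); both reduce everything to the base Q-datum $\calQ^{\circ}$ satisfying~\eqref{eq: condition twisted Coxeter height}. The genuine difference is how the base case is settled. You propose to establish it from scratch by a periodicity-plus-counting argument ($nrh^{\vee}=2N$, tracking sign-crossings column by column), and you correctly identify this as the bulk of the work; the paper instead dispatches it by citing Theorem~\ref{theorem: OS19b main}: for $\calQ^{\circ}$ the adapted class is $[\bfi(\tau^{\circ}_{\calQ^{\circ}})]$, and the roots $\beta^{\bfi}_{k}$ attached to that reduced word are, by construction, exactly the values $\tau_{\calQ^{\circ}}^{j}(\gamma^{\calQ^{\circ}}_{\im})$ prescribed by the recursion defining $\phi_{\calQ^{\circ}}$, so assertion (2) for $\calQ^{\circ}$ is read off directly and (1) then follows from the periodicity in Corollary~\ref{corollary: opposite end}. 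Your direct route is viable but would amount to re-proving the imported result of [OS19b], including the delicate injectivity/sign-crossing analysis you flag. One small correction: since $\tau_{\calQ^{\circ}}^{rh^{\vee}/2}=-1$ when $\sigma\neq\id$, one has $\tau_{\calQ^{\circ}}^{rh^{\vee}/2}\varpi_{\im}=-\varpi_{\im}$, not $-\varpi_{\im^{*}}$; the involution $*$ and the offset $\uxi_{\im}-\uxi_{\im^{*}}$ enter instead through Lemma~\ref{lemma: gamma relation} (namely $\tau_{\calQ}^{(\uxi_{\im}-\uxi_{\im^{*}})/2}(\gamma^{\calQ}_{\im})=\gamma^{\calQ}_{\im^{*}}$, using $\im^{*}\in\bar{\im}$ from Remark~\ref{remark: h^vee parity}), which combined with $\tau_{\calQ}^{rh^{\vee}/2}=-1$ is precisely Corollary~\ref{corollary: opposite end}.
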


For a proof, we need some lemmas.

\begin{lemma}
\label{lemma: gamma}
Let $(\im_{1}, \ldots, \im_{nr})$ be a reduced word for the quasi Coxeter element $\btau_{\calQ}$
adapted to $\calQ$
such that $|\{ k \mid \im_{k} = \im \}| = r_{\bar{\im}}$ for each $\im \in \Delta_{0}$.
Then, for each $\im \in \Delta_0$, we have
$$
\gamma_{\im}^{\calQ} = s_{\im_{1}}s_{\im_{2}} \cdots s_{\im_{k-1}} \alpha_{\im},
$$
where $k$ is the smallest number such that $\im_{k} = \im$.
In particular, we have $\gamma^{\calQ}_{\im} = \alpha_{\im}$ if and only if $\im$ is a source of $\calQ$.
\end{lemma}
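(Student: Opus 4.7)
The plan is to prove the formula by strong induction on the position $k$ where $\im$ first occurs in the adapted reduced word, with the ``in particular'' assertion following as a corollary.

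For the inductive step ($k>1$), since the adapted word has $\im_1$ as a source of $\calQ$ distinct from $\im$, Corollary~\ref{corollary: quasi Coxeter} identifies $(\im_2,\ldots,\im_{nr},\im_1)$ as an adapted reduced word of $\btau_{s_{\im_1}\calQ} = s_{\im_1}\btau_{\calQ}s_{\im_1}$, in which $\im$ first appears at position $k-1$. Combining Proposition~\ref{proposition: conjugation}(\ref{tauconj_conj}), namely $\tau_{s_{\im_1}\calQ}^{d_{\bar{\im}}} = s_{\im_1}\tau_{\calQ}^{d_{\bar{\im}}}s_{\im_1}$, with the identity $s_{\im_1}\varpi_\im = \varpi_\im$ (valid since $\im_1\ne\im$) yields $\gamma_\im^{s_{\im_1}\calQ} = s_{\im_1}\gamma_\im^{\calQ}$; the induction hypothesis applied to $s_{\im_1}\calQ$ then delivers the desired expression after multiplying by $s_{\im_1}$.

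The base case $k=1$ reduces to proving $\tau_{\calQ}^{d_{\bar{\im}}}\varpi_\im = s_\im\varpi_\im$ when $\im$ is a source of $\calQ$, and splits according to $d_{\bar{\im}}$. If $d_{\bar{\im}} = r$ then $r_{\bar{\im}} = 1$, so $\im$ occurs only once in the adapted word; writing $\btau_{\calQ} = \tau_{\calQ}^{r} = s_\im u$ where $u = s_{\im_2}\cdots s_{\im_{nr}}$ does not involve $s_\im$ and hence fixes $\varpi_\im$ gives the identity directly. If $d_{\bar{\im}} = 1$, then $\sigma(\im) = \im$ and $\im\in I_{\calQ}^{\circ}$ automatically; I would use the conjugation $\tau_{\calQ} = w[X_{\calQ}']^{-1}\tau_{\calQ}^{\circ}w[X_{\calQ}']$ and first check that in any compatible reading of $X_{\calQ}^{\circ}$ the vertex $(\im,\uxi_\im)$ precedes every $(\jm,\uxi_\jm)$ with $\jm\in I_{\calQ}^{\circ}$, $\jm\sim\im$---this being forced because $(\mathtt{H1})$ and $(\mathtt{H2})$ combined with sourceness yield $\uxi_\jm = \uxi_\im - \min(d_{\bar{\im}},d_{\bar{\jm}})$, producing the required arrow $(\jm,\uxi_\jm)\to(\im,\uxi_\im)$ in $\hDs$---so that $\tau_{\calQ}^{\circ}\varpi_\im = s_\im\varpi_\im$. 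The crux is then the observation that whenever $\jm\in\pi(X_{\calQ}')$ were to be a $\Delta$-neighbor of $\im$, condition $(\mathtt{H2})$ applied to the pair $(\bar{\im},\bar{\jm})$ (with $d_{\bar{\im}} = 1 < d_{\bar{\jm}}$) would force \eqref{eq: condition twisted Coxeter height} at $\bar{\jm}$ and thereby empty the relevant slice of $X_{\calQ}'$, a contradiction; hence $w[X_{\calQ}']$ fixes both $\varpi_\im$ and $\alpha_\im$, and the computation goes through.

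The ``in particular'' statement follows at once: the forward direction is the base case, possibly preceded by an allowable commutation of the adapted word via Corollary~\ref{corollary: adapted seq}; conversely, if $\gamma_\im^{\calQ} = \alpha_\im$, then $s_{\im_1}\cdots s_{\im_{k-1}}$ fixes $\alpha_\im$, and together with the minimality $\im_j\ne\im$ for $j<k$ this forces $\im_j\not\sim\im$ for all $j<k$, allowing us to commute $\im = \im_k$ to the front of the adapted word and conclude that $\im$ is a source of $\calQ$. The principal obstacle is the $d_{\bar{\im}} = 1$ subcase of the base case: it is the only step requiring genuine structural input from the Q-datum axioms, whereas the remaining parts reduce formally to Corollary~\ref{corollary: quasi Coxeter} and Proposition~\ref{proposition: conjugation}.
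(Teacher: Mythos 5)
Your proof is correct, and it is organized differently from the paper's. The paper fixes one well-chosen adapted word (a compatible reading of $X_{\calQ}$ whose first $n$ letters read $X_{\calQ}^{\circ}$) and computes $\gamma^{\calQ}_{\im}$ directly for an arbitrary first-occurrence position $k$, splitting into the cases $d_{\bar{\im}}=r$ (where $\tau_{\calQ}^{d_{\bar{\im}}}=\btau_{\calQ}$) and $d_{\bar{\im}}=1$ (where $\tau_{\calQ}=w[X_{\calQ}']^{-1}\tau_{\calQ}^{\circ}w[X_{\calQ}']$ and $w[X_{\calQ}']$ commutes with everything in sight); the reduction to that special word rests on the commutation-invariance of the expression $s_{\im_{1}}\cdots s_{\im_{k-1}}\alpha_{\im}$. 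You instead induct on $k$, using Corollary~\ref{corollary: quasi Coxeter} and Proposition~\ref{proposition: conjugation}~(\ref{tauconj_conj}) to establish $\gamma^{s_{\im_1}\calQ}_{\im}=s_{\im_1}\gamma^{\calQ}_{\im}$ — in effect re-deriving the relevant special case of Lemma~\ref{lemma: gamma reflection} without circularity, since that lemma's proof in the paper cites Lemma~\ref{lemma: gamma} — and then your base case $k=1$ coincides with the paper's two-case computation specialized to a source. What your route buys is that it works against an arbitrary adapted word from the start (no need to check invariance under commutation moves) and isolates all the genuine Q-datum input in the $d_{\bar{\im}}=1$ source case, exactly as you observe; what it costs is the extra inductive scaffolding. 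Your verification that $(\im,\uxi_{\im})$ precedes every $(j^{\circ},\uxi_{j^{\circ}})$ with $j\sim\bar{\im}$ in any compatible reading of $X_{\calQ}^{\circ}$, and that $\pi(X_{\calQ}')$ contains no neighbor of a $\sigma$-fixed source (via ($\mathtt{H2}$) forcing the relevant slice of $X_{\calQ}'$ to be empty), are both correct and make explicit an observation the paper only states in passing.

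One assertion deserves a line of justification: in the converse of the ``in particular'' statement you claim that $s_{\im_{1}}\cdots s_{\im_{k-1}}\alpha_{\im}=\alpha_{\im}$ together with $\im_{j}\neq\im$ for $j<k$ forces $\im_{j}\not\sim\im$ for all $j<k$. This is true but not automatic: if $j_{0}<k$ were maximal with $\im_{j_{0}}\sim\im$, then $s_{\im_{j_{0}}}\cdots s_{\im_{k-1}}\alpha_{\im}=\alpha_{\im}+\alpha_{\im_{j_{0}}}$, whence $\alpha_{\im}=s_{\im_{1}}\cdots s_{\im_{j_{0}-1}}\alpha_{\im}+\beta^{\bfi}_{j_{0}}$ would express the simple root $\alpha_{\im}$ as a sum of two positive roots (the first summand has $\alpha_{\im}$-coefficient $1$ and is hence positive; the second is positive because the word is reduced), a contradiction. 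With that supplied, the argument is complete.
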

\begin{proof}
 \ber In view of Lemma~\ref{lemma: repq} and Lemma~\ref{lemma: compatible reading}, \er it suffices to prove the assertion for a special reduced word $(\im_{1}, \ldots, \im_{nr})$
for $\btau_{\calQ}$ satisfying the above conditions.
Let us take a compatible reading $(x_{1}, \ldots, x_{nr})$ of the set $X_{\calQ}$
such that the first $n$-letters $(x_{1}, \ldots, x_{n})$ gives a compatible reading of
the subset $X_{\calQ}^{\circ} \subset X_{\calQ}$.
Then we have $\btau_{\calQ} = s_{\im_{1}} \cdots s_{\im_{nr}}$
and $\tau_{\calQ}^{\circ} = s_{\im_{1}} \cdots s_{\im_{n}} \sigma$,
where $(\im_{1}, \ldots, \im_{nr}) = (\pi(x_{1}), \ldots, \pi(x_{nr}))$.
Let $k$ be the smallest number such that $\im_{k} = \im$.
If $d_{\bar{\im}} = r$, we have $\tau_{\calQ}^{d_{\bar{\im}}} = \btau_{\calQ}$
by Proposition~\ref{proposition: conjugation}~(3)
and $\im_{l} = \im$ only if $l = k$.
Therefore we have $\btau_{\calQ} \varpi_{\im} = \varpi_{\im} - s_{\im_{1}} \cdots s_{\im_{k-1}} \alpha_{\im}$
and hence $\gamma^{\calQ}_{\im} = s_{\im_{1}} \cdots s_{\im_{k-1}} \alpha_{\im}$.
If $d_{\bar{\im}} = 1$, we have
$\tau_{\calQ}^{d_{\bar{\im}}} = \tau_{\calQ} = w[X_{\calQ}^{\prime}]^{-1} \cdot \tau_{\calQ}^{\circ} \cdot w[X_{\calQ}^{\prime}]$
and $1 \le k \le n$.
Note that $w[X_{Q}^{\prime}] \cdot \varpi_{\im} = \varpi_{\im}$ and
$\ber s_{\im_l} \er w[X_{Q}^{\prime}] = w[X_{Q}^{\prime}] s_{\im_l}$ for any $1 \le l \le n$.
Moreover,  we have $\im_{l} = \im$ with $1 \le l \le n$
only if $l=k$.
Thus we have $\gamma^{\calQ}_{\im} = (1-\tau_{\calQ}^{\circ})\varpi_{\im} = s_{\im_{1}} \cdots s_{\im_{k-1}} \alpha_{\im}$.
\end{proof}

\begin{lemma}
\label{lemma: gamma reflection}
Let $\im \in \Delta_{0}$ be a source of $\calQ$.
For any $(\jm, p) \in \Delta_{0} \times \Z$ such that $p - \uxi_{\jm} \in 2\Z$, we have
$$s_{\im}\tau_{\calQ}^{(\uxi_{\jm}-p)/2}(\gamma_{\jm}^{\calQ}) = \tau_{s_{\im}\calQ}^{((s_{\im}\uxi)_{\jm}-p)/2}(\gamma_{\jm}^{s_{\im}\calQ}).$$
\end{lemma}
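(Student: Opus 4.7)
The plan is to reduce the identity to a purely linear-algebraic statement about how the roots $\gamma_\jm^\calQ$ transform under the reflection $s_\im$, and then treat the cases $\jm \neq \im$ and $\jm = \im$ separately.

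First I would invoke Proposition~\ref{proposition: conjugation}~(\ref{tauconj_conj}), which gives $\tau_{s_\im\calQ} = s_\im \tau_\calQ s_\im$, so that $\tau_{s_\im\calQ}^m = s_\im \tau_\calQ^m s_\im$ for every $m \in \Z$. Applying this to the right-hand side, the identity to prove becomes
\[
 s_\im\, \tau_\calQ^{(\uxi_\jm - p)/2}(\gamma_\jm^\calQ)
 \;=\; s_\im\,\tau_\calQ^{((s_\im\uxi)_\jm - p)/2}\bigl(s_\im(\gamma_\jm^{s_\im\calQ})\bigr),
\]
which, after cancelling $s_\im$ and rearranging powers of $\tau_\calQ$, is equivalent to the reduced identity
\[
 \tau_\calQ^{(\uxi_\jm - (s_\im\uxi)_\jm)/2}(\gamma_\jm^\calQ) \;=\; s_\im(\gamma_\jm^{s_\im\calQ}).
\]
Since $(s_\im\uxi)_\jm = \uxi_\jm - 2\delta_{\im,\jm}\, d_{\bar\jm}$, the exponent on the left equals $0$ when $\jm \neq \im$ and equals $d_{\bar\im}$ when $\jm = \im$.

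For the case $\jm \neq \im$, I expand $\gamma_\jm^{s_\im\calQ} = (1 - \tau_{s_\im\calQ}^{d_{\bar\jm}})\varpi_\jm = (1 - s_\im \tau_\calQ^{d_{\bar\jm}} s_\im)\varpi_\jm$. Using that $(\varpi_\jm,\alpha_\im) = \delta_{\jm,\im} = 0$, we have $s_\im \varpi_\jm = \varpi_\jm$, so applying $s_\im$ once more yields $(1-\tau_\calQ^{d_{\bar\jm}})\varpi_\jm = \gamma_\jm^\calQ$, as required.

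For the case $\jm = \im$, the key input is Lemma~\ref{lemma: gamma}, which, since $\im$ is a source of $\calQ$, gives $\gamma_\im^\calQ = \alpha_\im$; equivalently $\tau_\calQ^{d_{\bar\im}}\varpi_\im = \varpi_\im - \alpha_\im$. Expanding
\[
 \gamma_\im^{s_\im\calQ}
 = \varpi_\im - s_\im\tau_\calQ^{d_{\bar\im}}s_\im \varpi_\im
 = \varpi_\im - s_\im \tau_\calQ^{d_{\bar\im}}(\varpi_\im - \alpha_\im),
\]
substituting $\tau_\calQ^{d_{\bar\im}}\varpi_\im = \varpi_\im - \alpha_\im$ and simplifying gives $\gamma_\im^{s_\im\calQ} = s_\im \tau_\calQ^{d_{\bar\im}}\alpha_\im$. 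Hence $s_\im(\gamma_\im^{s_\im\calQ}) = \tau_\calQ^{d_{\bar\im}}\alpha_\im = \tau_\calQ^{d_{\bar\im}}(\gamma_\im^\calQ)$, matching the left-hand side.

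There is no real obstacle: the whole lemma is a short bookkeeping calculation once the conjugation formula of Proposition~\ref{proposition: conjugation} and the source identification $\gamma_\im^\calQ = \alpha_\im$ of Lemma~\ref{lemma: gamma} are available; the only mildly delicate point is keeping track of the shift by $d_{\bar\im}$ in the exponent when $\jm = \im$, which is exactly balanced by the extra factor of $\tau_\calQ^{d_{\bar\im}}$ produced by the computation of $\gamma_\im^{s_\im\calQ}$.
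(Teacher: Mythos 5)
Your proof is correct and uses essentially the same ingredients and case split as the paper: Proposition~\ref{proposition: conjugation}~(\ref{tauconj_conj}) to replace $\tau_{s_{\im}\calQ}$ by $s_{\im}\tau_{\calQ}s_{\im}$, the invariance $s_{\im}\varpi_{\jm}=\varpi_{\jm}$ for $\jm\neq\im$, and the source identification $\gamma_{\im}^{\calQ}=\alpha_{\im}$ from Lemma~\ref{lemma: gamma} for $\jm=\im$. The only difference is cosmetic — you first reduce to the exponent-free identity $s_{\im}(\gamma_{\jm}^{s_{\im}\calQ})=\tau_{\calQ}^{\delta_{\im,\jm}d_{\bar{\im}}}(\gamma_{\jm}^{\calQ})$, whereas the paper computes directly, justifying the key step $\tau_{s_{\im}\calQ}^{d_{\bar{\im}}}s_{\im}\varpi_{\im}=\varpi_{\im}$ via the word expression of $\tau_{s_{\im}\calQ}^{d_{\bar{\im}}}s_{\im}$; these are equivalent.
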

\begin{proof}
First we consider the case when $\jm \neq \im$.
By Proposition~\ref{proposition: conjugation}~(1) and $s_{\im} \varpi_{\jm} = \varpi_{\jm}$,
we have
$$
s_{\im}\tau_{\calQ}^{(\uxi_{\jm}-p)/2}(\gamma_{\jm}^{\calQ}) =
\tau_{s_{\im}\calQ}^{(\uxi_{\jm}-p)/2} (1-\tau_{s_{\im}\calQ}^{d_{\bar{\jm}}})\varpi_{\jm}
= \tau_{s_{\im}\calQ}^{((s_{\im}\uxi)_{\jm}-p)/2}(\gamma_{\jm}^{s_{\im}\calQ}),
$$
which proves the assertion.
Let us consider the other case when $\jm = \im$.
Since $\im$ is a source of $\calQ$, we have
$\gamma^{\calQ}_{\im} = \alpha_{\im} = (1-s_{\im})\varpi_{\im}$
by Lemma~\ref{lemma: gamma}.
Thus we obtain
\begin{align*}
s_{\im}\tau_{\calQ}^{(\uxi_{\im}-p)/2}(\gamma_{\im}^{\calQ})
&= \tau_{s_{\im}\calQ}^{(\uxi_{\im}-p)/2}(s_{\im}-1)\varpi_{\im} \\
&= \tau_{s_{\im}\calQ}^{(\uxi_{\im} -2d_{\bar{\im}}-p)/2}\tau_{s_{\im}\calQ}^{d_{\bar{\im}}}(s_{\im}-1)\varpi_{\im} \\
&= \tau_{s_{\im}\calQ}^{(\uxi_{\im} -2d_{\bar{\im}}-p)/2}(1-\tau_{s_{\im}\calQ}^{d_{\bar{\im}}})\varpi_{\im} \\
&= \tau_{s_{\im}\calQ}^{((s_{\im}\uxi)_{\im}-p)/2}(\gamma^{s_{\im}\calQ}_{\im})
\end{align*}
as desired.
Here for the third equality we used the fact $\tau_{s_{\im}\calQ}^{d_{\bar{\im}}}s_{\im} \varpi_{\im} = \varpi_{\im}$,
which holds because the element $\tau_{s_{\im}\calQ}^{d_{\bar{\im}}}s_{\im}$ has an expression without the simple reflection $s_{\im}$.
\end{proof}

\begin{lemma}
\label{lemma: gamma relation}
Let $\calQ=(\Delta, \sigma, \uxi)$ be a ${\rm Q}$-datum.
We fix an index $i \in I$.
For any $\im, \im^{\prime} \in i$ we have
$$\tau_{\calQ}^{(\uxi_{\im} - \uxi_{\im^{\prime}})/2}(\gamma_{\im}^{\calQ}) = \gamma^{\calQ}_{\im^{\prime}}.$$
\end{lemma}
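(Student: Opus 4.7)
\medskip

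\noindent\textbf{Proof plan.} If $d_{i}=1$, then $\im$ is $\sigma$-fixed and the $\sigma$-orbit $i$ is the singleton $\{\im\}$, so $\im=\im'$ and the statement is vacuous. Hence assume $d_{i}=r$ throughout. The strategy is to reduce, via source reflections, to the case where $\calQ$ satisfies the condition~(\ref{eq: condition twisted Coxeter height}), i.e.~$\calQ=\calQ^{\circ}$, and then to verify the identity there by a direct computation with $\tau_{\calQ^{\circ}}^{\circ}=w[X_{\calQ}^{\circ}]\sigma$.

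\medskip

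\noindent\emph{Step 1 (Reduction).} For a source $\im_{0}$ of $\calQ$, I claim the identity for $\calQ$ is equivalent to that for $s_{\im_{0}}\calQ$. Apply $s_{\im_{0}}$ to both sides of the identity and invoke Lemma~\ref{lemma: gamma reflection} twice: with $(\jm,p)=(\im,\uxi_{\im'})$ on the left-hand side and with $(\jm,p)=(\im',\uxi_{\im'})$ on the right-hand side (note $\uxi_{\im'}-\uxi_{\im}\in 2\Z$ and $\uxi_{\im'}-\uxi_{\im'}=0$, so the hypothesis of Lemma~\ref{lemma: gamma reflection} is satisfied in both cases). Setting $\uxi_{1}:=s_{\im_{0}}\uxi$, the resulting identity reads
\[
\tau_{s_{\im_{0}}\calQ}^{((\uxi_{1})_{\im}-\uxi_{\im'})/2}\gamma_{\im}^{s_{\im_{0}}\calQ}
=\tau_{s_{\im_{0}}\calQ}^{((\uxi_{1})_{\im'}-\uxi_{\im'})/2}\gamma_{\im'}^{s_{\im_{0}}\calQ},
\]
and multiplying the statement for $s_{\im_{0}}\calQ$ by $\tau_{s_{\im_{0}}\calQ}^{((\uxi_{1})_{\im'}-\uxi_{\im'})/2}$ produces exactly this equation. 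So the correction terms coming from $\delta_{\im_{0},\im}$ and $\delta_{\im_{0},\im'}$ cancel and we get an equivalence. By (\ref{eq: translation}), $\calQ$ is obtained from $\calQ^{\circ}$ by successively reflecting at sources of the intermediate Q-data, so iterating this equivalence reduces the problem to the case $\calQ=\calQ^{\circ}$.

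\medskip

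\noindent\emph{Step 2 (The case $\calQ=\calQ^{\circ}$).} Now $\tau_{\calQ}=\tau_{\calQ}^{\circ}=w[X_{\calQ}^{\circ}]\sigma$, and by~(\ref{eq: condition twisted Coxeter height}) we may write $\im=\sigma^{a}(i^{\circ})$, $\im'=\sigma^{b}(i^{\circ})$ with $0\le a,b<r$ and $(\uxi_{\im}-\uxi_{\im'})/2=b-a$. Since $\gamma_{\im}^{\calQ}=(1-\tau_{\calQ}^{r})\varpi_{\im}=(1-\btau_{\calQ})\varpi_{\im}$ and $\tau_{\calQ}$ commutes with $\btau_{\calQ}$,
\[
\tau_{\calQ}^{b-a}\gamma_{\im}^{\calQ}=(1-\btau_{\calQ})\tau_{\calQ}^{b-a}\varpi_{\im},
\]
so it suffices to show $\tau_{\calQ}^{b-a}\varpi_{\im}=\varpi_{\im'}$. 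Assuming WLOG $a\le b$, I iterate the following one-step computation: for $0\le c<r-1$,
\[
\tau_{\calQ}\varpi_{\sigma^{c}(i^{\circ})}=w[X_{\calQ}^{\circ}]\varpi_{\sigma^{c+1}(i^{\circ})}=\varpi_{\sigma^{c+1}(i^{\circ})},
\]
because $\sigma^{c+1}(i^{\circ})\notin I_{\calQ}^{\circ}$ (the set $I_{\calQ}^{\circ}$ contains exactly one representative of each $\sigma$-orbit, and that representative for the orbit $i$ is $i^{\circ}=\sigma^{0}(i^{\circ})$), so every simple reflection appearing in $w[X_{\calQ}^{\circ}]$ fixes $\varpi_{\sigma^{c+1}(i^{\circ})}$. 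After $b-a$ iterations, each with $c+1<r$, we land on $\varpi_{\sigma^{b}(i^{\circ})}=\varpi_{\im'}$; the symmetric case $a>b$ is obtained by swapping the roles of $\im$ and $\im'$.

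\medskip

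\noindent The main technical obstacle is the reduction step: one must check that the seemingly asymmetric shifts $\delta_{\im_{0},\im}$ and $\delta_{\im_{0},\im'}$ induced on the exponents of $\tau$ by Lemma~\ref{lemma: gamma reflection} combine cleanly into the analogous identity for $s_{\im_{0}}\calQ$. Once this is done, everything else reduces to a bookkeeping on the action of $w[X_{\calQ}^{\circ}]\sigma$ on fundamental weights along a single $\sigma$-orbit, which is straightforward because the representative $i^{\circ}$ is chosen uniquely in that orbit.
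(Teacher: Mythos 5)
Your proof is correct and follows essentially the same route as the paper: reduce to the case where $\calQ$ satisfies~(\ref{eq: condition twisted Coxeter height}) via source reflections and Lemma~\ref{lemma: gamma reflection}, then verify the identity for $\calQ^{\circ}$. The only cosmetic difference is that in the base case you compute $\tau_{\calQ}^{b-a}\varpi_{\im}=\varpi_{\im'}$ directly from $\tau_{\calQ}^{\circ}=w[X_{\calQ}^{\circ}]\sigma$ and $\btau_{\calQ}=\tau_{\calQ}^{r}$, whereas the paper deduces the same fact by citing Lemma~\ref{lemma: gamma}; both arguments are valid and equivalent in substance.
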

\begin{proof}
First we consider the case when $[\calQ]$ contains a reduced word arising from a twisted Coxeter element
$\tau_{\calQ} = s_{\im_{1}} \cdots s_{\im_{n}} \sigma$,
or equivalently the condition~(\ref{eq: condition twisted Coxeter height}) is satisfied.
In this case,
what we have to show is that $\tau_{\calQ}(\gamma^{\calQ}_{\im}) = \gamma^{\calQ}_{\sigma(\im)}$
for all $\im \in I_{\calQ}^{\circ}$ with $d_{\bar{\im}} > 1$.
This can be deduced immediately from Lemma~\ref{lemma: gamma}.
A general case is reduced to this special case by Lemma~\ref{lemma: gamma reflection}
since any $\calQ$ can be obtained from $\calQ^{\circ}$ by a suitable sequence of source reflections
(cf.~(\ref{eq: translation})).
\end{proof}

\begin{corollary}
\label{corollary: opposite end}
Let $\calQ=(\Delta, \sigma, \uxi)$ be a Q-datum.
For each $\im \in \Delta_{0}$, we have
$$\tau_{\calQ}^{(rh^{\vee} + \uxi_{\im}-\uxi_{\im^{*}})/2}(\gamma^{\calQ}_{\im}) = -\gamma^{\calQ}_{\im^{*}}.$$
\end{corollary}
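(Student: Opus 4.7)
The plan is to prove the identity by reducing to a single convenient Q-datum per adapted cluster point, using source reflections, and then handling the base cases separately in the twisted and simply-laced situations.

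First I would introduce the auxiliary quantity
$$
\Phi(\calQ, \im) := \tau_{\calQ}^{(rh^{\vee} + \uxi_{\im}-\uxi_{\im^{*}})/2}(\gamma^{\calQ}_{\im}) + \gamma^{\calQ}_{\im^{*}}
$$
and recast the assertion as $\Phi(\calQ, \im) = 0$ for all $\im \in \Delta_{0}$. For each source $\jm$ of $\calQ$, I would apply $s_{\jm}$ to both terms of $\Phi(\calQ, \im)$ and invoke Lemma~\ref{lemma: gamma reflection} with the parameters $(\jm'=\im, p' = \uxi_{\im^{*}}-rh^{\vee})$ and $(\jm''=\im^{*}, p''=\uxi_{\im^{*}})$ respectively. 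Setting $a := (s_\jm\uxi)_\im - \uxi_\im = -2d_{\bar\jm}\delta_{\jm,\im}$ and $b := (s_\jm\uxi)_{\im^*} - \uxi_{\im^*} = -2d_{\bar\jm}\delta_{\jm,\im^*}$, a direct computation yields
$$
s_{\jm}\Phi(\calQ, \im) = \tau_{s_{\jm}\calQ}^{b/2}\cdot \Phi(s_{\jm}\calQ, \im),
$$
since $N_{\im}^{s_{\jm}\calQ} = N_{\im}^{\calQ} + (a - b)/2$. Because $\tau_{s_\jm\calQ}^{b/2}$ is invertible on $\sfP$, this equivalence gives $\Phi(\calQ,\im)=0 \iff \Phi(s_\jm\calQ,\im)=0$, so the validity of the identity is a property of the adapted cluster point $\lf\Ds\rf$ rather than of the individual Q-datum, by Theorem~\ref{theorem: OS19b main}.

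Next I would verify the identity on a well-chosen representative. When $\sigma \neq \id$, I take $\calQ = \calQ^{\circ}$ so that condition~(\ref{eq: condition twisted Coxeter height}) is satisfied and $\tau_{\calQ^{\circ}}$ equals the $\sigma$-Coxeter element $\tau_{\calQ^{\circ}}^{\circ}$. By Proposition~\ref{proposition: conjugation}~(\ref{tauconj_rh}) we have $\tau_{\calQ^\circ}^{rh^{\vee}/2}=-1$. Using Remark~\ref{remark: h^vee parity}, I split into two sub-cases: if $h^{\vee}$ is even then $\im^{*}=\im$ and the identity reduces to $\tau_{\calQ^\circ}^{rh^\vee/2}\gamma_\im^{\calQ^\circ} = -\gamma_\im^{\calQ^\circ}$; if $h^{\vee}$ is odd then $\im^{*}=\sigma(\im)$, and Lemma~\ref{lemma: gamma relation} gives $\tau_{\calQ^\circ}^{(\uxi_\im-\uxi_{\sigma(\im)})/2}\gamma_\im^{\calQ^\circ} = \gamma_{\sigma(\im)}^{\calQ^\circ}$, so applying $\tau_{\calQ^\circ}^{rh^\vee/2} = -1$ yields the desired equality.

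The case $\sigma = \id$ is the classical statement, where $\tau_{\calQ} = \tau_{Q}$ is an honest Coxeter element and $\gamma_{\im}^{Q}$ is the dimension vector of the injective $I_{\im} \in \Rep(Q)$. Here the identity
$$
\tau_{Q}^{(h^{\vee} + \xi_{\im}-\xi_{\im^{*}})/2}\gamma^{Q}_{\im} = -\gamma^{Q}_{\im^{*}}
$$
is a reformulation of the fact that the AR orbit of $I_\im$ in $\calD_{Q}$ arrives at $P_{\im^{*}}$ after the prescribed number of steps (equivalently, $\phi_{Q}(\im, \xi_{\im^{*}} - h^{\vee}) = (\gamma_{\im^{*}}^{Q}, 1)$), which is recorded in \cite[Section 2.2]{HL15}. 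The main obstacle is the bookkeeping in the reduction step, where the relations $a, b \ne 0$ occurring at $\jm = \im$ and $\jm=\im^{*}$ must be carefully combined with the definition of $N_\im$ in the Q-datum $s_\jm\calQ$; once this is handled uniformly the remainder is a short extraction from classical results and Proposition~\ref{proposition: conjugation}.
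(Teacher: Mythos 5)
Your proof is correct, and its essential content coincides with the paper's own argument: for $\sigma \neq \id$ the identity follows by writing the exponent as $rh^{\vee}/2 + (\uxi_{\im}-\uxi_{\im^{*}})/2$, applying Lemma~\ref{lemma: gamma relation} (using $\im^{*} \in \bar{\im}$, from Remark~\ref{remark: h^vee parity}) and then Proposition~\ref{proposition: conjugation}~(\ref{tauconj_rh}), while the case $\sigma=\id$ is the classical statement about AR orbits in $\calD_Q$. The one structural difference is your preliminary reduction to $\calQ^{\circ}$ via source reflections: the computation $s_{\jm}\Phi(\calQ,\im) = \tau_{s_{\jm}\calQ}^{b/2}\Phi(s_{\jm}\calQ,\im)$ is valid (and the chain $\calQ = s_{\jm_m}\cdots s_{\jm_1}\calQ^{\circ}$ from~(\ref{eq: translation}) makes it applicable), but it is redundant, because both Lemma~\ref{lemma: gamma relation} and Proposition~\ref{proposition: conjugation}~(\ref{tauconj_rh}) are already stated for an \emph{arbitrary} Q-datum; the same remark makes your case split on the parity of $h^{\vee}$ unnecessary, since both sub-cases ($\im^*=\im$ and $\im^*=\sigma(\im)$) are uniform instances of $\im^{*}\in\bar{\im}$. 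So the detour costs you the bookkeeping you flag as the "main obstacle" without buying anything; otherwise the argument is sound.
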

\begin{proof}
When $\sigma = \id$, the assertion is well-known.
When $\sigma \neq \id$,
we have $\im^{*} \in \bar{\im}$ for any $\im \in \Delta_{0}$
(see~Remark~\ref{remark: h^vee parity}).
Thus the assertion follows from
Proposition~\ref{proposition: conjugation}~(4) and
Lemma~\ref{lemma: gamma relation}.
\end{proof}

\begin{proof}[Proof of Theorem~\ref{thm: phi_Q}]
Let $\im \in \Delta_{0}$ be a source of $\calQ$.
We shall \ber prove the assertions \er for $s_{\im}\calQ$ assuming that
they are true for $\calQ$.
By Lemma~\ref{lemma: gamma reflection},
we have $\phi_{s_{\im}\calQ} = \widehat{s}_{\im} \circ \phi_{\calQ}$, where
$\widehat{s}_{\im} \colon \hsfR^+ \to \hsfR^+$
is the bijection defined by
\begin{align}\label{eq: sihat}
\widehat{s}_{\im} (\beta, k) := \begin{cases}
(s_{\im}\beta, k) & \text{if $\beta \in \sfR^{+} \setminus \{ \alpha_{\im} \}$}, \\
(\alpha_{\im} , k+1) & \text{if $\beta = \alpha_{\im}$}
\end{cases}
\end{align}
(see also \cite{KKOP20Br}). 
Therefore $\phi_{s_{\im}\calQ}$ is a bijection.
From the assumption $\phi_{\calQ}^{-1}(\sfR^{+} \times \{ 0 \}) = (\Gamma_{\calQ})_{0}$
and Corollary~\ref{corollary: opposite end},
we obtain $\phi_{\calQ}(\im^{*}, \uxi_{\im} - rh^{\vee}) = (\alpha_{\im}, -1)$.
Therefore we have
$$
\phi_{s_{\im}\calQ}^{-1}(\sfR^{+} \times \{ 0 \})
= \phi_{\calQ}^{-1}(\widehat{s}_{\im}^{-1} (\sfR^{+} \times \{ 0 \}))
= ((\Gamma_{\calQ})_{0} \setminus \{ (\im, \uxi_{\im}) \})\cup\{(\im^{*}, \uxi_{\im}-rh^{\vee})\}
= (\Gamma_{s_{\im}\calQ})_{0}.
$$
Furthermore, thanks to Proposition~\ref{proposition: reflection algorithm}, we obtain
$\phi_{s_{\im}\calQ}(\jm, p)=(\phi_{s_{\im}\calQ, 0}(\jm, p), 0)$ for any $(\jm,p)\in (\Gamma)_{s_{\im}\calQ}$.

Note that any Q-datum $\calQ$ can be obtained from $\calQ^{\circ}$
by a suitable sequence of source reflections (cf.~(\ref{eq: translation})).
Since the assertions for $\calQ^{\circ}$ follow immediately from
Theorem~\ref{theorem: OS19b main} and the definition of $\Upsilon_{[\calQ^{\circ}]}$,
we have obtained the proof for general $\calQ$.
\end{proof}

As a consequence of Theorem~\ref{thm: phi_Q} and Corollary~\ref{corollary: opposite end},
we observe the following.

\begin{corollary} \label{corollary: shift hDs}
Let $\calQ=(\Delta, \sigma, \uxi)$ be a ${\rm Q}$-datum
and $(\im, p) \in \hDs_{0}$.
If $\phi_{\calQ}(\im,p)= (\beta, k)$, we have
$$
\phi_{\calQ}(\im^{*}, p \mp rh^{\vee}) = (\beta, k\pm 1).
$$
\end{corollary}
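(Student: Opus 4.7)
\medskip

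\noindent\textbf{Plan of proof.}
My plan is to unwind the recursive definition of $\phi_{\calQ}$ in terms of the action of $\tau_{\calQ}$ on the roots, and then combine the already-established Corollary~\ref{corollary: opposite end} with the statement that $\tau_{\calQ}$ has order $rh^{\vee}$. It suffices to handle the upper sign (the lower sign follows by replacing $(\im,p)$ with $(\im^{*}, p-rh^{\vee})$ and using $(\im^{*})^{*}=\im$).

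First I would translate the hypothesis $\phi_{\calQ}(\im,p)=(\beta,k)$ into the identity
\[
\tau_{\calQ}^{(\uxi_{\im}-p)/2}(\gamma_{\im}^{\calQ}) = (-1)^{k}\beta,
\]
which is granted by Theorem~\ref{thm: phi_Q} (more precisely, by the remark right after its proof / the defining recursion of $\phi_{\calQ}$). Next I would use Corollary~\ref{corollary: opposite end} in the form
\[
\gamma_{\im^{*}}^{\calQ} = -\tau_{\calQ}^{(rh^{\vee}+\uxi_{\im}-\uxi_{\im^{*}})/2}(\gamma_{\im}^{\calQ})
\]
and substitute it into $\tau_{\calQ}^{(\uxi_{\im^{*}}-(p-rh^{\vee}))/2}(\gamma_{\im^{*}}^{\calQ})$. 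Collecting the exponents of $\tau_{\calQ}$ gives
\[
\tau_{\calQ}^{(\uxi_{\im^{*}}-p+rh^{\vee})/2}(\gamma_{\im^{*}}^{\calQ})
= -\tau_{\calQ}^{rh^{\vee}}\tau_{\calQ}^{(\uxi_{\im}-p)/2}(\gamma_{\im}^{\calQ}).
\]
By Proposition~\ref{proposition: conjugation}~(\ref{tauconj_ord}) the order of $\tau_{\calQ}$ is $rh^{\vee}$, so the factor $\tau_{\calQ}^{rh^{\vee}}$ is trivial, leaving $-(-1)^{k}\beta=(-1)^{k+1}\beta$. This is exactly the identity characterizing $\phi_{\calQ}(\im^{*}, p-rh^{\vee})=(\beta,k+1)$.

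Before concluding I need to check that $(\im^{*}, p-rh^{\vee})$ really lies in $\hDs_{0}$, i.e.\ the parity condition $p-rh^{\vee}-\epsilon_{\im^{*}} \in 2d_{\bar{\im^{*}}}\Z$. This is a short case analysis via Remark~\ref{remark: h^vee parity}: when $\sigma=\id$ one uses that $\epsilon_{\im}$ and $\epsilon_{\im^{*}}$ have the same parity as $h^{\vee}$; when $\sigma\neq\id$ and $h^{\vee}$ is even, $\im^{*}=\im$ and $2d_{\bar{\im}}\mid rh^{\vee}$; when $\sigma\neq\id$ and $h^{\vee}$ is odd one has $r=2$, $\im^{*}=\sigma(\im)$, and Lemma~\ref{lemma: parity}~(\ref{parity_injection}) applied to $\epsilon$ gives $\epsilon_{\sigma(\im)}\equiv\epsilon_{\im}-2\pmod{2d_{\bar{\im}}}$, which combines with $2(h^{\vee}-1)\in 2d_{\bar{\im}}\Z$ to yield the required congruence.

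The main (in fact only) subtlety is the parity/well-definedness check above; the algebraic manipulation itself is a one-line consequence of Corollary~\ref{corollary: opposite end} together with $\tau_{\calQ}^{rh^{\vee}}=\mathrm{id}$. Once both directions are handled, the corollary is proved.
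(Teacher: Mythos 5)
Your algebraic computation is correct as far as it goes, but the final step --- ``this is exactly the identity characterizing $\phi_{\calQ}(\im^{*},p-rh^{\vee})=(\beta,k+1)$'' --- is where the proof breaks down. The relation $\tau_{\calQ}^{(\uxi_{\jm}-q)/2}(\gamma^{\calQ}_{\jm})=(-1)^{k'}\beta'$ is a \emph{consequence} of $\phi_{\calQ}(\jm,q)=(\beta',k')$, not a characterization of it: since $\beta'$ is required to be a positive root, the identity determines $\beta'$ and the parity of $k'$, but nothing more. The integer $k'$ itself is defined by the recursion (it records how many times the sign of $\tau_{\calQ}^{ld_{\bar{\jm}}}(\gamma^{\calQ}_{\jm})$ flips as one moves along the column from $(\jm,\uxi_{\jm})$ to $(\jm,q)$), and your argument never engages with that count. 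So what you have actually proved is only that $\phi_{\calQ}(\im^{*},p-rh^{\vee})=(\beta,k')$ with $k'\equiv k+1\pmod 2$, which does not distinguish $k+1$ from $k-1$ (or $k+3$, etc.).

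This is not a removable formality: the missing information is exactly where the content of the corollary lies. To pin down $k'$ you must invoke Theorem~\ref{thm: phi_Q}~(2), i.e.\ the fact that $\phi_{\calQ}^{-1}(\sfR^{+}\times\{0\})=(\Gamma_{\calQ})_{0}$, which locates the ``$k=0$ band'' in each column. Concretely: the explicit description of $(\Gamma_{\calQ})_{0}$ together with Corollary~\ref{corollary: opposite end} shows that $\tau_{\calQ}^{ld_{\bar{\im}}}(\gamma^{\calQ}_{\im})$ is a positive root for $0\le l<(rh^{\vee}+\uxi_{\im}-\uxi_{\im^{*}})/(2d_{\bar{\im}})$ and becomes $-\gamma^{\calQ}_{\im^{*}}$ at the next step; iterating (and using $\tau_{\calQ}^{rh^{\vee}}=\id$) one gets the full sign pattern along the column, hence the exact value of the second component, and then one checks the shift statement at the top vertices $(\im,\uxi_{\im})$ and propagates it along the column because both sides of the claimed identity obey the same recursion in $p$. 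Carrying this out is instructive: at the top vertex the recursion plus Corollary~\ref{corollary: opposite end} yield $\phi_{\calQ}(\im^{*},\uxi_{\im}-rh^{\vee})=(\gamma^{\calQ}_{\im},-1)$ --- this very computation appears inside the paper's proof of Theorem~\ref{thm: phi_Q} --- so the second component actually moves by $\mp 1$ under $p\mapsto p\mp rh^{\vee}$ with the recursion as printed, and your method is structurally incapable of detecting which of the two signs is forced. Your parity check that $(\im^{*},p\mp rh^{\vee})\in\hDs_{0}$ is fine, but it addresses the easy half of the problem.
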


The next statement is a generalization of the additive property~(\ref{eq: classical additive property}).

\begin{theorem}[$\g$-additive property]
\label{thm: twisted additivity}
Let $\calQ = (\Delta, \sigma, \uxi)$ be a ${\rm Q}$-datum for $\g$ and
$I^{\circ} \subset \Delta_{0}$ be
an arbitrary section of the natural quotient map $\bar{ \ } \colon \Delta_{0} \to I$.
For any
$\im \in \Delta_{0}$ and $l \in \Z$, we have
\begin{align}
\label{eq: twisted additivity}
\tau_{\calQ}^{l}(\gamma^{\calQ}_{\im}) + \tau_{\calQ}^{l + d_{\bar{\im}}}(\gamma^{\calQ}_{\im})
&= \sum_{\jm \in I^{\circ}, \ \bar{\jm} \sim \bar{\im}} \sum_{t=0}^{-c_{\bar{\jm} \bar{\im}}-1}
\tau_{\calQ}^{l+t+(\uxi_{\jm} -\uxi_{\im} + \min(d_{\bar{\im}}, d_{\bar{\jm}}))/2} (\gamma^{\calQ}_{\jm}) \\
&= \sum_{j \in I, \ j \sim \bar{\im}} \sum_{t=0}^{-c_{j \bar{\im}}-1}  \frac{1}{d_{j}} \sum_{\jm \in j}
\tau_{\calQ}^{l+t+(\uxi_{\jm} -\uxi_{\im} + \min(d_{\bar{\im}}, d_{j}))/2} (\gamma^{\calQ}_{\jm}). \nonumber
\end{align}
\end{theorem}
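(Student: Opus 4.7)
My plan is to reduce the additive identity to a tractable base case by successively invoking three equivariance properties of both sides, and then verify the base case by direct computation. The second equality of the theorem is an immediate consequence of Lemma~\ref{lemma: gamma relation}: for any $\jm, \jm^{\prime} \in j$, the lemma gives $\tau_\calQ^{\delta_\jm}\gamma^\calQ_\jm = \tau_\calQ^{\delta_{\jm^{\prime}}}\gamma^\calQ_{\jm^{\prime}}$, where $\delta_\jm := (\uxi_\jm - \uxi_\im + \min(d_{\bar\im}, d_j))/2$, so the averaged sum $\frac{1}{d_j}\sum_{\jm \in j}(\cdots)$ collapses to the $I^\circ$-form for any choice of section $I^\circ$. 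I will therefore concentrate on proving the first equality.

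The first step is to reformulate the identity using $\gamma^\calQ_\im = (1 - \tau_\calQ^{d_{\bar\im}})\varpi_\im$ and telescope. The LHS becomes $\tau_\calQ^l(1 - \tau_\calQ^{2d_{\bar\im}})\varpi_\im$. On the RHS, for each fixed $\jm$, the inner sum over $t = 0, \ldots, -c_{\bar\jm\bar\im}-1$ combined with the factor $1 - \tau_\calQ^{d_{\bar\jm}}$ from $\gamma^\calQ_\jm$ telescopes via the key arithmetic $(-c_{\bar\jm\bar\im}) \cdot d_{\bar\jm} = \max(d_{\bar\im}, d_{\bar\jm}) =: M_\jm$ (verified in each of the three configurations $(d_{\bar\im}, d_{\bar\jm}) \in \{(d,d), (1,r), (r,1)\}$) to give $\tau_\calQ^l(\tau_\calQ^{\delta_\jm} - \tau_\calQ^{\delta_\jm + M_\jm})\varpi_\jm$. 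Cancelling $\tau_\calQ^l$, it suffices to establish, at $l = 0$, the identity
\[
(1 - \tau_\calQ^{2d_{\bar\im}})\varpi_\im \;=\; \sum_{\jm \in I^\circ,\, \bar\jm \sim \bar\im} (\tau_\calQ^{\delta_\jm} - \tau_\calQ^{\delta_\jm + M_\jm})\varpi_\jm .
\]

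The second step reduces to a specific base case via two further equivariances. Lemma~\ref{lemma: gamma relation} shows that the identity at $\im$ is equivalent to the one at any $\im^{\prime} \in \bar\im$ after an appropriate shift of $l$, so we may restrict $\im$ to $I^\circ_\calQ$. Proposition~\ref{proposition: conjugation}(\ref{tauconj_conj}) combined with Lemma~\ref{lemma: gamma reflection} shows that applying $s_\jm$ (for a source $\jm$ of $\calQ$) to both sides sends the identity for $(\calQ, \im, l)$ to the one for $(s_\jm\calQ, \im, l - \delta_{\jm, \im}d_{\bar\im})$; the compatibility of the shifts in the exponents $\delta_{\jm^{\prime}}$ comes from $(s_\jm\uxi)_{\jm^{\prime}} - \uxi_{\jm^{\prime}} = -2\delta_{\jm,\jm^{\prime}}d_{\bar{\jm^{\prime}}}$. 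Iterating source reflections, we reduce to a Q-datum $\calQ^\circ$ satisfying~\eqref{eq: condition twisted Coxeter height} in which $\im \in I^\circ_{\calQ^\circ}$ is a source; by choosing the underlying $\sigma$-Coxeter element so that $\im$ appears first in the product $\tau_\calQ = \tau_\calQ^\circ = s_\im s_{\im_2}\cdots s_{\im_n}\sigma$, Lemma~\ref{lemma: gamma} gives $\gamma^\calQ_\im = \alpha_\im$.

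The final step is a direct computation in this base case. The essential technical input is that $\varpi_{\im_k}$ is fixed by every $s_{\im_l}$ with $l \neq k$, since $\im_l$ and $\im_k$ are distinct vertices of $\Delta$. This reduces the calculation of $\tau_\calQ^p\varpi_\im$ and $\tau_\calQ^p\varpi_\jm$ to tracking the $\sigma$-action on $\Delta_0$ and applying only the pertinent simple reflections $s_\im, s_\jm$ (and their $\sigma$-translates when $d_{\bar\jm} = r$). The main obstacle is the case-by-case bookkeeping prescribed by axiom~($\mathtt{H2}$) of a Q-datum: one must separately treat configurations where $\im \sim \jm$ in $\Delta$ and those where only $\bar\im \sim \bar\jm$ holds in the Cartan matrix of $\g$ (so $\im$ and $\jm$ are mediated by a branching point), and in each case verify that the exponent $\delta_\jm$ encoded by the height function $\uxi$ matches the one produced by the explicit reduced-word computation.
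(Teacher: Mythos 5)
Your overall architecture coincides with the paper's: the second equality via Lemma~\ref{lemma: gamma relation}, the reduction by source reflections (Lemma~\ref{lemma: gamma reflection} and Proposition~\ref{proposition: conjugation}) to a Q-datum satisfying~\eqref{eq: condition twisted Coxeter height} with $\im$ a source and $\gamma^{\calQ}_{\im}=\alpha_{\im}$, and a final local verification. Your telescoping of the right-hand side, using $(-c_{\bar{\jm}\bar{\im}})d_{\bar{\jm}}=\max(d_{\bar{\im}},d_{\bar{\jm}})$, is a correct repackaging of the same algebra the paper performs on the left-hand side (note the sum $\sum_{t=0}^{-c_{\bar{\jm}\bar{\im}}-1}\tau_{\calQ}^{t}(1-\tau_{\calQ}^{d_{\bar{\jm}}})$ collapses to $1-\tau_{\calQ}^{\max(d_{\bar{\im}},d_{\bar{\jm}})}$ precisely because either the number of summands is $1$ or $d_{\bar{\jm}}=1$, which is what your three configurations verify).

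The one place where your write-up stops short of a proof is the base case, which is the actual content of the theorem: you announce a reduced-word computation "tracking the $\sigma$-action" and acknowledge the case-by-case bookkeeping forced by ($\mathtt{H2}$), but do not carry it out. That bookkeeping is avoidable. Since $\im$ is a source, $(1-\tau_{\calQ}^{d_{\bar{\im}}})\varpi_{\im}=\alpha_{\im}$ gives $(1+\tau_{\calQ}^{d_{\bar{\im}}})\varpi_{\im}=2\varpi_{\im}-\alpha_{\im}=\sum_{\jm\sim\im}\varpi_{\jm}$, so your telescoped identity becomes $(1-\tau_{\calQ}^{d_{\bar{\im}}})\sum_{\jm\sim\im}\varpi_{\jm}=\sum_{\jm}\tau_{\calQ}^{\delta_{\jm}}(1-\tau_{\calQ}^{M_{\jm}})\varpi_{\jm}$. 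When $d_{\bar{\im}}=r$ one factors $1-\tau_{\calQ}^{r}=(1-\tau_{\calQ}^{d_{\bar{\jm}}})\sum_{t}\tau_{\calQ}^{t}$ and checks $\delta_{\jm}=0$ from the source condition; when $d_{\bar{\im}}=1$ the sum over neighbours in $\Delta$ regroups by $\sigma$-orbits and one uses $(1-\tau_{\calQ})\sum_{\jm\in j}\varpi_{\jm}=\gamma^{\calQ}_{j^{\circ}}$, which follows from Lemma~\ref{lemma: gamma} under~\eqref{eq: condition twisted Coxeter height}. This second identity is the point your sketch does not isolate: the matching between the sum over actual neighbours of $\im$ in $\Delta$ (which may contain a whole $\sigma$-orbit of size $r$) and the sum over orbit representatives in the statement is exactly where Lemma~\ref{lemma: gamma} is needed, and a purely word-by-word computation would have to rediscover it. Your plan would succeed, but as written the crux is asserted rather than proved.
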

\begin{proof}
%
Note that each summand $\tau_{\calQ}^{l+t+(\uxi_{\jm} -\uxi_{\im} + \min(d_{\bar{\im}}, d_{j}))/2} (\gamma^{\calQ}_{\jm})$
does not depend on the choice of the representative $\jm \in j$ thanks to Lemma~\ref{lemma: gamma relation}.
This verifies the second equality and it suffices to prove the first equality~(\ref{eq: twisted additivity})
for a special choice of $I^{\circ}$.
Moreover,
Lemma~\ref{lemma: gamma reflection} reduces the situation to the case when
$\calQ$ satisfies the condition~(\ref{eq: condition twisted Coxeter height}) (i.e.~$\tau_{\calQ} = \tau_{\calQ}^{\circ}$)
and the vertex $\im$ is a source of $\calQ$.

The LHS of (\ref{eq: twisted additivity}) is computed as
\begin{align}
\label{eq: twisted additivity computation}
\tau_{\calQ}^{l}(\gamma^{\calQ}_{\im}) + \tau_{\calQ}^{l + d_{\bar{\im}}}(\gamma^{\calQ}_{\im})
&=
\tau_{\calQ}^{l} (1- \tau_{\calQ}^{d_{\bar{\im}}}) \varpi_{\im}
+ \tau_{\calQ}^{l + d_{\bar{\im}}} (1-\tau_{\calQ}^{d_{\bar{\im}}}) \varpi_{\im} \\
&=
\tau_{\calQ}^{l} (1-\tau_{\calQ}^{d_{\bar{\im}}}) (1+\tau_{\calQ}^{d_{\bar{\im}}}) \varpi_{\im}  \nonumber \\
&= \sum_{\jm \in \Delta_{0}, \ \jm \sim \im} \tau_{\calQ}^{l} (1-\tau_{\calQ}^{d_{\bar{\im}}})\varpi_{\jm}. \nonumber
\end{align}
Here for the last equality we used the fact
$$
(1+\tau_{\calQ}^{d_{\bar{\im}}}) \varpi_{\im} = 2 \varpi_{\im} - \alpha_{\im} = \sum_{\jm \in \Delta_{0}, \ \jm \sim \im} \varpi_{\jm},
$$
which holds because $\im$ is assumed to be a source of $\calQ$.

Now we assume that $d_{\bar{\im}} = r$.
Noting that $1-\tau_{\calQ}^{d_{\bar{\im}}} = (1-\tau_{\calQ}^{d_{\bar{\jm}}}) \sum_{t=0}^{-c_{\bar{\jm} \bar{\im}}-1} \tau_{\calQ}^{t}$
in this case, we have
$(1-\tau_{\calQ}^{d_{\bar{\im}}}) \varpi_{\jm} = \sum_{t=0}^{-c_{\bar{\jm} \bar{\im}}-1} \tau_{\calQ}^{t} (\gamma^{\calQ}_{\jm})$
for any $\jm \sim \im$.
On the other hand, we see $\uxi_{\im} - \uxi_{\jm} = \min(d_{\bar{\im}}, d_{\bar{\jm}})$ for
any $\jm \in \Delta_{0}$ with $\jm \sim \im$ since $\im$ is a source of $\calQ$ and $d_{\bar{\im}} =r$.
This completes a proof of (\ref{eq: twisted additivity}) in this case.

Finally we consider the case $d_{\bar{\im}} =1$. Note that, for any $j \in I$, we have
$$(1-\tau_{\calQ})\sum_{\jm \in j}\varpi_{\jm} =
\gamma^{\calQ}_{j^{\circ}}
$$
by Lemma~\ref{lemma: gamma} and the assumption $\tau_{\calQ}=\tau_{\calQ}^{\circ}$.
Applying this to the equation~(\ref{eq: twisted additivity computation}), we have
$$
\tau_{\calQ}^{l}(\gamma^{\calQ}_{\im}) + \tau_{\calQ}^{l + d_{\bar{\im}}}(\gamma^{\calQ}_{\im})
= \sum_{\jm \in I_{\calQ}^{\circ}, \ \jm \sim \im} \tau_{\calQ}^{l}(\gamma^{\calQ}_{\jm}).
$$
On the other hand, we have $c_{\bar{\jm} \bar{\im}} = -1$ and
$\uxi_{\im} - \uxi_{\jm} = \min(d_{\bar{\im}}, d_{\bar{\jm}})$ for
any $\jm \in I_{\calQ}^{\circ}$ with $\jm \sim \im$ under our assumption.
This completes a proof of (\ref{eq: twisted additivity}).
\end{proof}
\subsection{Folding twisted AR quivers}
\label{subsection: folding}

Let $\epsilon \colon I \to \{ 0,1\}$
be the function given by $\epsilon_{i} \equiv \epsilon_{\im} \pmod{2}$ for any $\im \in i$.
Note that this is well-defined by Lemma~\ref{lemma: parity}.
We define the infinite set $\hI$ by
\begin{align}\label{eq: hI}
\hI := \{ (i, p) \in I \times \Z \mid p - \epsilon_{i} \in 2\Z \}.
\end{align}
Restricting the map $\Delta_{0} \times \Z \to I \times \Z$ given by
$(\im, p) \mapsto (\bar{\im}, p)$,
we obtain the \emph{folding map}
$f \colon \hDs_{0} \to \hI.$
By Lemma~\ref{lemma: parity},
the map $f$ is a bijection.

Let $\calQ=(\Delta, \sigma, \uxi)$ be a Q-datum.
By composing, we obtain a bijection
$$\bphi_{\calQ} := \phi_{\calQ}  \circ (f^{-1}) \colon \hI \to \hsfR^+,
$$ which satisfies
\begin{equation}
\label{eq: bphi}
\tau_{\calQ}^{(\uxi_{\im} -p)/2}(\gamma^{\calQ}_{\im}) = (-1)^{k} \alpha \qquad \text{if $\bphi_{\calQ}(i,p) = (\alpha, k)$}
\end{equation}
for any $(i,p) \in \hI$ and $\im \in i$.
Note that the LHS of~(\ref{eq: bphi}) does not depend on the choice of $\im \in i$
thanks to Lemma~\ref{lemma: gamma relation}.

Following \cite{OS19b}, we define the \emph{folded AR quiver of $\calQ$} to be
the quiver $\bGamma_{\calQ}$ whose vertex set is
$$\hI_{\calQ} := f((\Gamma_{\calQ})_{0}) = \{ (\bar{\im}, p) \in \hI \mid (\im, p) \in (\Gamma_{\calQ})_{0}\}$$
and such that the restriction $f_{\calQ} := f|_{(\Gamma_{\calQ})_{0}} \colon (\Gamma_{\calQ})_{0} \to \hI_{\calQ}$
induces an isomorphism $\Gamma_{\calQ} \simeq \bGamma_{\calQ}$ of quivers.
We have the bijection $\bphi_{\calQ, 0} := \phi_{\calQ, 0} \circ (f_{\calQ}^{-1}) \colon \hI_{\calQ} \to \sfR^{+}$,
where $\phi_{\calQ, 0} \colon (\Gamma_{\calQ})_{0} \to \sfR^{+}$ is the bijection
introduced in Subsection~\ref{subsection: twisted AR quiver}.
Thanks to Theorem~\ref{thm: phi_Q} and (\ref{eq: bphi}), we have $\bphi_{\calQ}(i,p)
=(\bphi_{\calQ,0}(i,p), 0)$ and
$
\bphi_{\calQ, 0} (i, p) = \tau_{\calQ}^{(\uxi_{\im} -p)/2}(\gamma^{\calQ}_{\im})
$
for any $(i,p) \in \hI_{\calQ}$ and $\im \in i$.

\begin{example}
\label{example: bGammaB3}
The folded AR quivers associated with the Q-data $\calQ^{(1)}$ and $\calQ^{(2)}$ for $\g$ of type $\mathsf{B}_{3}$
given in Example~\ref{example: QB3} above are depicted as follows. Compare with
the twisted AR quivers $\Gamma_{\calQ^{(1)}}$ and $\Gamma_{\calQ^{(2)}}$ depicted in Example~\ref{example: GammaB3}.
$$\ber \bGamma_{\calQ^{(1)}} =
\raisebox{3.3em}{\scalebox{0.63}{\xymatrix @!C=0.1ex@R=3ex{
(i\setminus p) &-1&   0 &  1&   2 &  3&   4& 5&   6 &  7 & 8 \\
1 &&\bullet \ar@{->}[ddrr]&&  \bullet \ar@{->}[ddrr] &&\bullet \ar@{->}[ddrr]&& \bullet && \bullet  \\ \\
2&& \bullet \ar@{->}[dr]\ar@{->}[uurr] &&\bullet \ar@{->}[dr]\ar@{->}[uurr]
&& \bullet \ar@{->}[dr] \ar@{->}[uurr] && \bullet \ar@{->}[dr]\ar@{->}[uurr] \\
3& \bullet \ar@{->}[ur] && \bullet\ar@{->}[ur]  && \bullet\ar@{->}[ur] && \bullet \ar@{->}[ur]&& \bullet }}}, \er
\quad
\bGamma_{\calQ^{(2)}} =
\raisebox{3.3em}{\scalebox{0.63}{\xymatrix @!C=0.1ex@R=3ex{
(i\setminus p) & -4 &-3& -2 &-1& 0 &1&2 & 3& 4& 5&  6 & 7 & 8 \\
1&\bullet \ar@{->}[ddrr]&& && \bullet \ar@{->}[ddrr]&&  \bullet \ar@{->}[ddrr]  && \bullet \ar@{->}[ddrr]&&&& \bullet \\ \\
2&&&\bullet \ar@{->}[uurr]  \ar@{->}[dr]&& \bullet \ar@{->}[dr]\ar@{->}[uurr]
&& \bullet \ar@{->}[dr]\ar@{->}[uurr] &&  \bullet \ar@{->}[dr] && \bullet\ar@{->}[uurr]  \\
3&&\bullet \ar@{->}[ur] && \bullet \ar@{->}[ur] && \bullet \ar@{->}[ur] && \bullet \ar@{->}[ur] && \bullet \ar@{->}[ur] &&
}}}
$$
\end{example}

\begin{remark}
\label{remark: 2seg}
Unlike the (twisted) AR quiver $\Gamma_{\calQ}$, the folded
AR quiver $\bGamma_{\calQ}$ may not satisfy the $2$-segment property.
See~$\bGamma_{\calQ^{(2)}}$ in Example~\ref{example: bGammaB3} above.
Proposition~\ref{proposition: twisted Coxeter height} tells us that
the folded AR quiver $\bGamma_{\calQ}$ satisfies the $2$-segment property
if and only if $[\calQ] = [\bfi(\tau)]$ for some twisted Coxeter element $\tau$,
or equivalently the condition~(\ref{eq: condition twisted Coxeter height}) is satisfied.
When $\sigma \neq \id$, we can see the $2$-segment property as
$$
\hI_\calQ = \{ (i, \uxi_{i^{\circ}}-2k) \in I\times \Z \mid k \in \Z, 0 \le k < rh^{\vee}/2 \}
$$
under the condition~(\ref{eq: condition twisted Coxeter height}).
\end{remark}

\begin{proposition} \label{proposition: shift hDs}
Let $\calQ=(\Delta, \sigma, \uxi)$ be a Q-datum
and $(i, p) \in \hI$.
If $\bphi_{\calQ}(i,p)= (\alpha, k)$, we have
$$
\bphi_{\calQ}(i^{*}, p \mp rh^{\vee}) = (\alpha, k\pm 1).
$$
\end{proposition}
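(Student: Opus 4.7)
The plan is to deduce this as a direct corollary of the previously established Corollary~\ref{corollary: shift hDs}, which is the analogous statement at the level of the unfolded repetition quiver $\hDs$. The strategy is simply to unwind the defining relation $\bphi_{\calQ} = \phi_{\calQ} \circ f^{-1}$ and to check that the folding bijection $f \colon \hDs_0 \to \hI$ intertwines the two involutions ``$(\im, p) \mapsto (\im^{*}, p \mp rh^{\vee})$'' on $\hDs_0$ and ``$(i, p) \mapsto (i^{*}, p \mp rh^{\vee})$'' on $\hI$.

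More concretely, I would first set $(\im, p) := f^{-1}(i, p) \in \hDs_0$, so that $\bar{\im} = i$ and $\phi_{\calQ}(\im, p) = \bphi_{\calQ}(i, p) = (\alpha, k)$ by definition of $\bphi_{\calQ}$. Then Corollary~\ref{corollary: shift hDs} gives $\phi_{\calQ}(\im^{*}, p \mp rh^{\vee}) = (\alpha, k \pm 1)$, so the only remaining point is to verify the identity $f(\im^{*}, p \mp rh^{\vee}) = (i^{*}, p \mp rh^{\vee})$ in $\hI$, i.e.\ that $\overline{\im^{*}} = i^{*}$.

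This last identity splits according to whether $\sigma$ is trivial or not. When $\sigma = \id$, every $\sigma$-orbit is a singleton so the claim is tautological. When $\sigma \neq \id$, Remark~\ref{remark: h^vee parity} tells us that the involution $*$ on $\Delta_{0}$ is either the identity or equal to $\sigma$, so in any case $\im^{*} \in \bar{\im}$, whence $\overline{\im^{*}} = \bar{\im} = i$; and the same remark asserts that the induced involution on $I$ is trivial in this (non-simply-laced) case, so $i^{*} = i$ as well. Combining these two equalities yields $\overline{\im^{*}} = i^{*}$ as required, and plugging this back into $\bphi_{\calQ}(i^{*}, p \mp rh^{\vee}) = \phi_{\calQ}(f^{-1}(i^{*}, p \mp rh^{\vee})) = \phi_{\calQ}(\im^{*}, p \mp rh^{\vee})$ concludes the proof.

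There is essentially no obstacle here; the content of the statement is already in Corollary~\ref{corollary: shift hDs}, and this proposition just repackages it through the folding bijection. The only mild subtlety is the bookkeeping about the two involutions $*$ in the non-simply-laced case, which is handled cleanly by Remark~\ref{remark: h^vee parity}.
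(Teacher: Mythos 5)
Your proof is correct and follows exactly the route the paper takes: the paper's own proof of this proposition is the single sentence ``This is just a re-expression of Corollary~\ref{corollary: shift hDs}.'' You have merely made explicit the routine verification that the folding bijection $f$ intertwines the two involutions $*$ (which is essentially the content of Remark~\ref{remark: h^vee parity}), so there is nothing to add.
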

\begin{proof}
This is just a re-expression of Corollary~\ref{corollary: shift hDs}.
\end{proof}

\section{Inverse of quantum Cartan matrices}
\label{section: Cartan}

In this section, we show that the inverse of the quantum Cartan matrix of $\g$ can be computed
by using the generalized twisted Coxeter element $\tau_{\calQ}$ associated with a Q-datum $\calQ$ for $\g$.
Let us keep the notation in Section~\ref{section: Q-data}. \beb Note that 
the results in this section can be understood as generalizations of the results for symmetric affine types in  \cite[\S 2.5]{HL15} to all untwisted affine types.  \eb

\subsection{Quantum Cartan matrix} \label{ssec: Cartan}

Let $\g$ be a finite-dimensional complex simple Lie algebra
and $C=(c_{ij})_{i, j \in I}$ denote its Cartan matrix as in Subsection~\ref{subsection: notation2}.

\begin{definition}
Let $z$ be an indeterminate.
The \emph{quantum Cartan matrix}
of $\g$ is the $\Z[z^{\pm1}]$-valued $(I\times I)$-matrix $C(z) = (C_{ij}(z))_{i,j \in I}$ defined by
\begin{align}\label{eq: q Cartan}
C_{ij}(z) = \begin{cases}
z^{d_{i}}+z^{-d_{i}} & \text{if $i=j$,} \\
[c_{ij}]_{z} & \text{if $i \ne j$.}
\end{cases}
\end{align}
where $[k]_{z}$ denotes the quantum integer
$
[k]_{z} := \frac{z^{k}-z^{-k}}{z-z^{-1}} \in \Z[z^{\pm 1}]
$
for each $k \in \Z$.
\ber Note that, for $i \sim j$, we have
$[c_{ij}]_z = -[r]_z$ (resp.~$-1$) if $d_{j} > d_{i}$ (resp.~$d_{j} \le d_i$). \er 
\end{definition}

We set $D(z) := \diag\left([d_{i}]_{z} \mid i \in I\right)$.
The following property is easy to see from the definitions.

\begin{lemma}
\label{lemma: qDCsymm}
We have $D(z) C(z) = ([d_{i}c_{ij}]_{z})_{i,j \in I}$.
In particular, it is symmetric (cf.~(\ref{eq: DC symmetric})).
\end{lemma}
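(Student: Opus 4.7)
The plan is to verify the identity $(D(z)C(z))_{ij} = [d_i c_{ij}]_z$ entry by entry by a direct case-by-case computation, after which symmetry follows from the already-established classical identity~\eqref{eq: DC symmetric} together with the trivial observation $[k]_z = [k]_z$. Since $D(z)$ is diagonal, we have $(D(z)C(z))_{ij} = [d_i]_z \, C_{ij}(z)$, so the task reduces to checking that $[d_i]_z\, C_{ij}(z) = [d_i c_{ij}]_z$ in each of the cases from the definition~\eqref{eq: q Cartan}.

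First I would handle the diagonal case $i=j$. Here $c_{ii} = 2$, so the target is $[2 d_i]_z$. Using $[d_i]_z = \frac{z^{d_i} - z^{-d_i}}{z - z^{-1}}$, the product
\[
[d_i]_z (z^{d_i} + z^{-d_i}) = \frac{(z^{d_i} - z^{-d_i})(z^{d_i} + z^{-d_i})}{z - z^{-1}} = \frac{z^{2d_i} - z^{-2d_i}}{z - z^{-1}} = [2d_i]_z
\]
gives the claim. Next I would handle off-diagonal entries with $i \not\sim j$, where both $C_{ij}(z) = [0]_z = 0$ and $[d_i c_{ij}]_z = [0]_z = 0$, so the identity is trivial.

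The only remaining cases are the adjacent ones $i \sim j$, which split according to the definition of $c_{ij}$. If $d_j \le d_i$, then $c_{ij} = -1$ and $C_{ij}(z) = -1$, so the left-hand side is $-[d_i]_z = [-d_i]_z = [d_i c_{ij}]_z$. If $d_j > d_i$, then necessarily $d_i = 1$ and $d_j = r$, so $c_{ij} = -r$, $C_{ij}(z) = -[r]_z$, and the product is $[1]_z \cdot (-[r]_z) = -[r]_z = [d_i c_{ij}]_z$. This exhausts all cases and establishes the equality $D(z)C(z) = ([d_i c_{ij}]_z)_{i,j \in I}$.

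For the symmetry claim, the identity~\eqref{eq: DC symmetric} gives $d_i c_{ij} = d_j c_{ji}$, and hence $[d_i c_{ij}]_z = [d_j c_{ji}]_z$, which is exactly the symmetry of the matrix on the right. I expect no real obstacle here; the only subtlety is remembering the asymmetry in the definition of $c_{ij}$ (the ceiling $\lceil d_j/d_i\rceil$) so that the two sub-cases of the adjacent case are treated correctly.
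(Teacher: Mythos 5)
Your proof is correct and is exactly the routine case-by-case verification that the paper omits (the paper simply asserts the lemma is ``easy to see from the definitions''). All four cases and the use of $d_i c_{ij} = d_j c_{ji}$ for the symmetry claim check out.
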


Note that we have $C(z)|_{z=1} = C \in GL_{I}(\Q)$.
We regard $C(z)$ as an element of $GL_{I}\left( \Q(z) \right)$ and denote its inverse
by $\tC(z) = ( \tC_{ij}(z) )_{i,j \in I}$.
Let
$$\tC_{ij}(z)  = \sum_{u \in \Z}  \tc_{ij}(u)z^u$$
denote the formal Laurent expansion of the $(i,j)$-entry $\tC_{ij}(z)$ at $z = 0$.

\begin{lemma} \label{lemma: small vanish}
 For any $i, j \in I$ and $u \in \Z$, we have
$\tc_{ ij } (u) \in \Z$.
Moreover, we have
\begin{enumerate}
\item \label{tc1_small vanish}
$\tc_{ ij } (u) = 0$ if $u < d_{i}$,
\item \label{tc1_unip}
$\tc_{ij}(d_{i}) = \delta_{i,j}$.
\end{enumerate}
\end{lemma}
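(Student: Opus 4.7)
My starting point will be the identity $C(z)\tC(z) = I$, whose $(i,j)$-entry gives
\[
(z^{d_i} + z^{-d_i})\tC_{ij}(z) + \sum_{k\sim i} [c_{ik}]_z\, \tC_{kj}(z) = \delta_{i,j}. \tag{$\star$}
\]
Since $\det C(z)|_{z=1} = \det C \neq 0$, the matrix $C(z)$ is invertible in $GL_I(\Q(z))$, so each $\tC_{ij}(z)$ is a rational function whose Laurent expansion at $z=0$ has only finitely many strictly negative powers. The combinatorial engine driving the whole argument is the elementary inequality $|c_{ik}| = \lceil d_k/d_i\rceil \leq d_k$ for $i \sim k$, which holds because $d_i \geq 1$.

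For (1) and (2), I plan to use a minimum-achiever argument. Let $\ell_{ij}$ denote the lowest $z$-power in $\tC_{ij}(z)$ (set to $+\infty$ if $\tC_{ij}\equiv 0$) and set $M := \min_{i,j}(\ell_{ij} - d_i)$, so that (1) is equivalent to $M \geq 0$. Supposing $M < 0$ and fixing $(i^*, j^*)$ with $\ell_{i^*j^*} = d_{i^*} + M$, I will extract the coefficient of $z^M$ from $(\star)$ at $(i,j)=(i^*,j^*)$. Writing $b_{ik}(s)$ for the coefficient of $z^s$ in $[c_{ik}]_z$ (supported on $|s|\leq |c_{ik}|-1$), the summand $\tc_{i^*j^*}(M - d_{i^*})$ vanishes because $M - d_{i^*} < \ell_{i^*j^*}$, while $\tc_{i^*j^*}(M + d_{i^*})$ is the nonzero leading coefficient of $\tC_{i^*j^*}(z)$. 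For an off-diagonal summand $b_{i^*k}(s)\,\tc_{kj^*}(M-s)$ to be nonzero, one would need both $s \leq -d_k$ (so that $M - s \geq \ell_{kj^*} \geq d_k + M$, by minimality of $M$) and $s \geq 1 - |c_{i^*k}|$, forcing $|c_{i^*k}|\geq d_k + 1$ and contradicting the key inequality above. Thus every off-diagonal term vanishes and the RHS of $(\star)$ at $z^M$ equals $\delta_{i^*j^*}\delta_{M,0} = 0$, forcing $\tc_{i^*j^*}(\ell_{i^*j^*}) = 0$ -- a contradiction. This proves (1). Performing the same extraction at $m = 0$, using (1) to kill $\tc_{ij}(-d_i)$ and the same combinatorial step to kill the sum, then yields (2): $\tc_{ij}(d_i) = \delta_{i,j}$.

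For integrality, the recursion derived from $(\star)$ is not induction-friendly, since $[c_{ik}]_z$ can have lowest power $z^{1-r}$ while $d_i = 1$, so solving for $\tc_{ij}(u+d_i)$ may involve higher-index values $\tc_{kj}(u+r-1)$. I will instead use the transposed identity $\tC(z)C(z) = I$, which gives
\[
\tc_{ij}(v + d_j) = \delta_{i,j}\,\delta_{v,0} - \tc_{ij}(v - d_j) - \sum_{k\sim j}\sum_s \tc_{ik}(v-s)\,b_{kj}(s).
\]
Each $\tc_{ab}(u')$ appearing on the RHS satisfies $u' \leq v + |c_{kj}| - 1 \leq v + d_j - 1$, by the same key inequality $|c_{kj}| \leq d_j$, and is therefore strictly less than $v + d_j$. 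Strong induction on $v + d_j$, seeded by (1) and (2), then yields $\tc_{ij}(u) \in \Z$ for all $u$. The main obstacle to the plan is precisely this asymmetry between the two equivalent matrix identities $(\star)$ and its transpose: one is needed for the minimum-achiever contradiction that powers (1) and (2), while only the other supports a clean induction for integrality. Recognizing that a single combinatorial fact -- $|c_{ik}| \leq d_k$ -- governs both halves of the argument is the unifying observation.
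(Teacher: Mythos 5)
Your proof is correct, but it takes a genuinely different route from the paper's. The paper factors $C(z) = E(z)\cdot z^{-D}$ with $E(z) := C(z)\, z^{D}$ and $z^{D} = \diag(z^{d_i} \mid i \in I)$, checks that $E_{ij}(z) \in \delta_{i,j} + z\Z[z]$ (the diagonal entries are $1 + z^{2d_i}$, and the off-diagonal ones have lowest power $d_j + c_{ij} + 1 \ge 1$), and then reads off all three claims at once from $\tC(z) = z^{D}E(z)^{-1}$, since a matrix of the form $I + zN(z)$ inverts formally to an element of $I + z\,\mathrm{Mat}_I(\Z\lf z\rf)$. That single formal inversion replaces your three separate steps: the minimum-achiever contradiction for (1), the coefficient extraction at $z^0$ for (2), and the strong induction for integrality. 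Both arguments ultimately rest on the identical inequality $-c_{ij} = \lceil d_j/d_i\rceil \le d_j$, which you correctly isolate as the combinatorial heart of the statement; and your observation that $C(z)\tC(z)=I$ and $\tC(z)C(z)=I$ have asymmetric induction-theoretic behavior is accurate and is precisely what the paper's choice of multiplying $z^{D}$ on the \emph{right} of $C(z)$ (rather than the left) silently exploits. What the paper's route buys is brevity and a simultaneous proof of all three parts; what yours buys is that it works entirely at the level of Laurent coefficients without invoking the formal inverse of $E(z)$, at the cost of roughly three times the length.
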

\begin{proof}
Set $E(z) = ( E_{ij}(z))_{i,j \in I} := C(z)\cdot z^{D}$,
where $z^{D} \seteq \diag(z^{d_{i}} \mid i \in I)$.
Since $\tC(z) = z^{D} \cdot E(z)^{-1}$, it suffices to show that
$E_{ij}(z) \in \delta_{i, j} + z\Z[z]$ for each $i,j \in I$.
If $i = j$, we have $E_{ii}(z) = 1 + z^{2 d_{i}}$.
If $i \neq j$, we have
$-E_{ij}(z) = z^{d_{j}}\sum_{u=0}^{-c_{i j} - 1} z^{c_{i j} +1 + 2u}.$
Therefore it is enough to show $d_{j} + c_{i j} + 1 > 0$, or equivalently
$d_{j} \ge - c_{i j} = \lceil d_{j}/d_{i} \rceil$ for $i \sim j$.
The last condition is now obvious.
\end{proof}

\begin{lemma}
\label{lemma: tc1}
The integers $\{ \tc_{ij}(u) \mid i,j \in I, u \in \Z \}$ enjoy the following properties$\colon$
\begin{enumerate}
\item \label{tc1_auto} We have $\tc_{ij}(u) = \tc_{a(i) a(j)}(u)$
for any automorphism $a$ of the Dynkin diagram of $\g$.
\item \label{tc1_swap} For any $i,j \in I$ and $u \in \Z$, we have
$$
\tc_{ij}(u) = \begin{cases}
\tc_{ji}(u) & \text{if $d_{i}=d_{j}$}, \\
\tc_{ji}(u+1) + \tc_{ji}(u-1) & \text{if $(d_{i}, d_{j}) = (1,2)$}, \\
\tc_{ji}(u+2) + \tc_{ji}(u) + \tc_{ji}(u-2) & \text{if $(d_{i}, d_{j}) = (1,3)$}.
\end{cases}
$$
\item \label{tc1_symm} For any $i,j \in I$ and $u \in \Z$, we have
$$
\tc_{ij}(u+d_{i}) - \tc_{ij}(u-d_{i}) = \tc_{ji}(u+d_{j}) - \tc_{ji}(u-d_{j}).
$$
\end{enumerate}
\end{lemma}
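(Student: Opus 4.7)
The plan is to derive all three identities from a single master functional equation relating $\tC_{ij}(z)$ and $\tC_{ji}(z)$, which in turn follows from the symmetry of $D(z) C(z)$ established in Lemma~\ref{lemma: qDCsymm}.

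For part~(\ref{tc1_auto}), I would observe that a Dynkin diagram automorphism $a$ preserves both $d_i$ and $c_{ij}$ (by definition of automorphism of the Cartan data of $\g$), and hence preserves the entries of $C(z)$: $C_{a(i)a(j)}(z) = C_{ij}(z)$. Consequently, the induced permutation matrix $P_a$ commutes with $C(z)$, and therefore with $\tC(z) = C(z)^{-1}$, giving $\tC_{a(i)a(j)}(z) = \tC_{ij}(z)$. Extracting the coefficient of $z^u$ yields the claim.

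For parts~(\ref{tc1_swap}) and~(\ref{tc1_symm}), the key observation is the following. By Lemma~\ref{lemma: qDCsymm}, the matrix $D(z) C(z)$ is symmetric, so $C(z)^T D(z) = D(z) C(z)$, which upon inversion gives
\begin{equation*}
D(z)\, \tC(z)\, D(z)^{-1} = \tC(z)^T,
\end{equation*}
or entrywise $[d_i]_z \, \tC_{ij}(z) = [d_j]_z \, \tC_{ji}(z)$ for all $i,j \in I$. Multiplying both sides by $z - z^{-1}$ and using $[d]_z (z - z^{-1}) = z^d - z^{-d}$, I obtain the cleaner identity
\begin{equation*}
(z^{d_i} - z^{-d_i})\, \tC_{ij}(z) = (z^{d_j} - z^{-d_j})\, \tC_{ji}(z).
\end{equation*}
Extracting the coefficient of $z^u$ from this last equation immediately yields part~(\ref{tc1_symm}).

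Part~(\ref{tc1_swap}) is obtained in the same spirit but directly from the original relation $[d_i]_z \, \tC_{ij}(z) = [d_j]_z \, \tC_{ji}(z)$. When $d_i = d_j$ the factors cancel and I get $\tC_{ij}(z) = \tC_{ji}(z)$, yielding the first case. When $(d_i, d_j) = (1,2)$, the relation becomes $\tC_{ij}(z) = (z + z^{-1})\, \tC_{ji}(z)$, and comparing coefficients of $z^u$ gives the second case. When $(d_i, d_j) = (1,3)$, it becomes $\tC_{ij}(z) = (z^2 + 1 + z^{-2})\, \tC_{ji}(z)$, and comparing coefficients yields the third case. (The remaining pairs $(d_i, d_j)$ of distinct values reduce to these via swapping indices.) Since none of these steps requires any case-by-case analysis beyond reading off coefficients, the whole proof is short and essentially formal; there is no real obstacle, the only thing to be careful about is tracking signs and indices when going from the functional equation to the coefficient identities.
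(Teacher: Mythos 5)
Your proof is correct and follows essentially the same route as the paper: the symmetry of $D(z)C(z)$ gives $[d_i]_z\,\tC_{ij}(z)=[d_j]_z\,\tC_{ji}(z)$, from which (2) and (3) are read off coefficientwise, and (1) is immediate since a diagram automorphism preserves the quantum Cartan matrix. The only (harmless) difference is in the case $d_i=d_j\in\{2,3\}$ of (2), where you cancel the common factor $[d_i]_z$ directly (valid since the Laurent expansions live in the field $\Q(\!(z)\!)$), whereas the paper compares coefficients and runs a short induction using the vanishing $\tc_{ij}(u)=0$ for $u\le 0$.
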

\begin{proof}
The assertion (\ref{tc1_auto}) is immediate from the definition.
Let us prove (\ref{tc1_swap}) and (\ref{tc1_symm}).
Since the product $D(z) C(z)$ is symmetric by Lemma~\ref{lemma: qDCsymm},
$D(z) C(z)^{-1} = D(z) \cdot (D(z) C(z))^{-1} \cdot D(z)$ is symmetric as well.
In other words, for each $i, j \in I$, we have
\begin{equation}
\label{eq: qDCsymm}
\frac{z^{d_{i}} - z^{-d_{i}}}{z-z^{-1}} \sum_{u \in \Z}\tc_{ij}(u)z^{u} = \frac{z^{d_{j}}-z^{-d_{j}}}{z-z^{-1}} \sum_{u\in \Z} \tc_{ji}(u)z^{u}.
\end{equation}
Comparing the coefficients of $z^{u}$ in both sides of (\ref{eq: qDCsymm}),
we obtain the assertion (\ref{tc1_swap}) for the cases $(d_{i}, d_{j}) = (1,1), (1,2), (1,3)$.
When $(d_{i}, d_{j})=(2,2)$, we obtain
$$\tc_{ij}(u+1) + \tc_{ij}(u-1) = \tc_{ji}(u+1) + \tc_{ji}(u-1).$$
Then the assertion $\tc_{ij}(u) = \tc_{ji}(u)$ can be proved by induction on $u$
since we know $\tc_{ij}(u)=\tc_{ji}(u)=0$ for $u \le 0$ thanks to Lemma~\ref{lemma: small vanish}.
The case $(d_{i}, d_{j})=(3,3)$ is proved in the same way.
Similarly, after multiplying $z-z^{-1}$ to the both sides of (\ref{eq: qDCsymm}), we obtain the assertion (\ref{tc1_symm})
by comparing the coefficients of $z^{u}$.
\end{proof}

\subsection{\ber A combinatorial formula\er}
Recall the symmetric bilinear form $(\ , \ ) \colon \sfP \times \sfP \to \Q$ determined by
$(\varpi_{\im}, \alpha_{\jm}) = \delta_{\im, \jm}$ for $\im, \jm \in \Delta_{0}$.
This is invariant under the action of $\sfW \rtimes \langle \sigma \rangle$.

\begin{definition}
Let $\calQ=(\Delta, \sigma, \uxi)$ be a Q-datum for $\g$.
For each $\im, \jm \in \Delta_{0}$,
we define a function $\eta^{\calQ}_{\im\jm} \colon \Z \to \Z$ by
$$
\eta^{\calQ}_{\im \jm}(u) := \begin{cases}
(\varpi_{\im}, \tau_{\calQ}^{(u+\uxi_{\jm} - \uxi_{\im} - d_{\bar{\im}})/2}(\gamma^{\calQ}_{\jm}))
& \text{if $u+\uxi_{\jm} - \uxi_{\im} - d_{\bar{\im}} \in 2\Z$,} \\
0 & \text{otherwise}.
\end{cases}
$$
\end{definition}

\begin{lemma} \label{lemma: eta welldefined}
Let $\calQ^{\prime}$ be another Q-datum for $\g$ and
$\im^{\prime} \in \bar{\im}, \jm^{\prime} \in \bar{\jm}$.
Then we have $\eta^{\calQ}_{\im \jm} = \eta^{\calQ^{\prime}}_{\im^{\prime} \jm^{\prime}}.$
\end{lemma}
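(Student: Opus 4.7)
The plan is to prove the lemma in four steps: (i) independence of the representative $\jm \in \bar{\jm}$; (ii) invariance of $\eta^{\calQ}_{\im\jm}$ under a source reflection $\calQ \mapsto s_{\im_{0}}\calQ$; (iii) independence of $\calQ$; and (iv) independence of the representative $\im \in \bar{\im}$.

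Step~(i) is immediate from Lemma~\ref{lemma: gamma relation}, which yields $\tau_{\calQ}^{(\uxi_{\jm} - \uxi_{\jm'})/2}\gamma^{\calQ}_{\jm} = \gamma^{\calQ}_{\jm'}$ for $\jm, \jm' \in \bar{\jm}$; the parity condition is preserved by Lemma~\ref{lemma: parity}~(1). For step~(ii), set $k := (u + \uxi_{\jm} - \uxi_{\im} - d_{\bar{\im}})/2$ and apply Lemma~\ref{lemma: gamma reflection}, which gives $s_{\im_{0}}\tau_{\calQ}^{k}\gamma^{\calQ}_{\jm} = \tau_{s_{\im_{0}}\calQ}^{k - \delta_{\im_{0}, \jm}d_{\bar{\jm}}}\gamma^{s_{\im_{0}}\calQ}_{\jm}$. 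When $\im_{0} \neq \im$, $s_{\im_{0}}\varpi_{\im} = \varpi_{\im}$, so $\sfW$-invariance of the bilinear form together with a direct matching of exponents yields the equality at once. When $\im_{0} = \im$, the relation $s_{\im}\varpi_{\im} = \varpi_{\im} - \alpha_{\im}$ produces a correction term $(\alpha_{\im}, \tau_{\calQ}^{k + d_{\bar{\im}}}\gamma^{\calQ}_{\jm})$, while the height shift $(s_{\im}\uxi)_{\im} = \uxi_{\im} - 2d_{\bar{\im}}$ makes $\eta^{s_{\im}\calQ}_{\im\jm}(u)$ and $\eta^{\calQ}_{\im\jm}(u)$ differ by the pairing $(\varpi_{\im}, (1 - \tau_{\calQ}^{d_{\bar{\im}}})\tau_{\calQ}^{k}\gamma^{\calQ}_{\jm})$. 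Using $\sfW$-invariance to rewrite the latter as $((1 - \tau_{\calQ}^{-d_{\bar{\im}}})\varpi_{\im}, \tau_{\calQ}^{k}\gamma^{\calQ}_{\jm})$, then applying the identity $(1 - \tau_{\calQ}^{-d_{\bar{\im}}})\varpi_{\im} = -\tau_{\calQ}^{-d_{\bar{\im}}}\gamma^{\calQ}_{\im}$ (a direct consequence of $\gamma^{\calQ}_{\im} = (1-\tau_{\calQ}^{d_{\bar{\im}}})\varpi_{\im}$) together with $\gamma^{\calQ}_{\im} = \alpha_{\im}$ from Lemma~\ref{lemma: gamma} (since $\im$ is a source of $\calQ$), the two correction terms cancel exactly.

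Step~(iii) then follows, since constant shifts of $\uxi$ preserve both $\tau_{\calQ}$ and $\uxi_{\jm} - \uxi_{\im}$, and by Theorem~\ref{theorem: OS19b main} combined with Proposition~\ref{proposition: reflection algorithm} any two Q-data for $\g$ are related, up to a constant shift, by a sequence of source reflections. For step~(iv), I introduce a Q-datum $\calQ^{\sigma} = (\Delta, \sigma, \uxi^{\sigma})$ (well-defined up to constant shift) by $\uxi^{\sigma}_{\im} := \uxi_{\sigma^{-1}(\im)}$, whose sources are precisely the $\sigma$-images of the sources of $\calQ$. Transporting compatible readings of $X^{\circ}_{\calQ}$ and $X'_{\calQ}$ by $\sigma$ gives analogous compatible readings for $\calQ^{\sigma}$, which yield $\tau_{\calQ^{\sigma}} = \sigma\tau_{\calQ}\sigma^{-1}$ and hence $\gamma^{\calQ^{\sigma}}_{\im} = \sigma\gamma^{\calQ}_{\sigma^{-1}(\im)}$. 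The $\sigma$-invariance of the bilinear form on $\sfP$ then produces $\eta^{\calQ^{\sigma}}_{\sigma(\im)\sigma(\jm)}(u) = \eta^{\calQ}_{\im\jm}(u)$, and combining with steps~(i) and~(iii) we deduce $\eta^{\calQ}_{\sigma(\im)\jm} = \eta^{\calQ}_{\im\jm}$; iterating over the $\sigma$-orbit of $\im$ completes the proof. The hard part of the argument is the subcase $\im_{0} = \im$ of step~(ii), where one has to see that the correction term generated by $s_{\im}\varpi_{\im} = \varpi_{\im} - \alpha_{\im}$ and the exponent shift induced by the reflection on the height function cancel out precisely because of the sourceness of $\im$, via the key identity $\gamma^{\calQ}_{\im} = \alpha_{\im}$.
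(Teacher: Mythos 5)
Your proof is correct, and its skeleton coincides with the paper's: first $\jm$-orbit invariance via Lemma~\ref{lemma: gamma relation}, then $\calQ$-invariance by reducing to a single source reflection and invoking Lemma~\ref{lemma: gamma reflection}, then $\im$-orbit invariance last. The two places where you diverge are both local and both valid. In the subcase $\im_{0}=\im$ of your step~(ii), the paper avoids your explicit correction-term bookkeeping by using the single identity $s_{\im}\tau_{\calQ}^{d_{\bar{\im}}}\varpi_{\im}=\varpi_{\im}$ (which holds because $s_{\im}\tau_{\calQ}^{d_{\bar{\im}}}$ admits an expression without $s_{\im}$); note that this identity is literally equivalent to your key input $\gamma^{\calQ}_{\im}=\alpha_{\im}$, since $(1-\tau_{\calQ}^{d_{\bar{\im}}})\varpi_{\im}=\alpha_{\im}=(1-s_{\im})\varpi_{\im}$ rearranges to it, so your cancellation is a fully expanded version of the same computation. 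In step~(iv) you take a genuinely different route: the paper uses the already-established $\calQ$-independence to pass to a Q-datum satisfying condition~(\ref{eq: condition twisted Coxeter height}), where $\tau_{\calQ}^{l}\varpi_{\im}=\varpi_{\sigma^{l}(\im)}$ for $\im\in I_{\calQ}^{\circ}$ and $0\le l<d_{\bar{\im}}$ gives the result in one line, whereas you introduce the rotated datum $\calQ^{\sigma}$ and exploit $\sigma$-equivariance of $\tau_{\calQ}$, $\gamma^{\calQ}_{\im}$ and the bilinear form. Your variant is more symmetric and avoids choosing a special representative in the reflection class, at the cost of having to verify that $\calQ^{\sigma}$ is again a Q-datum; be aware that $\uxi^{\sigma}$ violates the normalization ($\mathtt{H3}$) for the fixed $\sigma$-parity function when $r>1$, so strictly one should add the constant $2r-2$ (which changes nothing in $\eta$, as it depends only on $\uxi_{\jm}-\uxi_{\im}$) before appealing to your step~(iii). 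With that one-line caveat supplied, the argument is complete.
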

\begin{proof}
We only have to consider the case when $u+\uxi_{\jm} - \uxi_{\im} - d_{\bar{\im}} \in 2\Z$.
The equality $\eta^{\calQ}_{\im \jm} = \eta^{\calQ}_{\im \jm^{\prime}}$ for $\jm^{\prime} \in \bar{\jm}$
follows from Lemma~\ref{lemma: gamma relation}.
To prove the equality $\eta^{\calQ}_{\im \jm} = \eta^{\calQ^{\prime}}_{\im \jm}$ for any $\calQ^{\prime}$,
it suffices to verify the equality $\eta^{\calQ}_{\im \jm} = \eta^{s_{k}\calQ}_{\im \jm}$,
where $k \in \Delta_{0}$ is a source of $\calQ$.
When $k \neq \im$, we have $s_{k}\varpi_{\im}=\varpi_{\im}$ and $(s_{k}\uxi)_{\im} = \uxi_{\im}$.
Therefore using Lemma~\ref{lemma: gamma reflection}, we obtain
$$
\eta^{\calQ}_{\im \jm}(u) = (\varpi_{\im}, s_{k}\tau_{\calQ}^{(u+\uxi_{\jm} - \uxi_{\im} - d_{\bar{\im}})/2}(\gamma^{\calQ}_{\jm}))
= (\varpi_{\im}, \tau_{s_{k}\calQ}^{(u+(s_{k}\uxi)_{\jm} - (s_{k}\uxi)_{\im} - d_{\bar{\im}})/2}(\gamma^{s_{k}\calQ}_{\jm}))
= \eta^{s_{k}\calQ}_{\im \jm}(u).
$$
When $k=\im$, we have $s_{\im}\tau_{\calQ}^{d_{\bar{\im}}} \varpi_{\im}=\varpi_{\im}$
since the element $s_{\im}\tau_{\calQ}^{d_{\bar{\im}}}$ has an expression without the simple reflection $s_{\im}$.
Using Lemma~\ref{lemma: gamma reflection} again, we obtain
$$
\eta^{\calQ}_{\im \jm}(u) = (s_{\im}\tau_{\calQ}^{d_{\bar{\im}}}\varpi_{\im},
s_{\im}\tau_{\calQ}^{(u+\uxi_{\jm} - \uxi_{\im} + d_{\bar{\im}})/2}(\gamma^{\calQ}_{\jm}))
=(\varpi_{\im}, \tau_{s_{\im}\calQ}^{(u+(s_{\im}\uxi)_{\jm} - (s_{\im}\uxi)_{\im} - d_{\bar{\im}})/2}(\gamma^{s_{\im}\calQ}_{\jm}))
= \eta^{s_{\im}\calQ}_{\im\jm}(u).
$$
Finally, let us verify the equality $\eta^{\calQ}_{\im\jm} = \eta^{\calQ}_{\im^{\prime}\jm}$ for $\im^{\prime} \in \bar{\im}$.
By the independence of the choice of $\calQ$,
we may assume that $\tau_{\calQ}$ is a twisted Coxeter element, or equivalently $\calQ$ satisfies
the condition~(\ref{eq: condition twisted Coxeter height}).
Note that we have $\tau_{\calQ}^{l}\varpi_{\im} = \varpi_{\sigma^{l}(\im)}$ for any $\im \in I_{\calQ}^{\circ}$
and $0 \le l < d_{\bar{\im}}$. Therefore we obtain
$$
\eta^{\calQ}_{\im \jm}(u)=
(\tau_{\calQ}^{l}\varpi_{\im}, \tau_{\calQ}^{(2l+u+\uxi_{\jm} - \uxi_{\im} + d_{\bar{\im}})/2}(\gamma^{\calQ}_{\jm}))
= (\varpi_{\sigma^{l}(\im)}, \tau_{\calQ}^{(u+\uxi_{\jm} - \uxi_{\sigma^{l}(\im)} + d_{\bar{\im}})/2}(\gamma^{\calQ}_{\jm}))
= \eta^{\calQ}_{\sigma^{l}(\im)\jm}(u)
$$
as desired.
\end{proof}

By  Lemma~\ref{lemma: eta welldefined}, the following notation is well-defined.
\begin{definition}
For each $i,j \in I$, we define $$\eta_{ij} \seteq \eta^{\calQ}_{\im \jm},$$
where $\calQ$ is a Q-datum for $\g$ and $\im \in i, \jm \in j$.
\end{definition}

The following statement is the main theorem of this subsection,
which gives a generalization of~\cite[Proposition 2.1]{HL15}.

\begin{theorem}
\label{theorem: combinatorial formula}
For each $i,j \in I$ and $u \in \Z_{\ge 0}$, we have
$\tc_{ij}(u) = \eta_{ij}(u)$.
In other words, we have
$$
\tc_{ij}(u) = \begin{cases}
(\varpi_{\im}, \tau_{\calQ}^{(u+\uxi_{\jm} - \uxi_{\im} - d_{i})/2}(\gamma^{\calQ}_{\jm}))
& \text{if $u+\uxi_{\jm} - \uxi_{\im} - d_{i} \in 2\Z$,} \\
0 & \text{otherwise}
\end{cases}
$$
for any Q-datum $\calQ=(\Delta, \sigma, \uxi)$ for $\g$ and $u \in \Z_{\ge 0}, \im \in i, \jm \in j$.
\end{theorem}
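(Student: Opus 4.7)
The plan is to verify that $\tc_{ij}(u)$ and $\eta_{ij}(u)$ satisfy almost the same linear recurrence in $u$ and agree on a finite initial range of $u$; equality for all $u \ge 0$ will then follow by induction.

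For $\tc_{ij}$, extracting the coefficient of $z^u$ from $\sum_k \tC_{ik}(z)\, C_{kj}(z) = \delta_{ij}$ and using $[c_{kj}]_z = -\sum_{s=0}^{-c_{kj}-1} z^{-c_{kj}-1-2s}$ for $k \sim j$, one obtains
\begin{equation*}
\tc_{ij}(u-d_j) + \tc_{ij}(u+d_j) = \delta_{ij}\delta_{u,0} + \sum_{k \sim j} \sum_{s=0}^{-c_{kj}-1} \tc_{ik}(u + c_{kj}+1+2s).
\end{equation*}
For $\eta_{ij}$, I would apply the $\g$-additive property (Theorem~\ref{thm: twisted additivity}) to $\gamma_\jm^\calQ$ with shift $l = (u - d_j + \uxi_\jm - \uxi_\im - d_i)/2$ and pair both sides with $\varpi_\im$. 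By Lemma~\ref{lemma: gamma relation}, each resulting summand equals $\eta_{ik}(v)$ for some $v$; matching the exponents of $\tau_\calQ$ via the elementary identity $-d_j + \min(d_j,d_k) = c_{kj}+1$ for $k \sim j$ (a direct case-check distinguishing $d_k \ge d_j$ from $d_k < d_j$), one gets
\begin{equation*}
\eta_{ij}(u-d_j) + \eta_{ij}(u+d_j) = \sum_{k \sim j} \sum_{s=0}^{-c_{kj}-1} \eta_{ik}(u + c_{kj}+1+2s),
\end{equation*}
i.e., the same recurrence without the $\delta_{ij}\delta_{u,0}$ term.

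Granting these two recurrences, the theorem follows by induction on $u \ge 0$, provided one checks the base cases $\eta_{ij}(u) = \tc_{ij}(u)$ for $0 \le u \le d_i$. By Lemma~\ref{lemma: small vanish} this means: $\eta_{ij}(u) = 0$ for $0 \le u < d_i$, $\eta_{ii}(d_i) = 1$, and $\eta_{ij}(d_i) = 0$ for $i \ne j$. The parity clause in the definition of $\eta_{ij}$ dispatches many of these cases automatically. For the remainder, Lemma~\ref{lemma: eta welldefined} allows me to pick a Q-datum $\calQ$ for which $\im \in i$ is a source, so that $\gamma_\im^\calQ = \alpha_\im$ by Lemma~\ref{lemma: gamma}; in particular $\eta_{ii}(d_i) = (\varpi_\im, \alpha_\im) = 1$, and the remaining small $\tau_\calQ$-powers of $\gamma_\jm^\calQ$ that enter are explicit positive roots lying on the boundary of $\Gamma_\calQ$ by Theorem~\ref{thm: phi_Q}, whose pairings with $\varpi_\im$ are read off directly.

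The hardest part will be verifying the initial conditions uniformly across all types and all adjacent pairs $(i, j)$; I expect this to reduce to a short, type-free calculation thanks to the uniform description afforded by the Q-datum. A conceptually cleaner alternative is a generating-function reformulation: setting $\tilde F_{ij}(z) := \sum_{u \ge 0} \eta_{ij}(u)\, z^u$, the $\eta$-recurrence implies that $\sum_k \tilde F_{ik}(z)\, C_{kj}(z)$ is a Laurent polynomial in $z$ whose coefficients depend only on finitely many values $\eta_{ij}(u)$ with $u$ slightly negative; using the periodicity of $\tau_\calQ$ (Proposition~\ref{proposition: conjugation}) together with Corollary~\ref{corollary: shift hDs}, one identifies this polynomial as $\delta_{ij}$, and invertibility of $C(z)$ over $\Q(z)$ then forces $\tilde F_{ij}(z) = \tC_{ij}(z)$.
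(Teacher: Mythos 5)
Your two recurrences are correct and are exactly the engine of the paper's proof: the $\eta$-recurrence is obtained precisely as you describe, by pairing the $\g$-additive property with $\varpi_{\im}$ and using $d_j+c_{kj}+1=\min(d_j,d_k)$, and the paper then works with the truncated generating functions $H_{ij}(z)=\sum_{u\ge 0}\eta_{ij}(u)z^u$ and shows $\sum_k H_{ik}(z)C_{kj}(z)=\delta_{ij}$ coefficient by coefficient. Your base cases $\eta_{ij}(u)=0$ for $0\le u<d_i$, $\eta_{ij}(d_i)=\delta_{ij}$ are also established in the paper (by choosing $\calQ$ with $j^{\circ}$ a source).

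However, your primary induction scheme has a genuine gap when $d_j>d_i$, i.e.\ $(d_i,d_j)=(1,2)$ or $(1,3)$. The recurrence computing $\eta_{ij}(w)$ (at $u=w-d_j$) involves $\eta_{ij}(w-2d_j)$, and for $d_i<w<2d_j-d_i+\delta_{ij}$ this argument is $\le -d_i-1+\delta_{ij}$, a range where $\eta_{ij}$ does \emph{not} vanish (unlike $\tc_{ij}$): e.g.\ for $(d_i,d_j)=(1,2)$ one has $\eta_{ij}(-2)=-\eta_{ji}(3)$ by the duality relation, which is generally nonzero, and worse, it sits at a \emph{larger} argument for the pair $(j,i)$, so a simultaneous induction on $u$ is circular there. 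Concretely, the missing content is the verification that $x_{ij}(u)=0$ for $2-r\le u<r-1$, which boils down to identities such as $\eta_{ij}(2)=\delta(i\sim j)=\eta_{ii}(1)\cdot\delta(i\sim j)$; this is not ``read off directly'' but requires, as in the paper, choosing a Q-datum satisfying condition~(\ref{eq: condition twisted Coxeter height}) with $j^{\circ}$ a source and $\uxi_{i^{\circ}}$ minimal so that $\tau_{\calQ}=ws_{i^{\circ}}\sigma$ and the pairing can be evaluated explicitly, and for $(d_i,d_j)=(1,3)$ (type $\mathsf{G}_2$) the paper falls back on a direct computation of $\tC(z)$ rather than a type-free argument. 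Your generating-function ``alternative'' is in fact the paper's actual proof, but it faces exactly the same boundary coefficients, so the appeal to periodicity and Corollary~\ref{corollary: shift hDs} does not by itself close this gap.
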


For a proof of Theorem~\ref{theorem: combinatorial formula},
we need a lemma.

\begin{lemma}
\label{lemma: eta property}
The functions $\{ \eta_{ij} \colon \Z \to \Z \mid i, j \in I \}$ enjoy the following properties:
\begin{enumerate}
\item \label{eta_+rh} $\eta_{ij}(u + rh^{\vee}) = -\eta_{ij^{*}}(u)$ for any $u \in \Z$.
\item \label{eta_additivity}
For any $u \in \Z$, we have
$$
\eta_{ij}(u-d_{j}) + \eta_{ij}(u+d_{j}) = \sum_{k \sim j}\sum_{l=0}^{-c_{kj}-1}\eta_{ik}(u+c_{kj}+1+2l).
$$
\item \label{eta_ARduality} For any $u \in \Z$, we have
$$
-\eta_{ij}(-u) = \begin{cases}
\eta_{ji}(u) & \text{if $d_{i}=d_{j}$}, \\
\eta_{ji}(u+1) + \eta_{ji}(u-1) & \text{if $(d_{i}, d_{j})=(1,2)$}, \\
\eta_{ji}(u+2) + \eta_{ji}(u) + \eta_{ji}(u-2) & \text{if $(d_{i}, d_{j})=(1,3)$}.
\end{cases}
$$
\item \label{eta_small vanish} $\eta_{ij}(u)=0$ if $|u| \le d_{i}-\delta_{ij}$.
\item \label{eta_unip} $\eta_{ii}(\pm d_{i})= \pm1$.
\end{enumerate}
\end{lemma}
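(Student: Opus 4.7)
The plan is to verify each item in turn, making use of Lemma~\ref{lemma: eta welldefined} (independence of $\eta_{ij}$ from $\calQ$) freely to select a convenient Q-datum at each step.

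For (1), I substitute $u \mapsto u + rh^{\vee}$ in the defining pairing and split the exponent of $\tau_{\calQ}$ as $(rh^{\vee} + \uxi_{\jm} - \uxi_{\jm^{*}})/2 + (u + \uxi_{\jm^{*}} - \uxi_{\im} - d_{i})/2$; Corollary~\ref{corollary: opposite end} then rewrites $\tau_{\calQ}^{(rh^{\vee} + \uxi_{\jm} - \uxi_{\jm^{*}})/2}\gamma_{\jm}^{\calQ} = -\gamma_{\jm^{*}}^{\calQ}$, yielding $\eta_{ij}(u + rh^{\vee}) = -\eta_{ij^{*}}(u)$. For (2), set $L := (u - d_{j} + \uxi_{\jm} - \uxi_{\im} - d_{i})/2$, so that the LHS reads $(\varpi_{\im},\, \tau_{\calQ}^{L}\gamma_{\jm}^{\calQ} + \tau_{\calQ}^{L + d_{j}}\gamma_{\jm}^{\calQ})$. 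Apply Theorem~\ref{thm: twisted additivity} with the distinguished vertex $\jm$ (in place of $\im$) and $l = L$; Lemma~\ref{lemma: gamma relation} collapses $\tfrac{1}{d_{k}}\sum_{\km \in k}$ to any single representative. A case analysis in $(d_{j}, d_{k})$ --- with $d_{k} \ge d_{j}$ forcing $-c_{kj} = 1$ and $\min(d_{j}, d_{k}) = d_{j}$, while $d_{k} < d_{j}$ forces $d_{k} = 1$, $d_{j} = r$, $-c_{kj} = r$ --- identifies the shift $2t + \min(d_{j}, d_{k}) - d_{j}$ with $c_{kj} + 1 + 2t$ throughout $t = 0, \ldots, -c_{kj} - 1$.

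For (3), write $\eta_{ij}(u) = f(L) - f(L + d_{j})$ with $f(k) := (\varpi_{\im}, \tau_{\calQ}^{k}\varpi_{\jm})$ and $L := (u + \uxi_{\jm} - \uxi_{\im} - d_{i})/2$, and expand $\eta_{ji}(v)$ analogously using $\tau_{\calQ}$-invariance of $(\, ,\, )$ to get $f(-k) = (\varpi_{\jm}, \tau_{\calQ}^{k}\varpi_{\im})$. In the equal case $d_{i} = d_{j}$, a single exponent shift aligns and gives $-\eta_{ij}(-u) = \eta_{ji}(u)$; in the cases $(d_{i}, d_{j}) = (1, 2)$ and $(1, 3)$, summing two or three shifted copies of $\eta_{ji}$ on the RHS produces a telescoping chain that collapses to $-\eta_{ij}(-u)$. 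For (5), I pass to a Q-datum $\calQ$ in which $\im$ is a source (attainable by iterating source reflections from any starting $\calQ$), so that $\gamma_{\im}^{\calQ} = \alpha_{\im}$ by Lemma~\ref{lemma: gamma}. Then $\eta_{ii}(d_{i}) = (\varpi_{\im}, \alpha_{\im}) = 1$ directly, while $\tau_{\calQ}$-invariance together with $\tau_{\calQ}^{d_{i}}\varpi_{\im} = \varpi_{\im} - \gamma_{\im}^{\calQ} = \varpi_{\im} - \alpha_{\im}$ rewrites $\eta_{ii}(-d_{i}) = (\varpi_{\im} - \alpha_{\im}, \alpha_{\im}) = 1 - 2 = -1$.

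For (4), I again use $\calQ$ with $\im$ a source, chosen additionally to satisfy~(\ref{eq: condition twisted Coxeter height}) so that $\tau_{\calQ}$ is a genuine twisted Coxeter element with $\im$ at the head of its reduced expression. Then $\tau_{\calQ}^{k}\varpi_{\im} = \varpi_{\sigma^{k}(\im)}$ for $0 \le k < d_{i}$ (since $\sigma^{k}(\im) \notin I_{\calQ}^{\circ}$ for such $k$, so no reflection in $\tau_{\calQ}^{k}$ acts nontrivially on $\varpi_{\sigma^{k}(\im)}$), while the first correction $\tau_{\calQ}^{d_{i}}\varpi_{\im} = \varpi_{\im} - \alpha_{\im}$ appears only at $k = d_{i}$. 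Expanding $\eta_{ij}(u) = (\tau_{\calQ}^{-L}\varpi_{\im}, \varpi_{\jm}) - (\tau_{\calQ}^{-L - d_{j}}\varpi_{\im}, \varpi_{\jm})$ and combining $\sigma$-invariance of $(\,,\,)$ with $\sigma^{d_{j}}(\jm) = \jm$, both pairings coincide whenever $-L, -L - d_{j} \in [0, d_{i})$, so $\eta_{ij}(u) = 0$ there. The boundary values push one exponent onto $d_{i}$, introducing a $-\delta_{\im\jm}$ correction that vanishes for $i \ne j$ and reproduces $\eta_{ii}(-d_{i}) = -1$ for $i = j$; residual values of $u$ are handled by (3) to swap $i \leftrightarrow j$. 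The main obstacle is covering the full range $|u| \le d_{i} - \delta_{ij}$ uniformly across the configurations of $\uxi_{\jm} - \uxi_{\im}$ and of $\im \sim \jm$ versus $\im \not\sim \jm$; in the simply-laced case $\sigma = \id$, (4) is already contained in \cite[\S 2.5]{HL15}, which provides a consistency check.
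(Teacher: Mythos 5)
Your treatments of items (1), (2), (3) and (5) are correct and essentially follow the paper's route: for (1) you invoke Corollary~\ref{corollary: opposite end} (which packages both the $\sigma=\id$ case, handled in \cite{Fujita19}, and the $\sigma\neq\id$ case via $\tau_{\calQ}^{rh^{\vee}/2}=-1$); (2) is exactly the paper's reduction to Theorem~\ref{thm: twisted additivity} through the identity $d_{j}+c_{kj}+1=\min(d_{j},d_{k})$; (3) is the paper's computation with the factorization $1-\tau_{\calQ}^{d_{j}}=\bigl(\sum_{l=0}^{d_{j}-d_{i}}\tau_{\calQ}^{l}\bigr)(1-\tau_{\calQ}^{d_{i}})$ rephrased as telescoping; and your direct evaluation $\eta_{ii}(-d_{i})=(\varpi_{\im}-\alpha_{\im},\alpha_{\im})=-1$ in (5) is a legitimate shortcut around the paper's appeal to (3).

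Item (4), however, has a genuine gap, which you yourself flag. The difficulty comes from normalizing the wrong index: making $\im$ a source controls $\tau_{\calQ}^{k}\varpi_{\im}$ only for $0\le k\le d_{i}$, and in the expression $\eta_{ij}(u)=(\tau_{\calQ}^{-L}\varpi_{\im},\varpi_{\jm})-(\tau_{\calQ}^{-L-d_{j}}\varpi_{\im},\varpi_{\jm})$ the window you require, $-L,\,-L-d_{j}\in[0,d_{i})$, forces $d_{j}\le -L<d_{i}$ and is therefore \emph{empty} unless $d_{j}<d_{i}$; in particular it never applies when $i=j$ (where one must show $\eta_{ii}(u)=0$ for all $|u|\le d_{i}-1$ with $d_{i}$ possibly equal to $r$), nor when $d_{i}\le d_{j}$. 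Outside that window the vectors $\tau_{\calQ}^{k}\varpi_{\im}$ accumulate non-simple root corrections, and the ``residual values handled by (3)'' step is not a mere swap: for $d_{i}>d_{j}$, (3) expresses a single value of $\eta_{ji}$ as a \emph{sum} of values of $\eta_{ij}$, so extracting individual values requires an induction you do not set up. The paper's fix is to normalize the other index: choose $\calQ$ satisfying (\ref{eq: condition twisted Coxeter height}) with $j^{\circ}$ a source and $\uxi_{j^{\circ}}-\uxi_{i^{\circ}}\in\{0,1\}$. Then $\gamma^{\calQ}_{j^{\circ}}=\alpha_{j^{\circ}}$ by Lemma~\ref{lemma: gamma}, so $\eta_{ij}(u)=(\tau_{\calQ}^{l}\varpi_{i^{\circ}},\alpha_{j^{\circ}})=\delta_{\sigma^{l}(i^{\circ}),\,j^{\circ}}$ with $l=(d_{i}-\uxi_{j^{\circ}}+\uxi_{i^{\circ}}-u)/2$, and for every $0\le u\le d_{i}$ this exponent lands in $[0,d_{i})$, giving (4) and (5) on the nonnegative range in one stroke; the negative range then follows from (3). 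You should adopt this normalization, or else supply the missing induction, to close (4).
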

\begin{proof}
When $\sigma \neq \id$, the property~(\ref{eta_+rh}) is a consequence of Proposition~\ref{proposition: conjugation}~(\ref{tauconj_rh}).
When $\sigma = \id$, it is proved in~\cite[\ber Lemma 3.7\er]{Fujita19}.
The property~(\ref{eta_additivity}) follows from the $\g$-additive property
(Theorem~\ref{thm: twisted additivity}) if we note that
$d_{j}+c_{kj}+1 = d_{j} - \lceil d_{j}/d_{k} \rceil +1= \min(d_{j}, d_{k})$ for any $j,k \in I$.

Let us prove the property~(\ref{eta_ARduality}).
We choose a Q-datum $\calQ=(\Delta, \sigma, \uxi)$ for $\g$ and $\im \in i, \jm \in j$.
Let $u \in \Z$ satisfy $u +\uxi_{\jm} -\uxi_{\im}-d_{i} \in 2\Z$.
We compute
\begin{align*}
-\eta_{ij}(-u) & = (\varpi_{\im}, \tau_{\calQ}^{(-u + \uxi_{\jm} - \uxi_{\im} - d_{i})/2} (\tau_{\calQ}^{d_{j}}-1) \varpi_{\jm}) \\
&= (\tau_{\calQ}^{(u + \uxi_{\im}-\uxi_{\jm} + d_{i})/2}(\tau_{\calQ}^{-d_{j}}-1)\varpi_{\im}, \varpi_{\jm}) \\
&= (\varpi_{\jm}, \tau_{\calQ}^{((u+d_{i} -d_{j}) + \uxi_{\im}-\uxi_{\jm} - d_{j})/2}(1-\tau_{\calQ}^{d_{j}})\varpi_{\im}).
\end{align*}
Under the assumption $d_{i} \le d_{j}$, we have
$1-\tau_{\calQ}^{d_{j}} = (\sum_{l=0}^{d_{j}-d_{i}} \tau_{\calQ}^{l})(1-\tau_{\calQ}^{d_{i}})$.
Combining with the above computation, we obtain (\ref{eta_ARduality}).

Finally we shall prove the property~(\ref{eta_small vanish}) and  (\ref{eta_unip}).
Fix $i,j \in I$ and choose a Q-datum $\calQ=(\Delta, \sigma, \uxi)$ for $\g$
satisfying the condition~(\ref{eq: condition twisted Coxeter height})
and such that (a) $j^{\circ}$ is a source of $\calQ$,
(b) $\uxi_{ij} := \uxi_{j^{\circ}} - \uxi_{i^{\circ}} \in \{ 0, 1\}$.
The condition (a) implies $\gamma^{\calQ}_{j^{\circ}}=\alpha_{j^{\circ}}$ by Lemma~\ref{lemma: gamma}.
For any $u \in \Z$ such that $u + \uxi_{ij} - d_{i} \in 2\Z$,
we have
$
\eta_{ij}(u) = (\tau_{\calQ}^{(d_{i} - \uxi_{ij} - u)/2}(\varpi_{i^{\circ}}), \alpha_{j^{\circ}}).
$
Combining with the fact that $\tau_{\calQ}^{l} (\varpi_{i^{\circ}}) = \varpi_{\sigma^{l}(i^{\circ})}$ for $0 \le l < d_{i}$,
we obtain the desired equalities~(\ref{eta_small vanish}) and  (\ref{eta_unip})
under the assumption $0 \le u \le d_{i}$.
The other case when $-d_{i} \le u \le 0$ follows from this case and the property~(\ref{eta_ARduality}).
\end{proof}

\begin{proof}[Proof of Theorem~\ref{theorem: combinatorial formula}]
Setting $H_{ij}(z) := \sum_{u \ge 0} \eta_{ij}(u)z^{u} \in \Z\lf z\rf$ for each $i, j \in I$,
we have to show that
\begin{equation}
\label{eq: HC to prove}
\sum_{k \in I} H_{ik}(z) C_{kj}(z) = \delta_{i,j}.
\end{equation}
We denote by $x_{ij}(u)$ the coefficient of $z^{u}$ in the LHS of (\ref{eq: HC to prove})
for each $u \in \Z$.
Then the equality~(\ref{eq: HC to prove}) is equivalent to
$x_{ij}(u) = \delta_{i,j} \delta_{u,0}$.
By Lemma~\ref{lemma: eta property}~(\ref{eta_small vanish}), we can write
$H_{ij}(z) = \sum_{u > -d_{i}} \eta_{ij}(u)z^{u}$ for any $i,j \in I$.
Therefore we have
$$
x_{ij}(u+d_{j}) = \eta_{ij}(u) + \eta_{ij}(u+2d_{j}) - \sum_{k \sim j}\sum_{l=0}^{-c_{kj}-1}\eta_{ik}(u+d_{j}+c_{kj}+1+2l) = 0
$$
for any $u > -d_{i}$ thanks to Lemma~\ref{lemma: eta property}~(\ref{eta_additivity}).
On the other hand, Lemma~\ref{lemma: eta property}~(\ref{eta_small vanish}) also tells us that
$$
H_{ik}(z)C_{kj}(z) \in \begin{cases}
z^{d_{i}+\delta(i \neq k)+c_{kj}+1} \Z \lf z \rf & \text{if $k\neq j$},\\
z^{d_{i} +\delta(i \neq j) - d_{j}}\Z \lf z \rf & \text{if $k=j$},
\end{cases}
$$
which implies $x_{ij}(u)=0$ if $u < d_{i}-d_{j} + \delta(i \neq j)$.
Therefore it is enough to show
$x_{ij}(u) = \delta_{i,j}\delta_{u,0}$
only for $d_{i} - d_{j} + \delta(i \neq j) \le u \le d_{j} - d_{i}$.
When $d_{i} \ge d_{j}$, we only have to \ber consider \er the case $i=j$ and $u=0$.
In this case, we have
$$
x_{ii}(0) = \eta_{ii}(d_{i}) - \sum_{k \sim i}\sum_{l=0}^{-c_{ki}-1}\eta_{ik}(u+c_{ki}+1+2l) = -\eta_{ii}(-d_{i}) = 1
$$
as desired thanks to Lemma~\ref{lemma: eta property}~(\ref{eta_additivity}) and (\ref{eta_unip}).

In the remaining case $(d_{i}, d_{j}) = (1,r)$ with $r > 1$,
we have to verify $x_{ij}(u)=0$ for $2-r \le u < r-1$.
First we assume that $r = 2$. We observe
$x_{ij}(0) = \eta_{ij}(2) - \eta_{ii}(1) \times \delta(i \sim j)$
using Lemma~\ref{lemma: eta property}~(\ref{eta_small vanish}).
Let us choose a Q-datum $\calQ=(\Delta, \sigma, \uxi)$ satisfying (\ref{eq: condition twisted Coxeter height})
and such that (a) $j^{\circ}$ is a source of $\calQ$, (b) $\uxi_{j^{\circ}} - \uxi_{i^{\circ}} \in \{0,1\}$
as before.
Note that if $\uxi_{j^{\circ}} - \uxi_{i^{\circ}} =0$, we have $i \not \sim j$ and $\eta_{ij}(2)=0$
by the parity reasons.
Thus we may assume $\uxi_{j^{\circ}} - \uxi_{i^{\circ}} =1$.
In this case, we can further require $\uxi_{i^{\circ}} = \min \{\uxi_{k^{\circ}} \mid k \in I \}$.
Then the corresponding twisted Coxeter element $\tau_{\calQ}$ can be written as
$\tau_{\calQ} = w s_{i^{\circ}} \sigma$, where $w$ is a product of $s_{k^{\circ}}$ with $k \in I \setminus \{i\}$.
Then we have
$$
\eta_{ij}(2) = (\varpi_{i^{\circ}}, ws_{i^{\circ}} \sigma(\alpha_{j^{\circ}}))
= (\varpi_{i^{\circ}}, s_{i^{\circ}} \alpha_{\sigma(j^{\circ})})
= \delta(i \sim j).
$$
On the other hand, we know $\eta_{ii}(1)=1$ by Lemma~\ref{lemma: eta property}~(\ref{eta_unip}).
Therefore we have $x_{ij}(0)=0$ as desired.

As for the remaining case $(d_{i}, d_{j}) = (1,3)$, we verify
$x_{ij}(u)=0$ for all $u \in \Z$ by a direct computation.
See Subsection~\ref{subsubsection: computation G} below.
\end{proof}

\ber We obtain the following corollaries, 
which are already proved for types $\mathsf{ADE}$ in \cite{HL15, Fujita19}.\er
\begin{corollary}
\label{corollary: tc2}
The integers $\{ \widetilde{c}_{ij}(u) \mid i,j \in I, u \in \Z \}$ enjoy the following properties:
\begin{enumerate}
\item \label{tc2_+rh} $\widetilde{c}_{ij}(u+rh^{\vee}) = - \widetilde{c}_{ij^{*}}(u)$ for $u \ge 0$.
\item \label{tc2_+2rh} $\widetilde{c}_{ij}(u + 2rh^{\vee}) = \widetilde{c}_{ij}(u)$ for $u \ge 0$.
\item \label{tc2_rh-} $\widetilde{c}_{ij}(rh^{\vee}-u) = \widetilde{c}_{ij^{*}}(u)$ for $0 \le u \le rh^{\vee}$.
\item \label{tc2_2rh-} $\widetilde{c}_{ij}(2rh^{\vee} -u) = -\widetilde{c}_{ij}(u)$ for $0 \le u \le 2rh^{\vee}$.
\item \label{tc2_zeros} $\widetilde{c}_{ij}(u) = 0$ if $|u- k r h^{\vee}| \le d_{i} -\delta_{ij}$ for some $k \in \Z_{\ge 0}$.
\item \label{tc2_pos} $\widetilde{c}_{ij}(u) \ge 0$ for $0 \le u \le rh^{\vee}$.
\item \label{tc2_neg} $\widetilde{c}_{ij}(u) \le 0$ for $rh^{\vee} \le u \le 2rh^{\vee}$.
\end{enumerate}
\end{corollary}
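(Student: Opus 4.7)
The proof combines Theorem~\ref{theorem: combinatorial formula} (identifying $\tc_{ij}(u) = \eta_{ij}(u)$ for $u \ge 0$) with the structural properties of $\eta_{ij}$ in Lemma~\ref{lemma: eta property}, and I would handle the seven items in a cascade.

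For (1), both $u$ and $u + rh^\vee$ are non-negative, so Lemma~\ref{lemma: eta property}(1) applies directly to give $\tc_{ij}(u+rh^\vee) = \eta_{ij}(u+rh^\vee) = -\eta_{ij^*}(u) = -\tc_{ij^*}(u)$. Property (2) then follows by iterating (1) and using the involutive nature $(j^*)^* = j$. For (5), Lemma~\ref{lemma: eta property}(4) furnishes the vanishing $\tc_{ij}(u) = 0$ on $\{0 \le u \le d_i - \delta_{ij}\}$ (the $k = 0$ case), and its translates around $u = krh^\vee$ for $k \ge 1$ are obtained by combining this with (1) and (2).

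For (4), the pivotal observation is that $C(z)$ is invariant under $z \mapsto z^{-1}$, since $[k]_{z^{-1}} = [k]_z$; consequently $\tC(z) = \tC(z^{-1})$ as rational functions. Using the period-$2rh^\vee$ periodicity from (2), one writes $\tC_{ij}(z) = R_{ij}(z)/(1 - z^{2rh^\vee})$ with $R_{ij}(z) := \sum_{u=0}^{2rh^\vee - 1} \tc_{ij}(u) z^u$ a polynomial, and the symmetry forces $R_{ij}(z) = -z^{2rh^\vee} R_{ij}(z^{-1})$; comparing coefficients of $z^u$ yields (4) directly. Property (3) is then immediate from (1) and (4): $\tc_{ij}(rh^\vee - u) = -\tc_{ij}(rh^\vee + u) = \tc_{ij^*}(u)$.

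For the sign properties (6) and (7), I plan to invoke the formula $\tc_{ij}(u) = (\varpi_\im, \tau_\calQ^m \gamma_\jm^\calQ)$ with $m = (u + \uxi_\jm - \uxi_\im - d_i)/2$. By Theorem~\ref{thm: phi_Q}, $\tau_\calQ^m \gamma_\jm^\calQ = (-1)^k \beta$ where $(\beta, k) = \phi_\calQ(\jm, \uxi_\im + d_i - u)$ and $\beta \in \sfR^+$, so since $\varpi_\im$ is dominant the sign of $\tc_{ij}(u)$ equals $(-1)^k$. For (6), the plan is to choose a Q-datum $\calQ$ so that the coordinates $(\jm, \uxi_\im + d_i - u)$ remain in the strip $(\Gamma_\calQ)_0 = \phi_\calQ^{-1}(\sfR^+ \times \{0\})$ for $0 \le u \le rh^\vee$, forcing $k = 0$; (7) then follows from (1) applied to (6) for $(i, j^*)$. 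The main obstacle will be precisely this last sign analysis: the width $rh^\vee + \uxi_\jm - \uxi_{\jm^*}$ of the strip depends on the height function, and the parity conditions ($\mathtt{H1}$)--($\mathtt{H3}$) constrain $\uxi$; one must exploit the freedom in choosing $\calQ$ afforded by Theorem~\ref{theorem: OS19b main} to arrange the required inclusion throughout $[0, rh^\vee]$, with the boundary cases $u \in \{0, rh^\vee\}$ handled by the vanishings from (5).
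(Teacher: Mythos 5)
Your proposal is correct, and for items (1), (2), (5), (6), (7) it follows essentially the same route as the paper: (1) is Theorem~\ref{theorem: combinatorial formula} plus Lemma~\ref{lemma: eta property}~(\ref{eta_+rh}), (2) and (7) are formal consequences, (5) combines the base vanishing of Lemma~\ref{lemma: eta property}~(\ref{eta_small vanish}) with the periodicities, and your plan for (6) is exactly the paper's argument. The concrete choice you are looking for in (6) is a Q-datum satisfying~(\ref{eq: condition twisted Coxeter height}) with $\uxi_{j^{\circ}}-\uxi_{i^{\circ}}\in\{0,1\}$: then Remark~\ref{remark: 2seg} (the $2$-segment property of $\bGamma_{\calQ}$) guarantees that $\tau_{\calQ}^{(u+\uxi_{ij}-d_i)/2}(\gamma^{\calQ}_{j^{\circ}})$ stays in $\sfR^{+}$ for the whole range $0\le u+\uxi_{ij}-d_i\le rh^{\vee}$, which covers $d_i\le u\le rh^{\vee}-d_i$; the boundary strip is absorbed by (5), as you anticipated. (The paper treats $\sigma=\id$ by citing \cite{Fujita19}, but the argument is identical.)

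Where you genuinely diverge is in the order and method for (3) and (4). The paper proves (3) first, by a computation chaining Lemma~\ref{lemma: eta property}~(\ref{eta_+rh}) and~(\ref{eta_ARduality}) with the symmetrization identity of Lemma~\ref{lemma: tc1}~(\ref{tc1_swap}), and needs a separate induction on $u$ for the case $d_i>d_j$; it then gets (4) from (1) and (3). You instead prove (4) directly from the observation that $C(z^{-1})=C(z)$ (each entry $z^{d_i}+z^{-d_i}$ and $[c_{ij}]_z$ is invariant under $z\mapsto z^{-1}$), hence $\tC(z^{-1})=\tC(z)$, and the periodicity (2) lets you write $\tC_{ij}(z)=R_{ij}(z)/(1-z^{2rh^{\vee}})$ with $R_{ij}$ a polynomial of degree $<2rh^{\vee}$; the functional equation forces $R_{ij}(z)=-z^{2rh^{\vee}}R_{ij}(z^{-1})$, and comparing coefficients (the extremal ones vanish by Lemma~\ref{lemma: small vanish}) gives (4), after which (3) is the one-line consequence $\tc_{ij}(rh^{\vee}-u)=-\tc_{ij}(rh^{\vee}+u)=\tc_{ij^*}(u)$. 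I checked this carefully and it is valid; it buys you a uniform argument with no case split on $(d_i,d_j)$ and no induction, at the cost of using a global symmetry of $C(z)$ rather than the root-theoretic duality of $\eta$.

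One caveat that applies equally to your argument and to the paper's: translating the base vanishing window by (1) produces, around \emph{odd} multiples of $rh^{\vee}$, the bound $|u-krh^{\vee}|\le d_i-\delta_{ij^{*}}$ rather than $d_i-\delta_{ij}$. When $j^{*}\neq j$ and $i\in\{j,j^{*}\}$ these differ; e.g.\ in type $\mathsf{A}_2$ one has $\tc_{12}(2)=1=-\,\tc_{12}(4)$ while the literal statement of (5) would predict both vanish. So what your derivation (and the paper's) actually establishes is the version of (5) with $\delta_{ij}$ replaced by $\delta_{ij^{(k)}}$, $j^{(k)}=j$ or $j^{*}$ according to the parity of $k$. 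This is an imprecision in the statement, not a gap in your proof.
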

\begin{proof}
The property~(\ref{tc2_+rh}) follows from Theorem~\ref{theorem: combinatorial formula}
and Lemma~\ref{lemma: eta property}~(\ref{eta_+rh}).
The property~(\ref{tc2_+2rh}) is a consequence of the property~(\ref{tc2_+rh}).
Let us verify the property~(\ref{tc2_rh-}). For the case $d_{i} \le d_{j}$,
we compute:
\begin{align*}
\tc_{ij}(rh^{\vee} - u)
&= - \eta_{ij^{*}}(-u) &&
\text{(by Theorem~\ref{theorem: combinatorial formula} and Lemma~\ref{lemma: eta property}~(\ref{eta_+rh}))}\\
&= \sum_{l=0}^{d_{j}-d_{i}} \eta_{j^{*}i}(u +d_{i}-d_{j}+2l) &&\text{(by Lemma~\ref{lemma: eta property}~(\ref{eta_ARduality}))}\\
&= \sum_{l=0}^{d_{j}-d_{i}} \tc_{j^{*}i}(u +d_{i}-d_{j}+2l) &&
\text{(by Lemma~\ref{lemma: eta property}~(\ref{eta_small vanish}) and Theorem~\ref{theorem: combinatorial formula})}\\
&= \tc_{ij^{*}}(u).  && \text{(by Lemma~\ref{lemma: tc1}~(\ref{tc1_swap}))}
\end{align*}
The other case $d_{i} > d_{j}$ can be proved by induction on $u$ using Lemma~\ref{lemma: tc1}~(\ref{tc1_swap})
and Lemma~\ref{lemma: eta property}~(\ref{eta_ARduality}).
The property~(\ref{tc2_2rh-}) is immediate \ber from \er the properties~(\ref{tc2_+rh}) and~(\ref{tc2_rh-}).
The property~(\ref{tc2_zeros}) is a consequence of Theorem~\ref{theorem: combinatorial formula} and
Lemma~\ref{lemma: eta property}~(\ref{eta_+rh}), (\ref{eta_small vanish}).
To verify the property (\ref{tc2_pos}), it is enough to show $\tc_{ij}(u) \ge 0$ for $d_{i}\le u \le rh^{\vee}-d_{i}$
thanks to (\ref{tc2_zeros}).
For the case $\sigma = \id$, see~\cite[Lemma 3.7]{Fujita19} for instance.
Here we focus on the case $\sigma \neq \id$
although our discussion here is quite similar to that for the case $\sigma = \id$.
We take a Q-datum $\calQ=(\Delta, \sigma, \uxi)$ for $\g$
satisfying the condition~(\ref{eq: condition twisted Coxeter height}) and
such that $\uxi_{ij} := \uxi_{j^{\circ}} - \uxi_{i^{\circ}} \in \{ 0,1\}$.
By the parity reason, we may assume $u+\uxi_{ij}-d_{i} \in 2\Z$.
By Remark~\ref{remark: 2seg},
we see that
$\tau_{\calQ}^{(u+\uxi_{ij}-d_{i})/2}(\gamma^{\calQ}_{j^{\circ}}) \in \sfR^{+}$
whenever $0 \le u + \uxi_{ij} - d_{i} \le rh^{\vee}$.
This last condition is always satisfied under our assumption
$d_{i}\le u \le rh^{\vee}-d_{i}$.
Since $(\varpi_{i^{\circ}}, \alpha) \ge 0$ holds if $\alpha \in \sfR^{+}$,
we obtain $\tc_{ij}(u) = (\varpi_{i^{\circ}}, \tau_{\calQ}^{(u+\uxi_{ij}-d_{i})/2}(\gamma^{\calQ}_{j^{\circ}})) \ge 0$
as desired.
The last property~(\ref{tc2_neg}) follows from (\ref{tc2_+rh}) and (\ref{tc2_pos}).
\end{proof}

\subsection{Explicit computations}
\label{subsection: computation}
Theorem~\ref{theorem: combinatorial formula} tells us that one can compute the explicit values of
the integers $\tc_{ij}(u)$ from any (folded) AR quiver $\bGamma_{\calQ}$.
In this subsection, we carry out such case-by-case computations for all non-simply-laced $\g$.
The results are used in Section~\ref{section: denominator} to deduce a unified denominator formula
for the normalized R-matrices.
\ber We note that a list of explicit values of $\tC_{ij}(z)$ for all $\g$ appeared also in \cite[Appendix A]{GTL17}
without a proof. \er

\begin{remark}
Assume $\g$ is not simply-laced, or equivalently $\sigma \neq \id$.
To compute all the values of $\{\tc_{ij}(u) \mid i,j \in I, u\in\Z\}$,
it is enough to compute
$$
(1+z^{rh^{\vee}})\tC_{ij}(z) = \sum_{u=0}^{rh^{\vee}-1} \tc_{ij}(u)z^{u}
$$
for each $i,j \in I$ thanks to Corollary~\ref{corollary: tc2}~(\ref{tc2_+rh}).
\end{remark}

\subsubsection{Type $\mathsf{B}_{n}$}
\label{subsubsection: computation B}

Let $\g$ be of type $\mathsf{B}_{n}$.
The corresponding pair in Table~\ref{table: classification} is $(\sfg, \sigma) = (\mathsf{A}_{2n-1}, \vee)$.
We use the labeling $\Delta_{0}=\{1, \ldots, 2n-1\}, I =\{1, \ldots, n\}$ as in~(\ref{eq: B_n}).
The involution $\vee$ on $\Delta$ is given by $i^{\vee} = 2n-i$ for $1 \le i \le 2n-1$,
and we set $\bar{i} =i$ for $1 \le i \le n$.
Note that we have $(d_{1},\ldots,d_{n}) = (2, \ldots, 2, 1)$ and $rh^{\vee}=4n-2$.

\begin{theorem} \label{theorem: closed B}
For $i, j \in I=\{1,2,\ldots,n\}$, the closed formula of $\tC_{ij}(z)$ is given as follows:
\begin{align}\label{eq: pre denom A}
(1+z^{4n-2})\tC_{ij}(z) =
\begin{cases}
\sum_{s=1}^{\min(i,j) } (z^{2(|i-j|+2s-1)} +  z^{2(2n-i-j-2+2s)}) & \text{ if $1\le i,j < n$}, \\
\sum_{s=1}^{i} z^{2n-2i-3+4s} & \text{ if $i < n, j=n$}, \\
\sum_{s=1}^{j}  (z^{2n-2j-4+4s}+ z^{2n-2j-2+4s})    & \text{ if $i = n, j<n$}, \\
\sum_{s=1}^{n}   z^{4s-3} & \text{ if $i =j = n$}.
\end{cases}
\end{align}
\end{theorem}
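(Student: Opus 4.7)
The plan is to derive the closed formula by directly applying the combinatorial formula from Theorem~\ref{theorem: combinatorial formula} to a carefully chosen Q-datum $\calQ$ for $\g$ of type $\mathsf{B}_n$. Specifically, I would select a Q-datum satisfying the condition~\eqref{eq: condition twisted Coxeter height} so that $\tau_\calQ$ reduces to an honest $\sigma$-Coxeter element $\tau_\calQ^\circ = s_{\im_1}\cdots s_{\im_n}\sigma$; a natural choice is to take the height function with $I_\calQ^\circ = \{1,2,\ldots,n\}$ (using the labeling in~\eqref{eq: B_n}) and with $\uxi_{i^\circ}$ strictly decreasing in $i$ for $1 \le i \le n-1$, while $\uxi_n$ is adjusted so that the branching-point axiom $(\mathtt{H2})$ holds. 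The folded AR quiver $\bGamma_\calQ$ then has the straightforward form obtained from Example~\ref{example: bGammaB3}, and the relevant data $\gamma_\jm^\calQ \in \sfR^+$ for $\sfg = \mathsf{A}_{2n-1}$ can be described explicitly.

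The key combinatorial step is to parametrize $\sfR^+$ of $\mathsf{A}_{2n-1}$ by intervals $[a,b] = \epsilon_a - \epsilon_{b+1}$ with $1 \le a \le b \le 2n-1$, so that pairing with a fundamental weight $\varpi_k$ evaluates to $\delta(a \le k \le b)$. With this identification, I would use Lemma~\ref{lemma: gamma} (combined with the chosen $\sigma$-Coxeter word) to write down each $\gamma_\jm^\calQ$ as an explicit interval, and then track the $\tau_\calQ$-orbit of $\gamma_{\jm}^\calQ$ step by step, recording at each power whether the pairing $(\varpi_{\im}, \tau_\calQ^m(\gamma_{\jm}^\calQ))$ equals $1$ or $0$. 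Corollary~\ref{corollary: opposite end} and Proposition~\ref{proposition: conjugation}~(\ref{tauconj_ord}) pin down exactly when the orbit flips sign (crossing into $\sfR^-$) and returns to a positive root, so the orbit partitions cleanly into two halves of length $rh^\vee/2 = 2n-1$, matching the factor $(1+z^{4n-2})$.

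Once this setup is in place, the proof splits into the four cases appearing in~\eqref{eq: pre denom A}. For $1 \le i, j < n$, the orbit of $\gamma_\jm^\calQ$ through the first $rh^\vee$ steps will successively hit the intervals $[a,b]$ containing the index $\im^\circ$ exactly at the arithmetic progressions of exponents $2(|i-j|+2s-1)$ and $2(2n-i-j-2+2s)$ predicted by the formula; this requires a careful but essentially routine check using the above parametrization. The three remaining cases ($i<n=j$, $i=n>j$, $i=j=n$) are handled analogously, except that the role of the short root at the branching vertex produces the odd exponents $4s-3$ and the coefficient pattern involving both $z^{2n-2j-4+4s}$ and $z^{2n-2j-2+4s}$ (which reflect the asymmetry $\tc_{nj}(u) = \tc_{jn}(u+1)+\tc_{jn}(u-1)$ from Lemma~\ref{lemma: tc1}~(\ref{tc1_swap})); in particular, the $i=n>j$ case can alternatively be deduced from the $i<n=j$ case via that symmetry, which is a useful consistency check.

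The main obstacle will be the bookkeeping of the $\tau_\calQ$-orbit in the interval parametrization, particularly in tracking how the orbit behaves when an interval's endpoint reaches the branching vertex $n$ of the $\mathsf{B}_n$ Dynkin diagram and interacts nontrivially with the diagram automorphism $\vee$. Rather than grinding through this orbit by hand in the middle of the proof, I would present it as a lemma describing $\tau_\calQ^m([a,b])$ for all relevant $m$ in closed form (proved by induction on $m$ using $\tau_\calQ^\circ = s_1 s_2 \cdots s_n \sigma$), then specialize to the pairings $(\varpi_\im, \cdot)$ needed in each case. All other ingredients —positivity and periodicity of $\tc_{ij}(u)$, the symmetries in Lemma~\ref{lemma: tc1}, and the fact that the nonzero contributions fall into two symmetric blocks— follow from Corollary~\ref{corollary: tc2} and will only be used as sanity checks.
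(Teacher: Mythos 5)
Your proposal is correct and follows essentially the same route as the paper: choose the Q-datum with $I_\calQ^{\circ}=\{1,\dots,n\}$ satisfying~(\ref{eq: condition twisted Coxeter height}) so that $\tau_{\calQ}=s_1\cdots s_n\vee$, realize $\sfR^{+}$ of $\mathsf{A}_{2n-1}$ via intervals $[a,b]$ with $(\varpi_k,[a,b])=\delta(a\le k\le b)$, compute the $\tau_{\calQ}$-orbits of $\gamma^{\calQ}_{\jm}=[1,\jm]$ in closed form (the paper's Lemma~\ref{lemma: computation B}), read off the set of exponents where the pairing is $1$ (Lemma~\ref{lemma: support B}), and convert via Theorem~\ref{theorem: combinatorial formula}. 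The only cosmetic difference is that the paper verifies completeness of the exponent set by the counting identity $\sum_{\alpha\in\sfR^{+}}(\varpi_i,\alpha)=2ni-i^2$, whereas your full closed-form orbit lemma would make that step automatic.
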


\ber
To prove Theorem~\ref{theorem: closed B},
let us choose the height function $\uxi \colon \Delta_{0} \to \Z$ on $(\Delta, \vee)$
given by
$$\uxi_{i}=\begin{cases}
-2i+2 & \text{if $1\le i < n$}, \\
-2n+1 & \text{if $i=n$}, \\
2i - 4n -4 & \text{if $n < i \le 2n-1$}.
\end{cases}
$$
\begin{center}
    \begin{tikzpicture}[xscale=1.5,yscale=1,baseline=0]
\node[dynkdot,label={above:\footnotesize$\uxi_1 = 0$}, label={below:\footnotesize$1$}] (V1) at (1,0){};
\node[dynkdot,label={above:\footnotesize$-2$}, label={below:\footnotesize$2$}] (V2) at (2,0) {};
\node[dynkdot,label={above:\footnotesize$-4$}, label={below:\footnotesize$3$}] (V3) at (3,0) {};
\node (V4) at (4,0) {$\cdots$};
\node[dynkdot, label={above:\footnotesize$-2n+2$}] (V5) at (5,0){};
\node[dynkdot,label={above:\footnotesize$-2n+1$}, label={below:\footnotesize$n$}] (V6) at (6,0) {};
\node[dynkdot,label={above:\footnotesize$-2n$}] (V7) at (7,0) {};
\node (V8) at (8,0) {$\cdots$};
\node[dynkdot,label={above:\footnotesize$-4$}] (V9) at (9,0){};
\node[dynkdot,label={above:\footnotesize$-2$}, label={below:\footnotesize$2n-1$}] (V10) at (10,0) {};
\draw[->] (V1) -- (V2);
\draw[->] (V2) -- (V3);
\draw[->] (V3) -- (V4);
\draw[->] (V4) -- (V5);
\draw[->] (V5) -- (V6);
\draw[<-] (V6) -- (V7);
\draw[<-] (V7) -- (V8);
\draw[<-] (V8) -- (V9);
\draw[<-] (V9) -- (V10);
\end{tikzpicture}
\end{center}
\er
Clearly, the corresponding Q-datum $\calQ:=(\Delta, \vee, \uxi)$ satisfies the condition~(\ref{eq: condition twisted Coxeter height})
and $I_{\calQ}^{\circ} = \{1,2, \ldots, n\}$.
Thus the corresponding twisted Coxeter element $\tau_{\calQ}$
is given by
$\tau_{\calQ} = s_{1}s_{2} \cdots s_{n}\vee$
and
$\btau_{\calQ} = \tau_{\calQ}^{2} = s_{1}s_{2} \cdots s_{n-1}s_{n}s_{2n-1}s_{2n-2} \cdots s_{n+1}s_{n}$.
Let us realize the root lattice $\sfQ$ inside the lattice $\bigoplus_{i=1}^{2n}\Z \epsilon_{i}$
in a standard way as $\alpha_{i} = \epsilon_{i}-\epsilon_{i+1}$ for $1\le i < 2n$.
Then we have $\sfR^{+} = \{ [k,l] \mid 1 \le k \le l < 2n\}$,
where $[k,l] := \epsilon_{k} - \epsilon_{l+1}$.
We simplify $[i] := [i, i] = \alpha_{i}$ for $1 \le i < 2n$.
Using Lemma~\ref{lemma: gamma}, we have
$
\gamma^{\calQ}_{i} =
[1,i]
$ for $1 \le i \le n$.

By a direct computation, we can check the following.

\begin{lemma}
\label{lemma: computation B}
\ber With \er the above notation, we \ber have\er:
\begin{align*}
\tau_{\calQ}^{2t-2}(\gamma^{\calQ}_{i})
&= \begin{cases}
[t, t+i-1] & \text{if $1 \le i <n$ and $1 \le t \le n-i$}, \\
[t, 3n-t-i] & \text{if $1 \le i <n$ and $n-i < t \le n$}, \\
[t,n] & \text{if $i=n$ and $1 \le t \le n$},
\end{cases}
\allowdisplaybreaks\\
\tau_{\calQ}^{2t-1}(\gamma^{\calQ}_{i})
&= \begin{cases}
[2n-t-i+1, 2n-t] & \text{if $1 \le i <n$ and $1 \le t \le n-i$}, \\
[-n+t+i+1, 3n-t-i-1] & \text{if $1 \le i <n$ and $n-i < t < n$},\\
[n+1, 2n-t] & \text{if $i=n$ and $1 \le t < n$}.
\end{cases}
\end{align*}
\end{lemma}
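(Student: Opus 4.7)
The plan is a direct calculation performed in the ambient Euclidean realization $V := \bigoplus_{k=1}^{2n} \Q \epsilon_k$ of the root lattice of $\mathsf{A}_{2n-1}$. First I will realize the diagram automorphism $\vee$ as the linear endomorphism of $V$ sending $\epsilon_k \mapsto -\epsilon_{2n+1-k}$ (the unique extension compatible with $\vee(\alpha_k) = \alpha_{2n-k}$), and verify by inspection that $s_1 s_2 \cdots s_n$ acts on $V$ as the cyclic rotation $\epsilon_1 \mapsto \epsilon_2 \mapsto \cdots \mapsto \epsilon_n \mapsto \epsilon_{n+1} \mapsto \epsilon_1$ while fixing $\epsilon_k$ for $k \ge n+2$. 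Composing these two ingredients yields an explicit formula for the action of $\tau_\calQ = (s_1 \cdots s_n)\vee$ on $V$, and squaring gives the key structural description: $\btau_\calQ = \tau_\calQ^2$ acts as the cyclic permutation of length $h^\vee = 2n-1$ through the ordered list of indices $(1, 2, \ldots, n, 2n, 2n-1, \ldots, n+2)$, and fixes $\epsilon_{n+1}$.

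With this cycle description in hand, the even-power formulas follow by inspection. Writing $\gamma^\calQ_i = \epsilon_1 - \epsilon_{i+1}$, I compute
\[ \tau_\calQ^{2t-2}(\gamma^\calQ_i) = \btau_\calQ^{t-1}(\epsilon_1) - \btau_\calQ^{t-1}(\epsilon_{i+1}) \]
by following the two orbits inside the cycle. The split into two subcases corresponds exactly to whether $\btau_\calQ^{t-1}(\epsilon_{i+1})$ still lies in the ascending segment $\{2,\ldots,n\}$ of the cycle or has wrapped onto the descending segment $\{2n, 2n-1, \ldots, n+2\}$; throughout the range $1 \le t \le n$ considered here the orbit of $\epsilon_1$ remains in the ascending segment, and the case $i = n$ degenerates since $\btau_\calQ$ fixes $\epsilon_{n+1}$.

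Finally, the odd-power formulas are obtained by applying $\tau_\calQ$ once more to each of the closed expressions just derived. I expect the main obstacle to be confined to this last step: each application of $\tau_\calQ = (s_1 \cdots s_n)\vee$ first sends $\epsilon_a - \epsilon_b$ to $-\epsilon_{2n+1-a} + \epsilon_{2n+1-b}$ and then runs the resulting two indices through the cyclic rotation on $\{1, \ldots, n+1\}$, which may shift one or both of them nontrivially depending on where they land. A careful case analysis on the image indices yields the subcases of the odd-power formula, and the identity $\tau_\calQ^{2n-1} = -1$ from Proposition~\ref{proposition: conjugation}(4) serves as a clean consistency check at the boundaries $t = n - i$ and $t = n$, where the orbit transitions within the $2h^\vee$-element cycle $\{\pm \tau_\calQ^k \gamma^\calQ_i\}$ between positive and negative roots.
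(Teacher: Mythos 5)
Your plan is sound, and it is essentially the paper's own argument: the paper offers no details beyond ``by a direct computation,'' and the computation you outline is the natural one. Your ingredients check out: $\vee$ acts on the root lattice as the restriction of $\epsilon_k \mapsto -\epsilon_{2n+1-k}$ (the extension to the ambient $2n$-dimensional space is not unique, but only the restriction to the sum-zero part is ever used), $s_1\cdots s_n$ is the cycle $\epsilon_1\to\epsilon_2\to\cdots\to\epsilon_{n+1}\to\epsilon_1$, and hence $\btau_{\calQ}=\tau_{\calQ}^2$ is the $(2n-1)$-cycle through $(1,2,\ldots,n,2n,2n-1,\ldots,n+2)$ fixing $\epsilon_{n+1}$. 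From this the even-power formulas follow exactly as you say, including the $i=n$ degeneration.

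Be warned, however, that the step you defer -- the case analysis after one more application of $\tau_{\calQ}$ -- is precisely where the displayed statement cannot be reproduced: carried out faithfully, your computation will not return the printed odd-power cases, because those appear to contain misprints. At $t=n-i$ the second index of $\tau_{\calQ}^{2t-2}(\gamma^{\calQ}_{i})=\epsilon_t-\epsilon_{t+i}$ is $\epsilon_n$, which $\tau_{\calQ}$ sends to $-\epsilon_1$ rather than to $-\epsilon_{n+1}$, so the value is $[1,\,n+i]$, not $[2n-t-i+1,\,2n-t]=[n+1,\,n+i]$ as the first printed case (which includes $t=n-i$) asserts; and for $n-i\le t<n$ one finds $[t+i-n+1,\,2n-t]$, whose second entry agrees with the printed $3n-t-i-1$ only when $i=n-1$. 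The corrected values, not the printed ones, are what the paper itself uses downstream: they match the displayed folded AR quiver for $n=4$ (where $\tau_{\calQ}^{5}(\gamma^{\calQ}_{1})=[1,5]$ and $\tau_{\calQ}^{5}(\gamma^{\calQ}_{2})=[2,5]$), and they are forced by Lemma~\ref{lemma: support B} and Theorem~\ref{theorem: closed B} (for instance $2n-3\in S_{11}(1)$ requires $(\varpi_1,\tau_{\calQ}^{2n-3}(\gamma^{\calQ}_{1}))=1$, which holds for $[1,n+1]$ but fails for $\alpha_{n+1}$). So when you write up the ``careful case analysis,'' state the odd powers as $[2n-t-i+1,\,2n-t]$ for $1\le t<n-i$ and $[t+i-n+1,\,2n-t]$ for $n-i\le t<n$ (the $i=n$ case is fine as printed), and use your $\tau_{\calQ}^{2n-1}=-1$ check together with the $n=4$ example as the consistency tests; do not force your answer to agree with the lemma verbatim.
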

In view of (\ref{eq: bphi}), Lemma~\ref{lemma: computation B} enables us to depict the folded AR quiver $\bGamma_{\calQ}$.

\begin{example}
When $n=4$,
the folded AR quiver $\bGamma_{\calQ}$ can be depicted as follows:
\ber
$$
\scalebox{0.7}{\xymatrix@!C=2ex@R=1ex{
(i\setminus p)  &-17&-16&-15&-14&-13&-12& -11 &-10& -9 &-8&  -7 &-6&  -5 &-4& -3  & -2 & -1 & 0 \\
1 &&&&&& [4,7]\ar[ddrr] &&[1,5]\ar[ddrr]&& [3]\ar[ddrr] &&[6]\ar[ddrr]&&  [2]\ar[ddrr] &&[7] && [1]  \\ \\
2 &&&& [4,6] \ar[ddrr]\ar[uurr]&&[2,5]\ar[uurr]\ar[ddrr]&&  [3,7]\ar[uurr]\ar[ddrr] &&[1,6]\ar[uurr]\ar[ddrr]&& [2,3]\ar[ddrr]\ar[uurr] &&[6,7]  \ar[uurr] && [1,2] \ar[uurr]  \\ \\
3 &&[4,5]\ar[uurr]\ar[dr] &&[3,5]\ar[dr]\ar[uurr]&& [3,6]\ar[dr]\ar[uurr]&&[2,6]\ar[uurr]\ar[dr]&& [2,7]\ar[dr]\ar[uurr]&&[1,7]\ar[uurr] \ar[dr]&& [1,3]\ar[uurr]  \\
4 &[4]\ar[ur] && [5]\ar[ur]&&[3,4]\ar[ur]&&[5,6]\ar[ur]&&[2,4]\ar[ur]&&[5,7]\ar[ur]&&[1,4]\ar[ur]
}}
$$
\er
\end{example}

\begin{lemma}
\label{lemma: support B}
For $1 \le i,j \le n$, the set
$S_{ij}(1) := \{ t \mid 0 \le t < 2n-1, (\varpi_{i}, \tau_{\calQ}^{t}(\gamma^{\calQ}_{j})) = 1 \}$
is given \ber by\er:
\begin{equation}
\label{eq: support B}
S_{ij}(1) =
\begin{cases}
\{ 2k \}_{k=0}^{i-1} \sqcup \{ 2n-2j-1+2k \}_{k=0}^{i-1} & \text{if $i \le j<n$}, \\
\{ 2i-2j+2k \}_{k=0}^{j-1} \sqcup \{ 2n-2j-1+2k \}_{k=0}^{j-1} & \text{if $j < i \le n$}, \\
\{2k\}_{k=0}^{i-1} & \text{if $i \le j=n$}.
\end{cases}
\end{equation}
\end{lemma}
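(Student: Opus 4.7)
The approach is a direct inspection of the orbit $\{\tau_{\calQ}^{t}(\gamma_{j}^{\calQ}) : 0 \le t < 2n-1\}$ using Lemma~\ref{lemma: computation B}, combined with the standard identity
$$
(\varpi_{i}, [k, l]) = \delta(k \le i \le l)
$$
valid for any positive root $[k, l] = \epsilon_{k} - \epsilon_{l+1}$ of $\sfg = \mathsf{A}_{2n-1}$ in the usual realization $\sfR \subset \bigoplus_{m=1}^{2n} \Z \epsilon_{m}$. Under this formula, the set $S_{ij}(1)$ consists precisely of those exponents $t$ for which the corresponding root $\tau_{\calQ}^{t}(\gamma_{j}^{\calQ}) = [k(t), l(t)]$ satisfies $k(t) \le i \le l(t)$.

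First I would dispatch the case $i \le j = n$. By Lemma~\ref{lemma: computation B}, the orbit consists of the two families $\tau_{\calQ}^{2s-2}(\gamma_{n}^{\calQ}) = [s, n]$ for $1 \le s \le n$ and $\tau_{\calQ}^{2s-1}(\gamma_{n}^{\calQ}) = [n+1, 2n-s]$ for $1 \le s < n$. Since $i \le n < n+1$, the odd family contributes nothing, while the even family contributes precisely when $s \le i$. This gives $S_{in}(1) = \{2s-2 : 1 \le s \le i\} = \{2k\}_{k=0}^{i-1}$, matching the third line of~(\ref{eq: support B}).

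For $j < n$ I would split the orbit into the four sub-families of Lemma~\ref{lemma: computation B} and treat the sub-cases $i \le j$ and $j < i \le n$ separately. The even orbit yields $[s, s+j-1]$ (for $1 \le s \le n-j$) and $[s, 3n-s-j]$ (for $n-j < s \le n$), contributing to $S_{ij}(1)$ iff the appropriate interval of $s$-values lies between $\max(1, i-j+1)$ and $\min(i, n-j)$ or $\min(i, 3n-i-j)$ respectively; a short computation shows these two intervals glue into a single arithmetic progression of length $i$ (resp.~$j$), producing the ``even part'' of $S_{ij}(1)$. An entirely analogous analysis handles the odd sub-families $[2n-s-j+1, 2n-s]$ and $[-n+s+j+1, 3n-s-j-1]$: the first never contributes when $i \le n$, and the second produces the ``odd part'' $\{2n-2j-1+2k\}_{k=0}^{i-1}$ (resp.~$\{2n-2j-1+2k\}_{k=0}^{j-1}$).

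The main obstacle is the boundary bookkeeping at the transition $s = n-j$: I would verify that the first sub-family of the even (resp.~odd) orbit ceases to contribute exactly where the second begins, the behavior depending on whether $i + j \le n$ or $i + j > n$ and on the sign of $i - j$, so that no value of $t$ is either missed or double-counted. Once these interval endpoints are matched up, the resulting disjoint unions coincide with the two arithmetic progressions listed in~(\ref{eq: support B}), completing the proof.
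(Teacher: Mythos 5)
Your overall strategy --- computing $S_{ij}(1)$ from both sides by solving $k(t)\le i\le l(t)$ for each sub-family of the orbit in Lemma~\ref{lemma: computation B} --- is genuinely different from the paper's proof, which only verifies the inclusion $\supseteq$ in~(\ref{eq: support B}) and then forces equality by the global count $\sum_{\alpha\in\sfR^{+}}(\varpi_{i},\alpha)=2ni-i^{2}$ (the $n$ orbits $\{\tau_{\calQ}^{t}(\gamma_{j}^{\calQ})\}_{0\le t<2n-1}$, $1\le j\le n$, exhaust $\sfR^{+}$, and each pairing is $0$ or $1$ in type $\mathsf{A}_{2n-1}$). Your handling of the even families, of the case $i\le j=n$, and of the first odd family is correct.

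The gap is in the odd part for $j<n$, precisely at the ``transition $s=n-j$'' that you flag as the main obstacle but do not actually resolve. The second odd sub-family of Lemma~\ref{lemma: computation B} is defined only for $n-j<s<n$, so it can only produce exponents $t=2s-1\ge 2n-2j+1$; solving $-n+s+j+1\le i\le 3n-s-j-1$ on that range gives $s\in[n-j+1,\;n+i-j-1]$ when $i\le j$, i.e.\ only $i-1$ values of $t$, namely $\{2n-2j+1+2k\}_{k=0}^{i-2}$ (and similarly $j-1$ values when $j<i\le n$). The leading element $t=2n-2j-1$ of the claimed progression corresponds to $s=n-j$, which lies in the \emph{first} odd sub-family, whose root $[n+1,n+j]$ you correctly discard since its left endpoint exceeds $n\ge i$. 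So the literal computation from the printed lemma yields an odd part that is one element short and shifted by $2$, and your assertion that the second sub-family ``produces the odd part $\{2n-2j-1+2k\}_{k=0}^{i-1}$'' is incompatible with its own parameter range. The discrepancy in fact traces back to the first odd family of Lemma~\ref{lemma: computation B} as printed: already for $n=2$ one checks $\tau_{\calQ}(\gamma_{1}^{\calQ})=s_{1}s_{2}(\alpha_{3})=[1,3]$ rather than $[3,3]$ (consistently, $(1+z^{6})\tC_{11}(z)=z^{2}+z^{4}$ forces $(\varpi_{1},\tau_{\calQ}(\gamma_{1}^{\calQ}))=1$). A complete write-up along your lines must therefore recompute the odd half of the orbit before doing the interval analysis; alternatively, the paper's cardinality identity $\sum_{j}|S_{ij}(1)|=2ni-i^{2}$ would have immediately exposed the off-by-one in your bookkeeping and is worth adding as a check even in a direct argument.
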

\begin{proof}
First we observe that
\begin{equation}
\label{eq: support root B}
(\varpi_{i}, [k,l]) = \begin{cases}
1 & \text{if $k \le i \le l$}, \\
0 & \text{otherwise},
\end{cases}
\end{equation}
for any $1 \le i < 2n$ and $1 \le k \le l < 2n$.
Using this observation and Lemma~\ref{lemma: computation B},
we can easily check that the RHS of~(\ref{eq: support B}) is included in $S_{ij}(1)$.
To see that this inclusion is actually an equality, we can use the fact
$\sum_{\alpha \in \sfR^{+}} (\varpi_{i}, \alpha) = 2ni-i^{2}$ for any $1 \le i < 2n$,
which also follows from (\ref{eq: support root B}).
\end{proof}

\begin{proof}[Proof of Theorem~\ref{theorem: closed B}]
Let $1 \le i, j \le n$. For $t \in S_{ij}(1)$, let us
compute $s$ by the equation
$t = (s+\uxi_j-\uxi_i -d_{i})/2$ using Lemma~\ref{lemma: support B}.
Then one can easily check that $s$ coincides
with the exponents appearing in the RHS of in~\eqref{eq: pre denom A}.
Hence our assertion follows from Theorem~\ref{theorem: combinatorial formula}.
\end{proof}

\subsubsection{Type $\mathsf{C}_{n}$}
\label{subsubsection: computation C}

Let $\g$ be of type $\mathsf{C}_{n}$.
The corresponding pair in Table~\ref{table: classification} is $(\sfg, \sigma) = (\mathsf{D}_{n+1}, \vee)$.
We use the labeling $\Delta_{0}=\{1,\ldots, n+1\}, I=\{1, \ldots, n\}$ as in~(\ref{eq: C_n}).
The involution $\vee$ on $\Delta$ is given by
$$k^{\vee} = \begin{cases} k & \text{ if } k \le n-1, \\ n+1 & \text{ if } k = n, \\ n & \text{ if } k = n+1. \end{cases}$$
We set $\bar{k} =k$ for $1 \le k \le n$.
Note that $(d_{1},\ldots,d_{n}) = (1, \ldots, 1, 2)$ and $rh^{\vee} = 2n+2$.

\begin{theorem} \label{theorem: closed C}
For $i, j \in I=\{1,2,\ldots,n\}$, the closed formula of $\tC_{ij}(z)$ is given as follows:
\begin{align}\label{eq: pre denom D}
(1+z^{2n+2})\tC_{ij}(z) =
\begin{cases}
\sum_{s=1}^{\min(i,j)} (z^{|i-j|+2s-1} +  z^{2n-i-j+2s+1}) & \text{ if $i,j < n$}, \\
\sum_{s=1}^{i} (z^{n-1-i+2s}+z^{n+1-i+2s}) & \text{ if $i < n, j=n$}, \\
\sum_{s=1}^{j}  z^{n-j+2s}    & \text{ if $i = n, j<n$}, \\
\sum_{s=1}^{n}   z^{2s} & \text{ if $i =j = n$}.
\end{cases}
\end{align}
\end{theorem}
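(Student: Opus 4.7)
The proof proceeds in exact parallel with the proof of Theorem~\ref{theorem: closed B}, so I will follow the same four-step scheme: choose a convenient Q-datum, describe the generalized twisted Coxeter element $\tau_\calQ$ and the roots $\gamma_i^\calQ$ explicitly, compute the $\tau_\calQ$-orbits of the $\gamma_i^\calQ$, and then extract the support sets $S_{ij}(1):=\{t \mid 0\le t< 2n+2,\ (\varpi_i,\tau_\calQ^t(\gamma_j^\calQ))=1\}$ to which Theorem~\ref{theorem: combinatorial formula} can be applied.

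First I would fix the height function $\uxi\colon \Delta_0\to \Z$ on $(\Delta,\vee)$ given by $\uxi_i = -i+1$ for $1\le i\le n-1$, $\uxi_n = -n+1$ and $\uxi_{n+1}=-n-1$ (or a suitable shift thereof). The asymmetry $\uxi_n-\uxi_{n+1}=2$ is forced by condition $(\mathtt{H2})$ at the branching index $n-1$, where $d_{n-1}=1<2=d_n$. This Q-datum $\calQ=(\Delta,\vee,\uxi)$ satisfies~\eqref{eq: condition twisted Coxeter height} with $I_\calQ^\circ=\{1,\ldots,n-1,n\}$, so by Proposition~\ref{proposition: twisted Coxeter height} the associated $\sigma$-Coxeter element is $\tau_\calQ=s_1 s_2\cdots s_{n-1}s_n\,\vee$ and $\btau_\calQ=\tau_\calQ^2 = s_1\cdots s_{n-1}s_n s_{n+1}s_{n-1}\cdots s_1$. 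Applying Lemma~\ref{lemma: gamma}, one obtains $\gamma_i^\calQ=\epsilon_1-\epsilon_{i+1}=\langle 1,-(i+1)\rangle$ for $1\le i<n$ and $\gamma_n^\calQ = \epsilon_1+\epsilon_2 = \langle 1,2\rangle$, realizing $\sfR^+$ inside $\bigoplus_{k=1}^{n+1}\Z\epsilon_k$ in the standard way.

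Next, by a direct orbit calculation entirely analogous to Lemma~\ref{lemma: computation B}, I would establish closed-form expressions for $\tau_\calQ^t(\gamma_i^\calQ)$ for $0\le t<2n+2$ in terms of the coordinates $\langle a,\pm b\rangle$; the cases $t$ even versus $t$ odd need to be handled separately, because the $\vee$-factor in $\tau_\calQ$ only swaps $\epsilon_n\leftrightarrow -\epsilon_{n+1}$ (and thus permutes the two ``spin'' ends) every other step. The key combinatorial observation is the standard pairing
\begin{equation*}
(\varpi_i,\langle a,\pm b\rangle)=\delta(a\le i)\pm \delta(b\le i)\qquad (1\le i<n),\qquad (\varpi_n,\langle a,\pm b\rangle) = \tfrac{1}{2}(1\pm 1),
\end{equation*}
from which the set $S_{ij}(1)$ can be read off. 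I expect the outcome will be, mirroring~\eqref{eq: support B},
\begin{equation*}
S_{ij}(1)=\begin{cases}\{|i-j|+2k\}_{k=0}^{\min(i,j)-1}\sqcup\{2n-i-j+2k\}_{k=0}^{\min(i,j)-1}& \text{if } i,j<n,\\
\{n-1-i+2k\}_{k=0}^{i-1}\sqcup\{n+1-i+2k\}_{k=0}^{i-1}& \text{if } i<n,\ j=n,\\
\{n-j+2k\}_{k=0}^{j-1} & \text{if } i=n,\ j<n,\\
\{2k\}_{k=1}^{n} & \text{if } i=j=n,\end{cases}
\end{equation*}
with a consistency check provided by $\sum_{\alpha\in\sfR^+}(\varpi_i,\alpha)$.

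Finally, for each $t\in S_{ij}(1)$ I would convert back via the substitution $u = 2t+\uxi_i-\uxi_j+d_i$ of Theorem~\ref{theorem: combinatorial formula} (noting that the parity condition $u+\uxi_j-\uxi_i-d_i\in 2\Z$ is automatic), and check that the resulting set of exponents $u$ matches precisely the exponents on the right-hand side of~\eqref{eq: pre denom D}. Periodicity modulo $2n+2$ (which corresponds to $rh^\vee$ in this type) together with Corollary~\ref{corollary: tc2}\eqref{tc2_+rh} then yields the stated identity after multiplying through by $1+z^{2n+2}$. The main obstacle is the bookkeeping around the branching vertex: because $d_n=2$ while $d_{n-1}=1$, the action of $\tau_\calQ$ does not behave uniformly on the two cases $j<n$ and $j=n$, and one must carefully match the asymmetric structure of~\eqref{eq: pre denom D} (especially the single versus doubled summands in the mixed case $i<n,j=n$ vs.~$i=n,j<n$) to the two different parities of the orbit of $\gamma_n^\calQ$ under $\tau_\calQ$. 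Once this asymmetry is tracked properly, the remaining manipulations are routine.
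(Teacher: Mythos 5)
Your overall strategy is exactly the paper's: fix the height function $\uxi_{i}=-i+1$ for $1\le i\le n$ and $\uxi_{n+1}=-n-1$, so that $\tau_\calQ=s_1\cdots s_n\vee$, compute the $\tau_\calQ$-orbits of the $\gamma_{i}^\calQ$, and feed the resulting support sets into Theorem~\ref{theorem: combinatorial formula}. However, there is a genuine gap in how you propose to read off the coefficients. You only track the level set $S_{ij}(1)=\{t\mid (\varpi_i,\tau_\calQ^t(\gamma_j^\calQ))=1\}$, in parallel with the type $\mathsf{B}_n$ argument. But in type $\mathsf{C}_n$ (i.e.\ $\sfg=\mathsf{D}_{n+1}$) the pairing $(\varpi_i,\lr{k,l})$ equals $2$ when $l\le i\le n-1$, and the value $\tc_{ij}(u)=2$ genuinely occurs: for $n=4$, $i=j=3$ the exponent $z^{5}$ appears in both sums of \eqref{eq: pre denom D}, so $\tc_{33}(5)=2$. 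Your claimed support sets are written as disjoint unions, yet the two constituent sets overlap precisely where the coefficient is $2$; without separating $S_{ij}(\ge 1)$ from $S_{ij}(2)$ (as the paper does in its type-$\mathsf{C}$ analogue of Lemma~\ref{lemma: support B}) the argument cannot produce the correct multiplicities, and the cases $i,j<n$ and $i<n,\ j=n$ of \eqref{eq: pre denom D} come out wrong.

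There are also concrete computational errors in the intermediate claims. Lemma~\ref{lemma: gamma} gives $\gamma_n^\calQ=s_1\cdots s_{n-1}\alpha_n=\epsilon_1-\epsilon_{n+1}=\lr{1,-(n+1)}$, not $\epsilon_1+\epsilon_2$; the uniform formula $\gamma_i^\calQ=\lr{1,-(i+1)}$ holds for all $1\le i\le n$. Your pairing formula for $\varpi_n$ is incomplete: $(\varpi_n,\epsilon_a\pm\epsilon_b)$ depends on whether $b\le n$ or $b=n+1$, since the sign of $\epsilon_{n+1}$ in $\varpi_n$ flips the answer. Finally, your guessed sets $S_{ij}(1)=\{|i-j|+2k\}_k\sqcup\{2n-i-j+2k\}_k$ have spacing $2$, i.e.\ they are (up to shift) the exponent sets in the $u$-variable rather than the $t$-sets: under the correct substitution $u=2t+\uxi_i-\uxi_j+d_i=2t+j-i+1$, the $t$-sets for $i\le j<n$ are $\{k\}_{k=0}^{i-1}\cup\{n-j+1+k\}_{k=0}^{i-1}$, consisting of consecutive integers. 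These slips would presumably surface once the orbit computation is actually carried out, but as written the proposal does not establish the stated formula.
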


\ber
To prove Theorem~\ref{theorem: closed C},
we choose the height function $\uxi \colon \Delta_{0} \to \Z$ on $(\Delta, \vee)$
given by
$$\uxi_{i}=\begin{cases}
-i+1 & \text{if $1\le i \le n$}, \\
-n -1 & \text{if $i=n+1$}.
\end{cases}
$$
\begin{center}
    \begin{tikzpicture}[xscale=1.5,yscale=1,baseline=0]
\node[dynkdot,label={above:\footnotesize$\uxi_1 = 0$}, label={below:\footnotesize$1$}] (V1) at (1,0){};
\node[dynkdot,label={above:\footnotesize$-1$}, label={below:\footnotesize$2$}] (V2) at (2,0) {};
\node[dynkdot,label={above:\footnotesize$-2$}, label={below:\footnotesize$3$}] (V3) at (3,0) {};
\node (V4) at (4,0) {$\cdots$};
\node[dynkdot,label={above:\footnotesize$-n+2$}, label={below:\footnotesize$n-1$}] (V5) at (5,0) {};
\node[dynkdot,label={above:\footnotesize$-n+1$}, label={below:\footnotesize$n$}] (V6) at (6,0.6) {};
\node[dynkdot,label={above:\footnotesize$-n-1$}, label={below:\footnotesize$n+1$}] (V7) at (6,-0.6){};
\draw[->] (V1) -- (V2);
\draw[->] (V2) -- (V3);
\draw[->] (V3) -- (V4);
\draw[->] (V4) -- (V5);
\draw[->] (V5) -- (V6);
\draw[->] (V5) -- (V7);
\end{tikzpicture}
\end{center}
\er
Clearly, the corresponding Q-datum $\calQ:=(\Delta, \vee, \uxi)$ satisfies the condition~(\ref{eq: condition twisted Coxeter height})
and $I_{\calQ}^{\circ} = \{1,2, \ldots, n\}$.
Thus the corresponding twisted Coxeter element $\tau_{\calQ}$
is given by
$\tau_{\calQ} = s_{1}s_{2} \cdots s_{n}\vee$
and
$\btau_{\calQ} = \tau_{\calQ}^{2} = s_{1}s_{2} \cdots s_{n-1}s_{n}s_{1}s_{2} \cdots s_{n-1}s_{n+1}$.
Let us realize the root lattice $\sfQ$ inside the lattice $\bigoplus_{i=1}^{n+1}\Z \epsilon_{i}$
in a standard way as $\alpha_{i} = \epsilon_{i}-\epsilon_{i+1}$ for $1\le i \le n$
and $\alpha_{n+1} = \epsilon_{n} + \epsilon_{n+1}$.
Then we have $\sfR^{+} = \{ \langle k, \pm l\rangle \mid 1 \le k < l \le n+1\}$,
where $\langle k, \pm l \rangle := \epsilon_{k} \pm \epsilon_{l}$.
Using Lemma~\ref{lemma: gamma}, we have
$
\gamma^{\calQ}_{i} =
\langle 1, -(i+1)\rangle
$ for $1 \le i \le n$.

By a direct computation, we can check the following.

\begin{lemma}
\label{lemma: computation C}
\ber With \er the above notation, for each $1 \le i \le n$, we \ber have\er:
\begin{align*}
\tau_{\calQ}^{t-1}(\gamma^{\calQ}_{i})
&= \begin{cases}
\langle t, -(i+1) \rangle & \text{if $1 \le t \le n+1-i$}, \\
\langle t+i-n-1, t \rangle & \text{if $n+1-i<t\le n+1$}.
\end{cases}
\end{align*}
\end{lemma}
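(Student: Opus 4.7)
The plan is to determine explicitly the action of $\tau_{\calQ} = s_1 s_2 \cdots s_n \vee$ on the standard basis $\{\epsilon_1, \ldots, \epsilon_{n+1}\}$ of the ambient lattice in which $\sfR$ is realized, and then iterate this action starting from $\gamma_i^{\calQ} = \epsilon_1 - \epsilon_{i+1}$.

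First I would compute how the diagram automorphism $\vee$ acts on the $\epsilon_k$: since $\vee$ fixes $\alpha_k = \epsilon_k - \epsilon_{k+1}$ for $1 \le k \le n-1$ and exchanges $\alpha_n = \epsilon_n - \epsilon_{n+1}$ with $\alpha_{n+1} = \epsilon_n + \epsilon_{n+1}$, solving these constraints forces $\vee(\epsilon_k) = \epsilon_k$ for $1 \le k \le n$ and $\vee(\epsilon_{n+1}) = -\epsilon_{n+1}$. Next, I would verify by direct tracing (applying $s_n, s_{n-1}, \ldots, s_1$ in order to each basis vector, and noting that only $s_k$ swaps $\epsilon_k$ with $\epsilon_{k+1}$) that the product $s_1 s_2 \cdots s_n$ realizes the cyclic shift $\epsilon_k \mapsto \epsilon_{k+1}$ for $1 \le k \le n$ and $\epsilon_{n+1} \mapsto \epsilon_1$. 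Composing the two maps, $\tau_{\calQ}$ acts as a signed rotation:
$$
\tau_{\calQ}(\epsilon_k) = \epsilon_{k+1} \text{ for } 1 \le k \le n, \qquad \tau_{\calQ}(\epsilon_{n+1}) = -\epsilon_1,
$$
which evidently has order $2(n+1) = rh^{\vee}$, consistent with Proposition~\ref{proposition: conjugation}.

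Iterating, for each $0 \le s \le 2n+1$ and $1 \le k \le n+1$, one reads off that $\tau_{\calQ}^s(\epsilon_k) = \epsilon_{k+s}$ when $k+s \le n+1$, while $\tau_{\calQ}^s(\epsilon_k) = -\epsilon_{k+s-n-1}$ when $n+1 < k+s \le 2(n+1)$. Setting $s = t-1$ and applying this termwise to the two summands of $\gamma_i^{\calQ} = \epsilon_1 - \epsilon_{i+1}$ partitions the range $1 \le t \le n+1$ into exactly the two subranges appearing in the statement, according to whether $\tau_{\calQ}^{t-1}(\epsilon_{i+1})$ has remained a positive coordinate vector or has flipped sign: in the first regime $1 \le t \le n+1-i$ both terms are positive and their difference yields the first case, whereas in the second regime $n+1-i < t \le n+1$ the second term becomes $-\epsilon_{t+i-n-1}$ and the overall expression assembles into $\epsilon_t + \epsilon_{t+i-n-1} = \langle t+i-n-1, t\rangle$, which is the second case.

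No substantive obstacle arises beyond careful bookkeeping of indices; the only delicate point is to pinpoint the threshold $t = n+1-i$ at which the $\tau_{\calQ}$-orbit of $\epsilon_{i+1}$ crosses from the positive to the negative half of the signed rotation, which is precisely where the two cases of the formula meet.
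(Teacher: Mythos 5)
Your computation is correct and is essentially the argument the paper intends: the paper gives no proof beyond the phrase ``by a direct computation,'' and determining the signed rotation $\tau_{\calQ}(\epsilon_k)=\epsilon_{k+1}$ for $k\le n$ and $\tau_{\calQ}(\epsilon_{n+1})=-\epsilon_1$ (from $\vee(\epsilon_k)=\epsilon_k$, $\vee(\epsilon_{n+1})=-\epsilon_{n+1}$ together with the cyclic shift realized by $s_1\cdots s_n$), then iterating on $\gamma^{\calQ}_i=\epsilon_1-\epsilon_{i+1}$, is exactly that computation. The order $2(n+1)=rh^{\vee}$ check is a good sanity test.

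One step is not right as written, though. In the regime $1\le t\le n+1-i$ your computation produces $\epsilon_t-\epsilon_{t+i}=\langle t,-(t+i)\rangle$, which you then declare to be ``the first case,'' i.e.\ the printed $\langle t,-(i+1)\rangle$; these agree only at $t=1$. What is actually happening is that the printed first case is a typo. For $n=3$, $i=1$, $t=2$ the statement as printed would read $\langle 2,-2\rangle=0$, whereas the folded AR quiver displayed right after the lemma shows $\langle 2,-3\rangle$ at $(i,p)=(1,-2)$, i.e.\ $\tau_{\calQ}(\gamma^{\calQ}_1)=\langle 2,-3\rangle$, matching your $\langle t,-(t+i)\rangle$ (compare also the analogous type $\mathsf{B}$ statement, where the second index does move with $t$). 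The subsequent Lemma~\ref{lemma: support D} and Theorem~\ref{theorem: closed C} are likewise consistent only with your formula: for instance, for $j<i<n$ the subset $\{i-j+k\}_{k=0}^{j-1}$ of $S_{ij}(\ge 1)$ arises from $(\varpi_i,\epsilon_{s+1}-\epsilon_{s+1+j})=1$ exactly for $i-j\le s\le i-1$, and would be empty if the second index of the root did not shift with $s$. So your derivation establishes the correct version of the lemma, but you should flag the discrepancy with the printed statement rather than silently identifying $\langle t,-(t+i)\rangle$ with $\langle t,-(i+1)\rangle$.
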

In view of (\ref{eq: bphi}), Lemma~\ref{lemma: computation C} enables us to depict the folded AR quiver $\bGamma_{\calQ}$.

\begin{example}
When $n=3$,
the folded AR quiver $\bGamma_{\calQ}$ can be depicted as follows:
\ber
$$
\scalebox{0.7}{\xymatrix@!C=3ex@R=1ex{
(i\setminus p)  & -8 & -7 & -6 & -5 & -4 & -3 & -2 &-1 & 0\\
1&&& \lr{1,4} \ar@{->}[dr] && \lr{3,-4}   \ar@{->}[dr] && \lr{2,-3}   \ar@{->}[dr]&& \lr{1,-2} \\
2&& \lr{2,4}  \ar@{->}[dr]\ar@{->}[ur]&& \lr{1,3}  \ar@{->}[dr]\ar@{->}[ur] && \lr{2,-4} \ar@{->}[dr]\ar@{->}[ur] && \lr{1,-3}\ar@{->}[ur] \\
3& \lr{3,4}  \ar@{->}[ur] && \lr{2,3} \ar@{->}[ur]  && \lr{1,2}  \ar@{->}[ur] && \lr{1,-4}\ar@{->}[ur]  }}
$$
\er
\end{example}

\begin{lemma}
\label{lemma: support D}
For $1 \le i,j \le n$ and $k \in \Z_{\ge 0}$, we define
$S_{ij}(k) := \{ t \mid 0 \le t < 2n-1, (\varpi_{i}, \tau_{\calQ}^{t}(\gamma^{\calQ}_{j})) =k \}$
and $S_{ij}(\ge k) := \bigcup_{l \ge k}S_{ij}(l)$. Then the followings hold:
\begin{enumerate}
\item For $i \le j < n$, we have
\begin{align*}
S_{ij}(\ge 1)&= \{ k \}_{k=0}^{i-1} \cup \{ n-j+1+k\}_{k=0}^{i-1}, \\
S_{ij}(2)&= \{ k \}_{k=0}^{i-1} \cap \{ n-j+1+k\}_{k=0}^{i-1}.
\end{align*}
\item For $j < i < n$, we have
\begin{align*}
S_{ij}(\ge 1)&= \{ i-j+k \}_{k=0}^{j-1} \cup \{ n-j+1+k\}_{k=0}^{j-1}, \\
S_{ij}(2)&= \{ i-j+k \}_{k=0}^{j-1} \cap \{ n-j+1+k\}_{k=0}^{j-1}.
\end{align*}
\item For $i<j=n$, we have
$
S_{ij}(\ge 1)= \{ k \}_{k=0}^{i}$
and $
S_{ij}(2)= \{ k \}_{k=1}^{i-1}$.
\item For $j \le i=n$, we have
$
S_{ij}(\ge 1) = S_{ij}(1) = \{ n-j+k \}_{k=0}^{j-1}.
$
\end{enumerate}
\end{lemma}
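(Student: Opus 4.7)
The plan is to prove the lemma by a direct case-by-case computation of the pairing $(\varpi_i, \tau_\calQ^t(\gamma^{\calQ}_j))$ using the explicit form of $\tau_\calQ^{t-1}(\gamma^{\calQ}_j)$ supplied by Lemma~\ref{lemma: computation C}. The first step I would take is to cut down the range of $t$: by Remark~\ref{remark: 2seg} (or equivalently by Corollary~\ref{corollary: opposite end}), $\tau_\calQ^t(\gamma^{\calQ}_j)$ traces through positive roots precisely for $0 \le t \le n$ and through negative roots for $n+1 \le t < rh^\vee = 2n+2$. Since $\varpi_i$ is dominant, any negative root pairs non-positively with $\varpi_i$, so values of $t$ beyond $n$ cannot lie in $S_{ij}(\ge 1)$. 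Hence it suffices to restrict attention to $t \in \{0, 1, \ldots, n\}$.

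Next, substituting $s = t-1$ in Lemma~\ref{lemma: computation C} gives
$$
\tau_\calQ^t(\gamma^{\calQ}_j) = \begin{cases} \epsilon_{t+1} - \epsilon_{j+1} & \text{if } 0 \le t \le n-j, \\ \epsilon_{t+j-n} + \epsilon_{t+1} & \text{if } n-j < t \le n. \end{cases}
$$
Taking the representatives in $I_\calQ^\circ = \{1, \ldots, n\} \subset \Delta_0$, the relevant fundamental weights are $\varpi_i = \epsilon_1 + \cdots + \epsilon_i$ for $1 \le i \le n-1$ and $\varpi_n = \frac12(\epsilon_1 + \cdots + \epsilon_n - \epsilon_{n+1})$. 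For $1 \le i \le n-1$ the condition $(\varpi_i, \epsilon_{t+1} - \epsilon_{j+1}) \ge 1$ reduces to the indicator inequalities $t+1 \le i$ and $j \ge i$, while the condition $(\varpi_i, \epsilon_{t+j-n} + \epsilon_{t+1}) \ge 1$ splits analogously into two indicator inequalities. Collecting the surviving $t$-values in each sub-range immediately yields the stated union description of $S_{ij}(\ge 1)$ in the first two cases of the lemma, and the ``$=2$'' condition is simply the conjunction of both indicator inequalities, producing the advertised intersection.

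The only point requiring extra care---the main obstacle, if there is one---is the spinor case $i = n$, where the half-integer coefficients of $\varpi_n$ could a priori produce the value $2$. I would handle this by observing that each $\tau_\calQ^t(\gamma^{\calQ}_j)$ with $0 \le t \le n$ involves $\epsilon_{n+1}$ in at most one of its two $\epsilon$-summands, so the two half-contributions $\pm\tfrac12$ to the pairing with $\varpi_n$ combine to $0$ or $\pm 1$, never $\pm 2$. This yields $S_{nj}(\ge 1) = S_{nj}(1)$ as claimed, and after the same bookkeeping as in the previous paragraph the stated arithmetic-progression description of this set falls out. Apart from this spinor subtlety, the verification is entirely routine.
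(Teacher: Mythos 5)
Your overall strategy is sound and is essentially the one the paper follows: reduce to $0\le t\le n$ using $\tau_{\calQ}^{n+1}=-1$, then compute the pairings $(\varpi_i,\tau_{\calQ}^{t}(\gamma^{\calQ}_{j}))$ explicitly (your exhaustive computation would even make superfluous the weight-sum counting argument the paper uses to upgrade an inclusion of sets to an equality). But there is a concrete gap in the central computation: the closed form you use for the first branch, $\tau_{\calQ}^{t}(\gamma^{\calQ}_{j})=\epsilon_{t+1}-\epsilon_{j+1}$ for $0\le t\le n-j$, is not correct. Since $\tau_{\calQ}$ acts on the $\epsilon$-basis by $\epsilon_{k}\mapsto\epsilon_{k+1}$ for $k\le n$ and $\epsilon_{n+1}\mapsto-\epsilon_{1}$, one gets $\tau_{\calQ}^{t}(\epsilon_{1}-\epsilon_{j+1})=\epsilon_{t+1}-\epsilon_{t+j+1}$ on that range; this is what the $n=3$ folded AR quiver in the paper displays (e.g.\ $\tau_{\calQ}(\gamma^{\calQ}_{2})=\lr{2,-4}$, not $\lr{2,-3}$), and it is forced anyway by the requirement that the output be a positive root (your expression fails to be a root once $t\ge j$). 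The printed statement of Lemma~\ref{lemma: computation C}, with $\lr{t,-(i+1)}$, contains the same slip; it should read $\lr{t,-(t+i)}$, and you have propagated it.

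This is not a harmless transcription issue, because your indicator analysis is built on the wrong expression. From $\epsilon_{t+1}-\epsilon_{j+1}$ you derive the conditions ``$t+1\le i$ and $j\ge i$''; the second condition does not involve $t$, so for $j<i$ your first branch contributes nothing to $S_{ij}(\ge 1)$. Consequently your computation cannot produce the progression $\{i-j,\dots,i-1\}$ in case (2), nor the element $t=n-j$ in case (4), both of which the lemma asserts; the claim that collecting the surviving $t$-values ``immediately yields the stated union description in the first two cases'' therefore does not hold for the formula you wrote down, and had you actually carried out case (2) you would have found a contradiction with the statement being proved. With the corrected expression the pairing on the first branch becomes $\delta(t\le i-1)-\delta(t\le i-j-1)$, which equals $1$ exactly for $i-j\le t\le i-1$ (intersected with $t\le n-j$, the remainder being supplied by the second branch), and the rest of your argument --- including the reduction to $t\le n$ and the half-integer observation showing $S_{nj}(\ge 1)=S_{nj}(1)$ --- then goes through as you describe.
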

\begin{proof}
First we observe that
\begin{align}
\label{eq: support root C}
(\varpi_{i}, \lr{k,-l}) &= \begin{cases}
1 & \text{if $k \le i < l$}, \\
0 & \text{otherwise},
\end{cases}
&
(\varpi_{i}, \lr{k,l}) &= \begin{cases}
2 & \text{if $l \le i \le n-1$} \\
1 & \text{if $k \le i < l$ or $i \in \{n, n+1\}$}, \\
0 & \text{otherwise},
\end{cases}
\end{align}
for any $1 \le i \le n+1$ and $1 \le k < l \le n+1$.
Using this observation and Lemma~\ref{lemma: computation C},
we can easily check that the LHS includes the RHS for each equation in (1)--(4).
To see that these inclusions are actually equalities, we can use the fact
$$\sum_{\alpha \in \sfR^{+}} (\varpi_{i}, \alpha) =
\begin{cases}
2ni-i^{2}+3i & \text{if $i < n$}, \\
(n^{2}+n)/2 & \text{if $i=n$},
\end{cases}
$$
which also follows from (\ref{eq: support root C}).
\end{proof}

\begin{proof}[Proof of \ber Theorem~\ref{theorem: closed C}\er]
Let $1 \le i, j \le n$. For $t \in S_{ij}(\ge 1)$, let us
compute $s$ by the equation
$t = (s+\uxi_j-\uxi_i -d_{i})/2$ using \ber Lemma~\ref{lemma: support D}\er.
Then one can easily check that $s$ coincides
with the exponents appearing in the RHS of \ber in~\eqref{eq: pre denom D}\er.
Hence our assertion follows from Theorem~\ref{theorem: combinatorial formula}.
\end{proof}

\subsubsection{Type $\mathsf{F}_{4}$}
\label{subsubsection: computation F}

Let $\g$ be of type $\mathsf{F}_{4}$.
The corresponding pair in Table~\ref{table: classification} is $(\sfg, \sigma) = (\mathsf{E}_{6}, \vee)$.
We use the labeling $\Delta_{0} = \{1,2,3,4,5,6\}, I=\{1,2,3,4\}$ as in~(\ref{eq: F_4}).
The involution $\vee$ on $\Delta$ is given by
$1^{\vee}=6$, $6^{\vee}=1$, $3^{\vee}=5$, $5^{\vee}=3$, $4^{\vee}=4$, $2^{\vee}=2$
and
we set $1=\bar{1} = \bar{6}$, $2= \bar{3}=\bar{5}$, $3=\bar{4}$, $4 = \bar{2}$.
Note that $(d_{1},d_{2},d_{3}, d_{4}) = (2,2,1,1)$ and $rh^{\vee} = 18$.

The following is an example of the folded AR quiver of a Q-datum for $\g$.
Here $\prt{a_{1} a_{2} a_{3}}{a_{4} a_{5} a_{6}}$ denotes the positive root $\sum_{i=1}^{6} a_{i} \alpha_{i} \in \sfR^{+}$ and \beb the Q-datum can be read from the folded AR quiver. \eb
$$
\scalebox{0.65}{\xymatrix@!C=2ex@R=2.6ex{
(i \setminus p) & 1 & 2 & 3 & 4 & 5 & 6 & 7 & 8 & 9 & 10 & 11 & 12 & 13 & 14 & 15 & 16 & 17 & 18  & 19 & 20\\
1 &&&&{\scriptstyle\prt{000}{111}}  \ar[drr]
&&{\scriptstyle\prt{111}{210}}  \ar[drr]
&&{\scriptstyle\prt{011}{110}}  \ar[drr]
&& {\scriptstyle\prt{001}{111}}  \ar[drr]
&& {\scriptstyle\prt{111}{211}}  \ar[drr]
&& {\scriptstyle\prt{111}{110}}  \ar[drr]
&& {\scriptstyle\prt{001}{000}}   \ar[drr]
&&{\scriptstyle\prt{000}{001}}
&& {\scriptstyle\prt{100}{000}} \\
2 &\quad\quad&{\scriptstyle\prt{000}{110}}    \ar[urr]\ar[dr]
&&{\scriptstyle\prt{011}{210}} \ar[urr]\ar[dr]
&&{\scriptstyle\prt{011}{221}} \ar[urr]\ar[dr]
&& {\scriptstyle\prt{112}{321}} \ar[urr]\ar[dr]
&& {\scriptstyle\prt{122}{321}} \ar[urr]\ar[dr]
&& {\scriptstyle\prt{112}{221}} \ar[urr]\ar[dr]
&& {\scriptstyle\prt{112}{211}}  \ar[urr]\ar[dr]
&&{\scriptstyle\prt{111}{111}}  \ar[dr]\ar[urr]
&& {\scriptstyle\prt{101}{000}}  \ar[urr] \\
3 &{\scriptstyle\prt{000}{100}}   \ar[dr]\ar[ur]
&&{\scriptstyle\prt{010}{110}} \ar[dr]\ar[ur]
&&{\scriptstyle\prt{001}{110}} \ar[dr]\ar[ur]
&& {\scriptstyle\prt{011}{211}} \ar[dr]\ar[ur]
&& {\scriptstyle\prt{111}{221}} \ar[dr]\ar[ur]
&& {\scriptstyle\prt{112}{210}} \ar[dr]\ar[ur]
&& {\scriptstyle\prt{011}{111}}  \ar[dr]\ar[ur]
&&{\scriptstyle\prt{101}{111}}\ar[dr]\ar[ur]
&& {\scriptstyle\prt{111}{100}} \ar[dr]\ar[ur] \\
4 &&{\scriptstyle\prt{010}{100}}  \ar[ur]
&&{\scriptstyle\prt{000}{010}} \ar[ur]
&&{\scriptstyle\prt{001}{100}} \ar[ur]
&& {\scriptstyle\prt{010}{111}} \ar[ur]
&& {\scriptstyle\prt{101}{110}} \ar[ur]
&& {\scriptstyle\prt{011}{100}} \ar[ur]
&& {\scriptstyle\prt{000}{011}}  \ar[ur]
&&{\scriptstyle\prt{101}{100}}\ar[ur]
&& {\scriptstyle\prt{010}{000}}
}}
$$
Using Theorem~\ref{theorem: combinatorial formula},
we can compute the explicit values of $\tC_{ij}(z)$ as follows:
\begin{align*}
\tC_{11}(z) &= (z^{2} + z^{8} + z^{10} + z^{16} )/(1+z^{18}), \allowdisplaybreaks\\
\tC_{12}(z) &= \tC_{21}(z) = (z^{4} + z^{6} + z^{8} + z^{10} + z^{12} + z^{14})/(1+z^{18}), \allowdisplaybreaks\\
\tC_{13}(z) &=(z^{5} + z^{7} + z^{11} + z^{13})/(1+z^{18}), \allowdisplaybreaks\\
\tC_{14}(z) &=(z^{6} + z^{12})/(1+z^{18}), \allowdisplaybreaks\\
\tC_{22}(z) &= (z^{2} + z^{4} + 2z^{6} + 2z^{8} + 2z^{10} + 2z^{12} + z^{14} + z^{16})/(1+z^{18}), \allowdisplaybreaks\\
\tC_{23}(z) &= (z^{3} + z^{5} + z^{7} + 2z^{9} + z^{11} + z^{13} + z^{15})/(1+z^{18}), \allowdisplaybreaks\\
\tC_{24}(z) &= (z^{4} + z^{8} + z^{10} + z^{14})/(1+z^{18}), \allowdisplaybreaks\\
\tC_{31}(z) &= (z^{4} + 2z^{6} + z^{8} + z^{10} + 2z^{12} + z^{14})/(1+z^{18}), \allowdisplaybreaks\\
\tC_{32}(z) &= (z^{2} + 2z^{4} + 2z^{6} + 3z^{8} + 3z^{10} + 2z^{12} + 2z^{14} + z^{16})/(1+z^{18}), \allowdisplaybreaks\\
\tC_{33}(z) &= ( z + z^{3} + z^{5} + 2z^{7} + 2z^{9} + 2z^{11} + z^{13} + z^{15} + z^{17})/(1+z^{18}), \allowdisplaybreaks\\
\tC_{34}(z) &= \tC_{43}(z) = (z^{2} + z^{6} + z^{8} + z^{10} + z^{12} + z^{16})/(1+z^{18}), \allowdisplaybreaks\\
\tC_{41}(z) &=(z^{5} + z^{7} + z^{11} + z^{13})/(1+z^{18}), \allowdisplaybreaks\\
\tC_{42}(z) &=(z^{3} + z^{5} + z^{7} + 2z^{9} + z^{11} + z^{13} + z^{15})/(1+z^{18}), \allowdisplaybreaks\\
\tC_{44}(z) &=(z + z^{7} + z^{11} + z^{17})/(1+z^{18}).
\end{align*}

\subsubsection{Type $\mathsf{G}_{2}$}
\label{subsubsection: computation G}

Let $\g$ be of type $\mathsf{G}_{2}$.
The corresponding pair in Table~\ref{table: classification} is $(\sfg, \sigma) = (\mathsf{D}_{4}, \tvee)$ or
$(\mathsf{D}_{4}, \tvee{}^{2})$. Here we shall only consider the case $\sigma = \tvee$.
We use the labeling $\Delta_{0}=\{1,2,3,4\}$ and $I=\{1,2\}$
as in~(\ref{eq: G_2}).
The involution $\tvee$ on $\Delta$ is given by
$1^{\tvee}=3$, $3^{\tvee}=4$, $4^{\tvee}=1$, $2^{\tvee}=2$
and
we set $1 = \bar{1}= \bar{3}=\bar{4}$, $2=\bar{2}$.
Note that $(d_{1},d_{2}) = (3,1)$ and $rh^{\vee}=12$.

The following is an example of the folded AR quiver of a Q-datum for $\g$.
$$
\scalebox{0.75}{\xymatrix@!C=2ex@R=1.5ex{
(i/p) & 0  & 1 & 2 & 3 & 4 & 5 & 6 & 7 & 8 & 9 & 10 & 11 \\
1& \lr{3,4}\ar[dr]&& \lr{2,3}\ar[dr]&& \lr{1,3}\ar[dr]&& \lr{1,2}\ar[dr]&& \lr{1,-4}\ar[dr]&& \lr{1,-3}\ar[dr]\\
2&& \lr{2,4}\ar[ur]&& \lr{3,-4}\ar[ur]&& \lr{1,4}\ar[ur]&& \lr{2,-4}\ar[ur]&& \lr{1,-2}\ar[ur]&& \lr{2,-3}
}}
$$
On the other hand, the quantum Cartan matrix of type $\mathsf{G}_{2}$ is
$$C(z) = \prt{z^{3} + z^{-3} & -1}{-(z^{2}+1+z^{-2}) & z+z^{-1}}.$$
Therefore it is easy to compute its inverse $\tC(z)$ as:
\begin{align*}
\tC(z) &= \frac{1}{z^{4}-1+z^{-4}}\prt{z + z^{-1} & 1}{z^{2}+1+z^{-2} & z^{3}+z^{-3}} \\
&= \frac{1}{1+z^{12}} \prt{z^{3}+z^{5}+z^{7}+z^{9} & z^{4}+z^{8}}{z^{2}+z^{4}+2z^{6}+z^{8}+z^{10} & z+z^{5}+z^{7}+z^{11}}.
\end{align*}
By a comparison, 
\ber we can check that Theorem~\ref{theorem: combinatorial formula} also holds in this case.
Thus the proof of Theorem~\ref{theorem: combinatorial formula} is completed\er.

\section{Representations of untwisted quantum affine algebras}
\label{section: QAffine}

In this section, we recall some basic facts on the 
finite-dimensional representation theory
of untwisted quantum affine algebras.
We also introduce the notion of $\calQ$-weights and 
\ber observe its relation to the block decomposition of 
the finite-dimensional module category established by \cite{CM05}, \cite{KKOP20b}. \er
Let us keep the notation from the previous sections.

\subsection{Untwisted quantum affine algebras}
\label{ssec: qaffine}
Let $\g$ be a finite-dimensional complex simple Lie algebra as before.
We denote by $\hg$ the untwisted affine Lie algebra of $\g$.
The Cartan matrix $C=(c_{ij})_{i,j \in I}$ of $\g$
is extended to the generalized Cartan matrix $C_{\af} = (c_{ij})_{i,j \in I_{\af}}$
of $\hg$, where $I_{\af} := I \cup \{0\}$.
We set $d_{0} := r$ and $r_{0}:=r/d_{0}=1$.

Let $q$ be an indeterminate.
We denote by $\kk := \overline{\Q(q)}$ the algebraic closure of the field $\Q(q)$
inside the ambient field $\bigcup_{m \in \Z_{\ge 1}}\overline{\Q}(\!(q^{1/m})\!)$.
For each $i \in I_{\af}$, we set $q_{i} := q^{d_{i}/r} = q^{1/r_{i}} \in \kk^{\times}$.
We also use the notation $\qs := q^{1/r}$, where $s$ stands for ``short''.
For $a,b \in \Z_{\ge 0}$ with $b \le a$ and $i \in I_{\af}$, we set
$$
[a]_{i}! := \prod_{k=1}^{a} [k]_{q_{i}}, \quad
\left[ \begin{matrix} a \\ b \end{matrix} \right]_{i} := \frac{[a]_{i}!}{[a-b]_{i}! [b]_{i}!}.
$$
\begin{definition}
We define the \emph{quantum affine algebra} $U_{q}^{\prime}(\hg)$ (without the degree operator)
to be the $\kk$-algebra given by the set of generators
$\{ e_{i}, f_{i}, K^{\pm1}_{i} \mid i \in I_{\af} \}$ satisfying the following relations:
\begin{itemize}
\item $K_{i} K_{i}^{-1} = 1 = K_{i}^{-1} K_{i}, K_{i} K_{j} = K_{j}K_{i}$ for $i,j \in I_{\af}$,
\item $K_{i} e_{j} K_{i}^{-1} = q_{i}^{c_{ij}} e_{j}, K_{i} f_{j} K_{i}^{-1} = q_{i}^{-c_{ij}} f_{i}$ for $i,j \in I_{\af}$,
\item $\displaystyle e_{i}f_{j} - f_{j}e_{i} = \delta_{ij}\frac{K_{i}-K_{i}^{-1}}{q_{i}-q_{i}^{-1}}$ for $i,j \in I_{\af}$,
\item $\displaystyle \sum_{k=0}^{1-c_{ij}} (-1)^{k}\left[\begin{matrix} 1-c_{ij} \\ k \end{matrix}\right]_{i}e_{i}^{1-c_{ij}-k}e_{j}e_{i}^{k}
= \sum_{k=0}^{1-c_{ij}} (-1)^{k}\left[\begin{matrix} 1-c_{ij} \\ k \end{matrix}\right]_{i }f_{i}^{1-c_{ij}-k}f_{j}f_{i}^{k} = 0$ for $i \neq j$.
\end{itemize}
\end{definition}

We denote by $U_{q}^{+}(\hg)$ (resp.~$U_{q}^{-}(\hg)$) the $\kk$-subalgebra of $U_{q}^{\prime}(\hg)$
generated by $e_{i}$ (resp.~$f_{i}$) for $i \in I_{\af}$.
The algebra $U_{q}^{\prime}(\hg)$ is equipped with a structure of Hopf algebra over $\kk$
whose coproduct is given by
$$
e_{i} \mapsto e_{i} \otimes K_{i}^{-1} + 1 \otimes e_{i},
\qquad f_{i} \mapsto f_{i} \otimes 1 + K_{i} \otimes f_{i},
\qquad K_{i} \mapsto K_{i} \otimes K_{i},
$$
for $i \in I_{\af}$.
We denote by $\scrC$ the category of finite-dimensional $U_{q}^{\prime}(\hg)$-modules
of type $\bf{1}$, i.e.~finite-dimensional modules on which the element $K_{i}$
acts as a diagonalizable linear operator whose eigenvalues belong to the set $\{q_{i}^{k} \mid k \in \Z\}$
for each $i \in I_{\af}$. 
\ber It is well-known that the description of general finite-dimensional $U_{q}^{\prime}(\hg)$-modules 
are essentially reduced to that of finite-dimensional $U_{q}^{\prime}(\hg)$-modules of type $\bf{1}$. \er
The $\kk$-linear abelian category $\scrC$
becomes a rigid monoidal category by the above Hopf algebra structure of $U_{q}^{\prime}(\hg)$.
We denote the right dual (resp.~left dual) of an object $V \in \scrC$ by ${}^{*}V$ (resp.~$V^{*}$).
For $k \in \Z$ and a module $V$ in $\scrC$, we define
$$
\scrD^k(V) \seteq
\begin{cases}
(\cdots( V^* \underbrace{ )^* \cdots )^* }_{\text{$(-k)$-times}} & \text{ if } k <0, \\
 \underbrace{\rd ( \cdots ( }_{\text{$k$-times}} \rd V  )\cdots)  & \text{ if } k \ge 0.
\end{cases}
$$

The category $\scrC$ is neither semisimple as an abelian category, nor braided as a monoidal category.
We say that two objects $V, W \in \scrC$ \emph{mutually commute} if we have
$V \otimes W \cong W \otimes V$ as $U_{q}^{\prime}(\hg)$-modules.
We also say that a simple object $V \in \scrC$ is \emph{real}
if the tensor square $V \otimes V$ remains simple.

Recall the central element $q^{c} := \prod_{i \in I_{\af}}K_{i}^{a_{i}} \in U_{q}^{\prime}(\hg)$,
where the positive integers $(a_{i})_{i \in I_{\af}}$ are defined as in~\cite[Chapter 4, Table Aff1]{Kac}.
It is well known that for any module $V \in \scrC$, the element $q^{c}$ acts by the identity.
The quotient algebra $U_{q}^{\prime}(\hg)/\langle q^{c} - 1 \rangle$ is isomorphic to
the \emph{quantum loop algebra} $U_{q}(L\g)$ defined below \ber (see~\cite{Beck94, CP, Damiani98, Damiani12, Damiani15})\er.
Via this isomorphism $U_{q}^{\prime}(\hg)/\langle q^{c} - 1 \rangle \cong U_{q}(L\g)$,
we can identify the category $\scrC$ with the category of finite-dimensional
$U_{q}(L\g)$-modules of type $\bf{1}$.

\begin{definition}
The quantum loop algebra $U_{q}(L\g)$
is the $\kk$-algebra given by the set of generators
$$\{ k_{i}^{\pm 1} \mid i \in I\} \cup \{ x_{i,k}^{\pm} \mid i \in I, k \in \Z\} \cup \{ h_{i, l} \mid i \in I, l \in \Z \setminus\{0\}\}$$
satisfying the following relations:
\begin{itemize}
\item $k_{i} k_{i}^{-1} = 1 = k_{i}^{-1} k_{i}, k_{i} k_{j} = k_{j} k_{i}$ for $i,j \in I$,
\item $k_{i} x_{j, k}^{\pm} k_{i}^{-1} = q_{i}^{\pm c_{ij}} x_{j,k}^{\pm}$ for $i,j \in I$ and $k \in \Z$,
\item $[k_{i}, h_{j,l}] = [h_{i,l}, h_{j,m}] = 0$ for $i,j \in I$ and $l,m \in \Z \setminus\{0\}$,
\item $[x_{i}^{+}(z), x_{j}^{-}(w)]
= \displaystyle \frac{\delta_{ij}}{q_{i} - q_{i}^{-1}}
\left( \delta(z/w) \phi_{i}^{+}(w) - \delta(w/z) \phi_{i}^{-}(z) \right)$ for $i, j \in I$,
\item $(q^{\pm c_{ij}}_{i}z-w) x_{i}^{\pm}(z) x_{j}^{\pm}(w) = (z-q^{\pm c_{ij}}_{i}w) x_{j}^{\pm}(w) x_{i}^{\pm}(z)$ for $i,j \in I$,
\item $(q^{\pm c_{ij}}_{i}z-w) \phi_{i}^{\varepsilon}(z) x_{j}^{\pm}(w) = (z-q^{\pm c_{ij}}_{i}w) x_{j}^{\pm}(w) \phi_{i}^{\varepsilon}(z)$
for $i,j \in I$ and $\varepsilon \in \{ +, -\}$,
\item $\displaystyle \sum_{g \in \mathfrak{S}_{1-c_{ij}}} \sum_{k=0}^{1-c_{ij}}
(-1)^{k} \left[\begin{matrix} 1-c_{ij} \\ k\end{matrix} \right]_{i}
x_{i}^{\pm}(z_{g(1)}) \cdots x_{i}^{\pm}(z_{g(k)}) x_{j}^{\pm}(w) x_{i}^{\pm}(z_{g(k+1)}) \cdots x_{i}^{\pm}(z_{g(1-c_{ij})}) = 0
$ for $i, j \in I$ with $i \sim j$,
\end{itemize}
where $\delta(z), x_{i}^{\pm}(z), \phi_{i}^{\pm}(z)$ are formal power series defined by
$$
\delta(z) := \sum_{l=-\infty}^{\infty}z^{l},
\quad
x_{i}^{\pm}(z) := \sum_{l=-\infty}^{\infty}x_{i,k}^{\pm}z^{k},
\quad
\phi_{i}^{\pm}(z) := k_{i}^{\pm 1}
\exp \left( \pm (q_{i}-q_{i}^{-1})
\sum_{l=1}^{\infty}h_{i, \pm l} z^{\pm l} \right).
$$
\end{definition}

\begin{remark}
\label{remark: Beck}
Under the isomorphism $U_{q}^{\prime}(\hg) / \langle q^{c}-1\rangle \cong U_{q}(L\g)$,
the generators $e_{i}, f_{i}, K_{i}$ correspond to
$x^{+}_{i, 0}, x^{-}_{i,0}, k_{i}$ respectively for each $i \in I$.
Strictly speaking, the isomorphism depends on the choice
of a function $o \colon I \to \{ \pm 1\}$
such that $o(i) \neq o(j)$ if $i \sim j$.
In what follows, we choose and fix such a function $o$, although the results in this paper will not depend on this choice.
\end{remark}

Let $z$ be an indeterminate.
For any object $V \in \scrC$, we can consider its \emph{affinization} \ber $V_{z}$. \er
This is the $\kk[z^{\pm}]$-module $V_{z} := V \otimes \kk[z^{\pm1}]$ equipped with a structure of
left $U_{q}(L\g)$-module by
$$
k_{i}(v \otimes f(z)) = (k_{i}v) \otimes f(z), \qquad
x^{\pm}_{i, k} (v \otimes f(z)) = (x^{\pm}_{i,k}v)\otimes z^{k} f(z), \qquad
h_{i,l}(v \otimes f(z)) = (h_{i,l}v) \otimes z^{l}f(z),
$$
for any $v \in V$ and $f(z) \in \kk[z^{\pm 1}]$.
For a non-zero scalar $a \in \kk^{\times}$, we set
$V_{a} := V_{z}/(z-a)V_{z}$, which is an object of $\scrC$.
The assignment $V \mapsto V_{a}$ defines a monoidal \ber self\er-equivalence $T_{a}$ of $\scrC$.

\subsection{Simple modules and $q$-characters}

A complete classification of the simple objects of $\scrC$ up to isomorphism
was given by Chari-Pressley (see \cite{CP95} or \cite[Chapter 12]{CP}) in terms of the so-called \emph{Drinfeld polynomials}.
Let $\fkD^{+} := (1 + z\kk[z])^{I}$ denote the multiplicative monoid
consisting of $I$-tuples of polynomials with constant terms $1$.
This is a commutative monoid freely generated by
the elements $\pi_{i, a} := ((1-az)^{\delta_{ij}})_{j \in I}$ for $i \in I, a \in \kk^{\times}$.

\begin{theorem}[\cite{CP95}]
For each $P= (P_{i}(z))_{i \in I} \in \fkD^{+}$, there exists a simple object $L(P) \in \scrC$
with a unique line $\kk v \subset L(P)$ such that, in $L(P)\lf z, z^{- 1} \rf$, \ber we have\er
$$
x^{+}_{i}(z) v = 0, \qquad \phi^{\pm}_{i}(z)v = q_{i}^{\deg P_{i}(z)}\left[\frac{P_{i}(\ber q_{i}^{-1}\er z)}{P_{i}(\ber q_{i}\er z)} \right]^{\pm} v
\qquad \text{for each $i \in I$},
$$
where $[f(z)]^{\pm}$ denotes the formal Laurent expansion of the rational function $f(z) \in \kk(z)$
at $z^{\pm 1}=0$.
The correspondence $P(z) \mapsto L(P(z))$ gives a bijection between
$\fkD^{+}$ and
the set $\Irr \scrC$ of isomorphism classes of simple objects of $\scrC$.
\end{theorem}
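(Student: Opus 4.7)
The plan is to follow the classical highest-weight theory for $U_q(L\g)$ developed by Chari-Pressley \cite{CP95} (see also \cite[Chap.~12]{CP}), which proceeds in three stages: (i) extracting Drinfeld polynomials from every simple object of $\scrC$, (ii) constructing an abstract simple highest-weight module $L(P)$ from each $P \in \fkD^+$, and (iii) verifying finite-dimensionality of $L(P)$.

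First I would address uniqueness and classification. For a simple $V \in \scrC$, decomposing $V$ into $\{k_i\}_{i\in I}$-weight spaces and choosing a $\g$-weight of $V$ that is maximal in the dominance order yields a top weight subspace $V^{\mathrm{top}}$ which is stable under every commuting operator $\phi^{\pm}_{i, l}$ and is annihilated by every $x^+_{i, k}$, since applying the latter would produce a vector of strictly greater $\g$-weight. A common eigenvector $v \in V^{\mathrm{top}}$ of the $\phi^{\pm}_{i, l}$ hence exists, with eigenvalue generating series $\Psi^{\pm}_i(z)$. Using the Drinfeld relations (particularly the ones controlling how $\phi^{\pm}_{i}(z)$ interacts with $x^{\pm}_{j}(w)$), one verifies that $\Psi^+_i(z)$ and $\Psi^-_i(z)$ are the formal expansions at $z^{- 1}=0$ and $z=0$ respectively of a single rational function in $z$; restricting to each rank-one subalgebra $U_q(L\mathfrak{sl}_2)_{(i)} \subset U_q(L\g)$ generated by $\{x^{\pm}_{i, k}, h_{i, l}, k_i^{\pm 1}\}$ and invoking finite-dimensionality of $V$ then forces this rational function to have the prescribed form $q_i^{\deg P_i} P_i(q_i^{-1}z)/P_i(q_i z)$ for a unique $P_i \in 1 + z\kk[z]$. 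Simplicity of $V$ gives $V = U_q(L\g) v$, so its isomorphism class is completely determined by $P = (P_i)_{i \in I} \in \fkD^+$, proving injectivity of the assignment $V \mapsto P$.

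Next, for each $P \in \fkD^+$ I would construct $L(P)$ abstractly as the unique simple quotient of a Verma-type module $M(P) = U_q(L\g) \otimes_{U_q(L\g)^{\ge 0}} \kk v_P$ induced from the one-dimensional module $\kk v_P$ of the upper Borel-type subalgebra $U_q(L\g)^{\ge 0}$, defined by letting $x^+_{i, k}$ act by zero and the generating series $\phi^{\pm}_i(z)$ act by the scalar series prescribed by $P$. The module $M(P)$ admits a unique maximal proper submodule and its simple quotient $L(P)$ carries a highest-weight vector satisfying the required identities. To ensure $L(P) \in \scrC$, I would exploit the multiplicative structure of $\fkD^+$: whenever $P = P^{(1)} \cdot P^{(2)}$, the tensor $v_{P^{(1)}} \otimes v_{P^{(2)}} \in L(P^{(1)}) \otimes L(P^{(2)})$ is a $P$-highest-weight vector, so $L(P)$ appears as a subquotient of the tensor product $L(P^{(1)}) \otimes L(P^{(2)})$. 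This reduces finite-dimensionality of every $L(P)$ to the case of the generators $P = \pi_{i, a}$.

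The key technical obstacle, and the heart of the proof, is showing that each fundamental simple $L(\pi_{i, a})$ lies in $\scrC$. Here I would again pass to $U_q(L\mathfrak{sl}_2)_{(i)}$: since the restriction of the highest-weight datum of $L(\pi_{i, a})$ amounts to a rank-one Drinfeld polynomial of degree one, the $U_q(L\mathfrak{sl}_2)_{(i)}$-submodule generated by $v$ is the two-dimensional evaluation representation, so in particular $x^-_{i, k} v = 0$ for $k$ sufficiently large, with analogous rank-one bounds holding after translating $v$ by elements of $U_q^-(L\g)$. Combining these finiteness statements with the Drinfeld-Serre relations among the $\{x^-_{j, k}\}$ enables one to bound the set of $\g$-weights appearing in $L(\pi_{i, a}) = U_q^-(L\g) v$ by a finite subset of the weight lattice of $\g$ with finite-dimensional multiplicities, whence $L(\pi_{i, a}) \in \scrC$. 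Combined with the preceding paragraphs, this yields the claimed bijection $\fkD^+ \overset{1:1}{\longleftrightarrow} \Irr \scrC$.
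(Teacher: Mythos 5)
The paper does not prove this theorem; it is quoted verbatim from the cited source, so there is no in-paper argument to compare against. Your outline is a faithful reconstruction of the standard Chari--Pressley proof: extraction of the rational eigenvalue of $\phi^{\pm}_i(z)$ on a highest $\ell$-weight vector by reduction to the loop-$\mathfrak{sl}_2$ subalgebras, construction of $L(P)$ as the simple head of a Verma-type induced module, reduction of finite-dimensionality to the fundamental case via multiplicativity of $\fkD^{+}$, and the rank-one finiteness bounds. Two remarks. First, the step ``$v_{P^{(1)}}\otimes v_{P^{(2)}}$ is a $P^{(1)}P^{(2)}$-highest-weight vector'' is not automatic: the Drinfeld--Jimbo coproduct has no closed formula on the loop generators, and one needs the nontrivial lemma that $\Delta(\phi^{\pm}_{i}(z)) \equiv \phi^{\pm}_{i}(z)\otimes\phi^{\pm}_{i}(z)$ modulo terms lying in $(U_q X^{-})\otimes(U_q X^{+})$, which annihilate the tensor product of highest-weight lines; your sketch silently assumes this, and it is where a real argument is required. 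Second, a minor convention slip: with the paper's definition $\phi^{+}_{i}(z)=k_i\exp((q_i-q_i^{-1})\sum_{l\ge 1}h_{i,l}z^{l})$, the series $\Psi^{+}_{i}(z)$ is the expansion of the rational function at $z=0$ and $\Psi^{-}_{i}(z)$ the expansion at $z^{-1}=0$, i.e.\ the opposite of what you wrote; this does not affect the substance but should be fixed to match the statement being proved.
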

We call the vector $v$ the \emph{$\ell$-highest weight vector} of $L(P)$,
which is unique up to \ber multiplication by \er $\kk^{\times}$.
By the above characterization, we have $L(P)_{a} \cong L(P_{a})$ with
$P_{a} := (P_{i}(az))_{i \in I}$.

For $(i,a) \in I \times \kk^{\times}$ and $l \in \Z_{\ge 0}$,
we set
$$
\pi^{(i)}_{l,a} := \prod_{k=0}^{l-1}\pi_{i, aq_{i}^{2k}} \in \fkD^{+}.
$$
The corresponding simple modules $L(\pi^{(i)}_{l, a})$ are called  \emph{Kirillov-Reshetikhin (KR) modules}.
In particular,
when $l=1$, the modules $L(\pi_{i,a})$ are called \emph{fundamental modules}.

Next we recall the notion of \emph{$q$-characters} introduced by
Frenkel-Reshetikhin~\cite{FR99}.
Let $\fkD$ denote the multiplicative group of
$I$-tuples of rational functions $\Psi = (\Psi_{i}(z))_{i \in I} \in \kk(z)^{I}$
satisfying $\Psi_{i}(1) = 1$ for all $i \in I$.
Note that $\fkD$ is naturally identified with the Grothendieck group of $\fkD^{+}$.
By \cite[Proposition 1]{FR99}, for any object $V \in \scrC$, we have a decomposition
$$
V=\bigoplus_{\Psi \in \fkD} V_{\Psi},
$$
where $V_{\Psi}$ is the subspace of $V$ on which the coefficient of $z^{k}$ in the series
$\phi_{i}^{\pm}(z) - q_{i}^{\deg \Psi_{i}(z)}\left[\Psi_{i}(q_{i}^{-1}z)/\Psi_{i}(q_i z) \right]^{\pm}$
acts nilpotently for every $k \in \Z_{\ge 0}$.
Here we define $\deg (f(z)/g(z)) := \deg f(z) - \deg g(z)$ for $f(z) \in \kk[z], g(z) \in \kk[z]\setminus\{0\}$.
Then the $q$-character $\chi_{q}(V)$ of $V$ is defined to be
$$
\chi_{q}(V) := \sum_{\Psi \in \fkD} \dim_{\kk} (V_{\Psi}) [\Psi],
$$
which is an element of the group ring $\Z[\fkD] = \bigoplus_{\Psi \in \fkD} \Z [\Psi]$.
It is known that the assignment
$V \mapsto \chi_{q}(V)$ defines an injective ring homomorphism
$\chi_{q} \colon K(\scrC) \hookrightarrow \Z[\fkD]$,
where $K(\scrC)$ is the Grothendieck ring of $\scrC$~\cite[Corollary 2]{FR99}.

\ber
Finally, we recall the behaviour of the duality functors 
$\scrD^{\pm 1}$. 
Denote by $\bfD$ the automorphism of the group $\fkD$ given by
$\bfD(\pi_{i, a}) := \pi_{i^*, aq^{h^{\vee}}}$ for 
all $i\in I$ and $a \in \kk^{\times}$. 
It naturally extends to a ring automorphism of 
$\Z[\fkD]$.

\begin{proposition}[cf.~{\cite[Proposition 5.1(b)]{CP96a}, \cite[(A.5)]{AK97}, \cite[Corollary 6.10]{FM01}}] 
\label{proposition: dual}
For each $P \in \fkD^+$ and $k \in \Z$, we have
$$
\scrD^{k}(L(P)) \cong L(\bfD^k(P))
$$
as $U_{q}^{\prime}(\hg)$-modules. Moreover, 
for any $V \in \scrC$, we have
$$
\chi_{q}(\scrD^k (V)) = \bfD^{k} (\chi_q(V)).
$$
\end{proposition}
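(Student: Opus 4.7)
The plan is to reduce the proposition to the single case $k=1$ applied to simple modules, from which the general statement follows by iteration and by exactness/ring-homomorphism formalism.

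First I would observe that both $\scrD^k$ and $\bfD^k$ are defined by iterated application of $\scrD$ (equivalently, of the right dual functor ${}^*(-)$) and of $\bfD$, respectively. Hence if we can prove ${}^*L(P) \cong L(\bfD P)$ for every $P \in \fkD^+$, the full statement $\scrD^k L(P) \cong L(\bfD^k P)$ follows by induction on $|k|$, using that the left dual is inverse to the right dual in the rigid monoidal category $\scrC$ (so the case $k = -1$ reduces to $k=1$ by applying it to $V^*$ and using ${}^*(V^*) \cong V$). This reduction step is formal.

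Next, for the key case $k=1$: since the right dual of a simple object in a rigid monoidal category remains simple, ${}^*L(P) \cong L(P')$ for a unique $P' \in \fkD^+$. The content is the identification $P' = \bfD P$. I would do this by locating the $\ell$-highest weight line $\kk v^* \subset {}^*L(P)$ and computing the eigenvalues of $\phi_i^\pm(z)$ on $v^*$ using the explicit formulas for the Hopf antipode $S$ of $U_q'(\hg)$ in the Drinfeld-new realization; up to a spectral shift, $S$ acts on the loop generators $\phi_i^\pm(z)$ essentially by the inversion $z \mapsto z^{-1}$ combined with the principal grading shift $q^{\pm 2 \rho^\vee}$ and the action of the longest element $w_0$ on $I$ (which produces the involution $i \mapsto i^*$). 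The height shift turns out to be exactly $q^{h^\vee}$, which matches the classical computation carried out in \cite[Proposition~5.1(b)]{CP96a}, \cite[(A.5)]{AK97}, and \cite[Corollary~6.10]{FM01}. This is the only genuinely nontrivial step, and the main obstacle is to carefully justify the precise value $q^{h^\vee}$ of the spectral shift; but I would simply invoke the cited references rather than redo the antipode computation.

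Finally, for the $q$-character statement over an arbitrary $V \in \scrC$: extend $\bfD$ to a group automorphism of $\fkD \supset \fkD^+$ by the same rule $\Psi_i(z) \mapsto \Psi_{i^*}(z q^{h^\vee})$, and then to a ring automorphism of $\Z[\fkD]$. The duality functor $\scrD$ is exact, so in the Grothendieck ring $K(\scrC)$ we have $[\scrD^k V] = \sum_P [V : L(P)]\, [\scrD^k L(P)] = \sum_P [V:L(P)]\, [L(\bfD^k P)]$ by the already-established simple case. Since $\chi_q \colon K(\scrC) \hookrightarrow \Z[\fkD]$ is a ring homomorphism, it suffices to verify $\chi_q(L(\bfD^k P)) = \bfD^k \chi_q(L(P))$ on simple modules. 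But each $\ell$-weight $\Psi$ occurring in $\chi_q(L(P))$ corresponds to a generalized $\phi^\pm_i(z)$-eigenspace inside $L(P)$; its image under the spectral shift and Dynkin involution defining $\bfD$ corresponds, by the same antipode computation used in Step 2 (now applied $\ell$-weight by $\ell$-weight rather than just to the highest line), to the analogous eigenspace in $L(\bfD^k P)$, with the same multiplicity. Assembling these multiplicities yields $\chi_q(L(\bfD^k P)) = \bfD^k \chi_q(L(P))$, concluding the proof.
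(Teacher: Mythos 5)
The paper offers no proof of this proposition at all---it is recalled as a known fact, with the entire content (the identification of the Drinfeld polynomial of the dual and the resulting substitution rule on $q$-characters) outsourced to the cited results of Chari--Pressley, Akasaka--Kashiwara and Frenkel--Mukhin. Your sketch is a correct reconstruction along exactly those lines: the reductions (iterating the $k=\pm 1$ case, passing to the Grothendieck ring by exactness of $\scrD$, and working $\ell$-weight by $\ell$-weight for the $q$-character identity) are all sound and standard, and the one genuinely nontrivial ingredient---the precise spectral shift $q^{h^{\vee}}$ coming from the antipode---is appropriately deferred to the same references the paper cites.
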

\er
\subsection{The Hernandez-Leclerc category $\scrC^{0}$}

Let us consider the Laurent polynomial ring
$$\calY = \Z[ Y_{i,p}^{\pm 1} \mid (i,p) \in \hI \ ],$$
where the set $\hI$ is defined as in Subsection~\ref{subsection: folding}.
We denote by $\calM^{+}$ (resp.~$\calM$) the set of all monomials
(resp.~Laurent monomials) in the variables $Y_{i,p}$.
Note that $\calM^{+}$ is a commutative monoid freely generated by the set $\{Y_{i,p} \mid (i,p) \in \hI\}$,
and $\calM$ is the Grothendieck group of $\calM^{+}$.
In what follows,
we regard $\calM$ as a subgroup of $\fkD$
via the injective homomorphism $\calM \hookrightarrow \fkD$
given by $Y_{i,p} \mapsto \pi_{i, \qs^{p}}$.

\begin{definition}[\cite{HL10,HL16,KKO19}]
The category $\scrC^{0}$ is defined to be the Serre subcategory of $\scrC$
such that $\Irr \scrC^{0} = \{ L(m) \mid m \in \calM^{+}\}$.
\end{definition}

\beb
\begin{remark}
The category $\scrC^{0}$ was introduced in \cite[Section 3.7]{HL10} and \cite{HL16} for  untwisted affine types, and was extended to other affine types in \cite{KKO19} by using the \emph{denominator formulas} between fundamental modules. The descriptions for $\scrC^{0}$ given in \cite{HL10,HL16} and \cite{KKO19} are different, but coincide for untwisted affine types
since the denominator formulas are \emph{meaningful} only for the pair of fundamental modules $L(Y_{i,p})$ and $L(Y_{j,s})$ when $(i,p),(j,s) \in \hI$ (see Section~\ref{section: denominator} below). 
\end{remark}
\eb
The subcategory $\scrC^{0}$ captures an essential part of the monoidal category $\scrC$
in the following sense.

\begin{theorem}[\ber cf.~{\cite[Section 3.7]{HL10}, \cite[Lemma 3.8]{HL16}}\er] \label{thm: scrC0}  \ber The followings hold. \er
\begin{enumerate}
\item \label{monoidal subcategory}
The full subcategory $\scrC^{0} \subset \scrC$ is closed under the tensor product $\otimes$ and duals $\scrD^{\pm 1}$.
Moreover, 
the $q$-character homomorphism $\chi_{q}$ restricts to the injective ring homomorphism
$$
\chi_{q} \colon K(\scrC^{0}) \hookrightarrow \calY = \Z[\calM].
$$
\item \label{tensor decomposition}
Any simple object $V \in \Irr \scrC$ decomposes as
$$
V \cong (V_{1})_{a_{1}} \otimes \cdots \otimes (V_{m})_{a_{m}}
$$
for some $m \in \Z_{\ge 1}$ and $\{(V_{k}, a_{k}) \in \Irr \scrC^{0} \times \kk^{\times} \mid 1 \le k \le m \}$
such that $a_{k}/a_{l} \not \in \qs^{2\Z}$ for $1 \le k < l \le m$.
Moreover, in this case, the modules $\{ (V_{k})_{a_{k}} \mid 1 \le k \le m\}$ pairwise commute.
\end{enumerate}
\end{theorem}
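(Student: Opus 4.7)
The plan is to establish (1) by tracking the behaviour of Drinfeld parameters and $\ell$-weights under tensor products and the duality functor, and then to establish (2) by grouping the parameters of a simple module according to cosets of $\qs^{2\Z}$ in $\kk^{\times}$.

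For the monoidal closure in (1), I would first show that for any $m \in \calM^{+}$ we have $\chi_{q}(L(m)) \in \Z[\calM]$. Write $m = \prod_{(i,p) \in \hI} Y_{i,p}^{n_{i,p}}$ with $n_{i,p} \ge 0$. The standard $q$-character ring argument, together with the Frenkel--Mukhin-type recursion that controls the $\ell$-weight monomials of $L(m)$, shows that every monomial appearing in $\chi_{q}(L(m))$ is a product of $Y_{i,p}^{\pm 1}$ with $(i,p) \in \hI$; equivalently, every parameter $a$ occurring in the Drinfeld polynomial of any composition factor satisfies $a \in \qs^{\Z}$ with the prescribed parity (and this parity is preserved since the simple roots $\alpha_{i}$ of $\hg$ get identified with the shifts $A_{i,p}^{-1}$ that already lie in $\calM$). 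Given a simple $L(m) \otimes L(m') \in \scrC$ with $m,m' \in \calM^{+}$, the $\ell$-highest weight of any composition factor is a product of a dominant monomial in $\chi_{q}(L(m))$ and a dominant monomial in $\chi_{q}(L(m'))$, hence lies in $\calM^{+}$. This shows $\scrC^{0}$ is closed under tensor product, and the induced ring map $\chi_{q} \colon K(\scrC^{0}) \to \Z[\calM]$ is well-defined and injective (as the restriction of the injective map $\chi_{q} \colon K(\scrC) \hookrightarrow \Z[\fkD]$).

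For closure under $\scrD^{\pm 1}$, I would invoke Proposition~\ref{proposition: dual}: $\scrD(L(P)) \cong L(\bfD(P))$, with $\bfD(\pi_{i,a}) = \pi_{i^{*},a q^{h^{\vee}}}$. In the $Y$-variable picture this amounts to $Y_{i,p} \mapsto Y_{i^{*},p + r h^{\vee}}$ (since $q^{h^{\vee}} = \qs^{r h^{\vee}}$). It then suffices to check $(i^{*}, p + r h^{\vee}) \in \hI$ whenever $(i,p) \in \hI$, i.e.\ $\epsilon_{i^{*}} \equiv \epsilon_{i} + r h^{\vee} \pmod{2}$. This is a case check using Remark~\ref{remark: h^vee parity}: when $h^{\vee}$ is even we have $i^{*} = i$ and $r h^{\vee}$ is even; when $h^{\vee}$ is odd we have $r = 2$, so $r h^{\vee}$ is even and $i^{*} = \sigma(i)$, while $\epsilon_{\sigma(\im)} \equiv \epsilon_{\im} \pmod{2}$ by Lemma~\ref{lemma: parity}(\ref{parity_constant},\ref{parity_injection}). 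Thus $\scrD^{\pm 1}$ sends $\calM^{+}$ into $\calM^{+}$, proving closure.

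For (2), I would decompose the Drinfeld parameter of $V = L(P)$ coset-wise. Write $P = \prod_{c} P^{(c)}$ where $c$ runs over the cosets of $\qs^{2\Z}$ in $\kk^{\times}$ that contain a root of some $P_{i}$, and $P^{(c)}$ collects the factors of $P$ using parameters in $c$. Only finitely many cosets appear. Pick representatives $a_{k}$ with distinct cosets and write $P^{(c_{k})} = (P_{k})_{a_{k}}$ for a unique $P_{k} \in \fkD^{+}$ with parameters in $\qs^{\Z}$; by the parity choice one can further arrange $P_{k} \in \calM^{+}$ after possibly absorbing a harmless $\qs$-shift into $a_{k}$, so that $V_{k} := L(P_{k}) \in \scrC^{0}$. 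Setting $W := (V_{1})_{a_{1}} \otimes \cdots \otimes (V_{m})_{a_{m}}$, the classical Chari--Pressley criterion for irreducibility of tensor products of $\ell$-highest weight modules applies: since the normalized R-matrices between $(V_{k})_{a_{k}}$ and $(V_{l})_{a_{l}}$ have poles only at ratios $a_{k}/a_{l}$ lying in $\qs^{2\Z}$, the assumption $a_{k}/a_{l} \notin \qs^{2\Z}$ forces these R-matrices to be isomorphisms, so $W$ is simple and the factors pairwise commute. Since $W$ has the same $\ell$-highest weight $P$ as $V$, we conclude $V \cong W$. The main obstacle here is the irreducibility and commutativity of the tensor product under the coset-disjointness assumption; I would handle it by invoking the rationality and pole structure of the normalized R-matrix together with the standard fact that cyclicity in both orders implies that the tensor product is simple.
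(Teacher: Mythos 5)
The paper itself offers no proof of this theorem --- it is quoted from the literature (HL10, HL16) --- so I can only judge your argument on its own terms. Part (1) is essentially the standard argument and is sound: closure of $\chi_q(L(m))$ under the variables $Y_{i,p}^{\pm1}$ via the Frenkel--Mukhin form of the $q$-character (Theorem~\ref{theorem: FM}), closure under $\otimes$ via dominant monomials of the product, and closure under $\scrD^{\pm1}$ via Proposition~\ref{proposition: dual}. One slip: your parity check quotes Remark~\ref{remark: h^vee parity} as ``$h^{\vee}$ even $\Rightarrow i^{*}=i$'', but that remark is stated only for $\sigma\neq\id$; in type $\mathsf{A}_{3}$, for instance, $h^{\vee}=4$ is even while $1^{*}=3$. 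The needed congruence $\epsilon_{i^{*}}\equiv\epsilon_{i}+rh^{\vee}\pmod 2$ is still true in the simply-laced case (it is exactly what makes the coordinate $(i^{*},\xi_{i}-h^{\vee})$ in~\eqref{eq: 2-segment} a legitimate vertex of $\hD$), but it needs its own one-line verification rather than an appeal to that remark.

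The genuine gap is in part (2), in the step ``by the parity choice one can further arrange $P_{k}\in\calM^{+}$ after possibly absorbing a harmless $\qs$-shift into $a_{k}$.'' Grouping the factors of $P$ by cosets of $\qs^{2\Z}$ does not in general produce shifts of dominant monomials in $\calM^{+}$, because membership of $\pi_{i,\qs^{p}}$ in $\calM^{+}$ requires $p\equiv\epsilon_{i}\pmod 2$, and $\epsilon_{i}$ depends on $i$. Concretely, if $\epsilon_{1}=0$ and $\epsilon_{2}=1$, then $\pi_{1,1}\pi_{2,\qs^{2}}$ has both parameters in the single coset $\qs^{2\Z}$, yet no spectral shift of it lies in $\calM^{+}$: writing the parameters as $a\qs^{p}$ and $a\qs^{p'}$ forces $p-p'$ even, whereas $\calM^{+}$ demands $p\equiv 0$ and $p'\equiv 1$. (This factor must in fact be split as $L(Y_{1,0})\otimes L(Y_{2,1})_{\qs}$.) The correct equivalence puts $\pi_{i,b}$ and $\pi_{j,b'}$ in the same group if and only if $b/b'\in\qs^{\,\epsilon_{i}-\epsilon_{j}+2\Z}$; with that grouping each block is a shift $(P_{k})_{a_{k}}$ with $P_{k}\in\calM^{+}$, distinct blocks automatically satisfy $a_{k}/a_{l}\notin\qs^{2\Z}$, and your concluding argument (all roots of $d_{V,W}(z)$ for $V,W\in\Irr\scrC^{0}$ lie in $\qs^{2\Z}$, hence the shifted blocks are pairwise non-singular in both orders, hence the tensor product is simple by the cyclicity criterion) goes through as you describe. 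So the gap is repairable, but as written the decomposition you construct is not the one asserted by the theorem.
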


\emph{In the sequel, we shall focus on the category $\scrC^{0}$ rather than $\scrC$.}
\subsection{The category $\scrC_{\calQ}$} \label{Subsection: CQ}
Let us fix a Q-datum $\calQ$ for $\g$.
Recall the bijection $\bphi_{\calQ} \colon \hI \to \hsfR^{+}$ 
and its restriction $\bphi_{\calQ, 0} \colon \hI_{\calQ} \to \sfR^+$ 
defined in Subsection~\ref{subsection: folding}.
Following~\cite{HL15, OS19b, OhS19}, we set
$$
V_{\calQ}(\alpha) := L(Y_{i,p}), \qquad \text{if} \ \ \bphi_{\calQ, 0}^{-1}(\alpha) = (i,p) \in \hI_{\calQ}
$$
for each $\alpha \in \sfR^{+}$.
Then the category $\scrC_{\calQ}$ is defined to be the smallest monoidal Serre subcategory of $\scrC^{0}$
containing the collection of fundamental modules
$\{ V_{\calQ}(\alpha) \mid \alpha \in \sfR^{+}\} = \{ L(Y_{i,p} ) \mid (i,p) \in \hI_{\calQ}\}$.
It turns out that we have $\Irr \scrC_{\calQ} = \{ L(m) \mid m \in \calM^{+}_{\calQ}\}$,
where $\calM^{+}_{\calQ}$ is the set of all the monomials in the variables $Y_{i,p}$ with $(i,p) \in \hI_{\calQ}$
(see~\cite[Lemma 5.8]{HL15} and \cite[Lemma 3.26]{HO19}).

The category $\scrC_{\calQ}$ can be seen as a ``heart'' of the category $\scrC^{0}$
due to the following property.

\begin{proposition}
\label{proposition: CQ}
For any $(\alpha, k) \in \hsfR^{+}$, we have
$$
\scrD^{k}(V_{\calQ}(\alpha)) \cong L(Y_{i,p}), \qquad \text{where $(i,p) = \bphi_{\calQ}^{-1}(\alpha, k)$}.
$$
In particular, the set $\{ \scrD^{k}( V_{\calQ}(\alpha)) \mid (\alpha, k) \in \hsfR^{+}\}$ forms a complete and irredundant collection
of fundamental modules in $\scrC^{0}$ up to isomorphisms.
\end{proposition}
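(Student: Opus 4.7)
The plan is to combine Proposition~\ref{proposition: dual} (which describes how the duality functor $\scrD$ acts on simple modules in terms of the automorphism $\bfD$ of $\fkD$) with Proposition~\ref{proposition: shift hDs} (which describes how the bijection $\bphi_{\calQ}$ interacts with a certain shift on $\hI$). The statement is essentially a translation between these two pieces of data via $\bphi_{\calQ}$, so the argument will be direct once the conventions are aligned.

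First, I would rewrite the formula for $\bfD$ in the $\calM$-coordinates. Under the embedding $\calM \hookrightarrow \fkD$, $Y_{i,p} = \pi_{i, \qs^{p}}$, and since $q^{h^{\vee}} = \qs^{r h^{\vee}}$, the action of $\bfD$ on monomials takes the form $\bfD(Y_{i,p}) = Y_{i^{*}, p + \varepsilon \cdot r h^{\vee}}$ for a fixed sign $\varepsilon \in \{\pm 1\}$ (which must coincide with the sign appearing in Proposition~\ref{proposition: shift hDs} by compatibility, as I verify in the next step). Hence from Proposition~\ref{proposition: dual},
\[
\scrD\bigl(L(Y_{i,p})\bigr) \cong L(Y_{i^{*}, p + \varepsilon \cdot rh^{\vee}}).
\]

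Second, I would set $(i_{0}, p_{0}) := \bphi_{\calQ}^{-1}(\alpha, 0) \in \hI_{\calQ}$, so that by definition $V_{\calQ}(\alpha) = L(Y_{i_{0}, p_{0}})$, and then prove the isomorphism $\scrD^{k}(V_{\calQ}(\alpha)) \cong L(Y_{i_{k}, p_{k}})$ with $(i_{k}, p_{k}) := \bphi_{\calQ}^{-1}(\alpha, k)$ by induction on $|k|$. The inductive step iterates the first step: applying $\scrD$ shifts $(i_{k}, p_{k})$ to $(i_{k}^{*}, p_{k} + \varepsilon \cdot rh^{\vee})$, and by Proposition~\ref{proposition: shift hDs} this equals $\bphi_{\calQ}^{-1}(\alpha, k+1) = (i_{k+1}, p_{k+1})$, giving the desired formula. (A parallel argument for $\scrD^{-1}$ handles $k<0$.)

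Finally, for the ``in particular'' assertion, I would invoke that by construction $\Irr \scrC^{0}$ contains precisely the fundamental modules $\{L(Y_{i,p}) \mid (i,p) \in \hI\}$, and use the fact established in Subsection~\ref{subsection: folding} that $\bphi_{\calQ} \colon \hI \to \hsfR^{+}$ is a bijection. Since the first part of the proposition identifies $\scrD^{k}(V_{\calQ}(\alpha))$ with $L(Y_{\bphi_{\calQ}^{-1}(\alpha, k)})$, the collection $\{\scrD^{k}(V_{\calQ}(\alpha)) \mid (\alpha, k) \in \hsfR^{+}\}$ is the image of the bijection $\bphi_{\calQ}^{-1}$ applied to the exhaustive and irredundant family $\{L(Y_{i,p}) \mid (i,p) \in \hI\}$, hence is itself exhaustive and irredundant. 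The only delicate point is reconciling the sign conventions of $\bfD$ and of the shift in Proposition~\ref{proposition: shift hDs}; this is merely bookkeeping, but it is where the compatibility between the representation-theoretic duality and the combinatorial bijection $\bphi_{\calQ}$ truly intervenes.
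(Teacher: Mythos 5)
Your proposal is correct and follows essentially the same route as the paper, whose entire proof is ``It follows from Proposition~\ref{proposition: shift hDs}'': you simply make explicit the two inputs (Proposition~\ref{proposition: dual} for the action of $\scrD$ on Drinfeld polynomials, Proposition~\ref{proposition: shift hDs} for the combinatorial shift on $\hI$) together with the bijectivity of $\bphi_{\calQ}$. On the one delicate point you flag: $\bfD(Y_{i,p})=Y_{i^{*},p+rh^{\vee}}$ forces $\varepsilon=+1$, and a direct check from Corollary~\ref{corollary: opposite end} and $\tau_{\calQ}^{rh^{\vee}}=1$ shows $\bphi_{\calQ}(i^{*},p+rh^{\vee})=(\alpha,k+1)$, so the compatibility you assert does hold (the branch of the $\pm$ in Proposition~\ref{proposition: shift hDs} one must use is the one with $p+rh^{\vee}$ paired with $k+1$).
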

\begin{proof}
It follows from Proposition~\ref{proposition: shift hDs}.
\end{proof}

\subsection{$\calQ$-weights and block decomposition}
\label{sec: Q-weights}
For a Laurent monomial $m \in \calM$, we write
\begin{equation} \label{eq: um}
m = \prod_{(i,p) \in \hI} Y_{i,p}^{u_{i,p}(m)}
\end{equation}
with $u_{i,p}(m) \in \Z$.
We define the map $h \colon \hsfR^+ \to \sfQ$ by 
$h(\alpha, k) := (-1)^k \alpha$.
\begin{definition} [cf.~\cite{KKOP20b}] \label{def: new-weights} 
Let $\calQ$ be a Q-datum for $\g$ and 
$m \in \calM$ be a Laurent monomial.
With the above notation,
 we define \emph{the $\calQ$-weight of $m$} by
$$
\wt_\calQ(m)  = \sum_{(i,p) \in \hI} 
u_{i,p}(m)h(\bphi_{\calQ}(i,p)) \in \sfQ.
$$
\end{definition}
Note that the $\calQ$-weights belong to the root lattice $\sfQ$ of the simply-laced Lie algebra $\sfg$ rather than that of $\g$.
The assignment $m \mapsto \wt_{\calQ}(m)$ defines the group homomorphism $\wt_\calQ \colon \calM \to \sfQ$
and equips the Laurent polynomial ring $\calY = \Z[\calM]$ with a $\sfQ$-grading. 

We shall see that the simple $q$-characters are homogeneous 
with respect to these $\sfQ$-gradings.
Let $\hI_{(+D)} := \{ (i,p) \in I \times \Z \mid (i,p-d_i) \in \hI \}$.
For each $(i,p) \in \hI_{(+D)}$, 
following~\cite{FR99},
we define 
the Laurent monomial $A_{i,p} \in \calM$  by
\begin{equation}
\label{eq: def A}
A_{i,p} := Y_{i, p-d_{i}} Y_{i, p+d_{i}} \left(\prod_{j: c_{ji}=-1} Y_{j,p}^{-1}\right)
\left(\prod_{j: c_{ji}=-2} Y_{j, p-1}^{-1} Y_{j, p+1}^{-1} \right)
\left( \prod_{j: c_{ji}=-3} Y_{j, p-2}^{-1} Y_{j, p}^{-1} Y_{j, p+2}^{-1} \right).
\end{equation}
By the $\g$-additive property in~\eqref{eq: twisted
additivity} and the property~\eqref{eq: bphi}, one can easily check
\begin{equation} \label{rmk: wt0}
\wt_\calQ(A_{i,p}) =0
\end{equation}
for any $(i,p) \in \hI_{(+D)}$ and a Q-datum $\calQ$ for $\g$.
Recall the following important result. 
\begin{theorem}[\cite{FM01}]\label{theorem: FM}
For each dominant monomial $m \in \calM^+$, the $q$-character of the 
corresponding simple module $L(m) \in \scrC^0$ is of the form
$$\chi_{q}(L(m)) = m \left(1 + \sum_{k} M_k\right)$$
where each $M_k$ is a monomial in the variables 
$A_{i,p}^{-1}$ with $(i,p) \in \hI_{(+D)}$.
\end{theorem}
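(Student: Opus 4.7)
The plan is to follow the classical strategy of Frenkel–Mukhin \cite{FM01}, adapted to our setup for $\scrC^0$.

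First, I would introduce the partial order $\preceq$ on the monoid $\calM$ of Laurent monomials by declaring $m' \preceq m$ if and only if $m \cdot (m')^{-1}$ can be written as a product of the monomials $A_{i,p}$ with $(i,p) \in \hI_{(+D)}$. With this, the theorem is equivalent to the assertion that every Laurent monomial $m'$ appearing in $\chi_q(L(m))$ satisfies $m' \preceq m$, together with the fact that the coefficient of $m$ in $\chi_q(L(m))$ equals $1$.

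The second step is to verify that $m$ itself occurs in $\chi_q(L(m))$ with multiplicity exactly one and is the unique maximal $\ell$-weight of $L(m)$. This is standard from the construction of $L(m)$ as an $\ell$-highest weight module: the line $\kk v \subset L(m)$ generated by the $\ell$-highest weight vector is the $\ell$-weight space $L(m)_{\Psi}$ with $\Psi$ corresponding to $m$, and any other nonzero $\ell$-weight space is reached from $v$ by applying a nonzero product of lowering operators $x_{i,k}^-$.

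The core step, and the main obstacle, is to show that each such lowering step moves the $\ell$-weight down by a factor of $A_{i,p}^{-1}$ (for suitable $p \in \Z$). The argument goes via restriction: for each $i \in I$, I would consider the subalgebra $U_i \subset U_q(L\g)$ generated by $\{x_{i,k}^{\pm}, h_{i,l}, k_i^{\pm 1}\}$, which is isomorphic to a quotient of $U_{q_i}(L\mathfrak{sl}_2)$. Restricting $L(m)$ to $U_i$, the $\ell$-weight spaces $L(m)_{\Psi}$ group into $\ell$-weight spaces of the $U_i$-module $L(m)|_{U_i}$, and using the classification of simple $U_{q_i}(L\mathfrak{sl}_2)$-modules I would show that within each $U_i$-isotypic piece the adjacent $\ell$-weights differ by $Y_{i,a}Y_{i,aq_i^2}^{-1}$-type factors, which are exactly the $i$-components of some $A_{i,p}^{-1}$. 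A careful bookkeeping of the full $\ell$-weight on the other indices $j \neq i$ (these components of $A_{i,p}^{-1}$ are rigidly determined by the combinatorics of $A_{i,p}$ in \eqref{eq: def A}) would close the argument.

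Finally, I would conclude by induction on the length of a shortest lowering sequence connecting $v$ to the $\ell$-weight space $L(m)_{\Psi'}$: the base case gives $m' = m$, and each inductive step multiplies by some $A_{i,p}^{-1}$, so $m' = m \cdot \prod_s A_{i_s,p_s}^{-1} \preceq m$. The technical heart is the $U_i$-restriction step, since one must show that the lowering remains consistent with \emph{all} components of $m$ simultaneously rather than only with the $i$-th column; this is precisely where Frenkel–Mukhin's analysis of the interplay between the $\ell$-weight structure and the screening-operator constraints on $\chi_q$ enters.
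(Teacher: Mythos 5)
First, note that the paper does not actually prove this statement: it is quoted from Frenkel--Mukhin [FM01], so your proposal has to be measured against their argument rather than against anything in this text. Your overall strategy --- $L(m)$ is spanned by vectors obtained from the $\ell$-highest weight line by products of the lowering operators $x_{i,k}^{-}$ (via the triangular decomposition in the Drinfeld generators), each application of $x_{i,k}^{-}$ shifts the $\ell$-weight by some $A_{i,p}^{-1}$, and an induction then places every monomial of $\chi_q(L(m))$ below $m$ --- is indeed the standard route, and the leading coefficient $1$ comes from $\dim L(m)_{m}=1$ as you say.

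The gap is in how you propose to establish the core shift lemma. Restricting to $U_i$ and invoking the classification of \emph{simple} $U_{q_i}(L\mathfrak{sl}_2)$-modules cannot deliver what your induction needs, for two reasons. First, $L(m)|_{U_i}$ is not semisimple, so at best you would need the string description of $q$-characters of arbitrary finite-dimensional $\mathfrak{sl}_2$-modules applied to generalized eigenspaces, not a list of simples. Second, and more seriously, $U_i$ does not contain $\phi_j^{\pm}(z)$ for $j\neq i$, so no amount of $U_i$-representation theory controls how the $j$-th components of the $\ell$-weight change under $x_{i,k}^{-}$; your remark that these components are ``rigidly determined by the combinatorics of $A_{i,p}$'' is circular, since matching the actual shift with the $j$-components of $A_{i,p}^{-1}$ is exactly what must be proved. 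The correct mechanism is the cross relation $(q_j^{-c_{ji}}z-w)\,\phi_j^{\ve}(z)x_i^{-}(w)=(z-q_j^{-c_{ji}}w)\,x_i^{-}(w)\phi_j^{\ve}(z)$: on a finite-dimensional module the series $x_i^{-}(w)v$ is supported at finitely many values $w=c$, and at each such $c$ the rational factor multiplies the generalized $\phi_j$-eigenvalue by precisely the $j$-component of the element of $\fkD$ corresponding to an $A_{i,\cdot}^{-1}$; assembling these over all $j$ yields $x_{i,k}^{-}L(m)_{\Psi}\subseteq\bigoplus_{p}L(m)_{\Psi A_{i,p}^{-1}}$, which is the statement your induction on lowering sequences actually uses. (A smaller omission: you should also justify why only $A_{i,p}^{-1}$ with $(i,p)\in\hI_{(+D)}$ occur, rather than arbitrary spectral parameters; this follows from the same local computation, since the parameters produced differ from those already present in $m$ by integer powers of $q_i^{2}$, so everything stays in $\calM$.)
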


By \eqref{rmk: wt0} and Theorem~\ref{theorem: FM},
we immediately obtain the following. 

\begin{proposition} \label{Prop:Qwt_simple}
For any dominant monomials $m \in \calM^+$ and a monomial $m'$ occurring in $\chi_{q}(L(m))$ and any ${\rm Q}$-datum $\calQ$ for $\g$, we have
$$\wt_\calQ(m)  =  \wt_\calQ(m').$$
In other words, the $q$-character $\chi_q(L(m))$ 
is a homogeneous element of $\calY$ 
with respect to the $\sfQ$-grading given by the $\calQ$-weight.
\end{proposition}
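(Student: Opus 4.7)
The plan is to deduce the proposition as a direct consequence of Theorem~\ref{theorem: FM} combined with the vanishing relation~\eqref{rmk: wt0}. Concretely, I would argue as follows. By Theorem~\ref{theorem: FM}, every monomial $m'$ appearing in $\chi_q(L(m))$ can be written in the form $m' = m \cdot M$, where $M$ is either $1$ or a (possibly empty) product of the Laurent monomials $A_{i,p}^{-1}$ with $(i,p) \in \hI_{(+D)}$. In particular, $m'/m$ belongs to the subgroup of $\calM$ generated by $\{A_{i,p}^{\pm 1} \mid (i,p) \in \hI_{(+D)}\}$.

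Next, since the map $\wt_\calQ \colon \calM \to \sfQ$ is a group homomorphism (it is defined on the free abelian group $\calM$ by its values on the generators $Y_{i,p}$), the vanishing $\wt_\calQ(A_{i,p}) = 0$ from~\eqref{rmk: wt0} immediately yields $\wt_\calQ(A_{i,p}^{-1}) = 0$ for every $(i,p) \in \hI_{(+D)}$, and hence $\wt_\calQ(M) = 0$ for any such product $M$. Therefore
\[
\wt_\calQ(m') = \wt_\calQ(m) + \wt_\calQ(M) = \wt_\calQ(m),
\]
which is precisely the first claim. The second claim, that $\chi_q(L(m))$ is homogeneous of $\calQ$-weight $\wt_\calQ(m)$ with respect to the $\sfQ$-grading of $\calY$ induced by $\wt_\calQ$, is then just a reformulation: every monomial in the support of $\chi_q(L(m))$ has the same $\calQ$-weight as $m$ itself.

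There is essentially no obstacle to this argument once the two ingredients are in hand: Theorem~\ref{theorem: FM} is quoted from the literature, and the vanishing of $\wt_\calQ$ on each $A_{i,p}$, which was recorded in~\eqref{rmk: wt0}, is itself a direct consequence of the $\g$-additive property (Theorem~\ref{thm: twisted additivity}) applied via the identity~\eqref{eq: bphi} that translates the coordinates of $\hI$ into roots through the action of the generalized Coxeter element $\tau_\calQ$. Thus the proof consists merely in assembling these two facts and noting that $\wt_\calQ$ is a group homomorphism on $\calM$; no further combinatorial input is required.
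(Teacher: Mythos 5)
Your proof is correct and is exactly the argument the paper uses: the paper also deduces the proposition immediately from Theorem~\ref{theorem: FM} together with the vanishing $\wt_\calQ(A_{i,p})=0$ recorded in~\eqref{rmk: wt0}, using that $\wt_\calQ$ is a group homomorphism on $\calM$.
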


\begin{definition}
Let $\calQ$ be a Q-datum for $\g$.
For a simple module $V$ in $\scrC^0$, we set
$$  \wt_\calQ(V) \seteq   \wt_\calQ(m)   \quad \text{ for any $m \in \calM$ occurring in $\chi_q(V)$.}$$
This is well-defined thanks to Proposition~\ref{Prop:Qwt_simple}.
\end{definition}

Let $\calM_{A} \subset \calM$ denote the subgroup
generated by the elements $A_{i,p}$ with $(i,p) \in \hI_{(+D)}$
and consider the quotient group $\calM/\calM_{A}$,
which was initially introduced by~\cite{CM05}.

\begin{proposition}
\label{proposition: elliptic}
For each {\rm Q}-datum $\calQ$ for $\g$, the homomorphism
$\wt_{\calQ} \colon \calM \to \sfQ$ induces an isomorphism
of abelian groups 
\begin{equation} \label{eq: CMisom}
\calM/\calM_{A} \simeq \sfQ.
\end{equation}
\end{proposition}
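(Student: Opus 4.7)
My plan is to exploit the bijection $\bphi_{\calQ} \colon \hI \to \hsfR^{+}$ to build an explicit inverse to the induced map on the quotient. First, note that $\wt_{\calQ}$ vanishes on $\calM_{A}$ by \eqref{rmk: wt0}, so it induces a homomorphism $\overline{\wt_{\calQ}} \colon \calM/\calM_{A} \to \sfQ$. Setting $(i_{\im}, p_{\im}) := \bphi_{\calQ}^{-1}(\alpha_{\im}, 0) \in \hI_{\calQ}$ for each $\im \in \Delta_{0}$, one has $\wt_{\calQ}(Y_{i_{\im}, p_{\im}}) = \alpha_{\im}$; surjectivity of $\overline{\wt_{\calQ}}$ then follows at once, since $\{\alpha_{\im}\}_{\im \in \Delta_{0}}$ is a $\Z$-basis of $\sfQ$.

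For injectivity I will construct a candidate inverse $\psi \colon \sfQ \to \calM/\calM_{A}$ by setting $\psi(\alpha_{\im}) := \overline{Y_{i_{\im}, p_{\im}}}$ and extending $\Z$-linearly. By construction $\overline{\wt_{\calQ}} \circ \psi = \id_{\sfQ}$, so it will remain to verify $\psi \circ \overline{\wt_{\calQ}} = \id_{\calM/\calM_{A}}$: equivalently, I must show that for every $(i, p) \in \hI$, the class $\overline{Y_{i,p}}$ equals $\sum_{\im} n_{\im} \overline{Y_{i_{\im}, p_{\im}}}$ in $\calM/\calM_{A}$, where $\wt_{\calQ}(Y_{i,p}) = \sum_{\im} n_{\im}\alpha_{\im}$.

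I propose to establish this identity by a two-step induction based on the defining relations $\overline{A_{j,q}} = 1$ in $\calM/\calM_{A}$. In the first step, I will use these relations to propagate any $(i,p) \in \hI$ into the finite subset $\hI_{\calQ}$: since each $A_{j,q}$ contains the factor $Y_{j, q-d_{j}}Y_{j, q+d_{j}}$, it allows translation in the $p$-coordinate at the cost of introducing $Y$'s at neighbouring indices $j' \sim j$. In the second step, for $(i, p) \in \hI_{\calQ}$ with $\bphi_{\calQ, 0}(i, p) = \alpha$, I will induct on the height of $\alpha$, reducing to the tautological case $\alpha = \alpha_{\im}$. The consistency of both inductions rests on Theorem~\ref{thm: twisted additivity}: applying $\wt_{\calQ}$ to an $A$-relation produces exactly the corresponding $\g$-additive identity in $\sfQ$, so that the combinatorial reductions on the $\calM$-side are matched term-by-term by the $\Z$-linear identities on the $\sfQ$-side.

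The hard part will be organizing these inductions so that every $(i,p) \in \hI$ is eventually reduced to the base set $\{(i_{\im}, p_{\im})\}_{\im \in \Delta_{0}}$. This will require choosing an appropriate well-founded order on $\hI$ (for instance, one measuring the distance from the finite set $\hI_{\calQ}$, which is bounded along each $i$-fiber) and verifying that each application of an $A$-relation strictly decreases the order parameter, so that the inductive procedure terminates and yields a unique normal form compatible with $\psi$.
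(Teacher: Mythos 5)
Your argument has the same formal skeleton as the paper's: a surjective homomorphism from an abelian group generated by $n$ elements onto $\sfQ\cong\Z^{n}$ carrying those generators to a $\Z$-basis is automatically an isomorphism, and your $\psi$ is precisely the splitting that witnesses this. The paper realizes the skeleton with the generators $\{\overline{Y_{\bar{\im},\uxi_{\im}}}\mid\im\in\Delta_{0}\}$, whose images $\gamma^{\calQ}_{\im}$ form a basis of $\sfQ$ by Lemma~\ref{lemma: gamma}, whereas you take the preimages of the simple roots under $\bphi_{\calQ}$. Two comments. First, your ``term-by-term matching'' is superfluous: once you know that every class $\overline{Y_{i,p}}$ lies in the subgroup generated by your $n$ base classes, the coefficients are forced to be the $n_{\im}$'s, because $\overline{\wt_{\calQ}}$ is already known to be well defined on the quotient and the $\alpha_{\im}$ are linearly independent. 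So the only substantive claim is that $\calM/\calM_{A}$ is generated by your $n$ classes; the ``consistency'' of the rewriting with the $\g$-additive identities comes for free.

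Second, that generation statement --- which you rightly identify as the hard part and leave as a plan --- is where your choice of base set works against you. The vertices $\bphi_{\calQ}^{-1}(\alpha_{\im},0)$ are scattered throughout $\Gamma_{\calQ}$, and the second stage of your induction (on the height of $\alpha$ inside $\hI_{\calQ}$) does not obviously close up: a relation $A_{j,q}=1$ invoked at a vertex of the finite set $\hI_{\calQ}$ generally involves vertices outside $\hI_{\calQ}$, and the mesh relations $\beta+\tau_{\calQ}^{d}\beta=\sum(\cdots)$ do not decrease height monotonically. The reduction is cleaner for the paper's generators: in each monomial $A_{i,p}$ the factor $Y_{i,p+d_{i}}$ (resp.\ $Y_{i,p-d_{i}}$) is the unique one with strictly maximal (resp.\ minimal) spectral parameter, so solving for the extremal factor gives a two-sided monotone reduction toward the slice $\{(\bar{\im},\uxi_{\im})\mid\im\in\Delta_{0}\}$, which meets every residue class of the spectral parameter at every node. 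I would either switch to those generators (accepting that their images are $\gamma^{\calQ}_{\im}$ rather than $\alpha_{\im}$, still a basis by Lemma~\ref{lemma: gamma}) or first reduce to that slice and only then pass to your base set.
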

\begin{proof}
By Remark~\ref{rmk: wt0}, the map $\wt_{\calQ}$ induces a surjective homomorphism
$\wt_{\calQ} \colon \calM/\calM_{A} \twoheadrightarrow \sfQ$.
Note that
the quotient group $\calM/\calM_{A}$ is generated by
the classes of the elements $\{Y_{\bar{\im},\uxi_{\im}} \mid \im \in \Delta_{0}\}$.
On the other hand, we know that their images
$\{\gamma^{\calQ}_{\im} = \wt_{\calQ}(Y_{\bar{\im}, \uxi_{\im}}) \mid \im \in \Delta_{0}\}$
form a free basis of $\sfQ$ by Lemma~\ref{lemma: gamma}.
Therefore $\wt_{\calQ}$ gives an isomorphism $\calM/\calM_{A} \simeq \sfQ$.
\end{proof}

Under the isomorphism~\eqref{eq: CMisom} $\wt_{\calQ} \colon \calM/\calM_{A} \simeq \sfQ$,
the element $\wt_{\calQ}(V)$ is identical to the \emph{elliptic character} of the simple module $V$
in the sense of~\cite{CM05}.
Therefore, we can re-express the main result of \cite{CM05} 
in terms of the $\calQ$-weight as follows. 

\begin{theorem}[{\cite[Theorem 8.3]{CM05}}] \label{Thm: CM}
Fix a Q-datum $\calQ$ for $\g$.
For each $\alpha \in \sfQ$, 
let $\scrC^0_{\alpha}$ denote the Serre subcategory of $\scrC^0$ generated by all the simple modules $V$ satisfying $\wt_\calQ(V) = \alpha$.
Then we have a block decomposition
$$
\scrC^0 = \bigoplus_{\alpha \in \sfQ} \scrC^0_{\alpha}.
$$
In particular, for any indecomposable module $V \in \scrC^0$,
its $q$-character $\chi_{q}(V) \in \calY$ is homogeneous with respect to $\wt_{\calQ}$
and hence $\wt_\calQ(V)$ is well-defined.

\end{theorem}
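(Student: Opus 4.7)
The plan is to reduce the statement to the block decomposition of Chari--Moura \cite{CM05} via the identification of $\calQ$-weights with elliptic characters, as foreshadowed in the remarks around Proposition~\ref{proposition: elliptic}. Recall that \cite[Theorem~8.3]{CM05} establishes a block decomposition
\[
\scrC^0 = \bigoplus_{\bar m \, \in \, \calM/\calM_A} \scrC^0_{\bar m},
\]
where $\scrC^0_{\bar m}$ is the Serre subcategory generated by those $L(m')$ with $m' \equiv m \pmod{\calM_A}$. (More precisely, one uses Theorem~\ref{theorem: FM} to see that the class of the dominant monomial of a simple module modulo $\calM_A$ depends only on the isomorphism class of the simple module, giving a well-defined ``elliptic character'' on $\Irr\scrC^0$; CM then prove that two simples lie in the same block if and only if their elliptic characters coincide.)

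Given this input, the first step is to observe that Proposition~\ref{proposition: elliptic} gives an isomorphism $\wt_\calQ \colon \calM/\calM_A \xrightarrow{\sim} \sfQ$, so relabelling the CM blocks by $\alpha \in \sfQ$ via this isomorphism yields $\scrC^0 = \bigoplus_{\alpha \in \sfQ} \scrC^0_\alpha$, provided we can identify each of our $\scrC^0_\alpha$ (Serre-generated by simples with $\calQ$-weight $\alpha$) with the corresponding CM block. This identification is immediate from Proposition~\ref{Prop:Qwt_simple}: for a simple $L(m)$, the invariant $\wt_\calQ(L(m))$ is $\wt_\calQ(m)$, and by Proposition~\ref{proposition: elliptic} this equals $\alpha$ if and only if $m$ lies in the preimage coset $\wt_\calQ^{-1}(\alpha) \in \calM/\calM_A$. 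Thus the two labelings of $\Irr\scrC^0$ agree.

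For the ``in particular'' part, I would argue as follows. Any indecomposable $V \in \scrC^0$ lies in a unique block (being indecomposable forbids nontrivial splitting across the direct sum decomposition of $\scrC^0$), so there exists a single $\alpha \in \sfQ$ such that every simple subquotient $L(m)$ of $V$ satisfies $\wt_\calQ(L(m)) = \alpha$. By Proposition~\ref{Prop:Qwt_simple}, every monomial occurring in $\chi_q(L(m))$ has $\calQ$-weight $\alpha$. Since $\chi_q$ is additive on short exact sequences and $\chi_q(V)$ is the sum of the $\chi_q(L(m))$ over the Jordan--H\"older constituents of $V$ (with multiplicities), every monomial occurring in $\chi_q(V)$ has $\calQ$-weight $\alpha$, i.e.\ $\chi_q(V)$ is homogeneous, and $\wt_\calQ(V) := \alpha$ is well-defined.

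The only nontrivial ingredient is the CM block decomposition itself, whose proof uses Theorem~\ref{theorem: FM} together with an explicit analysis of tensor products of fundamental modules to produce enough link between simples sharing the same elliptic character; but since the statement is being quoted from the literature, no obstacle arises in our proof. The main conceptual point, and the reason this reformulation is useful, is that the abstract quotient $\calM/\calM_A$ that appears in CM is now given a concrete, basis-free identification with the root lattice $\sfQ$ of $\sfg$ via any choice of $\calQ$, with the $\g$-additive property~\eqref{eq: twisted additivity} of $\tau_\calQ$ supplying the key vanishing $\wt_\calQ(A_{i,p}) = 0$ already recorded in~\eqref{rmk: wt0}.
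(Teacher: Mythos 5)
Your proposal is correct and matches the paper's treatment: the paper likewise obtains this theorem by quoting \cite[Theorem 8.3]{CM05} and relabelling the Chari--Moura blocks through the isomorphism $\wt_{\calQ}\colon \calM/\calM_{A}\simeq\sfQ$ of Proposition~\ref{proposition: elliptic}, which identifies $\wt_{\calQ}(V)$ with the elliptic character. Your additional spelling-out of the ``in particular'' part (indecomposability forces a single block, and additivity of $\chi_q$ on Jordan--H\"older constituents gives homogeneity) is exactly the intended argument.
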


\begin{remark}
The block decomposition by the $\calQ$-weight in Theorem~\ref{Thm: CM} turns out to be the same as 
the block decomposition recently obtained in~\cite{KKOP20b}, which investigates all quantum affine algebras.
See Remark~\ref{remark: KKOP20b} below.
In this sense, our description connects the result of \cite{CM05}
with that of \cite{KKOP20b}.
\end{remark}

\begin{lemma} \label{lemma: dual wt}
Let $\calQ$ be a Q-datum for $\g$. For any indecomposable module $V \in \scrC^0$ and $k \in \Z$, we have
$$
\wt_{\calQ}(\scrD^k V) = (-1)^k \wt_\calQ(V). 
$$
\end{lemma}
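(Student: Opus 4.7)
The plan is to reduce to the case of simple modules, and then translate the action of the duality functor $\scrD$ into an explicit combinatorial operation on $\hI$ via the map $\bphi_{\calQ}$.

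First I would reduce to the case when $V = L(m)$ is simple. Indeed, by Theorem~\ref{Thm: CM}, every indecomposable module in $\scrC^0$ lies in a single block $\scrC^0_\alpha$, so all its composition factors have a common $\calQ$-weight $\alpha = \wt_{\calQ}(V)$. Since $\scrD^k$ is an exact autoequivalence, it permutes the simple composition factors (sending $L(m)$ to $L(\bfD^k m)$ by Proposition~\ref{proposition: dual}), so proving the identity for simples gives it for indecomposables.

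Next, for $V = L(m)$ with $m = \prod_{(i,p) \in \hI} Y_{i,p}^{u_{i,p}(m)}$, Proposition~\ref{proposition: dual} gives $\scrD^k L(m) \cong L(\bfD^k m)$. Under the identification $Y_{i,p} \leftrightarrow \pi_{i, \qs^p}$ and using $q^{h^\vee} = \qs^{rh^\vee}$, the automorphism $\bfD$ acts on the monomial generators by $\bfD(Y_{i,p}) = Y_{i^*, p + rh^\vee}$, and therefore $\bfD^k(Y_{i,p}) = Y_{i^{*k}, p + krh^\vee}$ for all $k \in \Z$, where we set $i^{*k} := i$ if $k$ is even and $i^{*k} := i^*$ if $k$ is odd (note $** = \id$). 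Consequently
\[
\wt_{\calQ}(\scrD^k L(m)) \;=\; \sum_{(i,p) \in \hI} u_{i,p}(m)\, h\bigl(\bphi_{\calQ}(i^{*k},\, p + krh^\vee)\bigr).
\]

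The key step is to evaluate $\bphi_{\calQ}(i^{*k}, p + krh^\vee)$ by iterating Proposition~\ref{proposition: shift hDs}. If $\bphi_{\calQ}(i,p) = (\alpha, l)$, that proposition gives $\bphi_{\calQ}(i^*, p + rh^\vee) = (\alpha, l-1)$, and a straightforward induction on $|k|$ yields $\bphi_{\calQ}(i^{*k}, p + krh^\vee) = (\alpha, l-k)$. Hence
\[
h\bigl(\bphi_{\calQ}(i^{*k}, p + krh^\vee)\bigr) = (-1)^{l-k}\alpha = (-1)^{k}\,(-1)^{l}\alpha = (-1)^k\, h(\bphi_{\calQ}(i,p)).
\]
Substituting this back gives $\wt_{\calQ}(\scrD^k L(m)) = (-1)^k \sum_{(i,p)} u_{i,p}(m)\, h(\bphi_{\calQ}(i,p)) = (-1)^k \wt_{\calQ}(L(m))$, which completes the proof.

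I do not expect any real obstacle: the argument is essentially bookkeeping, and the only nontrivial ingredient is Proposition~\ref{proposition: shift hDs}, which has already been established. The mildest care is needed in handling negative $k$ (so that one sees $\bfD^{-1}$ corresponds to $(i,p) \mapsto (i^*, p - rh^\vee)$) and in keeping the signs consistent between the two sign conventions in Proposition~\ref{proposition: shift hDs}.
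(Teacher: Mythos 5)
Your proof is correct and follows the same route as the paper, whose proof is simply the citation of Proposition~\ref{proposition: dual} together with Proposition~\ref{proposition: shift hDs}; you have merely spelled out the bookkeeping (the identification $\bfD(Y_{i,p}) = Y_{i^*,\,p+rh^\vee}$, the iteration of the shift, and the reduction from indecomposables to simples via the block decomposition), all of which is accurate.
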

\begin{proof}
It follows from Proposition~\ref{proposition: dual} and Proposition~\ref{proposition: shift hDs}.
\end{proof}

\subsection{The skew-symmetric pairing $\scrN$}
In this subsection, we recall a $\Z$-valued skew-symmetric pairing $\scrN$
on the group $\calM$ introduced by Hernandez~\cite{Hernandez04} 
in his construction of the quantum Grothendieck ring
(see Remark~\ref{remark: qt} below).
Here we relate it with the $\calQ$-weight defined in the last subsection.

\begin{definition}[\cite{Hernandez04}]
Recall the notation~\eqref{eq: um}.
We define the group bi-homomorphism 
$\scrN \colon \calM \times \calM \to \Z$ by setting
$$
\scrN(m; m') := 
\sum_{(i,p), (j,s) \in \hI} u_{i,p}(m) u_{j,s}(m') \scrN(i,p;j,s), 
$$
for any $m,m' \in \calM$, where
\begin{equation}
\label{eq: def scrN}
\scrN(i,p;j,s) := \tc_{ij}(p-s-d_{i})- \tc_{ij}(p-s+d_{i}) - \tc_{ij}(s-p-d_{i})+ \tc_{ij}(s-p+d_{i}).
\end{equation}
For simple modules $V, W$ in $\scrC^0$, 
we set 
$$\scrN(V, W) := \scrN(m;m') \qquad 
\text{if $V \cong L(m), W \cong L(m')$ with $m,m' \in \calM^+$.}
$$
\end{definition}

\begin{remark}
For any $(i,p), (j,s) \in \hI$, we have
\begin{equation} \label{eq: scrN skew}
    \scrN(i, p; j, s) = -\scrN(j,s;i,p)
\end{equation}
by Lemma~\ref{lemma: tc1}~(\ref{tc1_symm}).
Therefore the pairing $\scrN$ is skew-symmetric. 
Moreover, we have
\begin{equation}
\label{eq: scrN half}
\scrN(i,p;j,s) = \tc_{ij}(p-s-d_{i})- \tc_{ij}(p-s+d_{i}) \qquad 
\text{if \ber $p-s \ge \delta_{i,j}$ \er},
\end{equation}
by
\ber
Lemma~\ref{lemma: small vanish}.
\er
\end{remark}

\begin{remark} \label{remark: qt}
Let $t$ be an indeterminate with a formal square root $t^{1/2}$.
The skew-symmetric pairing $\scrN$ was introduced by Hernandez~\cite{Hernandez04}
to define the \emph{quantum Grothendieck ring} $K_t(\scrC^0)$,
which is a $t$-deformation of the Grothendieck ring $K(\scrC^0)$.
The ring 
$K_t(\scrC^0)$
is constructed as a $\Z[t^{\pm 1}]$-subalgebra of the quantum torus $(\calY_t, *)$
that is
$\calY_t := \Z[t^{\pm 1}][\calM] \cong \Z[t^{\pm 1/2}] \otimes_{\Z} \calY$ 
with the multiplication $*$ given by
$$
m * m' = t^{\scrN(m;m')}m' * m := t^{\scrN(m;m')/2} mm' 
$$
for $m,m' \in \calM$. 
It satisfies that $\evt(K_t(\scrC^0)) = \chi_q(K(\scrC^0))$,
where $\evt \colon \calY_{t} \to \calY$ is the evaluation map at $t=1$.

For each simple module $L(m) \in \scrC^0$ with $m \in \calM^+$,
Hernandez~\cite{Hernandez04} further constructed its \emph{$(q,t)$-character}
$[L(m)]_t$ as an element of $K_t(\scrC^0)$, which contains $m$ as the leading term, 
by means of an analog of Kazhdan-Lusztig algorithm.
It was conjectured that $\evt([V]_t) = \chi_q(V)$ holds for any simple module $V$ in $\scrC^0$
(\cite[Conjecture 7.3]{Hernandez04}).
Note that we have 
$$ [V]_t * [W]_t = t^{\scrN(V, W)} [W]_t * [V]_t$$
in $K_t(\scrC^0)$ if $[V]_t$ and $[W]_t$ commute up to a power of 
$t^{\pm 1/2}$.
\end{remark}

\begin{proposition} \label{proposition: gamma wt pairing same}
Let $(i,p), (j,s) \in \hI$.
If $\bphi_{\calQ}(i,p)=(\alpha, k)$ and $\bphi_{\calQ}(j, s)=(\beta, l)$, we have
$$  \scrN(i,p; j,s) = (-1)^{\delta(p\ge s) + k+l} \delta( (\alpha,k) \ne (\beta,l) ) (\alpha, \beta).$$
\end{proposition}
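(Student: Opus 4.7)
The plan is to split into three cases according to the sign of $p - s$. The case $p < s$ will be reduced to $p > s$ via the skew-symmetry \eqref{eq: scrN skew} of $\scrN$. The main work goes into the case $p > s$, and the case $p = s$ will be handled separately by a direct computation. Throughout, I would rely on three principal tools: Theorem~\ref{theorem: combinatorial formula} (to convert $\tc_{ij}$ into the pairing expression $\eta_{ij}$), the defining relation $\gamma^{\calQ}_{\im} = (-1)^{k}\tau_{\calQ}^{(p-\uxi_{\im})/2}\alpha$ coming from \eqref{eq: bphi}, and the $\sfW \rtimes \langle \sigma \rangle$-invariance of the bilinear form $(\ ,\ )$.

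For the case $p > s$, Lemma~\ref{lemma: small vanish} forces the last two terms of \eqref{eq: def scrN} to vanish, so by \eqref{eq: scrN half},
$$
\scrN(i,p;j,s) = \tc_{ij}(p-s-d_i) - \tc_{ij}(p-s+d_i).
$$
Theorem~\ref{theorem: combinatorial formula} together with Lemma~\ref{lemma: eta property}~(\ref{eta_small vanish}) then replaces each $\tc_{ij}$ by the corresponding $\eta_{ij}$ (the latter is needed to handle the possibly negative argument $p-s-d_i$). Unfolding the definition of $\eta_{ij}$ and applying the identity $(\varpi_{\im}, \tau_{\calQ}^{-d_i} x) = (\tau_{\calQ}^{d_i}\varpi_{\im}, x)$, the difference collapses to
$$
\scrN(i,p;j,s) = -\bigl(\gamma^{\calQ}_{\im},\, \tau_{\calQ}^{(p-s+\uxi_{\jm}-\uxi_{\im})/2}\gamma^{\calQ}_{\jm}\bigr).
$$
Finally, inserting $\gamma^{\calQ}_{\im} = (-1)^{k}\tau_{\calQ}^{(p-\uxi_{\im})/2}\alpha$ and $\gamma^{\calQ}_{\jm} = (-1)^{l}\tau_{\calQ}^{(s-\uxi_{\jm})/2}\beta$ and using $\tau_{\calQ}$-invariance once more, the $\tau_{\calQ}$-exponents cancel exactly and we obtain $\scrN(i,p;j,s) = (-1)^{1+k+l}(\alpha, \beta)$, as desired, since $p>s$ forces $(i,p)\neq(j,s)$ and the indicator $\delta((\alpha,k)\neq(\beta,l))$ equals $1$.

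For the case $p = s$, a direct expansion of \eqref{eq: def scrN} yields $\scrN(i,p;j,p) = 0$: the outer two terms $\tc_{ij}(-d_i)$ both vanish by Lemma~\ref{lemma: small vanish}, while the inner two terms cancel. On the right-hand side, if $(i,p) = (j,s)$ then $(\alpha, k) = (\beta, l)$ and the indicator vanishes; otherwise, necessarily $i \ne j$ since $p = s$, and I would verify $(\alpha, \beta) = 0$ as follows. Using \eqref{eq: bphi} and $\tau_{\calQ}$-invariance as above,
$$
(\alpha, \beta) = (-1)^{k+l}\bigl(\gamma^{\calQ}_{\im},\, \tau_{\calQ}^{(\uxi_{\jm}-\uxi_{\im})/2}\gamma^{\calQ}_{\jm}\bigr),
$$
and expanding $\gamma^{\calQ}_{\im} = (1-\tau_{\calQ}^{d_i})\varpi_{\im}$ together with the definition of $\eta_{ij}$ rewrites the right factor as $\eta_{ij}(d_i) - \eta_{ij}(-d_i)$. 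Both of these vanish by Lemma~\ref{lemma: eta property}~(\ref{eta_small vanish}) when $i \ne j$.

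The case $p < s$ is then obtained from the established $p > s$ case by swapping the two arguments: starting from $\scrN(j,s;i,p) = (-1)^{1+l+k}(\beta, \alpha)$ and using the skew-symmetry \eqref{eq: scrN skew}, one recovers $\scrN(i,p;j,s) = (-1)^{k+l}(\alpha,\beta)$, which matches the formula since $\delta(p \ge s) = 0$. The principal obstacle is the symbolic bookkeeping in the $p > s$ case: one must choose the right places to invoke $\tau_{\calQ}$-invariance so that the many powers of $\tau_{\calQ}$ telescope cleanly and the combination $(1-\tau_{\calQ}^{d_i})\varpi_{\im}$ can be recognized as $\gamma^{\calQ}_{\im}$. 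Once this computation is streamlined, the remaining cases follow essentially by specialization and symmetry.
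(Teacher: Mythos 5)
Your proposal is correct and follows essentially the same route as the paper: reduce via the skew-symmetry of $\scrN$, apply~\eqref{eq: scrN half} together with Theorem~\ref{theorem: combinatorial formula}, and collapse the difference of $\eta_{ij}$-values to $-(\gamma^{\calQ}_{\im}, \tau_{\calQ}^{(p-s+\uxi_{\jm}-\uxi_{\im})/2}\gamma^{\calQ}_{\jm})$ before invoking~\eqref{eq: bphi}. The paper merely folds your $p=s$ case into the main computation by reducing to $(i,p)\neq(j,s)$ with $p\ge s$ at the outset, so your separate verification that $(\alpha,\beta)=0$ there is a harmless (and correct) redundancy.
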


\begin{proof}
\ber Since the both sides of the desired equality are skew-symmetric,
we only have to consider the case $(i,p)\neq (j,s)$
and $p \ge s$. \er
Take $\im \in i$ and $\jm \in j$.
By Theorem~\ref{theorem: combinatorial formula} and (\ref{eq: scrN half}), we have
\begin{align*}
\scrN(i,p;j,s)
& =  -(\varpi_{\im},
\tau_{\calQ}^{(p-s+\uxi_{\jm}-\uxi_{\im})/2 }(\gamma^{\calQ}_{\jm})
- \tau_{\calQ}^{(p-s+\uxi_{\jm}-\uxi_{\im})/2 -d_{i}}(\gamma^{\calQ}_{\jm})) \allowdisplaybreaks\\
& =-  (\tau_{\calQ}^{(\uxi_{\im} - p)/2} (1-\tau_{\calQ}^{d_{i}})\varpi_{\im}, \tau_{\calQ}^{(\uxi_{\jm}-s)/2}(\gamma^{\calQ}_{\jm}))\allowdisplaybreaks\\
& =  - (\tau_{\calQ}^{(\uxi_{\im} - p)/2}(\gamma^{\calQ}_{\im}), \tau_{\calQ}^{(\uxi_{\jm}-s)/2}(\gamma^{\calQ}_{\jm}))\allowdisplaybreaks\\
& =  (-1)^{1+ k+l}(\alpha, \beta),
\end{align*}
where the last equality follows from (\ref{eq: bphi}).
This completes the proof.
\end{proof}

\begin{corollary} \label{corollary: scrN wt}
Let $m, m' \in \calM$ such that
$$
\min\{p \mid \exists i \in I, u_{i,p}(m) \neq 0\}
>
\max\{p \mid \exists i \in I, u_{i,p}(m') \neq 0\}.
$$
Then, for any Q-datum $\calQ$ for $\g$, we have
$$
\scrN(m;m') = -(\wt_\calQ(m), \wt_\calQ(m')).
$$
\end{corollary}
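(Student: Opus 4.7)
The plan is to reduce the identity to the single-variable case and then invoke Proposition~\ref{proposition: gamma wt pairing same} directly. Both sides of the desired equality are $\Z$-bilinear: the left side $\scrN(\,\cdot\,;\,\cdot\,)$ is bilinear by its very definition, and the right side $-(\wt_\calQ(\,\cdot\,),\wt_\calQ(\,\cdot\,))$ is bilinear because $\wt_\calQ \colon \calM \to \sfQ$ is a group homomorphism and $(\,\cdot\,,\,\cdot\,)$ is $\Z$-bilinear on $\sfQ$. So expanding $m = \prod_{(i,p)} Y_{i,p}^{u_{i,p}(m)}$ and $m' = \prod_{(j,s)} Y_{j,s}^{u_{j,s}(m')}$, it suffices to prove the identity on single generators, i.e.\ to show
\[
\scrN(i,p;j,s) \;=\; -\bigl(\wt_\calQ(Y_{i,p}),\,\wt_\calQ(Y_{j,s})\bigr)
\]
for all $(i,p),(j,s) \in \hI$ satisfying the hypothesis $p > s$ (which is precisely what the support condition forces on all surviving pairs in the expansion).

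Now I would write $\bphi_\calQ(i,p) = (\alpha,k)$ and $\bphi_\calQ(j,s) = (\beta,l)$. By the definition of the $\calQ$-weight with $h(\gamma,m) = (-1)^m\gamma$, we have $\wt_\calQ(Y_{i,p}) = (-1)^k \alpha$ and $\wt_\calQ(Y_{j,s}) = (-1)^l \beta$, so
\[
-\bigl(\wt_\calQ(Y_{i,p}),\,\wt_\calQ(Y_{j,s})\bigr) \;=\; -(-1)^{k+l}(\alpha,\beta).
\]
On the other hand, Proposition~\ref{proposition: gamma wt pairing same} gives
\[
\scrN(i,p;j,s) \;=\; (-1)^{\delta(p\ge s)+k+l}\,\delta\bigl((\alpha,k)\neq(\beta,l)\bigr)\,(\alpha,\beta).
\]
Under our assumption $p > s$ we have $\delta(p\ge s) = 1$; moreover $p > s$ forces $(i,p)\neq(j,s)$, and since $\bphi_\calQ \colon \hI \to \hsfR^{+}$ is a bijection by Theorem~\ref{thm: phi_Q}, we get $(\alpha,k)\neq(\beta,l)$, so the Kronecker delta is $1$. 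Thus $\scrN(i,p;j,s) = -(-1)^{k+l}(\alpha,\beta)$, matching the right-hand side. Summing over the supports of $m$ and $m'$ completes the proof.

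There is no real obstacle here: the statement is essentially a packaged form of Proposition~\ref{proposition: gamma wt pairing same}. The only point worth checking carefully is that the strict inequality in the hypothesis guarantees both $\delta(p \ge s) = 1$ and the non-coincidence of $\bphi_\calQ$-images, so that the sign combination works out correctly.
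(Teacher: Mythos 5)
Your proof is correct and is exactly the intended derivation: the paper states this as an immediate corollary of Proposition~\ref{proposition: gamma wt pairing same} without writing out the argument, and the reduction by bilinearity to single generators, together with the observations that $p>s$ forces $\delta(p\ge s)=1$ and (via bijectivity of $\bphi_{\calQ}$) the nonvanishing of the Kronecker delta, is precisely what is needed. Nothing is missing.
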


\section{R-matrices and related invariants}
\label{section: denominator}

In this section, we present a conjectural unified formulae of the denominators
of the normalized R-matrices between all the Kirillov-Reshetikhin modules.
We also compute the $\Lambda$-invariants introduced by \cite{KKOP20} in terms of the $\calQ$-weights
and the skew-symmetric pairing $\scrN$
discussed in Section~\ref{section: QAffine}.

\begin{convention} Throughout this section, we keep the following convention:
\begin{enumerate}
\item
For $f(z), g(z) \in \kk (\!( z )\!)$, we write
$f(z) \equiv g(z)$ if $f(z) / g(z) \in \kk[z^{\pm 1}]^{\times}$.
\item For $f(z) \in \kk(z)$ and $a \in \kk$,
we denote by $\zero_{z=a} (f(z))$ the order of zero of $f(z)$ at $z=a$.
\end{enumerate}
\end{convention}

\subsection{R-matrices}
In this subsection, we recall the notion of R-matrices between $U_{q}^{\prime}(\hg)$-modules
together with their denominators and universal coefficients
following~\cite[Section 8]{Kashiwara02} and \cite[Appendix A]{AK97}.
Choose a basis $\{ E_{\nu} \}_{\nu}$ of $U_{q}^{+}(\hg)$ and
a basis $\{ F_{\nu}\}_{\nu}$ of $U_{q}^{-}(\hg)$ 
\ber (with $\nu$ running over a certain index set) \er
which are dual to each other
with respect to a suitable pairing between $U_{q}^{+}(\hg)$ and $U_{q}^{-}(\hg)$.
For $U_{q}(L\g)$-modules $V$ and $W$ of type $\bf{1}$ ,
the \emph{universal R-matrix} defines a $U_{q}(L\g)$-linear homomorphism
$V \otimes W \to W \otimes V$
by
$$
R_{V, W}^{\univ}(v \otimes w) = q^{\ber \langle\lambda, \mu\rangle \er} \sum_{\nu} E_{\nu}w \otimes F_{\nu} v
$$
provided that the infinite sum has a meaning,
where $v \in V$, $w \in W$ and $\lambda = (\lambda_{i})_{i \in I}, \mu=(\mu_{i})_{i \in I} \in \Z^{I}$
are such that $K_{i}v = q_{i}^{\lambda_{i}}v, K_{i} w = q_{i}^{\mu_{i}} w$ for $i\in I$.
Here we set $\ber \langle\lambda, \mu\rangle \er := \sum_{i,j \in I} d_{i}\tc_{ij}\lambda_{i} \mu_{j}$.
Note that we have $\ber \langle\lambda, \mu\rangle = \langle\mu, \lambda\rangle\er$ by (\ref{eq: DtC symmetric}).

Recall the affinization $W_z$ of an object $W \in \scrC$
defined in Subsection~\ref{ssec: qaffine}.
For any objects $V, W \in \scrC$, it is known that $R^{\univ}_{V, W_{z}}$
converges in the $z$-adic topology and
induces a $U_{q}(L\g)\otimes \kk(\!(z)\!)$-linear isomorphism
\begin{equation*}
R^{\univ}_{V, W_{z}} \colon (V \otimes W_{z})\otimes_{\kk[z^{\pm 1}]} \kk(\!( z )\!)
\xrightarrow{\sim} (W_{z} \otimes V)\otimes_{\kk[z^{\pm 1}]} \kk(\!( z )\!).
\end{equation*}
Moreover, if $V, W$ are simple modules 
with $\ell$-highest weight vectors $v \in V$, $w \in W$,
there exists a unique element $a_{V, W}(z) \in \kk \lf z \rf^{\times}$
such that
$$
R^{\univ}_{V, W_{z}} (v \otimes w_{z}) = a_{V, W}(z) (w_{z} \otimes v),
$$
where $w_{z} := w \otimes 1 \in W_{z}$.
Then $R^{\norm}_{V, W_{z}} := a_{V, W}(z)^{-1} R^{\univ}_{V, W_{z}} |_{(V \otimes W_{z})\otimes_{\kk[z^{\pm 1}]} \kk(z)}$
induces a unique $U_{q}(L\g) \otimes \kk(z)$-linear isomorphism
$$
R^{\norm}_{V, W_{z}} \colon (V \otimes W_{z}) \otimes_{\kk[z^{\pm 1}]}\kk(z)
\xrightarrow{\sim} (W_{z} \otimes V) \otimes_{\kk[z^{\pm 1}]} \kk(z)
$$
satisfying
$
R^{\norm}_{V, W_{z}} (v \otimes w_{z}) = w_{z} \otimes v.
$
We call $a_{V, W}(z)$ 
(resp.~$R^{\norm}_{V, W_{z}}$)
the \emph{universal coefficient} 
(resp.~the \emph{normalized R-matrix})
between $V$ and $W$.

Let $d_{V, W}(z) \in \kk[z]$ 
be a monic polynomial of the smallest degree
such that the image of $d_{V, W}(z) R^{\norm}_{V, W_{z}}$ is contained in $W_{z} \otimes V$.
We call $d_{V,W}(z)$ the denominator of $R^{\norm}_{V, W_{z}}$.

The singularities of the normalized R-matrices
strongly reflect the structure of the tensor product modules.
For example, we have the following proposition.
Here we say that a simple module $V$ in $\scrC$ is \emph{real}
if its tensor square $V \otimes V$ is again simple.
For instance, every Kirillov-Reshetikhin module is
known to be real (see e.g.~\cite[proof of Theorem 4.11]{FH15}).
\begin{proposition}[{\cite[Section 3.2]{KKKO15}}]
\label{proposition: simplicity}
Let $V, W \in \Irr \scrC$ such that at least one of them is real.
Then the following three conditions are mutually equivalent:
\begin{itemize}
\item The tensor product $V \otimes W$ is a simple $U_{q}^{\prime}(\hg)$-module.
\item The objects $V$ and $W$ mutually commute.
\item We have $d_{V,W}(1) \cdot d_{W,V}(1) \neq 0$.
\end{itemize}
\end{proposition}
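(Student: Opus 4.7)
The plan is to proceed by proving the three implications (1) $\Rightarrow$ (2) $\Rightarrow$ (3) $\Rightarrow$ (1) cyclically, using the interplay between the normalized R-matrices and Schur's lemma. Without loss of generality I would assume that $V$ is the real simple module. The principal tool throughout is the observation that the normalized R-matrix $R^{\norm}_{V, W_{z}}(z)$ is a rational $U_q(L\g)\otimes\kk(z)$-linear isomorphism, and that whenever its denominator $d_{V,W}(z)$ does not vanish at $z=1$, its specialization at $z=1$ defines a non-zero $U'_q(\hg)$-linear homomorphism $\mathbf{r}_{V,W} \colon V\otimes W \to W\otimes V$.

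For (1) $\Rightarrow$ (2): assuming $V\otimes W$ is simple, consider the homomorphism obtained from $R^{\norm}_{V, W_z}(z)$ after clearing poles by multiplying by a suitable power of $(z-1)$; restricting to $z=1$ produces a non-zero map $V\otimes W \to W\otimes V$, which must be an isomorphism by Schur's lemma applied to the simple module $V\otimes W$. For (2) $\Rightarrow$ (3): if $V\otimes W \cong W\otimes V$, then the space $\Hom_{U'_q(\hg)}(V\otimes W, W\otimes V)$ is one-dimensional by Schur (using reality to avoid subtleties with multiplicities), and this space is spanned by the specialization of $R^{\norm}_{V,W_z}(z)/(z-1)^{\zero_{z=1}R^{\norm}_{V,W_z}(z)}$; matching this with the already-present non-zero intertwiner from the isomorphism forces the order of zero of $R^{\norm}$ to vanish, hence $d_{V,W}(1)\ne 0$, and symmetrically $d_{W,V}(1)\ne 0$.

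The crucial implication is (3) $\Rightarrow$ (1), where the reality hypothesis plays its essential role. With both $d_{V,W}(1)\ne 0$ and $d_{W,V}(1)\ne 0$, the specializations $\mathbf{r}_{V,W}$ and $\mathbf{r}_{W,V}$ are both defined and non-zero, and their composition equals a scalar on $V\otimes W$ (namely the specialization of $a_{V,W}(z)a_{W,V}(z^{-1})$ times the identity, up to a unit). The hard part will be to conclude that both tensor products are actually simple: one shows that any non-zero submodule $M\subset V\otimes W$ must, after pairing with the real module $V^{\otimes 2}$ through associativity, contain the $\ell$-highest weight vector $v\otimes w$; realness of $V$ is used to ensure that the head and socle of $V\otimes W$ coincide via a standard $R$-matrix head/socle identification argument, and the non-vanishing of both denominators then upgrades this to full simplicity. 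I expect this last step to be the main technical obstacle, as it is precisely the place where reality cannot be dropped and where the argument relies on the detailed description of simple head/socle of tensor products developed in the cited works.
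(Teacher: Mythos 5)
First, a caveat: the paper does not prove this proposition at all---it is quoted verbatim from \cite[Section 3.2]{KKKO15}---so your attempt can only be measured against the argument in that reference. Your overall skeleton (the cyclic chain of implications, the renormalized specialization $\mathbf{r}_{V,W}$ of $R^{\norm}_{V,W_z}$ at $z=1$, and the central role of the head/socle theory) is the right one, and your (1) $\Rightarrow$ (2) is fine. Two of the three steps, however, have genuine gaps.

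In (2) $\Rightarrow$ (3) you assert that $\Hom_{U_q'(\hg)}(V\otimes W, W\otimes V)$ is one-dimensional ``by Schur''. Schur's lemma is unavailable here: at this stage $V\otimes W$ is not known to be simple, and for a general non-simple module the endomorphism algebra can be large. The one-dimensionality is true, but it is precisely the hard content of \cite{KKKO15}: for $V$ real simple and $W$ simple, $V\otimes W$ has simple head and simple socle, the head occurs with multiplicity one in the composition series, and consequently $\End(V\otimes W)=\kk\,\id$. Your parenthetical ``using reality to avoid subtleties with multiplicities'' gestures at this but does not supply it; without that theorem the step collapses. Once it is granted, your conclusion does go through: $\mathbf{r}_{V,W}$ must be proportional to the given isomorphism, hence injective, hence cannot annihilate the $\ell$-highest weight vector $v\otimes w$, so no renormalizing power of $(z-1)$ was needed and $d_{V,W}(1)\neq 0$.

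The implication (3) $\Rightarrow$ (1) is not actually carried out. The composite of the two normalized R-matrices is the \emph{identity} of $V\otimes W_z$ as a rational family (not ``$a_{V,W}(z)a_{W,V}(z^{-1})$ times the identity''---that is the composite of the universal R-matrices), because it fixes $v\otimes w_z$ and $V\otimes W_a$ is simple for all but finitely many $a$. Hence under (3) both specializations exist and compose to the identity in both orders, so $\mathbf{r}_{V,W}\colon V\otimes W \to W\otimes V$ is an isomorphism. The missing punchline is then short: by \cite{KKKO15} the image of $\mathbf{r}_{V,W}$ equals $\mathrm{soc}(W\otimes V)$, which is simple (this is where realness enters), so surjectivity forces $W\otimes V$, and hence $V\otimes W$, to be simple. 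Your proposed substitute---``pairing with the real module $V^{\otimes 2}$ through associativity'' and the claim that ``the head and socle of $V\otimes W$ coincide''---is not a workable argument as stated (head and socle of $V\otimes W$ coincide only once simplicity is already known), and you yourself flag this step as an unresolved obstacle rather than resolving it.
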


The orders of zeros of the denominators $d_{V,W}(z)$ also play an important role
especially in the theory of monoidal categorification of cluster algebras
and the construction of the generalized quantum affine Schur-Weyl duality functors.
See~\cite{KKOP20} and \cite{KKK18} for more details.

The following properties are well-known.
\begin{lemma}[cf.~{\cite[Appendix A]{AK97}}]
\label{lemma: property ad}
Let $V, W$ be two simple modules in $\scrC$ and $a,b \in \kk^{\times}$. We have
\begin{enumerate}
\item \label{shift ad}
$a_{V_{a}, W_{b}}(z) = a_{V, W}((b/a)z)$, and $
d_{V_{a}, W_{b}}(z) \equiv d_{V,W}((b/a) z)$,
\item \label{dual ad}
$a_{V, W}(z) = a_{{}^{*}V, {}^{*}W}(z) = a_{V^{*}, W^{*}}(z)$ and
$d_{V, W}(z) = d_{{}^{*}V, {}^{*}W}(z) = d_{V^{*}, W^{*}}(z)$.
\end{enumerate}
\end{lemma}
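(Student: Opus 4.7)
The plan is to deduce both parts from standard functorial properties of the universal R-matrix $R^{\univ}$ and the affinization construction, essentially following the approach of~\cite[Appendix A]{AK97}.

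For part (1), I would use the canonical isomorphism $(V_a)_z \cong V_{az}$ of $U_{q}(L\g) \otimes \kk[z^{\pm 1}]$-modules, where $V_{az}$ is shorthand for the module $V \otimes \kk[z^{\pm 1}]$ with the Drinfeld generators acting by $x_{i,k}^{\pm}(v \otimes f(z)) = (x_{i,k}^{\pm} v) \otimes (az)^{k} f(z)$. Under the resulting identification $V_a \otimes (W_b)_z \cong V \otimes W_{(b/a)z}$ (after base change to $\kk(\!(z)\!)$), the $\ell$-highest weight tensor on the left side is matched with the $\ell$-highest weight tensor on the right side, and the universal R-matrix is intertwined because $R^{\univ}$ comes from an element of a suitable completion of $U_{q}(\hg) \otimes U_{q}(\hg)$ acting naturally. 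It follows that $a_{V_a, W_b}(z) = a_{V, W}((b/a)z)$. For the denominator, the same identification sends $R^{\norm}_{V_a,(W_b)_z}$ to $R^{\norm}_{V, W_{(b/a)z}}$, and the condition of having image in $(W_b)_z \otimes V_a$ (resp.~$W_{(b/a)z} \otimes V$) is preserved; this yields $d_{V_a, W_b}(z) \equiv d_{V, W}((b/a) z)$ up to a unit in $\kk[z^{\pm 1}]$.

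For part (2), I would exploit the compatibility of the universal R-matrix with the antipode $S$ of the Hopf algebra $U_{q}'(\hg)$, namely the identity $(S \otimes S)(R^{\univ}) = R^{\univ}$ valid in any quasi-triangular Hopf algebra. Recall that ${}^{*}V$ (resp.~$V^{*}$) is obtained by transporting the action on the dual space via $S$ (resp.~$S^{-1}$), and that the $\ell$-highest weight line of ${}^{*}V$ is identified with the dual of the $\ell$-lowest weight line of $V$, which is in turn canonically determined by the $\ell$-highest weight line of $V$. Combining these identifications, $R^{\univ}_{{}^{*}V,\,{}^{*}W_z}$ sends the $\ell$-highest weight tensor of ${}^{*}V \otimes {}^{*}W_z$ to the same scalar multiple as $R^{\univ}_{V,W_z}$ does on $v \otimes w_z$, yielding $a_{V,W}(z) = a_{{}^{*}V,\,{}^{*}W}(z)$. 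The case of the left dual is symmetric, using $S^{-1}$ in place of $S$. The equalities of denominators follow in the same way: the identification intertwines the corresponding normalized R-matrices, so the monic polynomials of smallest degree controlling their regularity must coincide.

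The main technical point is tracking the formal convergence of $R^{\univ}$ together with the base changes to $\kk(\!(z)\!)$ and $\kk(z)$, and confirming that the identifications used above are compatible with the $\ell$-highest weight structure in our chosen normalization (in particular the prefactor $q^{\langle \lambda, \mu \rangle}$ in the formula for $R^{\univ}$). All such verifications are carried out systematically in~\cite[Appendix A]{AK97}, so the strategy is to apply those results once we have checked that our conventions agree.
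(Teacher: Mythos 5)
Your proposal is correct and takes essentially the same route as the paper, which states this lemma without proof as a well-known consequence of the functoriality of the universal R-matrix and defers all verifications to \cite[Appendix A]{AK97}, exactly as you do. The one step worth flagging as genuinely nontrivial (though still covered by the cited appendix) is that in part (2) the scalar by which $R^{\univ}_{V,W_z}$ acts on the $\ell$-\emph{lowest} weight tensor must be shown to coincide with $a_{V,W}(z)$, since that is what makes the transposed operator again the \emph{normalized} R-matrix of the duals.
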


\subsection{A conjectural unified KR denominator formula}
\label{subsection: KR denominator}

In this subsection, we \ber give \er a conjectural unified formula
expressing the denominators of the normalized R-matrices
between all the KR modules.

For each $i \in I$ and $l \in \Z_{\ge 1}$, we write
$$
V^{(i)}_{l} := L\left(\pi^{(i)}_{l, q_{i}^{-l+1}}\right), \qquad
\text{where} \ \pi^{(i)}_{l, q_{i}^{-l+1}} = \pi_{i, q_{i}^{-l+1}} \pi_{i, q_{i}^{-l+3}} \cdots \pi_{i, q_{i}^{l-3}} \pi_{i, q_{i}^{l-1}}.
$$
Note that every KR module can be obtained as $(V^{(i)}_{l})_{a}$ for a suitable $a \in \kk^{\times}$.
When $l=1$, it gives a fundamental module $V^{(i)}_{1} = L(\pi_{i,1})$.
In view of~\ref{lemma: property ad}~(\ref{shift ad}), it is enough to consider the denominators
\begin{equation}
\label{eq: }
d_{i^{l}, j^{m}}(z) := d_{V^{(i)}_{l}, V^{(j)}_{m}}(z)
\end{equation}
for $i,j \in I$ and $l, m \in \Z_{\ge 1}$.
We also write $d_{i,j^{m}}(z) := d_{i^{1}, j^{m}}(z)$ and
$d_{i^{l}, j}(z) := d_{i^{l}, j^{1}}(z)$ for simplicity.

These denominators $d_{i, j}(z)$ have been computed
by \cite{DO94, AK97, KKK15, Oh15R, OhS19, Fujita19} and
 $d_{i^{l}, j^{m}}(z)$  have been computed in many cases by \cite{OhS19s}.

In Appendix~\ref{section: denominator formula}, we give a list of all the formulae of $d_{i^{l}, j^{m}}(z)$
which are currently known.

\begin{remark}
\label{remark: convention}
We have some remarks on the convention.
Our fundamental module $V^{(i)}_{1}$
is slightly different from the fundamental module $V(\varpi_{i})$
in the references e.g.,~\cite{AK97, Kashiwara02, KKOP20},
which possesses a global basis.
Indeed, it was shown by Nakajima~\cite[Section 3.1]{Nakajima04e} that we have
\begin{equation}
\label{eq: convention fund}
V(\varpi_i) \cong L(\pi_{i, a_{i}}) \cong (V^{(i)}_{1})_{a_{i}},
\qquad \text{with $a_{i} := -o(i)(-1)^{h}q^{-h^{\vee}}$}
\end{equation}
where $h$ is the Coxeter number of $\g$
and $o \colon I \to \{ \pm 1\}$ is as in Remark~\ref{remark: Beck}.

Similarly, our KR modules $V^{(i)}_{l}$ is slightly different from
the KR modules $V(i^{l})$ appearing in~\cite{OhS19s}.
Indeed, we have
\begin{equation}
\label{eq: convention KR}
V(i^{l}) = L\left(\pi^{(i)}_{l, (-q_{i})^{-l+1}a_{i}} \right) = (V^{(i)}_{l})_{(-1)^{l-1} a_{i}}.
\end{equation}

For $i, j \in I$, we have $a_{i}/a_{j} = o(i)/o(j) = (-1)^{d(i,j)}$, where $d(i,j)$ denotes
the distance between $i$ and $j$ in the Dynkin diagram of $\g$.
Thus, Lemma~\ref{lemma: property ad}~ together with (\ref{eq: convention KR}) implies
\begin{equation}
\label{eq: convention d}
d_{i^{l}, j^{m}}(z) \equiv d_{V(i^{l}), V(j^{m})}\left((-1)^{d(i,j) + l +m}z\right)
\end{equation}
for each $i,j \in I$ and $l, m \in \Z_{\ge 1}$.
One should notice that the same symbol $d_{i^{l}, j^{m}}(z)$ is used in~\cite{OhS19s}
for denoting $d_{V(i^{l}), V(j^{m})}(z)$, which is different from our convention.
The case-by-case denominator formulae listed in Appendix~\ref{section: denominator formula} have been rewritten
from the original formulae of $d_{V(i^{l}), V(j^{m})}(z)$
by using~(\ref{eq: convention d}).
They turn out to be obtained simply by forgetting signs appearing in the original ones.
\end{remark}

\begin{lemma}
\label{lemma: symmetry d}
For any $i,j \in I$, we have
$$d_{i,j}(z) =d_{j,i}(z) = d_{i^{*},j^{*}}(z).$$
\end{lemma}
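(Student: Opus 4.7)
The plan is to split the lemma into the two independent equalities $d_{i,j}(z) = d_{i^{*}, j^{*}}(z)$ and $d_{i,j}(z) = d_{j,i}(z)$ and address them separately, since they require different techniques: the first falls out of the general duality formalism, while the second seems to require inspection of the explicit formulas.

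For the equality $d_{i,j}(z) = d_{i^{*}, j^{*}}(z)$, I would apply the duality properties of Lemma~\ref{lemma: property ad}. By Proposition~\ref{proposition: dual}, the right dual of the fundamental module $V^{(i)}_{1} = L(\pi_{i,1})$ is
\[
{}^{*}V^{(i)}_{1} \cong L(\bfD \pi_{i,1}) = L(\pi_{i^{*}, q^{h^{\vee}}}) \cong (V^{(i^{*})}_{1})_{q^{h^{\vee}}}.
\]
Applying Lemma~\ref{lemma: property ad}~(\ref{dual ad}) to replace both factors by their right duals, and then (\ref{shift ad}) to absorb the common spectral shift $a=b=q^{h^{\vee}}$ (so that $b/a = 1$), one obtains
\[
d_{i,j}(z) = d_{{}^{*}V^{(i)}_{1},\,{}^{*}V^{(j)}_{1}}(z) \equiv d_{V^{(i^{*})}_{1}, V^{(j^{*})}_{1}}(z) = d_{i^{*}, j^{*}}(z).
\]
Since both sides are monic polynomials of the same degree, the equivalence $\equiv$ produced by (\ref{shift ad}) upgrades to a genuine equality.

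For the remaining equality $d_{i,j}(z) = d_{j,i}(z)$, the plan is to verify it by direct inspection of the explicit case-by-case formulas listed in Appendix~\ref{section: denominator formula}. In each type, the formula for $d_{i,j}(z)$ depends on the pair $(i,j)$ only through symmetric quantities such as $|i-j|$ and $\min(i,j)$ (and their type-specific analogs), so the symmetry under $(i,j) \leftrightarrow (j,i)$ is immediate from inspection.

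The main obstacle is that the symmetry $d_{i,j}(z) = d_{j,i}(z)$ resists a clean conceptual proof from the machinery already developed in the paper. The naive attempt via the simplicity criterion in Proposition~\ref{proposition: simplicity} only relates the zeros of $d_{V,W}(z)$ to the \emph{inverses} of zeros of $d_{W,V}(z)$, so it does not directly yield the identity (the denominators in question are not palindromic in general). Consequently the symmetry must be read off from the explicit denominator tables, which is the natural reason for placing this lemma after the introduction of Appendix~\ref{section: denominator formula}.
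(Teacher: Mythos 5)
Your treatment of the first equality $d_{i,j}(z)=d_{i^{*},j^{*}}(z)$ is correct and is essentially the paper's argument: combine Proposition~\ref{proposition: dual} (which gives ${}^{*}V^{(i)}_{1}\cong (V^{(i^{*})}_{1})_{q^{h^{\vee}}}$) with Lemma~\ref{lemma: property ad}~(\ref{dual ad}) and then (\ref{shift ad}) with $a=b=q^{h^{\vee}}$; since the denominators are monic with nonzero constant term, the resulting $\equiv$ is an equality. No complaints there.

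The second equality is where your proposal has a genuine gap. You claim the symmetry $d_{i,j}(z)=d_{j,i}(z)$ "resists a clean conceptual proof" and must be read off the tables in Appendix~\ref{section: denominator formula}, but this is backwards on both counts. First, the paper does give a conceptual proof: it invokes the general identity $d_{V(\varpi_{i}),V(\varpi_{j})}(z)=d_{V(\varpi_{j}),V(\varpi_{i})}(z)$ from \cite[(A.7)]{AK97} for the fundamental modules $V(\varpi_{i})$, and then transfers it to the present normalization via Lemma~\ref{lemma: property ad}~(\ref{shift ad}) together with the observation that $a_{j}/a_{i}=o(i)o(j)=a_{i}/a_{j}$ (so the two spectral shifts that appear are equal and cancel). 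Second, inspection of the appendix cannot serve as a proof: for types $\mathsf{F}_{4}$ and $\mathsf{G}_{2}$ the tables only record $d_{i,j}(z)$ for one ordering of each pair, i.e.\ the symmetry is presupposed in how the tables are compiled rather than verifiable from them; and more fundamentally those explicit formulas are imported from references (\cite{Oh15R, OhS19, OhS19s, Fujita19}) whose derivations already rely on the symmetry of the denominators (cf.\ \cite[Lemma 2.2]{OhS19s}), so your argument would be circular. If you want a self-contained proof, the missing ingredient is precisely the external input \cite[(A.7)]{AK97} plus the sign bookkeeping relating $V^{(i)}_{1}$ to $V(\varpi_{i})$ in \eqref{eq: convention fund}.
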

\begin{proof}
By~\cite[(A.7)]{AK97}, we know $d_{V(\varpi_{i}), V(\varpi_{j})}(z) = d_{V(\varpi_{j}), V(\varpi_{i})}(z)$.
Combining with Lemma~\ref{lemma: property ad}~(\ref{shift ad}),
we get $d_{i,j}((a_{j}/a_{i})z) = d_{j,i}((a_{i}/a_{j})z)$.
Since $a_{j}/a_{i} = o(i)o(j) = a_{i}/a_{j}$, we obtain the equality $d_{i,j}(z) =d_{j,i}(z)$.
The other equality $d_{i,j}(z) = d_{i^{*},j^{*}}(z)$ follows from Lemma~\ref{lemma: property ad}~(\ref{dual ad}) and
Proposition~\ref{proposition: dual}.
\end{proof}

Note that the case-by-case formulae of $d_{i^{l}, j^{m}}(z)$ listed in Appendix~\ref{section: denominator formula}
are expressed in a symmetric way under the exchange $l d_{i} \leftrightarrow m d_{j}$.
In what follows,
we obtain more unified formulae expressed in terms of the integers $\tc_{ij}(u)$, which however break such a symmetry.
First, we observe the following. \ber Recall the notation $q_s := q^{1/r}$ and $r_i = r/d_i$. \er

\begin{proposition}
\label{proposition: generalization}
\beb For $i,j \in I$, we have \eb
\begin{equation}
\label{eq: generalization}
d_{i^{r_{i}}, j}(z) = \prod_{u=0}^{rh^{\vee}}(z-\qs^{u+r})^{\tc_{ij}(u)}.
\end{equation}
\end{proposition}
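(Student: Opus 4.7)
The plan is to establish~(\ref{eq: generalization}) by a case-by-case verification across the types of $\g$. The proof is made possible because both sides are fully explicit: the right-hand side is controlled by the coefficients $\tc_{ij}(u)$ computed in Subsection~\ref{subsection: computation}, while the left-hand side is a specialization of the previously established denominator formulae for normalized R-matrices between Kirillov--Reshetikhin modules collected in Appendix~\ref{section: denominator formula} (based on \cite{DO94, AK97, KKK15, Oh15R, OhS19, Fujita19, OhS19s}).

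When $\g$ is of type $\mathsf{ADE}$, we have $\sigma = \id$ and $r = r_i = d_i = 1$, so the claim reduces to $d_{i,j}(z) = \prod_{u=0}^{h^\vee}(z - q^{u+1})^{\tc_{ij}(u)}$, which is precisely the denominator formula proved by the first named author in \cite{Fujita19} by means of the geometry of quiver varieties. For non-simply-laced $\g$, one verifies the identity in each of the types $\mathsf{B}_n$, $\mathsf{C}_n$, $\mathsf{F}_4$, $\mathsf{G}_2$ separately. In every type, one reads off the explicit form of $d_{i^{r_i}, j}(z)$ from Appendix~\ref{section: denominator formula} (after the convention conversion of Remark~\ref{remark: convention}) and compares the resulting exponents against those given by the closed formulae of Theorem~\ref{theorem: closed B}, Theorem~\ref{theorem: closed C}, or the tabulated values in Subsubsections~\ref{subsubsection: computation F} and~\ref{subsubsection: computation G}.

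The structural reason this works is that the balanced choice $(l,m) = (r_i, 1)$ enforces $l d_i = r$, so that the spectral parameters on both sides live on the common lattice $\qs^{\Z} = q^{\Z/r}$. The positivity $\tc_{ij}(u) \ge 0$ for $0 \le u \le rh^\vee$ (Corollary~\ref{corollary: tc2}~(\ref{tc2_pos})) and the vanishing $\tc_{ij}(u) = 0$ for $|u| \le d_i - \delta_{ij}$ (Corollary~\ref{corollary: tc2}~(\ref{tc2_zeros})) ensure that the range of nontrivial factors on the right-hand side is compatible with the support of the left-hand side. The main difficulty is therefore not conceptual but rather the volume of combinatorial bookkeeping, which is heaviest in type $\mathsf{F}_4$ where both $r_i = 1$ and $r_i = 2$ occur and the list of explicit values of $\tc_{ij}(u)$ is longest; nonetheless, since all the necessary input data are now assembled, the matching is a finite and mechanical check.
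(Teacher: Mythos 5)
Your proposal is correct and follows essentially the same route as the paper: the simply-laced case is delegated to the quiver-variety result of \cite{Fujita19}, and the non-simply-laced cases are settled by directly matching the explicit denominator formulae of Appendix~\ref{section: denominator formula} (after the sign conversion of Remark~\ref{remark: convention}) against the explicit values of $\tC(z)$ computed in Subsection~\ref{subsection: computation}. The additional structural remarks about the lattice $\qs^{\Z}$ and the positivity of $\tc_{ij}(u)$ are reasonable motivation but play no logical role beyond the finite check the paper itself performs.
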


\begin{proof}
When $\g$ is simply-laced, this is \cite[Theorem 2.10]{Fujita19} (see~(\ref{eq: formulaADEfund})).
When $\g$ is not simply-laced, we can check the assertion directly by
comparing the known explicit formulae of $d_{i^{r_{i}}, j}(z)$ as in Appendix~\ref{section: denominator formula}
with the explicit values of $\tC(z)$ computed in Subsection~\ref{subsection: computation}.
\end{proof}

\begin{remark} \label{rmk: F4}
\beb
After the authors first uploaded this paper to arXiv, the second named author and
T.~Scrimshaw computed all the denominators $d_{i^{r_i},j}(z)$ of type $\mathsf{F}_4$, and proved that Proposition~\ref{proposition: generalization} holds also for $\mathsf{F}_4$. The result will be dealt with in the near future (see Appendix~\ref{section: denominator formula} below). 
\eb
\end{remark}

Now we propose the following more general formula.

\begin{conjecture}[A unified KR denominator formula]
\label{conjecture: unified}
Let $i, j \in I$ and $l, m \in \Z_{\ge 1}$ such that $l d_{i} \ge m d_{j}$. Then we have
\begin{align}
\label{eq: unified}
d_{i^{l}, j^{m}}(z) & = d_{j^{m}, i^{l}}(z) =
\prod_{k=0}^{m-1} \prod_{u=0}^{r h^{\vee}} \left(z - \qs^{u + l d_{i} + (2k - m +1 ) d_{j}}  \right)^{\tc_{ij}(u)}
\end{align}
with the following three exceptions:
\begin{itemize}
\item[($\mathtt{EX1}$)] $\g$ is of type $\mathsf{C}_{n}$, $d_{i} = d_{j} = 1$ and $l = m= 1$,
\item[($\mathtt{EX2}$)] $\g$ is of type $\mathsf{F}_{4}$, $d_{i}=d_{j}=1$ and $l = m =1$,
\item[($\mathtt{EX3}$)] $\g$ is of type $\mathsf{G}_{2}$, $d_{i}=d_{j}=1$ and $l = m \in \{ 1,2 \}$.
\end{itemize}
\end{conjecture}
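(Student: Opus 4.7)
The plan is to verify the conjecture by reducing it to a direct case-by-case comparison between the explicit KR denominator formulae collected in Appendix~\ref{section: denominator formula} and the proposed unified right-hand side, using the closed expressions for $\tC(z)$ obtained in Subsection~\ref{subsection: computation}. The symmetry $d_{i^l,j^m}(z) = d_{j^m,i^l}(z)$ follows already from \cite[(A.7)]{AK97} combined with Lemma~\ref{lemma: symmetry d}, so we may indeed impose the asymmetric normalization $ld_i \geq md_j$ and then transport the result to the other range by invoking Lemma~\ref{lemma: tc1}\,(\ref{tc1_swap}).

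First I would treat the simply-laced case: here $r = d_i = d_j = 1$, and the conjectured formula specializes to the formula proved in~\cite{Fujita19} for types $\mathsf{ADE}$ via quiver varieties, so nothing new is needed. Next, I would handle the base case $(l, m) = (r_i, 1)$, which is already settled by Proposition~\ref{proposition: generalization}. This is the key ``seed'' identity that relates the denominator to the exponents $\tc_{ij}(u)$, and every other case will be built by multiplying out shifted copies of this seed.

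The main body of the argument is then a type-by-type bookkeeping comparison. For each non-simply-laced type, I would take the closed-form expression for $(1+z^{rh^\vee})\tC_{ij}(z)$ obtained in Subsections~\ref{subsubsection: computation B}--\ref{subsubsection: computation G} (yielding the list of exponents $u$ for which $\tc_{ij}(u) \neq 0$, together with the multiplicities), substitute into the RHS of~\eqref{eq: unified}, and collect exponents of $(z - \qs^{\star})$. This collection should match, exponent by exponent and multiplicity by multiplicity, the case-by-case formulae from~\cite{OhS19s} listed in Appendix~\ref{section: denominator formula}, after the sign-convention translation recorded in~\eqref{eq: convention d}. The positivity/periodicity/duality properties from Corollary~\ref{corollary: tc2} (especially items~(\ref{tc2_+rh}), (\ref{tc2_rh-}), and (\ref{tc2_pos})) are the combinatorial identities that guarantee the double product $\prod_k \prod_u$ telescopes properly and produces the compact intervals of consecutive roots observed in the case-by-case formulae.

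The main obstacle is type $\mathsf{F}_4$: the denominators $d_{i^l, j^m}(z)$ between arbitrary KR modules are not yet available in the literature in full generality, so for this type only the partial results for $(l,m)$ in the ranges covered by Appendix~\ref{section: denominator formula} (and Proposition~\ref{proposition: generalization}, pending Remark~\ref{rmk: F4}) can be checked. A secondary obstacle is the careful identification of the three genuine exceptions $(\mathtt{EX1})$--$(\mathtt{EX3})$: for each of these, the proposed factor at $u = 0$ (together with the $z^{rh^\vee}$ shift) produces a spurious root because $\tc_{ii}(d_i) = 1$ conflicts with a cancellation visible in the known formula; one should therefore explicitly rederive each exceptional denominator from the closed expressions in Subsection~\ref{subsection: computation} to confirm that the discrepancy is exactly as stated and not a symptom of a more systematic correction term.
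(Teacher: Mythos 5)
The statement you are proving is a \emph{conjecture}, and the paper itself does not prove it in full: what the paper establishes is the partial result Theorem~\ref{theorem: unified}, namely that the formula holds whenever $\g$ is neither of type $\mathsf{E}_{6,7,8}$ nor of type $\mathsf{F}_{4}$, and its proof is exactly the strategy you describe --- take the seed identity of Proposition~\ref{proposition: generalization}, use the explicit case-by-case KR denominators of \cite{OhS19s} collected in Appendix~\ref{section: denominator formula} (translated via~\eqref{eq: convention d}), and match exponents against the closed forms of $\tC_{ij}(z)$ from Subsection~\ref{subsection: computation}. So in spirit your proposal coincides with the paper's treatment, and you correctly identify $\mathsf{F}_{4}$ as a type where the general KR denominators are unavailable and only partial checks are possible.

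There is, however, one genuine gap in your coverage claim: you assert that in the simply-laced case ``nothing new is needed'' because the conjectured formula specializes to the result of \cite{Fujita19}. That reference only treats \emph{fundamental} modules, i.e.\ the case $l=m=1$ (formula~\eqref{eq: formulaADEfund}). For general $l,m$ the product formula~\eqref{eq: formulaADBll} is only known for types $\mathsf{A}$ and $\mathsf{D}$; the KR denominators $d_{i^{l},j^{m}}(z)$ with $\max(l,m)>1$ in type $\mathsf{E}_{6,7,8}$ are not in the literature, which is precisely why the paper's Theorem~\ref{theorem: unified} excludes type $\mathsf{E}$ alongside $\mathsf{F}_{4}$. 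Your argument as written would silently skip these cases while claiming them settled. A smaller point: the symmetry $d_{i^{l},j^{m}}(z)=d_{j^{m},i^{l}}(z)$ for general KR modules does not follow from Lemma~\ref{lemma: symmetry d} (which is the fundamental case only) together with \cite[(A.7)]{AK97}; the paper invokes \cite[Lemma 2.2]{OhS19s} for this. With these two corrections --- restricting the verified range to exclude $\mathsf{E}_{6,7,8}$ and citing the right symmetry statement --- your proposal reproduces the paper's proof of the partial result, but it cannot upgrade the conjecture to a theorem.
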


\ber
\begin{remark}
Note that the three exceptions
($\mathtt{EX1}$), ($\mathtt{EX2}$), ($\mathtt{EX3}$)
occur only if $ld_i = m d_j < r$.
Even in these exceptional cases, 
the denominator $d_{i^{l}, j^{m}}(z) (= d_{j^{m}, i^{l}}(z))$ always divides the RHS of (\ref{eq: unified}). 
\end{remark}
\er

As an evidence for Conjecture~\ref{conjecture: unified}, 
we give a partial result here.

\begin{theorem}
\label{theorem: unified}
Conjecture~\ref{conjecture: unified} \ber holds \er 
when $\g$ is neither of type $\mathsf{E}_{6,7,8}$ nor of type $\mathsf{F}_{4}$.
\end{theorem}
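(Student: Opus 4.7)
The plan is to carry out a direct case-by-case verification for the types $\mathsf{A}_n$, $\mathsf{B}_n$, $\mathsf{C}_n$, $\mathsf{D}_n$, and $\mathsf{G}_2$, since for these types all the denominators $d_{i^l, j^m}(z)$ have been explicitly computed in the literature (see Appendix~\ref{section: denominator formula}, based on \cite{DO94, AK97, KKK15, Oh15R, OhS19, Fujita19, OhS19s}), and the integers $\tc_{ij}(u)$ have been explicitly computed in Subsection~\ref{subsection: computation} (Theorem~\ref{theorem: closed B}, Theorem~\ref{theorem: closed C}, and Subsubsection~\ref{subsubsection: computation G}). The exclusion of types $\mathsf{E}_{6,7,8}$ and $\mathsf{F}_4$ reflects the fact that the case-by-case formulas $d_{i^l, j^m}(z)$ for general $l, m$ are not yet fully established in these types.

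First, I would unfold the right-hand side of (\ref{eq: unified}) for fixed $i, j \in I$ with $ld_i \geq md_j$: for each $u$ with $\tc_{ij}(u) \neq 0$, the product over $k = 0, 1, \ldots, m-1$ produces $m$ evenly-spaced factors with exponents $u + ld_i + (2k - m + 1)d_j$. Using the periodicity $\tc_{ij}(u + 2rh^{\vee}) = \tc_{ij}(u)$, the sign relation $\tc_{ij}(u + rh^\vee) = -\tc_{ij^*}(u)$, and the vanishing properties collected in Corollary~\ref{corollary: tc2}, I would reorganize the factors so that they are naturally indexed by the support of $\tc_{ij}(u)$ inside one fundamental domain $[d_i, rh^\vee - d_i]$. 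For the simply-laced types, Proposition~\ref{proposition: generalization} already settles the case $(l, m) = (1, 1)$ in the unified form, and the extension to general $(l, m)$ follows from the classical results of \cite{DO94, AK97} together with the positivity/symmetry of $\tC(z)$. For the non-simply-laced types in scope, I would plug the closed formulas \eqref{eq: pre denom A}, \eqref{eq: pre denom D}, and the type $\mathsf{G}_2$ formula from Subsubsection~\ref{subsubsection: computation G} into the right-hand side of (\ref{eq: unified}) and compare termwise with the explicit list in Appendix~\ref{section: denominator formula}.

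The key technical input is then the observation that the various sub-cases appearing in \cite{OhS19s} (which are indexed by the relative magnitudes of $ld_i$ and $md_j$ and the positions of $i, j$ in the Dynkin diagram) should line up with the natural splittings of $\sum_u \tc_{ij}(u) z^u$ into an ``initial'' part, a ``middle'' part and a ``final'' part. Equation~(\ref{eq: generalization}) of Proposition~\ref{proposition: generalization} already provides the prototype: the roots of $d_{i^{r_i}, j}(z)$ are read off exactly from the support of $\tc_{ij}$, and the general $(l, m)$ pattern is obtained by shifting this support. In type $\mathsf{C}_n$ and $\mathsf{G}_2$, one would also cross-check against the folded AR quivers depicted in Subsection~\ref{subsection: computation}, since the exponents can be directly read from the coordinates of the relevant positive roots.

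The main obstacle is the combinatorial bookkeeping in non-simply-laced types: the explicit formulas in Appendix~\ref{section: denominator formula} split into several sub-cases according to whether $i, j < n$, $i = n$, or $j = n$ and according to the parity of $ld_i \pm md_j$, and verifying the coincidence of zeros (with multiplicities) in each sub-case requires patient but essentially mechanical matching. The three exceptions ($\mathtt{EX1}$)--($\mathtt{EX3}$) arise precisely when $ld_i = md_j < r$, where a ``trivial factor'' predicted by the right-hand side of (\ref{eq: unified}) fails to be a genuine pole of the normalized R-matrix; one must verify that these are the only genuine discrepancies, and that the formula holds without modification in all other parameter ranges. This verification is where the closed formulas of Subsection~\ref{subsection: computation} play an essential role, as they allow the support of $\tc_{ij}$ to be controlled uniformly.
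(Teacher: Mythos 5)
Your proposal matches the paper's own proof: the paper verifies the conjecture case by case by taking the explicit formulas for $d_{i^{l},j^{m}}(z)$ from Appendix~\ref{section: denominator formula}, anchoring the base case via Proposition~\ref{proposition: generalization} (which identifies the exponents with the support of $\tc_{ij}(u)$), and then mechanically rewriting the general $(l,m)$ product to match the right-hand side of~(\ref{eq: unified}), exactly as you describe. The only difference is one of presentation --- the paper writes out the bookkeeping in full only for type $\mathsf{C}_{n}$ and declares the remaining cases similar or easier.
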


\begin{proof}
Here we give a case-by-case proof based on Proposition~\ref{proposition: generalization} and
the results of~\cite{OhS19s}.
Since the equality $d_{i^{l}, j^{m}}(z) = d_{j^{m}, i^{l}}(z)$ is already understood
by~\cite[Lemma 2.2]{OhS19s} (see also (\ref{eq: convention d}) above),
we only have to prove that $d_{i^{l}, j^{m}}(z)$
is equal to the RHS of~(\ref{eq: unified}).
We shall provide a detailed proof only for type~$\mathsf{C}_{n}$,
since the other cases are quite similar or easier.

Now we suppose $\g$ is of type $\mathsf{C}_{n}$.
We use the labeling $I=\{1, \ldots, n \}$ as in~(\ref{eq: C_n}).
In the case $1 \le i,j < n$, we have $d_{i}=d_{j}=1$ and $r_{i}=2$.
The condition $l d_{i} \ge m d_{j}$ implies $l \ge m$.
When $(l, m) = (2,1)$, we have
\begin{equation}
\label{eq: formulaCi2j}
d_{i^{2}, j}(z) = \prod_{u=0}^{2h^{\vee}} (z-\qs^{u+2})^{\tc_{ij}(u)}
= \prod_{u=1}^{\min(i,j)} (z-\qs^{|i-j|+1+2u})(z-\qs^{2n+3-i-j+2u})
\end{equation}
by Proposition~\ref{proposition: generalization} and (\ref{eq: formulaCss}).
For general $l, m$ with $l \ge m$ and $l \ge 2$, we have
\begin{align*}
d_{i^{l}, j^{m}}(z) &= \prod_{k=0}^{m-1} \prod_{u=1}^{\min(i,j)} (z-\qs^{|i-j|+(l-m)+2(u+k)})(z-\qs^{2n+2-i-j+(l-m)+2(k+u)})
&& (\text{by (\ref{eq: formulaCss})})\allowdisplaybreaks\\
&= \prod_{k=0}^{m-1} \prod_{u=1}^{\min(i,j)} (z-\qs^{(|i-j|+1+2u) -1+ l +2k-m)})(z-\qs^{(2n+3-i-j+2u)-1+l+2k-m})
&& \allowdisplaybreaks\\
&= \prod_{k=0}^{m-1}  \prod_{u=0}^{2h^{\vee}} (z-\qs^{(u+2)-1+l +2k-m})^{\tc_{ij}(u)}
&& (\text{by (\ref{eq: formulaCi2j})}) \allowdisplaybreaks\\
&= \prod_{k=0}^{m-1}  \prod_{u=0}^{2h^{\vee}} (z-\qs^{u+l +(2k-m+1)})^{\tc_{ij}(u)},
\end{align*}
as desired.

In the case $i<j=n$, we have $d_{i}=1$ and $r_{i}=d_{j}=2$.
The condition $l d_{i} \ge m d_{j}$ implies $l \ge 2m$.
When $(l, m) = (2,1)$, we have
\begin{equation}
\label{eq: formulaCi2n}
d_{i^{2}, n}(z) = \prod_{u=0}^{2h^{\vee}} (z-\qs^{u+2})^{\tc_{in}(u)}
= \prod_{k=0}^{1} \prod_{u=1}^{i} (z-\qs^{n+1-i+2k+2u})
\end{equation}
by Proposition~\ref{proposition: generalization} and (\ref{eq: formulaCsl}).
For general $l, m$ with $l \ge 2m$, we have
\begin{align*}
d_{i^{l}, n^{m}}(z) &= \prod_{k=0}^{2m-1} \prod_{u=1}^{i} (z-\qs^{n+1-i+(l-2m)+2k+2u})
&& (\text{by (\ref{eq: formulaCsl})})\allowdisplaybreaks\\
&= \prod_{k=0}^{m-1}\prod_{k^{\prime}=0}^{1} \prod_{u=1}^{i} (z-\qs^{(n+1-i+2k^{\prime}+2u)+l+4k-2m})
&& (\text{replace $k$ with $2k+k^{\prime}$})\allowdisplaybreaks\\
&= \prod_{k=0}^{m-1}  \prod_{u=0}^{2h^{\vee}} (z-\qs^{(u+2)+l +4k-2m})^{\tc_{nn}(u)}
&& (\text{by (\ref{eq: formulaCi2n})}) \allowdisplaybreaks\\
&= \prod_{k=0}^{m-1}  \prod_{u=0}^{2h^{\vee}} (z-\qs^{u+l +2(2k-m+1)})^{\tc_{nn}(u)},
\end{align*}
as desired.

In the case $i=n>j$, we have $d_{i}=2$ and $r_{i}=d_{j}=1$.
The condition $l d_{i} \ge m d_{j}$ implies $2l \ge m$.
When $(l, m) = (1,1)$, we have
\begin{equation}
\label{eq: formulaCnj}
d_{n, j}(z) = \prod_{u=0}^{2h^{\vee}} (z-\qs^{u+2})^{\tc_{nj}(u)}
= \prod_{u=1}^{j} (z-\qs^{n+2-j+2u})
\end{equation}
by Proposition~\ref{proposition: generalization} and (\ref{eq: formulaCsl}).
For general $l, m$ with $2l \ge m$, we have
\begin{align*}
d_{n^{l}, j^{m}}(z) &= \prod_{k=0}^{m-1} \prod_{u=1}^{j} (z-\qs^{n+1-j+(2l-m)+2k+2u})
&& (\text{by (\ref{eq: formulaCsl})}) \allowdisplaybreaks\\
&= \prod_{k=0}^{m-1}\prod_{u=1}^{j} (z-\qs^{(n+2-j+2u)-1+2l+2k-m})
&& \allowdisplaybreaks\\
&= \prod_{k=0}^{m-1}  \prod_{u=0}^{2h^{\vee}} (z-\qs^{(u+2)-1+2l +2k-m})^{\tc_{nj}(u)}
&& (\text{by (\ref{eq: formulaCnj})}) \allowdisplaybreaks\\
&= \prod_{k=0}^{m-1}  \prod_{u=0}^{2h^{\vee}} (z-\qs^{u+2l +(2k-m+1)})^{\tc_{nj}(u)},
\end{align*}
as desired.

In the case $i=j=n$, we have $d_{i}=d_{j}=2$ and $r_{i}=1$.
The condition $l d_{i} \ge m d_{j}$ implies $l \ge m$.
When $(l, m) = (1,1)$, we have
\begin{equation}
\label{eq: formulaCnn}
d_{n, n}(z) = \prod_{u=0}^{2h^{\vee}} (z-\qs^{u+2})^{\tc_{nn}(u)}
= \prod_{u=1}^{n} (z-\qs^{2+2u})
\end{equation}
by Proposition~\ref{proposition: generalization} and (\ref{eq: formulaCll}).
For general $l, m$ with $l \ge m$, we have
\begin{align*}
d_{n^{l}, n^{m}}(z) &= \prod_{k=0}^{m-1} \prod_{u=1}^{n} (z-\qs^{(2+2u)+(2l-2m)+4k})
&& (\text{by (\ref{eq: formulaCll})}) \allowdisplaybreaks\\
&= \prod_{k=0}^{m-1}  \prod_{u=0}^{2h^{\vee}} (z-\qs^{(u+2)+2l +4k-2m})^{\tc_{nj}(u)}
&& (\text{by (\ref{eq: formulaCnn})})\allowdisplaybreaks \\
&= \prod_{k=0}^{m-1}  \prod_{u=0}^{2h^{\vee}} (z-\qs^{u+2l +2(2k-m+1)})^{\tc_{nj}(u)},
\end{align*}
as desired.
\end{proof}

\subsection{A universal coefficient formula}
\label{subsection: univ_coeff}

In this subsection, we briefly recall the formula 
computing the universal coefficients $a_{V, W}(z) \in \kk\lf z \rf^\times$
for all $V, W \in \Irr \scrC$
due to Frenkel-Reshetikhin~\cite{FR99}.
First we notice the following bi-multiplicativity 
of the universal coefficients.
\begin{lemma}[{\cite[Proposition 5]{FR99}}]
\label{lemma: bilinear a}
For $P_{1}, P_{2}, P_{3} \in \fkD^{+}$, we have
\begin{align*}
a_{L(P_{1} P_{2}), L(P_{3})}(z) &= a_{L(P_{1}), L(P_{3})}(z) \cdot a_{L(P_{2}), L(P_{3})}(z), \\
a_{L(P_{1}), L(P_{2}P_{3})}(z) &= a_{L(P_{1}), L(P_{2})}(z) \cdot a_{L(P_{1}), L(P_{3})}(z).
\end{align*}
\end{lemma}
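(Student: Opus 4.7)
The plan is to deduce both identities from the standard coproduct properties of the universal R-matrix of $U_q'(\hg)$, namely
\begin{equation*}
(\Delta \otimes \id)(R) = R_{13}R_{23}, \qquad (\id \otimes \Delta)(R) = R_{13}R_{12},
\end{equation*}
which are built into the construction of $R$ recalled above. These are identities of formal series, and by the same $z$-adic convergence argument used to make sense of $R^{\univ}_{V, W_z}$ for $V, W \in \scrC$, they yield well-defined $U_q(L\g) \otimes \kk(\!(z)\!)$-linear identities
\begin{equation*}
R^{\univ}_{V_1 \otimes V_2,\, W_z} = (R^{\univ}_{V_1, W_z})_{13}\circ (R^{\univ}_{V_2, W_z})_{23}, \quad R^{\univ}_{V_1,\, (W_2 \otimes W_1)_z} = (R^{\univ}_{V_1, (W_1)_z})_{13}\circ (R^{\univ}_{V_1, (W_2)_z})_{12},
\end{equation*}
after the appropriate permutation of tensor factors (the second identity requires the standard observation that $V_z \otimes W_z$ carries the affinization of $V \otimes W$).

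For the first identity of the lemma, I would choose $\ell$-highest weight vectors $v_k \in L(P_k)$ for $k=1,2,3$, and evaluate both sides on $v_1 \otimes v_2 \otimes (v_3)_z \in L(P_1) \otimes L(P_2) \otimes L(P_3)_z$. The coproduct relation above gives
\begin{equation*}
R^{\univ}_{L(P_1) \otimes L(P_2),\, L(P_3)_z}\bigl(v_1 \otimes v_2 \otimes (v_3)_z\bigr) = a_{L(P_1), L(P_3)}(z)\, a_{L(P_2), L(P_3)}(z)\, (v_3)_z \otimes v_1 \otimes v_2.
\end{equation*}
On the other hand, the vector $v_1 \otimes v_2$ is $\ell$-highest weight of Drinfeld polynomial $P_1 P_2$, so the $U'_q(\hg)$-submodule it generates inside $L(P_1) \otimes L(P_2)$ admits $L(P_1 P_2)$ as a quotient, with $v_1 \otimes v_2$ mapping to the $\ell$-highest weight vector. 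Applying the universal R-matrix to the image and using naturality, the same scalar identifies with $a_{L(P_1 P_2), L(P_3)}(z)$ times the image of $(v_3)_z \otimes v_1 \otimes v_2$, which yields the first claimed equality.

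The second identity is proved in exactly the same way using $(\id \otimes \Delta)(R) = R_{13} R_{12}$ and the fact that $(v_2 \otimes v_3)_z = (v_2)_z \otimes (v_3)_z$ is $\ell$-highest in $L(P_2)_z \otimes L(P_3)_z$ with Drinfeld polynomial $(P_2 P_3)_{\text{shifted}}$. The main technical point — and the only place where one must be a bit careful — is to justify the convergence/well-definedness of $R^{\univ}$ acting on triple tensor products involving one affinization, and to ensure that projecting from the submodule generated by the $\ell$-highest weight vector onto the simple quotient $L(P_1 P_2)$ is compatible with the action of the universal R-matrix. Both are standard (cf.\ \cite[Appendix A]{AK97}), and modulo them the identity reduces to tracking scalars on $\ell$-highest weight vectors, which is the content of the computation above.
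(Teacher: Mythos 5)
The paper offers no proof of this lemma, quoting it as \cite[Proposition 5]{FR99}, and your argument — the cabling identities $(\Delta\otimes\id)(R)=R_{13}R_{23}$ and $(\id\otimes\Delta)(R)=R_{13}R_{12}$ evaluated on $\ell$-highest weight vectors, combined with naturality of the universal R-matrix under the projection from the submodule generated by $v_1\otimes v_2$ onto its simple head $L(P_1P_2)$ — is exactly the standard proof given in that reference. The two technical points you isolate (convergence on the triple tensor product with one affinized factor, and compatibility of $R^{\mathrm{univ}}$ with module maps) are indeed the only gaps to fill, and both are covered by \cite[Appendix A]{AK97}.
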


In view of Lemma~\ref{lemma: bilinear a} 
and Lemma~\ref{lemma: property ad}~(\ref{shift ad}),
it suffices to consider the fundamental case
$$a_{i,j}(z) := a_{V^{(i)}_{1}, V^{(j)}_{1}}(z) = a_{L(\pi_{i,1}), L(\pi_{j,1})}(z)$$
for $i, j \in I$. 
The following simple formula was obtained by Frenkel-Reshetikhin 
(see the discussion in Section 4.3 of \cite{FR99}).
It is based on a factorization formula of the universal R-matrix
established by Khoroshkin-Tolstoy~\cite{KT94} and also by
Damiani~\cite{Damiani98}.

\begin{theorem}[{\cite[Section 4.3]{FR99}}]
\label{theorem: univ_coeff}
For $i, j \in I$, we have
\begin{equation} \label{eq: FR}
    a_{i,j}(z) = q_{i}^{\tc_{ij}} \prod_{u=0}^{2r h^{\vee}} \left(
\frac{[u-d_{i}]}{[u + d_{i}]}
\right)^{\tc_{ij}(u)}
= q_{i}^{\tc_{ij}} \prod_{u =0}^{\infty} \left( \frac{1-\qs^{u-d_{i}}z}{1-\qs^{u+d_{i}}z} \right)^{\tc_{ij}(u)},
\end{equation}
where, for each $m \in \Z$, we set
$$[m] := (\qs^{m}z; q^{2h^{\vee}})_{\infty} = \prod_{k=0}^{\infty}(1-\qs^{m+2krh^{\vee}}z) \quad \in \kk \lf z \rf^\times.$$
\end{theorem}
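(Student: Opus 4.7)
The plan is to follow the strategy of Frenkel--Reshetikhin, which rests on the Khoroshkin--Tolstoy/Damiani factorization of the universal R-matrix of $U_q(L\g)$ as an ordered product
$$
R^{\univ} \;=\; \mathcal{R}^{+}\cdot \mathcal{K} \cdot \mathcal{R}^{-}
$$
over the positive root system of $\hg$. Here $\mathcal{R}^{\pm}$ are infinite ordered $q$-exponential products indexed by the real positive roots, while $\mathcal{K}$ is built from the imaginary root vectors $h_{i,\pm k}$ ($i\in I$, $k\ge 1$) together with a finite Cartan prefactor, and has the shape
$$
\mathcal{K} \;=\; q^{\sum_{i,j\in I} d_{i}\tc_{ij}(h_{i}\otimes h_{j})/r}\cdot \exp\!\left(\sum_{k\ge 1}\sum_{i,j\in I} C^{(k)}_{ij}\,(h_{i,k}\otimes h_{j,-k})\right),
$$
where the coefficients $C^{(k)}_{ij}$ are determined by the inverse of the $q$-analog of the Cartan matrix appearing in the Damiani presentation.

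First, I would observe that $\mathcal{R}^{+}$ annihilates $v\otimes w_{z}$ modulo vectors whose weights are strictly lower than that of $w_{z}\otimes v$, and that $\mathcal{R}^{-}$ contributes the identity to the scalar universal coefficient for the same reason. Consequently, $a_{i,j}(z)$ is read off entirely from the action of $\mathcal{K}$ on $v\otimes w_{z}$.

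Second, the action of $h_{i,k}$ on the $\ell$-highest weight vector of a fundamental module is encoded in the expansion of $\phi^{\pm}_{i}(z)$ dictated by its Drinfeld polynomial data. Evaluating explicitly on the pair $(V^{(i)}_{1}, V^{(j)}_{1})$, whose Drinfeld polynomials are simply $1-\mathsf{q}_{s}^{?}z$, the exponent in $\mathcal{K}$ becomes a geometric series in $z$ whose Laurent coefficients match precisely the generating series $\widetilde{C}_{ij}(z) = \sum_{u\ge 0}\tc_{ij}(u)z^{u}$, in light of Theorem~\ref{theorem: combinatorial formula}. Resumming this series turns the exponential into the infinite product
$q_{i}^{\tc_{ij}}\prod_{u\ge 0}\bigl((1-\qs^{u-d_{i}}z)/(1-\qs^{u+d_{i}}z)\bigr)^{\tc_{ij}(u)}$,
which is the second expression in~\eqref{eq: FR}. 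The reformulation in terms of the $q$-Pochhammer brackets $[m] = (\qs^{m}z; q^{2h^{\vee}})_{\infty}$ is then a purely combinatorial repackaging: grouping factors by the residue of $u$ modulo $2rh^{\vee}$ and using the periodicity $\tc_{ij}(u+2rh^{\vee}) = \tc_{ij}(u)$ from Corollary~\ref{corollary: tc2}~\eqref{tc2_+2rh} collapses the product into $\prod_{u=0}^{2rh^{\vee}}([u-d_{i}]/[u+d_{i}])^{\tc_{ij}(u)}$.

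The main technical obstacle is pinning down the precise normalization of $\mathcal{K}$ and verifying that its imaginary-root coefficients $C^{(k)}_{ij}$ combine into exactly the entries $\tc_{ij}(u)$ with the right shift by $\pm d_{i}$; this is the content of \cite[Section~4.3]{FR99} (building on \cite{KT94, Damiani98}). Convergence of the infinite product in $\kk\lf z \rf^{\times}$ is not an issue thanks to the vanishing $\tc_{ij}(u)=0$ for $u<d_{i}$ (Lemma~\ref{lemma: small vanish}), together with the periodicity and sign properties of Corollary~\ref{corollary: tc2}, which also make the equivalence of the two expressions in~\eqref{eq: FR} manifest.
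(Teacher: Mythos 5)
Your proposal is correct and takes essentially the same route as the paper: the paper does not prove this statement but quotes it from \cite[Section 4.3]{FR99} (resting on the Khoroshkin--Tolstoy/Damiani factorization of the universal R-matrix, exactly as you outline), and the only argument it supplies is that the two displayed expressions agree by the $2rh^{\vee}$-periodicity of $\tc_{ij}(u)$ from Corollary~\ref{corollary: tc2}, which you also note. Your sketch of how the Cartan/imaginary-root factor $\mathcal{K}$ produces the product over $\tc_{ij}(u)$ is a faithful outline of the cited computation, with the key normalization step deferred to the same reference.
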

Note that the second equality in \eqref{eq: FR} follows
from the $2rh^\vee$-periodicity of $\tc_{ij}(u)$
(Corollary~\ref{corollary: tc2}~(\ref{tc2_+2rh})).

\subsection{\ber Computation of $\Lambda$-invariants\er}
\label{subsection: degree}

Let us recall the definitions of the degree functions introduced in~\cite{KKOP20}.
First we define the subgroup $\mathcal{G} \subset \kk(\!( z )\!)^{\times}$ by
$$
\mathcal{G} := \left\{ cz^{m} \prod_{a\in \kk^{\times}} \varphi(az)^{\mu_{a}} \; \middle| \;
\begin{array}{c}
c\in \kk^{\times}, m \in \Z, \\
\text{$\mu_{a} \in \Z$ vanishes except for finitely many $a$'s.}
\end{array} \right\},
$$
where we set  $\varphi(z) := (z ; q^{2h^{\vee}})_{\infty} = \prod_{k =0}^{\infty}(1-q^{2kh^{\vee}}z) \in \kk[\![z]\!]^{\times}$.
Note that $\kk(z)^{\times} \subset \mathcal{G}$.
Then we define the group homomorphisms $\Deg \colon \mathcal{G} \to \Z$ and $\Deg^{\infty} \colon \mathcal{G} \to \Z$
by
\begin{align*}
\Deg(f(z))
&:= \sum_{a \in \{q^{2kh^{\vee}}  \mid k \in \Z_{\le 0}\}} \mu_{a} - \sum_{a \in \{q^{2kh^{\vee}} \mid k \in \Z_{> 0}\}} \mu_{a}, \\
\Deg^{\infty}(f(z)) &:= \sum_{a \in \{q^{2kh^{\vee}} \mid k \in \Z \}} \mu_{a}
\end{align*}
for $f(z) = cz^{m} \prod_{a \in \kk^{\times}} \varphi(az)^{\mu_{a}} \in \mathcal{G}$.
For a rational function $f(z) \in \kk(z)$, it follows
that $\Deg(f(z)) = 2 \zero_{z=1}(f(z))$ and $\Deg^{\infty}(f(z)) = 0$ (see~\cite[Lemma 3.4]{KKOP20}).

By Lemma~\ref{lemma: bilinear a} and 
Theorem~\ref{theorem: univ_coeff}, we see that
the universal coefficient $a_{V,W}(z)$ belongs to $\mathcal{G}$
for any $V, W \in \Irr \scrC$. Thus it makes sense to consider $\Deg(a_{V,W}(z))$ and $\Deg^{\infty}(a_{V,W}(z))$.
Using these degree functions, the following $\Z$-valued invariants are introduced in \cite{KKOP20}. For $V, W \in \Irr \scrC$, we define
\begin{align}
\Lambda(V,W) & \seteq \Deg(d_{V,W}(z)/a_{V,W}(z)) = 2\zero_{z=1}(d_{V,W}(z)) - \Deg(a_{V,W}(z)), \label{eq: def Lambda}\\
\Lambda^{\infty}(V,W) & \seteq \Deg^{\infty}(d_{V,W}(z)/a_{V,W}(z)) = - \Deg^{\infty}(a_{V,W}(z)). \label{eq: def Lambda^infty}
\end{align}
In the reminder of this section,
we compute $\Deg(a_{V,W}(z))$ and $\Deg^{\infty}(a_{V,W}(z))$
in terms of the skew-symmetric pairing $\scrN$ 
and the $\calQ$-weight studied in the previous section. 

\begin{theorem}
\label{theorem: Deg}
For any $(i,p), (j,s) \in \widehat{I}$, we have
\begin{equation}
\label{eq: Deg}
\Deg (a_{L(Y_{i,p}), L(Y_{j,s})}(z))
= - \scrN(i, p; j, s).
\end{equation}
In particular, for any $V, W \in \Irr \scrC^0$, we have
\begin{equation} \label{eq: Deg general}
\Deg (a_{V, W}(z))
= - \scrN(V, W).
\end{equation}
\end{theorem}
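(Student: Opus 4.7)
The plan is to reduce the general statement \eqref{eq: Deg general} to the fundamental case \eqref{eq: Deg}, and then to use the explicit Frenkel--Reshetikhin formula for the universal coefficient from Theorem~\ref{theorem: univ_coeff} to compute $\Deg$ directly and match it with $-\scrN$ via the combinatorial symmetries of $\tc_{ij}$ collected in Corollary~\ref{corollary: tc2}.

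For the reduction, writing $V = L(m)$ and $W = L(m')$ for dominant monomials $m, m' \in \calM^+$, Lemma~\ref{lemma: bilinear a} yields
$$
a_{V, W}(z) = \prod_{(i,p),(j,s) \in \hI} a_{L(Y_{i,p}), L(Y_{j,s})}(z)^{u_{i,p}(m)\, u_{j,s}(m')}.
$$
Since $\Deg$ is a group homomorphism $\mathcal{G} \to \Z$ and $\scrN$ is bi-additive on $\calM$ by its defining construction, this reduces \eqref{eq: Deg general} to \eqref{eq: Deg}.

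For the fundamental case, I would combine Theorem~\ref{theorem: univ_coeff} with Lemma~\ref{lemma: property ad}~(\ref{shift ad}) to write
$$
a_{L(Y_{i,p}), L(Y_{j,s})}(z) = q_i^{\tc_{ij}} \prod_{u=0}^{2rh^{\vee}} \left( \frac{\varphi(\qs^{u - d_i + s - p} z)}{\varphi(\qs^{u + d_i + s - p} z)} \right)^{\tc_{ij}(u)} \; \in \mathcal{G}.
$$
Applying $\Deg$ and using that $\Deg(\varphi(\qs^m z))$ equals $+1$, $-1$, or $0$ according to whether $m \in 2rh^{\vee}\Z_{\le 0}$, $m \in 2rh^{\vee}\Z_{> 0}$, or $m \notin 2rh^{\vee}\Z$, produces a sum of at most two nonzero signed terms $\pm \tc_{ij}(u_\pm)$, where $u_\pm \in [0, 2rh^{\vee}]$ are the (essentially unique) residues with $u_\pm \equiv \pm d_i + p - s \pmod{2rh^{\vee}}$; any boundary overlap at $u = 0$ or $u = 2rh^{\vee}$ is harmless because $\tc_{ij}(0) = \tc_{ij}(2rh^{\vee}) = 0$ by Lemma~\ref{lemma: small vanish}~(\ref{tc1_small vanish}) together with Corollary~\ref{corollary: tc2}~(\ref{tc2_+2rh}).

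The remaining step is a case analysis on the residue of $p - s$ modulo $2rh^{\vee}$: in each regime the surviving contributions are rewritten using the symmetries of $\tc_{ij}$ from Corollary~\ref{corollary: tc2}---periodicity~(\ref{tc2_+2rh}), quasi-periodicity~(\ref{tc2_+rh}), and the reflection identities~(\ref{tc2_rh-}) and~(\ref{tc2_2rh-})---together with the vanishing $\tc_{ij}(u) = 0$ for $u < d_i$, and one verifies they assemble into the symmetric four-term expression
$$
-\tc_{ij}(p-s-d_i) + \tc_{ij}(p-s+d_i) + \tc_{ij}(s-p-d_i) - \tc_{ij}(s-p+d_i) \;=\; -\scrN(i,p;j,s),
$$
as required. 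The main obstacle I anticipate is precisely this last case analysis: the raw $\Deg$ computation naturally produces only two terms, whereas $\scrN$ is defined symmetrically by four; one must check that the periodicity/antiperiodicity relations of Corollary~\ref{corollary: tc2} conspire to reassemble each regime into the same symmetric expression, so that the equality holds uniformly in $(p,s)$.
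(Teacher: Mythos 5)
Your proposal is correct and follows essentially the same route as the paper: reduce to the fundamental case via Lemma~\ref{lemma: bilinear a} and the additivity of $\Deg$ and $\scrN$, then apply the Frenkel--Reshetikhin formula and evaluate $\Deg$ of each factor $[\,u+s-p\mp d_i\,]$. The case analysis you anticipate as the "main obstacle" is exactly what the paper packages as Lemma~\ref{Lem: sum}, namely $\sum_{u=0}^{2rh^{\vee}}\tc_{ij}(u)\Deg[u-x]=\tc_{ij}(|x|)=\tc_{ij}(x)+\tc_{ij}(-x)$, where the last equality (from the vanishing of $\tc_{ij}$ at non-positive arguments) is what converts the two surviving terms into the symmetric four-term expression $-\scrN(i,p;j,s)$.
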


We need a lemma.
Recall that we have defined
$[m] := \varphi(\qs^{m}z)$
for each $m \in \Z$.

\begin{lemma}
\label{Lem: sum}
For any $i,j \in I$ and $x \in \Z$, we have
$$
\sum_{u=0}^{2rh^{\vee}} \tc_{ij}(u) \cdot \Deg [u-x] = \tc_{ij}(|x|) = \tc_{ij}(x) + \tc_{ij}(-x).
$$
\end{lemma}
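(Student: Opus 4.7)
The strategy is a direct calculation: unpack the definitions of $\Deg$ and $[m]$, identify the very few values of $u \in \{0, 1, \ldots, 2rh^{\vee}\}$ that contribute nonzero degree, and then match the resulting finite sum to $\tc_{ij}(|x|)$ using the periodicity and reflection properties of the coefficients $\tc_{ij}(u)$ established in Corollary~\ref{corollary: tc2}.

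First I will observe that $[u-x] = \varphi(\qs^{u-x} z)$ and $\qs^{u-x} = q^{(u-x)/r}$, so the exponent $\mu_{a}$ in the definition of $\Deg$ is nonzero only when $a = q^{(u-x)/r}$ equals $q^{2kh^{\vee}}$ for some integer $k$, i.e.\ when $u - x \in 2rh^{\vee}\Z$. Writing $u - x = 2krh^{\vee}$, the convention gives $\Deg[u-x] = 1$ if $k \le 0$ and $\Deg[u-x] = -1$ if $k > 0$.

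Next I will do a short case analysis on $x \in \Z$. Write $x = 2arh^{\vee}+b$ with $0 \le b < 2rh^{\vee}$. If $b \ne 0$, then $u = b$ is the unique index in $\{0,\ldots,2rh^{\vee}\}$ with $u \equiv x \pmod{2rh^{\vee}}$, and the corresponding $k$ equals $-a$, so $\Deg[u-x] = \operatorname{sgn}(x)$. Thus the sum equals $\operatorname{sgn}(x)\,\tc_{ij}(b)$, and I conclude by observing that $\tc_{ij}(b) = \tc_{ij}(|x|)$ when $x>0$ (by periodicity, Corollary~\ref{corollary: tc2}\,(\ref{tc2_+2rh})) and $-\tc_{ij}(b) = \tc_{ij}(2rh^{\vee}-b) = \tc_{ij}(|x|)$ when $x<0$ (by Corollary~\ref{corollary: tc2}\,(\ref{tc2_+2rh}) and (\ref{tc2_2rh-})). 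If $b = 0$, the two indices $u = 0$ and $u = 2rh^{\vee}$ contribute and their signs always coincide; the total equals $\pm(\tc_{ij}(0)+\tc_{ij}(2rh^{\vee})) = 0$, which agrees with $\tc_{ij}(|x|) = \tc_{ij}(2|a|rh^{\vee}) = \tc_{ij}(0) = 0$ by Lemma~\ref{lemma: small vanish} (since $0 < d_{i}$) and periodicity.

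Finally, the second equality $\tc_{ij}(|x|) = \tc_{ij}(x) + \tc_{ij}(-x)$ is immediate from Lemma~\ref{lemma: small vanish}: for $x > 0$ one has $-x < 0 < d_{i}$ so $\tc_{ij}(-x) = 0$ (and symmetrically for $x < 0$), while for $x = 0$ both sides vanish. No step here is substantial; the only mild care needed is the bookkeeping of signs in the subcase $x < 0$, where the reflection identity $\tc_{ij}(2rh^{\vee}-u) = -\tc_{ij}(u)$ is the key input.
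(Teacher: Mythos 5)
Your proof is correct and follows essentially the same route as the paper's: isolate the index (or the two endpoints $u=0$, $u=2rh^{\vee}$ when $x\in 2rh^{\vee}\Z$) with nonzero $\Deg$, read off the sign from the definition, and convert the resulting coefficient into $\tc_{ij}(|x|)$ via Corollary~\ref{corollary: tc2}~(\ref{tc2_+2rh}) and (\ref{tc2_2rh-}). The one blemish is the assertion that the signs at $u=0$ and $u=2rh^{\vee}$ ``always coincide'' (they differ when $x=0$), but this is harmless since $\tc_{ij}(0)=\tc_{ij}(2rh^{\vee})=0$, as your own citations show.
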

\begin{proof}
For $x \in \Z$, we denote by $\bar{x}$ the unique integer such that $0 \le \bar{x} < 2rh^{\vee}$
and $\bar{x} - x \in 2rh^{\vee}\Z$.
Under this notation, we have
$$
\sum_{u=0}^{2rh^{\vee}} \tc_{ij}(u) \cdot \Deg [u-x] = \tc_{ij}(\bar{x}) \cdot \Deg [\bar{x} -x].
$$
By the definition of $\Deg$, we have
$$
\Deg [\bar{x} -x] = \begin{cases}
1 & \text{if $x \ge 0$}, \\
-1 & \text{if $x < 0$}.
\end{cases}
$$
When $x \ge 0$, we have $c_{ij}(\bar{x}) = c_{ij}(x)$
by Corollary~\ref{corollary: tc2}~(\ref{tc2_+2rh}).
On the other hand, when $x < 0$, we have
$
\tc_{ij}(\bar{x})  = -\tc_{ij}(2rh^{\vee} - \bar{x}) = -\tc_{ij}(-x)
$
by Corollary~\ref{corollary: tc2}~(\ref{tc2_2rh-}) and (\ref{tc2_+2rh}).
As a result, we obtain
$\tc_{ij}(\bar{x}) \cdot \Deg [\bar{x} -x] = \tc_{ij}(|x|),$
which proves the first equality of the assertion.
The second equality follows from the fact that $\tc_{ij}(x)=0$ for $x \le 0$.
\end{proof}

\begin{proof}[Proof of Theorem~\ref{theorem: Deg}]
By Theorem~\ref{theorem: univ_coeff}, we have
$$
\Deg (a_{L(Y_{i,p}), L(Y_{j,s})}(z)) = \sum_{u=0}^{2rh^{\vee}} \tc_{ij}(u)
 \cdot(\Deg [u+s-p-d_{i}]-\Deg [u+s-p+d_{i}])
$$
Applying Lemma~\ref{Lem: sum} above, we obtain the 
desired equality \eqref{eq: Deg}.
The latter identities \eqref{eq: Deg general} and 
\eqref{eq: Lambda} follow
from \eqref{eq: Deg} and Lemma~\ref{lemma: bilinear a}.
\end{proof}

\begin{definition}
For an ordered pair $(V,W)$ of (real) simple modules in $\scrC^0$,  we say that
\begin{enumerate}[{\rm (i)}]
\item the pair is \emph{left pre-commutative} if  $\zero_{z=1}(d_{V,W}(z))=0$,
\item the pair is \emph{right pre-commutative} if $\zero_{z=1}(d_{W,V}(z))=0$,
\end{enumerate}
Note that a pair $(V,W)$ is commutative
if and only if it is left and right pre-commutative
thanks to Proposition~\ref{proposition: simplicity}.
\end{definition}

\begin{corollary} \label{cor: La Lac}
For any $V, W \in \Irr \scrC^0$, we have
\begin{equation}
\label{eq: Lambda}
\Lambda(V, W) 
=  2\zero_{z=1}(d_{V,W}(z)) + \scrN(V, W)
\end{equation}
In particular, for a left pre-commutative pair $(V,W)$ 
of simple modules in $\scrC^0$, we have
$$  \Lambda(V,W) = \scrN(V,W).$$
\end{corollary}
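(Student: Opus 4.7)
The proof is essentially a direct bookkeeping exercise combining the definition of $\Lambda(V,W)$ with the formula for $\Deg(a_{V,W}(z))$ just established in Theorem~\ref{theorem: Deg}. I will simply unwind the definitions.

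Recall from \eqref{eq: def Lambda} that
\[
\Lambda(V,W) = \Deg\bigl(d_{V,W}(z)/a_{V,W}(z)\bigr) = 2\,\zero_{z=1}(d_{V,W}(z)) - \Deg(a_{V,W}(z)),
\]
where the second equality uses the fact that $\Deg(f(z)) = 2\,\zero_{z=1}(f(z))$ for any rational function $f(z) \in \kk(z)^{\times}$, as noted in Subsection~\ref{subsection: degree}. Substituting the identity $\Deg(a_{V,W}(z)) = -\scrN(V,W)$ from Theorem~\ref{theorem: Deg} (equation \eqref{eq: Deg general}) yields the first formula \eqref{eq: Lambda} immediately.

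For the second assertion, a pair $(V,W)$ is left pre-commutative by definition precisely when $\zero_{z=1}(d_{V,W}(z)) = 0$. Plugging this into \eqref{eq: Lambda} collapses the right-hand side to $\scrN(V,W)$, which gives the claimed equality $\Lambda(V,W) = \scrN(V,W)$.

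There is really no main obstacle here: all the substantive work has already been carried out in Theorem~\ref{theorem: Deg}, whose proof in turn rests on the Frenkel--Reshetikhin universal-coefficient formula (Theorem~\ref{theorem: univ_coeff}) and the periodicity/anti-symmetry properties of $\tc_{ij}(u)$ collected in Corollary~\ref{corollary: tc2}. The only point worth double-checking while writing is the sign conventions in \eqref{eq: def Lambda}, \eqref{eq: Deg general}, and the definition of $\scrN$ in \eqref{eq: def scrN}, to ensure the final sign in $\Lambda(V,W) = 2\,\zero_{z=1}(d_{V,W}(z)) + \scrN(V,W)$ is indeed a $+$ rather than a $-$.
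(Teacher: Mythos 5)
Your proof is correct and is exactly the argument implicit in the paper: the corollary follows by substituting the identity $\Deg(a_{V,W}(z)) = -\scrN(V,W)$ from Theorem~\ref{theorem: Deg} into the definition \eqref{eq: def Lambda}, and the second claim is the specialization $\zero_{z=1}(d_{V,W}(z))=0$. The paper leaves this substitution to the reader, so there is nothing to compare beyond noting that your bookkeeping, including the sign, matches.
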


\begin{theorem}
\label{theorem: Deg^infty}
For any $(i,p), (j,s) \in \widehat{I}$ and a {\rm Q}-datum $\calQ=(\Delta, \sigma, \uxi)$ for $\g$,
we have 
\begin{equation} \label{eq: Deg^infty}
    -\Lambda^{\infty}(L(Y_{i,p}), L(Y_{j,s})) = \Deg^{\infty} (a_{L(Y_{i,p}), L(Y_{j,s})}(z)) = (-1)^{k+l}(\alpha, \beta),
\end{equation}
where $\bar{\phi}_{\calQ}(i,p)=(\alpha, k)$ and $\bar{\phi}_{\calQ}(j,s)=(\beta, l)$.
In particular, for any $V, W \in \Irr \scrC^0$, we have
\begin{equation} \label{eq: Deg^infty general}
    \Lambda^{\infty}(  V,W  ) = - (  \wt_\calQ(V)  , \wt_\calQ(W)).
\end{equation}
\end{theorem}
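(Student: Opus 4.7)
The first equality $-\Lambda^\infty(L(Y_{i,p}), L(Y_{j,s})) = \Deg^\infty(a_{L(Y_{i,p}), L(Y_{j,s})}(z))$ is immediate from \eqref{eq: def Lambda^infty}: since $d_{V,W}(z) \in \kk[z]$ is a rational function, $\Deg^\infty(d_{V,W}(z)) = 0$. The main work is therefore to establish $\Deg^\infty(a_{L(Y_{i,p}), L(Y_{j,s})}(z)) = (-1)^{k+l}(\alpha,\beta)$ in the fundamental case; the general formula \eqref{eq: Deg^infty general} will then follow by bilinearity.

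For the fundamental case, I combine the Frenkel--Reshetikhin formula (Theorem~\ref{theorem: univ_coeff}) with the shift relation $a_{L(Y_{i,p}), L(Y_{j,s})}(z) = a_{i,j}(\qs^{s-p} z)$ from Lemma~\ref{lemma: property ad}~(\ref{shift ad}). A direct inspection of the definition gives $\Deg^\infty(\varphi(\qs^m z)) = \delta(m \in 2rh^\vee \Z)$, so summing these indicator contributions together with the $2rh^\vee$-periodicity of $\tc_{ij}$ (Corollary~\ref{corollary: tc2}~(\ref{tc2_+2rh})) yields
\[
\Deg^\infty(a_{L(Y_{i,p}), L(Y_{j,s})}(z)) = \tilde{c}_{ij}(p-s+d_i) - \tilde{c}_{ij}(p-s-d_i),
\]
where $\tilde{c}_{ij}$ denotes the $2rh^\vee$-periodic extension of $\tc_{ij}$ to all of $\Z$ (which is odd in its argument by Corollary~\ref{corollary: tc2}~(\ref{tc2_2rh-})). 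Since $\tau_\calQ$ has order $rh^\vee$ by Proposition~\ref{proposition: conjugation}~(\ref{tauconj_ord}), the combinatorial formula of Theorem~\ref{theorem: combinatorial formula} extends periodically to $\tilde{c}_{ij}(u) = (\varpi_\im, \tau_\calQ^{(u+\uxi_\jm - \uxi_\im - d_i)/2}\gamma_\jm^\calQ)$ for all $u \in \Z$ of the appropriate parity. Substituting $u = p-s \pm d_i$, using the $\sfW \rtimes \langle \sigma \rangle$-invariance of $(\cdot,\cdot)$ together with $\gamma_\im^\calQ = (1-\tau_\calQ^{d_i})\varpi_\im$ (Lemma~\ref{lemma: gamma}), the difference telescopes into the compact expression $(\gamma_\im^\calQ,\, \tau_\calQ^m \gamma_\jm^\calQ)$ with $m \seteq (p-s+\uxi_\jm - \uxi_\im)/2$. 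The characterization \eqref{eq: bphi} then gives $\gamma_\im^\calQ = (-1)^k \tau_\calQ^{(p-\uxi_\im)/2}\alpha$ and $\tau_\calQ^m \gamma_\jm^\calQ = (-1)^l \tau_\calQ^{(p-\uxi_\im)/2}\beta$ (the two exponents coincide after substituting $m$), so one last application of $\tau_\calQ$-invariance produces $(-1)^{k+l}(\alpha, \beta)$.

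To extend to general $V = L(m), W = L(m') \in \Irr \scrC^0$ with $m = \prod_a Y_{i_a,p_a}$ and $m' = \prod_b Y_{j_b,s_b}$, I invoke the bi-multiplicativity of universal coefficients (Lemma~\ref{lemma: bilinear a}), which makes $\Deg^\infty \circ a_{\cdot, \cdot}$ bi-additive in such monomial factorizations. Since $\wt_\calQ$ is a group homomorphism by Definition~\ref{def: new-weights} and $\wt_\calQ(L(Y_{i,p})) = h(\bphi_\calQ(i,p)) = (-1)^k \alpha$, the bilinear sum $(\wt_\calQ(V), \wt_\calQ(W)) = \sum_{a,b}((-1)^{k_a}\alpha_a, (-1)^{l_b}\beta_b)$ matches term-by-term with the fundamental identity, giving \eqref{eq: Deg^infty general}.

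The main obstacle lies in the telescoping step of the second paragraph, where one must carefully justify the periodic extension of Theorem~\ref{theorem: combinatorial formula}. The arguments $p-s \pm d_i$ can be negative or exceed $2rh^\vee$; in particular, the edge case $(i,p) = (j,s)$ forces $m = 0$ and requires $\tilde{c}_{ii}(-d_i) = -\tilde{c}_{ii}(d_i) = -1$ via Corollary~\ref{corollary: tc2}~(\ref{tc2_2rh-}), correctly giving $\Deg^\infty = 2 = (\alpha, \alpha)$. One must also check that the parity constraint $p - s + \uxi_\jm - \uxi_\im \in 2\Z$ holds automatically, which is ensured by $(i,p), (j,s) \in \hI$ together with condition $(\mathtt{H3})$ on height functions.
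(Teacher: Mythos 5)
Your proposal is correct and follows essentially the same route as the paper: apply the Frenkel--Reshetikhin formula (with the spectral shift) to get $\Deg^{\infty}(a_{L(Y_{i,p}),L(Y_{j,s})}(z))=\tc_{ij}(\overline{p-s+d_i})-\tc_{ij}(\overline{p-s-d_i})$, extend Theorem~\ref{theorem: combinatorial formula} periodically using $\tau_{\calQ}^{rh^{\vee}}=1$, telescope via the $\sfW\rtimes\langle\sigma\rangle$-invariance of the pairing and \eqref{eq: bphi} (the computation the paper delegates to the proof of Proposition~\ref{proposition: gamma wt pairing same}), and conclude the general case by the bi-multiplicativity of Lemma~\ref{lemma: bilinear a}. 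Your parity and edge-case checks are consistent with the paper's conventions, so no gap remains.
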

\begin{proof}
By Theorem~\ref{theorem: univ_coeff}, we have
\begin{align*}
\Deg^{\infty} (a_{L(Y_{i,p}), L(Y_{j,s})}(z)) &= \sum_{u=0}^{2rh^{\vee}} \tc_{ij}(u)
 \cdot(\Deg^{\infty} [u+s-p-d_{i}]-\Deg^{\infty} [u+s-p+d_{i}]) \\
&= \tc_{ij}(\overline{-s+p+d_{i}}) - \tc_{ij}(\overline{-s+p-d_{i}}),
\end{align*}
where $\bar{x}$ denotes for each $x \in \Z$ the unique integer such that
$0 \le \bar{x} < 2rh^{\vee}$ and $\bar{x}-x \in 2rh^{\vee}\Z$
as before.
Let us choose
$\im, \jm \in \Delta_{0}$ such that $\bar{\im}=i, \bar{\jm}=j$.
By Theorem~\ref{theorem: combinatorial formula}, we have
$$
\tc_{ij}(\bar{x}) = (\varpi_{\im}, \tau_{\calQ}^{(x+\uxi_{\imath}-\uxi_{\jmath}-d_{i})/2}(\gamma_{\jm}^{\calQ}))
$$
for each $x \in \Z$.
By the same computation as in the proof of Proposition~\ref{proposition: gamma wt pairing same}, we obtain
$$
\tc_{ij}(\overline{-s+p+d_{i}}) - \tc_{ij}(\overline{-s+p-d_{i}})
= (\tau_{\calQ}^{(\uxi_{\im}-p)/2}(\gamma_{\im}^{\calQ}),
        \tau_{\calQ}^{(\uxi_{\jm}-s)/2}(\gamma_{\jm}^{\calQ})) = (-1)^{k+l} (\alpha, \beta),
$$
which proves \eqref{eq: Deg^infty}.
The equality \eqref{eq: Deg^infty general}
follows from \eqref{eq: Deg^infty} and Lemma~\ref{lemma: bilinear a}.
\end{proof}



\begin{remark} \label{remark: KKOP20b}
Theorem~\ref{theorem: Deg^infty} gives a new unified proof of~\cite[(5.2)]{KKOP20b} for all untwisted affine types,
which was a crucial step in the proof of~\cite[Theorem 3.6]{KKOP20b}.
In particular, it leads to a block decomposition of the category 
$\scrC^0$ indexed by the simply-laced root lattice $\sfQ$
by \cite[Section 4]{KKOP20b},
which is comparable with Theorem~\ref{Thm: CM} above.
\end{remark}

\begin{corollary}[cf.~{\cite[(3.6)]{KKOP20b}}]
For simple modules $V$ and $W$ in $\scrC^0$, we have
$$  \scrN(\scrD^{\ber 2k \er}V,W) = 
\scrN(V,\scrD^{\ber -2k \er}W) =   \Lambda^\infty(V,W) \quad \text{ for }k \gg 0.$$
\end{corollary}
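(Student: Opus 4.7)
The plan is to reduce the statement to a direct application of Corollary~\ref{corollary: scrN wt} after rewriting the duality functor as a spectral-parameter shift. First I would write $V \cong L(m)$ and $W \cong L(m')$ with $m, m' \in \calM^+$, and use Proposition~\ref{proposition: dual} to identify $\scrD^{2k}V \cong L(\bfD^{2k}m)$. Unwinding the definition of $\bfD$ together with the identification $Y_{i,p} = \pi_{i,\qs^{p}}$ shows that $\bfD^{2}$ acts on each generating variable by $Y_{i,p} \mapsto Y_{i, p + 2rh^{\vee}}$; hence $\bfD^{2k}m$ is obtained from $m$ by shifting every spectral parameter by $2krh^{\vee}$. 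Consequently, for $k$ large enough, every spectral parameter appearing in $\bfD^{2k}m$ strictly exceeds every spectral parameter appearing in $m'$, so that Corollary~\ref{corollary: scrN wt} applies and yields
\[
\scrN(\scrD^{2k}V, W) \;=\; \scrN(\bfD^{2k}m;\, m') \;=\; -\bigl(\wt_{\calQ}(\bfD^{2k}m),\, \wt_{\calQ}(m')\bigr).
\]

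Next I would plug in Lemma~\ref{lemma: dual wt}, which gives $\wt_{\calQ}(\scrD^{2k}V) = (-1)^{2k}\wt_{\calQ}(V) = \wt_{\calQ}(V)$, and the formula~\eqref{eq: Deg^infty general} of Theorem~\ref{theorem: Deg^infty}, which reads $\Lambda^{\infty}(V,W) = -(\wt_{\calQ}(V), \wt_{\calQ}(W))$. Combining these immediately produces the first equality $\scrN(\scrD^{2k}V, W) = \Lambda^{\infty}(V, W)$ for $k \gg 0$. The second equality is handled by the symmetric argument: for $k \gg 0$, every spectral parameter of $\bfD^{-2k}m'$ lies strictly below every parameter of $m$, so Corollary~\ref{corollary: scrN wt} (now applied with the roles of the two monomials fitting its hypothesis) together with Lemma~\ref{lemma: dual wt} and Theorem~\ref{theorem: Deg^infty} yields $\scrN(V, \scrD^{-2k}W) = -(\wt_{\calQ}(V), \wt_{\calQ}(W)) = \Lambda^{\infty}(V, W)$.

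I do not anticipate any substantial obstacle: the argument is essentially bookkeeping, with all the real content supplied by Proposition~\ref{proposition: dual}, Corollary~\ref{corollary: scrN wt}, Lemma~\ref{lemma: dual wt} and Theorem~\ref{theorem: Deg^infty}. The only mildly subtle point is the matching of signs: the factor $(-1)^{2k}$ arising from Lemma~\ref{lemma: dual wt} is trivial because $2k$ is even, and this is precisely what aligns the minus sign coming from Corollary~\ref{corollary: scrN wt} with the minus sign in $\Lambda^{\infty}(V,W) = -(\wt_{\calQ}(V), \wt_{\calQ}(W))$.
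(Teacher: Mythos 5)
Your proposal is correct and uses exactly the same ingredients as the paper's proof (Proposition~\ref{proposition: dual}, the shift $\bfD^2(Y_{i,p})=Y_{i,p+2rh^{\vee}}$, Corollary~\ref{corollary: scrN wt}, Lemma~\ref{lemma: dual wt} and \eqref{eq: Deg^infty general}); the only cosmetic difference is that the paper obtains the first equality directly from the fact that $\scrN(i,p;j,s)$ depends only on $p-s$, whereas you prove both quantities equal $\Lambda^{\infty}(V,W)$ separately.
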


\begin{proof}
Let $V \cong L(m)$ and $W \cong L(m')$ 
with $m,m' \in \calM_+$.
The first equality follows from Proposition~\ref{proposition: dual}
and the fact $\bfD^2(Y_{i,p}) = Y_{i, p+2rh^\vee}$.
Choose $k \in \Z$ large enough so that we have
$$
\min\{p \mid \exists i \in I, u_{i,p}(\bfD^{2k}m) \neq 0\}
>
\max\{p \mid \exists i \in I, u_{i,p}(m') \neq 0\}.
$$
Then the second equality follows from 
Lemma~\ref{lemma: dual wt},
Corollary~\ref{corollary: scrN wt}
and \eqref{eq: Deg^infty general}.
\end{proof}

\appendix

\section{Denominator formulae}
\label{section: denominator formula}

In this appendix, we give a list of all the formulae of $d_{i^{l}, j^{m}}(z)$ which are currently known.
These are quoted from~\cite{Oh15R, OhS19, OhS19s, Fujita19}.
See Remark~\ref{remark: convention} for our convention and \beb
Remark~\ref{rmk: F4} for type $\mathsf{F}_4$. \eb
For non-simply-laced $\g$, we use the labeling $I=\{1, \ldots, n\}$ as in~(\ref{eq: diagram foldings}).

\subsection*{(Type $\mathsf{ADE}$)}
For any $i,j \in I$, we have
\begin{equation}
\label{eq: formulaADEfund}
d_{i,j}(z) = \prod_{u=0}^{h^{\vee}} (z-q^{u+1})^{\tc_{ij}(u)}.
\end{equation}
If $\g$ is of type $\mathsf{AD}$ and $l,m \in \Z_{\ge 1}$, we have
\begin{equation}
\label{eq: formulaADBll}
d_{i^{l}, j^{m}}(z) \equiv \prod_{t=0}^{\min(l, m) -1} d_{i,j}(q^{-|l-m|-2t}z).
\end{equation}

\subsection*{(Type $\mathsf{B}_{n}$)}
For $1 \le i,j <n$ and $l,m \in \Z_{\ge 1}$, we have
\begin{equation}
d_{i,j}(z) = \prod_{u=0}^{\min(i,j)} (z-q^{|i-j|+2u})(z-q^{2n-i-j-1+2u})
\end{equation}
and $d_{i^{l}, j^{m}}(z)$ are given by the same formula as~(\ref{eq: formulaADBll}).
For $1 \le i < n$ and $l, m \in \Z_{\ge 1}$, we have
\begin{align}
\label{eq: formulaBls}
d_{i^{l}, n^{m}}(z) &= d_{n^{m}, i^{l}}(z) = \prod_{t=0}^{\min(2l, m)-1} \prod_{u=1}^{i} ( z - \qs^{2n -2i -2 + |2l - m| +4u + 2t} ), \allowdisplaybreaks \\
\label{eq: formulaBss}
d_{n^{l}, n^{m}}(z) &\equiv \prod_{t=0}^{\min(l, m) -1} d_{n,n}(\qs^{-|l-m| -2t}z).
\end{align}

\subsection*{(Type $\mathsf{C}_{n}$)}
For $1 \le i, j < n$ and $l, m\in \Z_{\ge 1}$ with $\max(l, m) > 1$, we have
\begin{align}
\label{eq: formulaCss}
d_{i^{l}, j^{m}} (z) &= \prod_{t=0}^{\min(l, m) -1} \prod_{u=1}^{\min(i,j)} (z-\qs^{|i-j|+|l-m|+2(t+u)}) (z-\qs^{2n+2-i-j+|l-m|+2(t+u)}), \allowdisplaybreaks\\
\label{eq: formulaCsl}
d_{i^{l}, n^{m}}(z) &= d_{n^{m}, i^{l}}(z) = \prod_{t=0}^{\min(l, 2m)-1} \prod_{u=1}^{i}(z-\qs^{n+1-i+|2m-l|+2t+2u}), \allowdisplaybreaks\\
\label{eq: formulaCll}
d_{n^{l}, n^{m}}(z) &= \prod_{t=0}^{\min(l, m)-1} \prod_{u=1}^{n} (z-\qs^{2+|2l-2m|+2u+4t}),
\end{align}
while, for $1 \le i,j \le n$, we have
\begin{equation}
\label{eq: formulaCfund}
d_{i,j}(z) = \prod_{u=1}^{\min(i, j, n-i, n-j)}(z-\qs^{|i-j|+2u}) \prod_{u=1}^{\min(i,j)}(z-\qs^{2n+2-i-j+2u}).
\end{equation}

\subsection*{(Type $\mathsf{F}_{4}$)} We have \beb
\begin{align}
\label{eq: formulaF11} d_{1,1}(z) &= (z-\qs^{4})(z-\qs^{10})(z-\qs^{12})(z-\qs^{18}), \allowdisplaybreaks \\
\label{eq: formulaF12} d_{1,2}(z) &= (z-\qs^{6})(z-\qs^{8})(z-\qs^{10})(z-\qs^{12})(z-\qs^{14})(z-\qs^{16}), \allowdisplaybreaks\\
\label{eq: formulaF13} d_{1,3}(z) &= (z-\qs^{7})(z-\qs^{9})(z-\qs^{13})(z-\qs^{15}), \allowdisplaybreaks\\
\label{eq: formulaF14} d_{1,4}(z) &= (z-\qs^{8})(z-\qs^{14}), \allowdisplaybreaks \\
\label{eq: formulaF22} d_{2,2}(z)
&= (z-\qs^{4})(z-\qs^  {6})(z-\qs^{8})^{2}(z-\qs^{10})^{2}(z-\qs^{12})^{2}(z-\qs^{14})^{2}(z-\qs^{16})(z-\qs^{18}), \allowdisplaybreaks\\
\label{eq: formulaF23} d_{2,3}(z) &= (z-\qs^{5})(z-\qs^{7})(z-\qs^{9})(z-\qs^{11})^{2}(z-\qs^{13})(z-\qs)^{15}(z-\qs^{17}), \allowdisplaybreaks\\
\label{eq: formulaF24} d_{2,4}(z) &= (z-\qs^{6})(z-\qs^{10})(z-\qs^{12})(z-\qs^{16}), \allowdisplaybreaks\\
\label{eq: formulaF33} d_{3,3}(z) &= (z-\qs^{2})(z-\qs^{6})(z-\qs^{8})(z-\qs^{10})(z-\qs^{12})^{2}(z-\qs^{16})(z-\qs^{18}),\allowdisplaybreaks \\
\label{eq: formulaF34} d_{3,4}(z) &= (z-\qs^{3})(z-\qs^{7})(z-\qs^{11})(z-\qs^{13})(z-\qs^{17}),\allowdisplaybreaks \\
\label{eq: formulaF44} d_{4,4}(z) &= (z-\qs^{2})(z-\qs^{8})(z-\qs^{12})(z-\qs^{18}).
, \allowdisplaybreaks\\
\label{eq: formulaF3s1}
d_{3^2,1}(z) & = (z-\qs^{6})(z-\qs^{8})^2(z-\qs^{10})(z-\qs^{12})(z-\qs^{14})^2(z-\qs^{16})
\allowdisplaybreaks\\
\label{eq: formulaF3s2}
d_{3^2,2}(z) &= (z-\qs^{4})(z-\qs^{6})^2 (z-\qs^{8})^{2} (z-q_s^{10})^{3}
\ber (z-\qs^{12})^3 \er (z-\qs^{14})^2(z-\qs^{16})^2(z-\qs^{18}),  \allowdisplaybreaks\\
\label{eq: formulaF3s3}
d_{3^2,3}(z)& = (z-\qs^{3})(z-\qs^{5})(z-\qs^{7})(z-\qs^{9})^{2}(z-\qs^{11})^2(z-\qs^{13})^2(z-\qs^{15})(z-\qs^{17})(z-\qs^{19}), \allowdisplaybreaks\\
\label{eq: formulaF3s4}
d_{3^2,4}(z) & = (z-\qs^{4}) (z-\qs^{8})(z-\qs^{10})(z-\qs^{12})(z-\qs^{14})(z-\qs^{18}), 
\allowdisplaybreaks\\
\label{eq: formulaF4s1}
d_{4^2,1}(z) &= (z-\qs^7)(z-\qs^9)(z-\qs^{13})(z-\qs^{15}), 
\allowdisplaybreaks\\
\label{eq: formulaF4s2}
d_{4^2,2}(z)&=  (z-\qs^{5})(z-\qs^{7})(z-\qs^{9})(z-\qs^{11})^2(z-\qs^{13})(z-\qs^{15})(z-\qs^{17}), \allowdisplaybreaks\\
\label{eq: formulaF4s3}
d_{4^2,3}(z)&= (z-\qs^{4})(z-\qs^{8})(z-\qs^{10})(z-\qs^{12})(z-\qs^{14})(z-\qs^{18}), 
\allowdisplaybreaks\\
\label{eq: formulaF4s4}
d_{4^2,4}(z)& = (z-\qs^3)(z-\qs^9)(z-\qs^{13})(z-\qs^{19}).
\end{align}
\eb

\subsection*{(Type $\mathsf{G}_{2}$)}
We have
\begin{align}
d_{1,1}(z) &= (z-\qs^{6})(z-\qs^{8})(z-\qs^{10})(z-\qs^{12}), \qquad   d_{1,2}(z) = (z-\qs^{7})(z-\qs^{11}),
\end{align}
and for $l,m \in \Z_{\ge 1}$, we have
\begin{align}
\label{eq: formulaGll}
d_{1^{l}, 1^{m}}(z) &\equiv \prod_{t=1}^{\min(l, m) -1} d_{1,1}(q^{-|l-m|-2t}z), \\
\label{eq: formulaGls}
d_{1^{l}, 2^{m}}(z) &= d_{2^{m}, 1^{l}}(z) \equiv \prod_{t=0}^{\min(3l, m)-1} d_{1,2}(\qs^{-|3l - m| + 2 - 2t}z).
\end{align}
For $l, m \in \Z_{\ge 1}$ with $(l, m) \neq (1,1), (2,2)$, we have
\begin{align}
\label{eq: formulaGss}
d_{2^{l}, 2^{m}}(z) &= \prod_{t=0}^{\min(l, m)-1} \prod_{u=1}^{2}(z-\qs^{-2+|l-m|+4u+2t})(z-\qs^{4+|l-m|+4u+2t}),
\end{align}
while
\begin{align}
\label{eq: formulaG2121}
d_{2,2}(z) &= (z-\qs^{2})(z-\qs^{8})(z-\qs^{12}), \\
\label{eq: formulaG2222}
d_{2^{2}, 2^{2}}(z) &= (z-\qs^{2})(z-\qs^{4})(z-\qs^{8})^{2}(z-\qs^{10})(z-\qs^{12})(z-\qs^{14}).
\end{align}


\end{document}